\documentclass[11pt]{amsart}

\setlength{\textwidth}{\paperwidth}
\addtolength{\textwidth}{-2in}
\calclayout

\usepackage{graphicx}
\usepackage{amssymb, amsthm,mathrsfs}
\usepackage[all]{xy}
\usepackage[dvipsnames]{xcolor}
\usepackage{comment}
\usepackage{pinlabel}
\usepackage[inline]{enumitem}
\usepackage{todonotes}
\usepackage{thm-restate}
\usepackage{microtype}
\usepackage[T1]{fontenc}
\usepackage{enumitem}
\usepackage{comment}
\usepackage{tikz-cd}
\usetikzlibrary{positioning,fit,backgrounds}
\usepackage{ae,aecompl}
\usepackage{times}
\usepackage{float}
\usepackage{scalerel}
\usepackage{stmaryrd}
\usepackage{verbatim,amsmath,amsthm,amsfonts,amssymb,latexsym,graphicx,mathtools,color,mathabx,soul}
\setcounter{tocdepth}{3}
\usepackage[all]{xy}
\usepackage{graphicx}
\usepackage{caption}
\usepackage{subcaption}
\DeclareMathAlphabet{\mathpzc}{OT1}{pzc}{m}{it}
\pagestyle{plain}
\usepackage[colorlinks,pagebackref,hypertexnames=false]{hyperref} 
\usepackage{hyperref}
\hypersetup{
  colorlinks = true,
  linkcolor  = black,
  citecolor=blue
} 

\usepackage[alphabetic,backrefs,msc-links]{amsrefs}
\usepackage{accents}

\graphicspath{./images/}
\usepackage{aliascnt}
\numberwithin{equation}{section}

\newtheorem{theorem}{Theorem}[section]
\newtheorem{prop}[theorem]{Proposition}
\newtheorem{lemma}[theorem]{Lemma}

\newtheorem{quest}{Question}
\newtheorem{cor}[theorem]{Corollary}
\newtheorem{conj}{Conjecture}
\newtheorem{post}{Postulate}

\theoremstyle{definition}
\newtheorem{definition}[theorem]{Definition}

\DeclareMathOperator{\coker}{coker}


\theoremstyle{remark}
\newtheorem{remark}[theorem]{Remark}

\begin{document}

\title{A Lefschetz decomposition over $\mathbb Z$, and applications}

\author[Analisa Faulkner Valiente]{Analisa Faulkner Valiente}
\address{Department of Mathematics, Barnard College, New York, NY 10025}
\email{acf2182@barnard.edu}

\author[Mike Miller Eismeier]{Mike Miller Eismeier}
\address{Department of Mathematics, University of Vermont, Burlington, VT 05405}
\email{Mike.Miller-Eismeier@uvm.edu}

\maketitle
\begin{abstract}
We discuss a `Lefschetz filtration' of $\Lambda^*(\mathbb Z^{2g})$ and prove its subquotients are isomorphic as $\text{Sp}(2g)$-modules to primitive subspaces $P^k(\mathbb Z^{2g})$. This gives a sort of integral version of the Lefschetz decomposition over $\mathbb C$. 

We present three applications: the precise failure of the Hard Lefschetz theorem for $\Lambda^*(\mathbb Z^{2g})$, a description of the $\text{Sp}(2g)$-module structure on the cohomology of integer Heisenberg groups, and a computation of the Heegaard Floer homology groups $HF^\infty(\Sigma_g \times S^1; \mathbb Z)$ as modules over the mapping class group. Our computation implies that $HF^\infty$ is not naturally isomorphic to Mark's `cup homology'.
\end{abstract}

\setcounter{tocdepth}{1}
\tableofcontents

\section{Introduction}
A compact $n$-dimensional K\"ahler manifold $(X, \omega, J)$ is a compact complex manifold of real dimension $2n$ equipped with a symplectic form $\omega$ for which $\omega(v, Jw)$ is a Riemannian metric on each tangent space. The symplectic form $\omega$ is closed, so defines a cohomology class $[\omega] \in H^2(X; \mathbb R)$. 

The celebrated \textit{hard Lefschetz theorem} \cite[page 122]{GriffithsHarris} asserts that the map \begin{equation}\label{eq:hLef}H^{n-i}(X;\mathbb R) \xrightarrow{\wedge [\omega]^i} H^{n+i}(X;\mathbb R)\end{equation} is an isomorphism. 

The hard Lefschetz theorem is closely related to what is alternately called the \emph{Hodge--LePage decomposition} \cite[Proposition 1.1]{BGG} or the \emph{Lefschetz decomposition}. Writing \[P^k(X) = \{x \in H^k(X;\mathbb C) \mid [\omega]^{n-k+1} \wedge x = 0\},\] for $0 \le k \le n$, understanding this vector space to be zero for $k > n$, the hard Lefschetz theorem implies the existence of a direct sum decomposition \begin{equation}\label{eq:H-LP}H^k(X;\mathbb C) = P^k(X) \oplus \omega P^{k-2}(X) \oplus  \omega^2 P^{k-4}(X) \oplus \cdots \end{equation}

If the K\"ahler form $\omega$ is integral, so that it may be considered as an element of \[H^2(X;\mathbb Z)/\text{Tors} = \text{im}\left(H^2(X;\mathbb Z) \to H^2(X; \mathbb R)\right),\] it makes sense to ask whether the map \eqref{eq:hLef} is an isomorphism over the integers, or whether the direct sum decomposition \eqref{eq:H-LP} holds at the level of abelian groups. Smooth complex projective varieties give examples of K\"ahler manifolds with integral K\"ahler form.

Both statements fail, badly, for multiple reasons: \begin{enumerate}[label=(\alph*)]
    \item The form $\omega^i$ is divisible; $\left[\frac{\omega^i}{i!}\right]$ is also an integer form. 
    \item While the subspaces $\frac{\omega^i}{i!} P^{k-2i}$ are independent, they fail to span $H^k(X;\mathbb Z)$ as soon as $k = 2$. 
    \item If $X$ is a smooth complex projective variety, then in the extreme case $i = n$, the map $\wedge \left[\frac{\omega^n}{n!}\right]$ is multiplication by $d$, where $[X] = d[\mathbb{CP}^n] \in H_{2n}(\mathbb{CP}^N;\mathbb Z)$. This is not an isomorphism unless $X$ is a projective subspace, and thus this map depends on the geometry of $X$. 
\end{enumerate}

To get a better handle on what the Hard Lefschetz theorem or the Lefschetz decomposition should mean over the integers, we investigate a special case: $\Lambda^*(\mathbb Z^{2g})$ equipped with the standard symplectic form. This example arises as the cohomology ring of the Jacobian variety $\text{Jac}(\Sigma_g) \cong (\mathbb C/\mathbb Z+i\mathbb Z)^g$, but (c) implies that the corresponding K\"ahler form does not arise from an embedding in $\mathbb{CP}^N$. We refer to this as the linear case, as it is analogous to studying the Lefschetz decomposition of $\Lambda^*(T_p M)$.

As a first observation, we may define a $\text{Sp}(2g)$-invariant Lefschetz filtration on the exterior powers \[F_r \Lambda^k(\mathbb Z^{2g}) = \{\alpha \in \Lambda^k(\mathbb Z^{2g}) \mid \omega^{g-k+r+1} \wedge \alpha = 0\};\] for $0 \le k \le g$, this filtration begins at $F_0 \Lambda^k = P^k(\mathbb Z^{2g})$, while for $k \ge g$ the first nonzero term is the `coprimitive subspace' $F_{k-g} \Lambda^k(\mathbb Z^{2g})$. Our main result in this direction identifies the subquotients of this filtration.

\begin{theorem}\label{thm:Lefschetz-intro}
The subquotients of the Lefschetz filtration \[\textup{gr}_r \Lambda^{k+2r}(\mathbb Z^{2g}) = F_r \Lambda^{k+2r}(\mathbb Z^{2g})/F_{r-1} \Lambda^{k+2r}(\mathbb Z^{2g})\] are free abelian groups isomorphic as $\textup{Sp}(2g)$-modules to $P^k(\mathbb Z^{2g}).$ 
\end{theorem}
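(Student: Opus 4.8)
The plan is to produce an explicit natural $\Sp(2g)$-equivariant map from $\textup{gr}_r\Lambda^{k+2r}(\mathbb Z^{2g})$ to $P^k(\mathbb Z^{2g})$, observe that injectivity and freeness come essentially for free, and then concentrate all the effort on surjectivity. Throughout I work in the range $0\le k$, $0\le r\le g-k$, outside of which the subquotient vanishes. Freeness of the subquotients is automatic: $F_r\Lambda^{k+2r}(\mathbb Z^{2g})$ is the kernel of the homomorphism $\omega^{g-k-r+1}\wedge(-)$ between free abelian groups, hence a \emph{saturated} subgroup of $\Lambda^{k+2r}(\mathbb Z^{2g})$ (if $n\alpha$ lies in the kernel then so does $\alpha$, as the target is torsion-free); consequently $F_{r-1}\Lambda^{k+2r}$ is saturated inside $F_r\Lambda^{k+2r}$, so $\textup{gr}_r\Lambda^{k+2r}$ is finitely generated and torsion-free, hence free.

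Next I bring in the adjoint operator. Let $\Lambda\colon \Lambda^m(\mathbb Z^{2g})\to\Lambda^{m-2}(\mathbb Z^{2g})$ be contraction with the invariant bivector $\sum_i e_i\wedge f_i$ (the classical dual Lefschetz operator); it is defined over $\mathbb Z$ and commutes with $\Sp(2g)$, and writing $\Lambda=\sum_i c_i$ with $c_i$ contraction along the $i$-th hyperbolic plane one sees $c_i^2=0$ and $c_ic_j=c_jc_i$, so the divided power $\Lambda^{(r)}:=\Lambda^r/r!=\sum_{i_1<\dots<i_r}c_{i_1}\cdots c_{i_r}$ is again integral. Over $\mathbb Q$, the Lefschetz $\mathfrak{sl}_2$-action and the commuting $\Sp(2g)$-action give $F_r\Lambda^{k+2r}\otimes\mathbb Q=\bigoplus_{j\le r}(\omega^j/j!)P^{k+2r-2j}\otimes\mathbb Q$, with $\ker(\Lambda^r)=F_{r-1}\Lambda^{k+2r}\otimes\mathbb Q$ and $\Lambda^r$ carrying the top summand $(\omega^r/r!)P^k$ isomorphically (up to a nonzero scalar) onto $P^k$. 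Since kernels of maps of free groups are saturated, integrally we get $\ker(\Lambda^r|_{\Lambda^{k+2r}(\mathbb Z^{2g})})=F_{r-1}\Lambda^{k+2r}(\mathbb Z^{2g})$, and $\Lambda^{(r)}$ maps $F_r\Lambda^{k+2r}(\mathbb Z^{2g})$ into $P^k(\mathbb Z^{2g})$ because $P^k$ is saturated. Hence $\Lambda^{(r)}$ descends to an $\Sp(2g)$-equivariant map $\overline{\Lambda^{(r)}}\colon \textup{gr}_r\Lambda^{k+2r}(\mathbb Z^{2g})\to P^k(\mathbb Z^{2g})$; its kernel is $\big(F_r\cap\ker\Lambda^r\big)/F_{r-1}=F_{r-1}/F_{r-1}=0$, so it is injective, and as both sides are free of the same finite rank it is an isomorphism if and only if it is surjective.

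The surjectivity of $\overline{\Lambda^{(r)}}$ is the heart of the argument. The highest weight vector $e_1\wedge\dots\wedge e_k\in P^k(\mathbb Z^{2g})$ lies in the image: since $k+r\le g$ the monomial $\alpha_0=e_1\wedge\dots\wedge e_k\wedge e_{k+1}\wedge f_{k+1}\wedge\dots\wedge e_{k+r}\wedge f_{k+r}$ is defined, it lies in $F_r\Lambda^{k+2r}(\mathbb Z^{2g})$ because $\omega^{g-k-r+1}\wedge\alpha_0=0$ (only $g-k-r$ indices are unused, too few for the $g-k-r+1$ disjoint pairs $\omega^{g-k-r+1}$ requires), and $\Lambda^{(r)}\alpha_0=\pm\,e_1\wedge\dots\wedge e_k$ since $\alpha_0$ has exactly the $r$ full pairs to delete. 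As $\textup{im}(\overline{\Lambda^{(r)}})$ is an $\Sp(2g,\mathbb Z)$-submodule of $P^k(\mathbb Z^{2g})$ containing this vector, surjectivity reduces to the representation-theoretic claim that $P^k(\mathbb Z^{2g})$ is generated as a $\mathbb Z[\Sp(2g,\mathbb Z)]$-module by $e_1\wedge\dots\wedge e_k$. I would prove this by reducing modulo each prime $p$ (using that $\Sp(2g,\mathbb Z)\twoheadrightarrow\Sp(2g,\mathbb Z/p)$ and that the index $[P^k:\mathbb Z[\Sp(2g,\mathbb Z)]\cdot e_1\wedge\dots\wedge e_k]$ is $1$ iff it is prime to every $p$) and then exhibiting a spanning set of the mod-$p$ reduction of $P^k$ by an explicit sequence of symplectic transvections --- exploiting that the simple-root strings through $e_1\wedge\dots\wedge e_k$ have length at most two, so the relevant one-parameter subgroups already yield the integral root operators with no auxiliary denominators, and then bootstrapping to capture the ``diagonal'' primitive vectors such as $e_1\wedge f_1-e_2\wedge f_2$ (which come from transvections along vectors like $f_1+f_2$). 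An alternative route is to build compatible $\mathbb Z$-bases of $\Lambda^\bullet(\mathbb Z^{2g})$ adapted to the filtration and of $P^\bullet(\mathbb Z^{2g})$, perhaps inductively through $\Lambda^\bullet(\mathbb Z^{2g})=\Lambda^\bullet(\mathbb Z^{2g-2})\otimes\Lambda^\bullet(\mathbb Z^2)$, and verify directly that $\Lambda^{(r)}$ matches the two bases. In either case the expected main obstacle is exactly this: making the comparison map surjective on the nose rather than merely of finite index, which requires genuine control of the integral (as opposed to rational) structure of $P^k$.
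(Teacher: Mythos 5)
Your setup is sound and matches the paper's strategy in its first half: the comparison map you use, contraction with $\omega^r/r!$, is exactly the map the paper uses; your saturation argument gives freeness of the subquotients, well-definedness, and injectivity correctly (the paper gets the same facts from its Lemma \ref{lemma:contr-filt}); and your reduction of the whole theorem to surjectivity of $\iota_{\omega_r}\colon F_r\Lambda^{k+2r}(\mathbb Z^{2g})\to P^k(\mathbb Z^{2g})$ is precisely where the paper also locates the content (Proposition \ref{prop:w-surj}). The problem is that you do not prove surjectivity. You show the single element $e_1\wedge\cdots\wedge e_k$ lies in the image (correctly), and then assert that it suffices to know that $P^k(\mathbb Z^{2g})$ is generated as a $\mathbb Z[\Sp(2g,\mathbb Z)]$-module by this highest weight vector --- a valid reduction, but the generation claim is itself an integral statement of essentially the same depth as the theorem: it says the lattice $P^k(\mathbb Z^{2g})=\ker(\iota_\omega)\cap\Lambda^k(\mathbb Z^{2g})$ is cyclic over the integral group ring, i.e.\ that no prime $p$ sees a proper $\Sp(2g,\mathbb F_p)$-submodule containing the reduction of the highest weight vector. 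What you offer for this --- ``reduce mod $p$, span by an explicit sequence of symplectic transvections, bootstrap to the diagonal primitive vectors,'' or alternatively ``build compatible $\mathbb Z$-bases and verify directly'' --- is a plan, not an argument, and you acknowledge as much (``the expected main obstacle is exactly this''). The mod-$p$ structure here is genuinely delicate: the reductions $P^k\otimes\mathbb F_p$ interact with the divisibility phenomena the paper is precisely tracking (for instance $\iota_\omega(\omega_j\wedge x)$ acquires coefficients like $k-g+j-1$ that can vanish mod $p$, so invariant or near-invariant vectors can appear inside the reduction), and whether the transvection orbit spans cannot be settled by the one rank-$2$ computation you gesture at. So as written the proof has a gap exactly at its heart.

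For comparison, the paper proves surjectivity without any appeal to generation by a highest weight vector or to modular representation theory: Proposition \ref{prop:w-surj} is an induction on $g$, writing a primitive class $x\in P^{k+1}(\mathbb Z^{2g+2})$ as $a+e^{2g+1}b+e^{2g+2}c+e^{2g+1}e^{2g+2}d$ with $a,b,c,d\in\Lambda^*(\mathbb Z^{2g})$, extracting from $\omega_{g-k+1}\wedge x=0$ that $b,c,d$ are primitive (or filtered) for the smaller symplectic lattice, applying the inductive hypothesis to produce an explicit preimage $y$, and finishing by showing the leftover term $a'$ vanishes because wedging with $\eta_{g-k}$ is injective on $\Lambda^k(\mathbb Z^{2g})$. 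If you want to salvage your approach, you would need to actually prove the cyclicity of $P^k(\mathbb Z^{2g})$ over $\mathbb Z[\Sp(2g,\mathbb Z)]$ (for every $k\le g$ and every prime), which in practice requires the same kind of explicit integral bookkeeping; the paper's induction on $g$ is the more economical route.
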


The isomorphism is very explicit, given by contraction against $\omega^r/r!$, and permits explicit computation of associated graded maps; see Corollary \ref{cor:Lefschetz-graded}. We use this to give three applications. 

 We compute the cokernel of the map \[\omega^i/i!: \Lambda^{g-i}(\mathbb Z^{2g}) \to \Lambda^{g+i}(\mathbb Z^{2g})\] as an abelian group, and as a $\text{Sp}(2g)$-module \textit{up to filtration}.

\begin{theorem}\label{thm:hardLefschetz-intro}
The cokernel of $\omega^i/i!: \Lambda^{g-i}(\mathbb Z^{2g}) \to \Lambda^{g+i}(\mathbb Z^{2g})$ admits a $\textup{Sp}(2g)$-invariant $\mathbb Z$-split filtration whose associated graded modules are \[\textup{gr}_r \coker(\omega^i/i!) = P^{g-i-2r}(\mathbb Z^{2g})\big/\scalebox{1.2}{$\binom{r+i}{i}$}.\]
\end{theorem}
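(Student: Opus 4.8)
The plan is to transport the Lefschetz filtration of Theorem~\ref{thm:Lefschetz-intro} along the map $m:=\omega^i/i!$, identify the induced map on associated graded via Corollary~\ref{cor:Lefschetz-graded}, and then read off the subquotients of $\coker(m)$ with the snake lemma. First I would record that $m$ is a filtered map: for $x\in\Lambda^{g-i}(\mathbb Z^{2g})$ one has $\omega^{s-i+1}\wedge\frac{\omega^i}{i!}x=\frac1{i!}\,\omega^{s+1}\wedge x$, and since $\Lambda^*(\mathbb Z^{2g})$ is torsion free this vanishes if and only if $\omega^{s+1}\wedge x=0$. Because $x\in F_r\Lambda^{g-i}(\mathbb Z^{2g})$ means exactly $\omega^{i+r+1}\wedge x=0$ and $\beta\in F_{s}\Lambda^{g+i}(\mathbb Z^{2g})$ means $\omega^{s-i+1}\wedge\beta=0$, taking $s=i+r$ gives $m(F_r\Lambda^{g-i})\subseteq F_{i+r}\Lambda^{g+i}$ and in fact $m^{-1}(F_{i+r}\Lambda^{g+i})=F_r\Lambda^{g-i}$, equivalently $m(\Lambda^{g-i})\cap F_{i+r}\Lambda^{g+i}=m(F_r\Lambda^{g-i})$. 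Hence $\coker(m)$ acquires an exhaustive, manifestly $\Sp(2g)$-invariant filtration by the subgroups $C_r:=\operatorname{im}\!\big(F_{i+r}\Lambda^{g+i}\to\coker(m)\big)=F_{i+r}\Lambda^{g+i}/m(F_r\Lambda^{g-i})$, with $C_{-1}=0$ and $C_r=\coker(m)$ once $r=\lfloor(g-i)/2\rfloor$.

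Next I would feed the evident map of short exact sequences --- rows $0\to F_{r-1}\Lambda^{g-i}\to F_r\Lambda^{g-i}\to\operatorname{gr}_r\Lambda^{g-i}\to 0$ and $0\to F_{i+r-1}\Lambda^{g+i}\to F_{i+r}\Lambda^{g+i}\to\operatorname{gr}_{i+r}\Lambda^{g+i}\to 0$, all vertical maps induced by $m$ --- into the snake lemma. The left and middle cokernels are $C_{r-1}$ and $C_r$, and the right vertical map is $\operatorname{gr}_r(m)$. Under the contraction isomorphisms of Theorem~\ref{thm:Lefschetz-intro} we have $\operatorname{gr}_r\Lambda^{g-i}\cong P^{g-i-2r}(\mathbb Z^{2g})$ and $\operatorname{gr}_{i+r}\Lambda^{g+i}\cong P^{(g+i)-2(i+r)}(\mathbb Z^{2g})=P^{g-i-2r}(\mathbb Z^{2g})$, so $\operatorname{gr}_r(m)$ becomes an $\Sp(2g)$-equivariant endomorphism of the free abelian group $P^{g-i-2r}(\mathbb Z^{2g})$; by Corollary~\ref{cor:Lefschetz-graded} --- wedging a primitive class with $\omega^i/i!$ and then contracting against $\omega^{i+r}/(i+r)!$ contributes intermediate scalars whose product simplifies to $\binom{r+i}{i}$ --- this endomorphism is multiplication by $\binom{r+i}{i}$. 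As that is injective on a free abelian group, the connecting homomorphism vanishes and the snake lemma yields short exact sequences $0\to C_{r-1}\to C_r\to P^{g-i-2r}(\mathbb Z^{2g})/\binom{r+i}{i}\to 0$. Already this proves that $\coker(m)$ carries an $\Sp(2g)$-invariant filtration with the stated associated graded modules.

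The step I expect to be the real obstacle is the claim that this filtration is $\mathbb Z$-split, i.e.\ that each of these extensions splits as extensions of abelian groups. This is not formal: distinct $\binom{r+i}{i}$ share prime divisors, so $\operatorname{Ext}^1$ of consecutive subquotients need not vanish; and since the integral Lefschetz decomposition genuinely fails (points (a)--(b) of the introduction), $m$ cannot simply be block-diagonalized. I would attack it by splitting the sequences explicitly and $\Sp(2g)$-equivariantly using the integral contraction operators $\iota_{\omega^{i+r}/(i+r)!}$ (integral because $\omega^{i+r}/(i+r)!$ is an integral form): on $F_{i+r}\Lambda^{g+i}$ this operator descends to the projection $C_r\to\operatorname{gr}_r C$, so it remains to lift $\operatorname{gr}_r C$ back to $C_r$ and then correct the lift by an element of $F_{i+r-1}\Lambda^{g+i}$ so that it annihilates $\binom{r+i}{i}\cdot\operatorname{gr}_r C$; I expect this to come down to an inductive divisibility bookkeeping among the binomial coefficients $\binom{r+i}{i}$ and the intermediate factors $(i+2r)!/r!$ and $s!/(s-i)!$ appearing in the contraction formulas. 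A fallback would be to exhibit a $\mathbb Z$-basis of $\Lambda^{g+i}(\mathbb Z^{2g})$ built from symplectic monomials in a symplectic basis $e_1,f_1,\dots,e_g,f_g$ adapted simultaneously to the Lefschetz filtration and to $m(\Lambda^{g-i})$, in which $\coker(m)$ visibly decomposes as a direct sum of cyclic groups. By comparison, the rational input (hard Lefschetz, which makes $\coker(m)$ finite) and the identification of the subquotients are routine once Theorem~\ref{thm:Lefschetz-intro} and Corollary~\ref{cor:Lefschetz-graded} are in hand.
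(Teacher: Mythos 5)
Your construction of the filtration and the identification of its subquotients are correct: the strictness statement $m^{-1}\bigl(F_{i+r}\Lambda^{g+i}\bigr)=F_r\Lambda^{g-i}$ follows exactly as you say from torsion-freeness, and the snake-lemma argument then gives $\Sp(2g)$-invariant subgroups $C_r\subset\coker(m)$ with $C_r/C_{r-1}\cong P^{g-i-2r}(\mathbb Z^{2g})/\binom{r+i}{i}$. This is essentially the filtration the paper uses in Theorem \ref{thm:failure-of-hard-lefschetz}; the paper first applies Hodge--Lefschetz duality to replace $\wedge\,\omega_i$ by $\iota_{\omega_i}:\Lambda^{g+i}\to\Lambda^{g-i}$ and reads the graded map off the contraction half of Corollary \ref{cor:Lefschetz-graded}, which gives $\binom{r+i}{i}$ directly. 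One caution: your coefficient is right, but it does not come from the wedge half of Corollary \ref{cor:Lefschetz-graded} as literally stated (plugging $k=g-i-2r$, $j=i$ into that formula gives $\binom{i+3r}{i}$); a direct check via Lemma \ref{lemma:general-mn-formula} (the relevant binomial is $\binom{g-\deg x-r}{j}=\binom{i+r}{j}$) or the dualization route confirms $\pm\binom{r+i}{i}$, so you should either verify this computation yourself or dualize as the paper does.

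The genuine gap is exactly the step you flag: $\mathbb Z$-splitness is asserted but not proved, and neither of your proposed strategies is carried out. Splitting the extensions $0\to C_{r-1}\to C_r\to P^{g-i-2r}/\binom{r+i}{i}\to 0$ one at a time is genuinely delicate, since consecutive subquotients share prime divisors and no Ext-vanishing is available, and your "divisibility bookkeeping'' is not more than a hope. The paper's resolution is different and simpler, and it contradicts your assertion that $m$ "cannot simply be block-diagonalized'': it can be, just not $\Sp(2g)$-equivariantly, and equivariance of the splitting is not needed (only the filtration on the cokernel must be invariant, which it is because it is defined by the invariant subgroups $F_{i+r}\Lambda^{g+i}$). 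Concretely, Proposition \ref{prop:splitting-across-midpoint} (resting on the surjectivity in Proposition \ref{prop:w-surj}/Theorem \ref{thm:Lefschetz}) shows that after fixing an arbitrary $\mathbb Z$-splitting $\{G_r\}$ of the Lefschetz filtration on $\Lambda^{g-i}$, one can choose a $\mathbb Z$-splitting $\{G_{r+i}\}$ of $\Lambda^{g+i}$ with $\iota_{\omega_i}(G_{r+i}\Lambda^{g+i})\subset G_r\Lambda^{g-i}$. With such compatible splittings the (dualized) map is the direct sum of its associated graded pieces, so $\coker(m)\cong\bigoplus_r P^{g-i-2r}/\binom{r+i}{i}$ with the filtration given by partial sums, and $\mathbb Z$-splitness is automatic. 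To complete your proof you need this compatible-splitting lemma (or an equivalent substitute), which is the one idea missing from your outline.
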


It would be interesting to determine this cokernel for arbitrary K\"ahler manifolds with integral K\"ahler form.\\

Studying the cokernel of $\wedge \omega$, our attention was drawn to the results of \cite{LeePacker}. These compute the group cohomology of the integer Heisenberg group $N_g$, a nilpotent central extension of $\mathbb Z^{2g}$ by $\mathbb Z$, and the result is a direct sum of groups of shape $(\mathbb Z/k)^{\binom{2g}{j} - \binom{2g}{j-2}}$. One is immediately drawn to the exponent, which is the dimension of the primitive subspace $P^j(\mathbb Z^{2g})$; this interpretation does not appear in \cite{LeePacker}, whose arguments were combinatorial. Our Lefschetz decomposition gives us an alternate proof of their results which makes the description in terms of primitive subspaces more transparent and respects the action of the symplectic group.

\begin{theorem}\label{thm:Heisenberg-intro}
If $N_g$ is the integer Heisenberg group of rank $2g+1$, then the group homology $H_k(N_g)$ admits a $\textup{Sp}(2g)$-invariant and $\mathbb Z$-split filtrations whose subquotients, as $\textup{Sp}(2g)$-modules, are isomorphic to $P^i(\mathbb Z^{2g})/(j)$ for various $i, j$.
\end{theorem}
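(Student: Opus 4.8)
The plan is to compute $H_*(N_g)$ from the Lyndon--Hochschild--Serre spectral sequence of the central extension $1\to\mathbb Z\to N_g\to\mathbb Z^{2g}\to 1$, whose classifying class is the standard symplectic form $\omega\in H^2(\mathbb Z^{2g};\mathbb Z)$. Since the extension is central, $\mathbb Z^{2g}$ acts trivially on $H_*(\mathbb Z;\mathbb Z)$, so $E^2_{p,q}=H_p(\mathbb Z^{2g};H_q(\mathbb Z;\mathbb Z))$ is $\Lambda^p(\mathbb Z^{2g})$ for $q\in\{0,1\}$ and vanishes for $q\ge2$; I abbreviate $\Lambda^p:=\Lambda^p(\mathbb Z^{2g})$ below. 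A spectral sequence concentrated in two rows collapses at $E^3$, its only differential being $d^2\colon E^2_{p,0}\to E^2_{p-2,1}$, which is cap product with the extension class, i.e.\ the contraction $\iota_\omega\colon\Lambda^p\to\Lambda^{p-2}$; everything here is $\textup{Sp}(2g)$-equivariant. Reading off $E^\infty$ exhibits $H_k(N_g)$, for each $k$, as an $\textup{Sp}(2g)$-equivariant extension
\[
0\longrightarrow\coker\big(\iota_\omega\colon\Lambda^{k+1}\to\Lambda^{k-1}\big)\longrightarrow H_k(N_g)\longrightarrow\ker\big(\iota_\omega\colon\Lambda^{k}\to\Lambda^{k-2}\big)\longrightarrow0,
\]
so the problem reduces to understanding the kernel and cokernel of a single contraction as $\textup{Sp}(2g)$-modules, up to a $\mathbb Z$-split extension.

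The kernel is immediate: $\iota_\omega$ is the lowering operator of the Lefschetz $\mathfrak{sl}_2$-action, so over $\mathbb Q$ its kernel on $\Lambda^\ell$ is the primitive subspace when $\ell\le g$ and is $0$ when $\ell>g$; since the conditions $\iota_\omega x=0$ and $\omega^{g-\ell+1}\wedge x=0$ cut out the same rational subspace, they cut out the same sublattice of $\Lambda^\ell$, namely $P^\ell(\mathbb Z^{2g})=F_0\Lambda^\ell$. As $P^\ell(\mathbb Z^{2g})$ is free abelian, the displayed extension splits over $\mathbb Z$ for $k\le g$, contributing the subquotient $P^k(\mathbb Z^{2g})=P^k(\mathbb Z^{2g})/(0)$, while for $k>g$ one simply has $H_k(N_g)\cong\coker(\iota_\omega)$.

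For the cokernel I would pass from $\iota_\omega$ to $\wedge\omega$ via the $\textup{Sp}(2g)$-equivariant duality isomorphism $\Lambda^p\xrightarrow{\ \sim\ }\Lambda^{2g-p}$ (contraction against the symplectic volume form, unimodular and $\textup{Sp}(2g)$-invariant), which carries $\iota_\omega$ to $\pm\wedge\omega$; thus $\coker(\iota_\omega\colon\Lambda^{k+1}\to\Lambda^{k-1})\cong\coker(\wedge\omega\colon\Lambda^{m-2}\to\Lambda^{m})$ with $m=2g-k+1$. The point is that $\wedge\omega$ interacts cleanly with the Lefschetz filtration of Theorem~\ref{thm:Lefschetz-intro}: from the defining formula $F_s\Lambda^\ell=\{\alpha\mid\omega^{g-\ell+s+1}\wedge\alpha=0\}$ one reads off at once that $\omega\wedge\alpha\in F_s\Lambda^{m}$ if and only if $\alpha\in F_{s-1}\Lambda^{m-2}$, so $\wedge\omega$ is a \emph{strict} morphism of filtered abelian groups, raising the filtration degree by exactly one. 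Strictness forces $\mathrm{gr}_r\coker(\wedge\omega)$ to equal the cokernel of the induced map $\mathrm{gr}_{r-1}\Lambda^{m-2}\to\mathrm{gr}_r\Lambda^{m}$; by Theorem~\ref{thm:Lefschetz-intro} both of these are identified, via contraction against $\omega^{r-1}/(r-1)!$ and $\omega^{r}/r!$, with the free module $P^{m-2r}(\mathbb Z^{2g})$, and under these identifications Corollary~\ref{cor:Lefschetz-graded} evaluates the induced map as multiplication by an explicit integer $c_r$. Hence $\mathrm{gr}_r\coker(\wedge\omega)\cong P^{m-2r}(\mathbb Z^{2g})/(c_r)$, and splicing this quotient filtration into the extension above gives an $\textup{Sp}(2g)$-invariant filtration of $H_k(N_g)$ whose subquotients have the advertised form $P^i(\mathbb Z^{2g})/(j)$.

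The step I expect to be the real obstacle is upgrading this to a $\mathbb Z$-\emph{split} filtration, i.e.\ ruling out hidden extensions among the torsion subquotients, so that $H_k(N_g)$ is literally the direct sum of the pieces $P^i(\mathbb Z^{2g})/(j)$. For this I would promote the strictness of $\wedge\omega$ to a ``filtered Smith normal form'': by Theorem~\ref{thm:Lefschetz-intro} the Lefschetz filtration on each $\Lambda^\ell$ is split with free graded pieces, so one can choose bases of $\Lambda^{m-2}$ and $\Lambda^{m}$ simultaneously adapted to the filtrations and to $\wedge\omega$, thereby writing $\coker(\wedge\omega)$ \emph{as an abelian group} as $\bigoplus_r P^{m-2r}(\mathbb Z^{2g})/(c_r)$. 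Finally, as a consistency check --- and to recover the statement advertised in the introduction --- I would match the answer with Lee--Packer's computation \cite{LeePacker} of $H_*(N_g)$: there the cyclic summand $\mathbb Z/j$ occurs with multiplicity $\binom{2g}{i}-\binom{2g}{i-2}=\dim P^i(\mathbb Z^{2g})$, exactly as our description via $P^i(\mathbb Z^{2g})/(j)$ predicts.
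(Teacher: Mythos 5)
Your setup (the two-row Lyndon--Hochschild--Serre/Gysin sequence, the identification of the kernel with the primitive lattice, and the identification of the subquotients of the image filtration on the cokernel via strictness and Corollary \ref{cor:Lefschetz-graded}) is sound and matches the paper's argument in low degrees. The gap is the final step, the ``filtered Smith normal form'': the claim that one can choose bases of $\Lambda^{m-2}$ and $\Lambda^{m}$ simultaneously adapted to the Lefschetz filtrations and to $\wedge\omega$, so that $\coker(\wedge\omega)$ is, as an abelian group, $\bigoplus_r P^{m-2r}(\mathbb Z^{2g})/(c_r)$. This is precisely what Lemma \ref{lemma:no-total-filtration} rules out, and it is false in the range you need it most, namely for $H_k(N_g)$ with $k>g$ (equivalently, wedge target degree $m=2g-k+1\le g$). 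Already for $g=2$, $k=3$: here $H_3(N_2)\cong\coker\bigl(\iota_\omega:\Lambda^4(\mathbb Z^4)\to\Lambda^2(\mathbb Z^4)\bigr)$, dually $\coker\bigl(\wedge\omega:\Lambda^0\to\Lambda^2\bigr)=\Lambda^2/\langle\omega\rangle\cong\mathbb Z^5$, which is \emph{free} because $\omega$ is indivisible; yet the graded pieces of the image filtration are $P^2(\mathbb Z^4)$ and $\mathbb Z/2$ (since $[\omega]$ is $\binom{2}{1}=2$ times a generator of $\textup{gr}_1\Lambda^2$). So the filtration step $0\to P^2\to\mathbb Z^5\to\mathbb Z/2\to 0$ does not split over $\mathbb Z$, your predicted group $\mathbb Z^5\oplus\mathbb Z/2$ is wrong, and your own consistency check against Lee--Packer (Theorem \ref{thm:LP} gives $H_3(N_2)\cong\mathbb Z^5$, torsion-free) fails. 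Strictness (Lemma \ref{lemma:contr-filt}) controls the subquotients but not the extensions, and above the middle the extensions are genuinely nontrivial; this is exactly the content of the remark after Theorem \ref{thm:LP-Lefschetz} that $\coker(\omega)_{k-1}$ disagrees with $\coker(\textup{gr}\,\omega)_{k-1}$ for $k>g$.

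Two further points. Even in the good range $k\le g$, where your conclusion is correct, the existence of splittings of the Lefschetz filtrations compatible with $\iota_\omega$ is not automatic from freeness of the graded pieces; it is the content of Propositions \ref{prop:splitting-firsthalf} and \ref{prop:splitting-across-midpoint} and needs a proof (and provably fails in general, by Lemma \ref{lemma:no-total-filtration}), so this step cannot simply be asserted. And for $k>g$ the paper does not compute $\coker(\iota_\omega)$ by a filtration argument at all: it uses Poincar\'e duality $H_k(N_g)\cong H^{2g+1-k}(N_g)$ and the universal coefficient theorem to transport the already-constructed $\mathbb Z$-split filtration from degrees $\le g$ through $\textup{Ext}$, together with the self-duality of the lattices $P^i(\mathbb Z^{2g})$; note that the resulting subquotients in Theorem \ref{thm:LP-Lefschetz}(b) differ from the naive graded cokernels you wrote down. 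Some repair along these lines is needed for the upper half of your argument.
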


See Theorem \ref{thm:LP-Lefschetz} for a more precise statement.\\

Our original interest in the results of \cite{LeePacker} and a Lefschetz decomposition over $\mathbb Z$ arose from a problem in Floer homology of $3$-manifolds. We omit certain technical details in this introduction, and in particular we suppress the dependence on spin$^c$ structures; the spin$^c$ structure of interest to us is always torsion. Section \ref{sec:HF-bg} contains a more detailed review.

The Heegaard Floer homology groups $HF^\bullet(Y)$ are invariants of closed, oriented $3$-manifolds introduced by Ozsvath and Szabo in \cite{OzSz3D1}, which are relatively graded $R[U]$-modules with $|U| = -2$. These invariants have had profound applications to the topology of $3$- and $4$-manifolds; see \cite{HFSurvey1,HFSurvey2,HFSurvey3,HFSurvey4} for several surveys of the subject.

Understanding the simplest of these invariants, $HF^\infty(Y; R)$, is a basic starting point for understanding the whole package. While the other Heegaard Floer groups depend on delicate geometric information about $Y$, the group $HF^\infty(Y;R)$ depends only on the group $H^1(Y; \mathbb Z)$ and the triple cup product $3$-form $\cup^3_Y: \Lambda^3 H^1(Y; \mathbb Z) \to \mathbb Z$. Its isomorphism type over $R = \mathbb Q$ and $R = \mathbb F_2$ are known, thanks to \cite[Proposition 35.1.5]{KMBook} and \cite{Lidman}, respectively. Despite its relative simplicity, the determination of the isomorphism type of $HF^\infty(Y;\mathbb Z)$ has largely remained open since this invariant was introduced.

There is another invariant with the same dependence on $\cup^3_Y$, named and studied in \cite{Mark}. The triple cup product defines a $3$-form $\omega_Y \in \Lambda^3 H^1(Y;\mathbb Z)^*$, and the \emph{cup homology} $HC_*(Y;R)$ is the homology of the complex $\Lambda^*(H^1(Y;\mathbb Z)) \otimes_{\mathbb Z} R[U, U^{-1}]$ with respect to the differential given by contraction with the triple cup product $\omega_Y$. 

Ozsv\'ath and Szab\'o found a spectral sequence of $R[U]$-modules \[\Lambda^*(H^1(Y;\mathbb Z)) \otimes R \Rightarrow HF^\infty(Y; R)\] and made the following conjecture \cite[Conjecture 4.10]{OzSzPlumbed}.

\begin{conj}\label{conj:OS}
If $Y$ is a closed oriented $3$-manifold, the $E^4$ page of the spectral sequence above is isomorphic to $HC_*(Y;R)$ as an $R[U]$-module, and the spectral sequence collapses at the $E^4$ page.
\end{conj}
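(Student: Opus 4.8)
The plan is to separate Conjecture~\ref{conj:OS} into its two assertions --- the identification of the $E^4$ page with cup homology, and collapse at $E^4$ --- and to attack these by rather different means.

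\emph{Identifying $E^4$.} The Ozsv\'ath--Szab\'o spectral sequence comes from a filtration of $CF^\infty(Y,\spin)$ (with $\spin$ the torsion spin$^c$ structure) whose associated graded complex is the $CF^\infty$ of a connected sum of copies of $S^1\times S^2$; this identifies $E^1 \cong \Lambda^*(H^1(Y;\mathbb Z))\otimes R[U,U^{-1}]$ with $d_1 = 0$. First I would record the vanishing $d_2 = 0$ for degree reasons (the differentials preserve the internal $\mathbb Z/2$ grading, whereas $d_2$ would shift exterior degree by $2$; cf.\ \cite{OzSzPlumbed}), so that $E^1 = E^2 = E^3$. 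Next I would identify $d_3$ on $E^3$ with (a power of $U$ times) contraction $\iota_{\omega_Y}$ against the triple cup product $3$--form, either by quoting Ozsv\'ath--Szab\'o's local computation or by a direct model calculation on a surgery presentation, where only the triple cup product can contribute a differential of filtration jump $3$. Granting this, $E^4$ is by definition Mark's cup homology $HC_*(Y;R)$ \cite{Mark}, and the identification is visibly $R[U]$--linear. I expect this half to go through over $\mathbb Z$ with no extra hypotheses.

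\emph{Collapse at $E^4$.} The remaining content is $d_r = 0$ for all $r\ge 4$. Because $E^\infty$ is the associated graded of $HF^\infty(Y,\spin)$ for the filtration, and each $d_r$ can only shrink a finitely generated module, it suffices to compare $E^4$ with $HF^\infty$ in size. Over $R=\mathbb Q$ this is immediate: the computation of $\HMb$ in \cite[Proposition~35.1.5]{KMBook}, together with the isomorphism between Heegaard Floer and monopole Floer homology, gives $HF^\infty(Y;\mathbb Q)\cong HC_*(Y;\mathbb Q)$, so $\dim_{\mathbb Q}E^4_{\mathbb Q} = \dim_{\mathbb Q}E^\infty_{\mathbb Q}$ and every rational differential $d_r$ with $r\ge 4$ vanishes. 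The same argument runs over $R=\mathbb F_2$ using Lidman's computation \cite{Lidman}. For $R=\mathbb F_p$ with $p$ odd one would first need the analogue of \cite{Lidman}, which I would try to obtain by rerunning Lidman's inductive surgery--exact--triangle argument with $\mathbb F_p$ coefficients; one would then attempt to descend to $\mathbb Z$ by comparing the integral spectral sequence with its field reductions through the change-of-coefficient maps $\mathbb Z\to\mathbb Q$ and $\mathbb Z\to\mathbb F_p$, using a universal--coefficients argument to promote field--wise vanishing of $d_r$ to integral vanishing.

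\emph{The main obstacle.} The last step is where both the difficulty and, ultimately, the failure of the conjecture are concentrated. Universal coefficients for a filtered complex does not commute with passage to the successive pages of a spectral sequence, so vanishing of $d_r$ after tensoring with every field does \emph{not} formally force $d_r = 0$ over $\mathbb Z$: torsion in $E^4_{\mathbb Z} = HC_*(Y;\mathbb Z)$ can in principle support an integral higher differential that is invisible over all fields. One should moreover expect this to happen, because the statement ``$E^\infty_{\mathbb Z} = HC_*(Y;\mathbb Z)$'' is equivalent to ``$\textup{gr}\,HF^\infty(Y;\mathbb Z)\cong HC_*(Y;\mathbb Z)$'', and the $\Sp(2g)$--equivariant analysis underlying Theorem~\ref{thm:hardLefschetz-intro}, applied to $Y=\Sigma_g\times S^1$, distinguishes these two modules. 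So a realistic outcome of this plan is: an unconditional proof of the $E^4$--identification; a proof of collapse over $\mathbb Q$ and over $\mathbb F_2$; and the discovery, when one tries to carry the argument over $\mathbb Z$ (or to make any of the isomorphisms natural for the mapping class group), that the conjecture is false in general. Pinning this down precisely on $Y = \Sigma_g\times S^1$ is what I would regard as the real content.
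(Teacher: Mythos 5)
Conjecture \ref{conj:OS} is not a result proved in this paper: it is Ozsv\'ath--Szab\'o's conjecture, restated here, and the paper neither proves nor refutes it. What the paper actually does is (i) confirm it (together with the stronger Conjecture \ref{conj:iso}) for $Y=\Sigma_g\times S^1$, by a route that never touches the spectral sequence at all --- the surgery exact triangle identifies $HF^\infty(\Sigma_g\times S^1,\mathfrak s_0)$ with the kernel and cokernel of contraction by $e^{\omega U}-1$, and Proposition \ref{prop:computation-for-coker}(a) produces an explicit ring automorphism $\phi$ of $\mathbb Z[v_1,\dots,v_k]/(v_i^2)$ with $\phi(\omega)=e^\omega-1$, giving $\operatorname{coker}(\omega)\cong\operatorname{coker}(e^\omega-1)$ --- and (ii) refute the naturality Question \ref{q:natural} (Theorem \ref{thm:nonisofunctor}). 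So there is no ``paper proof'' for your attempt to be measured against; and your proposal, by its own account, is a plan rather than a proof: the $E^4$-identification rests on quoted inputs ($d_2=0$, $d_3=\iota_{\omega_Y}$ up to $U$-powers), collapse is only obtained over $\mathbb Q$ and $\mathbb F_2$ by rank counting against \cite{KMBook} and \cite{Lidman}, and the passage to $\mathbb Z$ is exactly the step you concede cannot be made by universal coefficients. That honest assessment of the obstacle is correct --- this is precisely why the integral statement is still open.

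The genuine error is in your concluding diagnosis. You predict that the $\Sp(2g)$-equivariant analysis, applied to $Y=\Sigma_g\times S^1$, ``distinguishes these two modules'' and that carrying the argument over $\mathbb Z$ would reveal Conjecture \ref{conj:OS} to be false. The paper shows the opposite at the level the conjecture actually addresses: for $\Sigma_g\times S^1$ one has $HF^\infty(Y;\mathbb Z)\cong HC_*(Y;\mathbb Z)$ as $\mathbb Z[U]$-modules (Theorem \ref{thm:HFHC-filt-intro}(a)), and even the $\textup{MCG}^{++}$-equivariant associated graded modules agree (Theorem \ref{thm:HFHC-filt-intro}(b)). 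What fails (Theorem \ref{thm:no-iso-intro}, via Proposition \ref{prop:computation-for-coker}(c), where $\operatorname{coker}(\omega;\mathbb F_2)$ and $\operatorname{coker}(e^\omega-1;\mathbb F_2)$ are non-isomorphic as $\Sp(8;\mathbb F_2)$-modules) is an \emph{equivariant}, i.e.\ natural, isomorphism of the unfiltered modules --- and Conjecture \ref{conj:OS} makes no naturality claim, so it is untouched by this. Indeed the paper states explicitly that a naturally filtered refinement of Conjecture \ref{conj:OS} is the strongest statement consistent with its results, and the computational evidence behind Conjecture \ref{conj:iso} points toward the conjecture being true over $\mathbb Z$, not false. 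In short: your plan stalls where everyone's does (integral collapse), and the lesson you propose to draw from this paper's equivariant computations conflates the module-level conjecture with the functorial question that the paper actually refutes.
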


This means, in particular, that $HF^\infty(Y; R)$ carries a filtration by $R[U]$-modules whose associated graded module is isomorphic to $HC_*(Y; R)$. When $R$ is a field, it implies these modules are isomorphic; when $R = \mathbb Z$, it is entirely plausible that there exists a $3$-manifold with $b_1(Y) = 3$ and \[HF^\infty(Y; \mathbb Z) \cong (\mathbb Z^6 \oplus \mathbb Z/4)[U, U^{-1}], \quad \quad HC_*(Y; \mathbb Z) \cong (\mathbb Z^6 \oplus \mathbb Z/2 \oplus \mathbb Z/2)[U, U^{-1}].\]

In \cite{LME1}, Francesco Lin and the second author found an algorithmic way to compute $HF^\infty(Y; R)$ as an $R[U]$-module, passing through the perspective of monopole Floer homology. We expected to use this algorithm to find either an example where the spectral sequence above fails to degenerate, or where there are `extension problems', so that $HC_*$ and $HF^\infty$ fail to be isomorphic as $R[U]$-modules. To our surprise, millions of computer calculations gave us isomorphic results, suggesting the following stronger conjecture:

\begin{conj}\label{conj:iso}
If $Y$ is a closed oriented $3$-manifold, then $HF^\infty(Y; \mathbb Z) \cong HC_*(Y;\mathbb Z)$. 
\end{conj}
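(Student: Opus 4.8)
The plan is to reduce the conjecture to a purely algebraic comparison, strip away the part that is already known, and locate the remaining obstruction. Both $HF^\infty(Y;\mathbb Z)$ and $HC_*(Y;\mathbb Z)$ depend only on the pair $(V,\omega) = (H^1(Y;\mathbb Z),\cup^3_Y)$, and each is a finitely generated graded module over $R = \mathbb Z[U,U^{-1}]$, on which $U$ acts invertibly. Since the only homogeneous prime ideals of $R$ are $(0)$ and $(p)$ for $p$ a rational prime, every such module is a finite direct sum of shifted copies of $R$ and of cyclic modules $R/(p^k)$; hence each invariant is determined by its free part (a graded finitely generated abelian group) together with, for every prime $p$ and every $k \ge 1$, the number of $R/(p^k)$-summands and their degrees. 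The conjecture is exactly the assertion that these data agree.

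The free parts agree, being detected by $R\otimes\mathbb Q$: $HF^\infty(Y;\mathbb Q)\cong HC_*(Y;\mathbb Q)$ follows from the rational computation of $\HMb(Y;\mathbb Q)$ \cite[Proposition 35.1.5]{KMBook} together with the identification of the $E^4$-page of the Ozsv\'ath--Szab\'o spectral sequence over $\mathbb Q$ with cup homology. Reduction mod $p$ constrains but does not determine the torsion: if $C_Y$ is a finite complex of free $R$-modules with $H_*(C_Y \otimes R') = HF^\infty(Y;R')$ for all coefficient rings $R'$, then the universal coefficient sequence recovers only the \emph{total} number of $p$-power summands from the $\mathbb F_p$-Betti numbers --- known for $p = 2$ by Lidman \cite{Lidman}, with the odd-$p$ analogue expected by the same method. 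What remains, and is invisible to every field, is the list of exponents $k$ and the distribution of the torsion between even and odd degrees; equivalently, one needs the integral homology of $C_Y$, or all of its $\mathbb Z/p^k$-reductions and Bocksteins.

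For the integral computation I would take $C_Y$ from the algorithm of \cite{LME1} and use that it carries an exterior-degree filtration whose associated graded complex is exactly the cup-homology complex $(\Lambda^* V \otimes R, \iota_\omega)$, the differential of $C_Y$ being $\iota_\omega$ plus terms of strictly higher order in $U$. The goal is then to show that the homology of this filtered complex is \emph{abstractly} isomorphic to the homology of its associated graded --- even though, as the $\Sigma_g \times S^1$ computation of this paper shows, no natural such isomorphism exists. Concretely this means proving simultaneously the Ozsv\'ath--Szab\'o half of Conjecture \ref{conj:OS} (degeneration at $E^4$) and that all resulting extensions among the torsion summands split after a change of basis. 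This is precisely what Theorems \ref{thm:Lefschetz-intro}--\ref{thm:Heisenberg-intro} accomplish when $\omega$ is (essentially) the standard symplectic form --- the case relevant to $\Sigma_g\times S^1$ and to the Heisenberg groups.

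The main obstacle is that for a general alternating $3$-form $\omega$ there is no structure theory comparable to the Lefschetz decomposition of Theorem \ref{thm:Lefschetz-intro}: alternating $3$-forms on $\mathbb Z^n$ are not classified even over $\mathbb C$ once $n \ge 9$, so one cannot reduce to a normal form, compute the associated graded, and control the extensions uniformly. Lacking such an input, the route I see is to stratify by $b_1(Y)$ and by the rational canonical form of $\omega$ and to check the matching of exponents and degrees case by case --- tractable for small $b_1$, in agreement with the computer experiments that motivated the conjecture, but with no apparent uniform bound to make the argument terminate. I therefore expect a complete proof to rest on a new statement about the integral homology of $\iota_\omega$-type Koszul complexes valid for \emph{all} $3$-forms, of which Theorem \ref{thm:Lefschetz-intro} would be the symplectic special case.
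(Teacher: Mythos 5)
The statement you are addressing is Conjecture \ref{conj:iso}, which the paper does not prove: it is posed as an open conjecture, supported by computer experiments via the algorithm of \cite{LME1} and verified only in the special case $Y = \Sigma_g \times S^1$ (Theorem \ref{thm:HFHC-filt-intro}(a)). So there is no proof in the paper to compare yours against, and your text is, by your own account, a research program rather than a proof; as a proof attempt it has a genuine gap, namely exactly the one you name at the end (no integral structure theory for the Koszul-type complex of a general alternating $3$-form). Your diagnosis of where the difficulty lies is accurate and matches the paper's implicit stance.

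Two caveats on the intermediate steps. First, your key input --- that the complex $C_Y$ from \cite{LME1} carries a filtration whose associated graded is the cup complex $(\Lambda^* V \otimes R, \iota_\omega)$, with the full differential equal to $\iota_\omega$ plus higher-order terms in $U$ --- is not something you can simply ``take'': over $\mathbb Z$ this is essentially Conjecture \ref{conj:OS} itself (what is actually established, in \cite{LME2}, is a filtration statement for $\overline{HM}$, and transporting it to $HF^\infty(-;\mathbb Z)$ runs into the functoriality issues the paper isolates in Postulate \ref{post:HF-works}). Second, your classification of the possible $\mathbb Z[U,U^{-1}]$-modules needs the relative $\mathbb Z$-grading with $|U|=-2$, not just the $R$-module structure; $\mathbb Z[U,U^{-1}]$ is not a PID, and with only a $\mathbb Z/2$-grading modules like $R/(2U-3)$ are homogeneous but not of your claimed form. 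Finally, note that even in the ``symplectic'' case your template (degeneration at $E^4$ plus splitting of extensions) is not how the paper handles $\Sigma_g \times S^1$: the relevant $3$-form there is $e^0 \wedge \omega$, and the isomorphism $HF^\infty \cong HC_*$ of Theorem \ref{thm:HFHC-filt-intro}(a) is obtained by splitting off the free kernels (Lemma \ref{lemma:ker-contraction}) and then comparing cokernels via the pair-free subspace decomposition and an explicit ring automorphism carrying $\omega$ to $e^\omega - 1$ (Proposition \ref{prop:computation-for-coker}(a)) --- an abelian-group isomorphism produced directly, with the Lefschetz filtration of Theorem \ref{thm:Lefschetz-intro} used for the finer, equivariant statements (b) rather than for (a). Indeed the paper's Proposition \ref{prop:computation-for-coker} shows the filtration it constructs on $\coker(\omega)$ is not $\mathbb Z$-split, so ``compute the associated graded and split all extensions'' is not literally the mechanism even there.
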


The fact that these groups appear to be isomorphic --- without extension problems --- suggests that they should be isomorphic for a better reason than the collapsing of a spectral sequence. 

As discussed in Section \ref{sec:HF-bg}, $HC_*$ also extends to a functor on an appropriate cobordism category. One interpretation of the idea that these groups should be isomorphic for `a good reason' is as follows.

\begin{quest}\label{q:natural}
Do the assignments \[Y \mapsto HC_*(Y;R), \quad \quad Y \mapsto HF^\infty(Y; R)\] define naturally isomorphic functors on an appropriate cobordism category?
\end{quest}

Our third application of the Lefschetz filtration is a negative answer to this question. 

\begin{theorem}\label{thm:nonisofunctor}
The functors \[HC_*(-;\mathbb F_2), HF^\infty(-;\mathbb F_2): \mathsf C \to \mathbb F_2[U]\mathsf{-Mod}\] are not naturally isomorphic. 
\end{theorem}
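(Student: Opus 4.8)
The plan is to exhibit a single $3$-manifold $Y$ with $b_1(Y) = 3$ and nontrivial triple cup product on which the natural transformation (if it existed) would have to fail to be an isomorphism, by tracking the $\mathrm{Sp}$- and $\mathrm{MCG}$-equivariance that any natural transformation is forced to carry. First I would recall from Section \ref{sec:HF-bg} that both $HC_*(-;\mathbb F_2)$ and $HF^\infty(-;\mathbb F_2)$ depend only on $H^1(Y;\mathbb Z)$ and $\cup^3_Y$, and that the relevant automorphisms of $(H^1(Y;\mathbb Z), \cup^3_Y)$ act on both functors; for $Y = \Sigma_g \times S^1$ the triple cup product is the standard contraction form $\alpha \wedge \beta \mapsto \int_{\Sigma_g}\alpha\wedge\beta$ paired with the $S^1$-class, so $\mathrm{Sp}(2g,\mathbb Z)$ (the image of the mapping class group of $\Sigma_g$) acts. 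On this family, $HC_*(\Sigma_g \times S^1;\mathbb F_2)$ is, by definition, the homology of $(\Lambda^*(\mathbb F_2^{2g+1}), \iota_{\omega_Y})[U,U^{-1}]$, while the paper's computation of $HF^\infty(\Sigma_g\times S^1;\mathbb Z)$ (Theorem \ref{thm:Heisenberg-intro} and its companion Theorem \ref{thm:LP-Lefschetz}, reduced mod $2$) gives the $\mathrm{Sp}(2g)$-module structure of $HF^\infty(\Sigma_g\times S^1;\mathbb F_2)$ explicitly in terms of primitive subspaces $P^i(\mathbb F_2^{2g})$.

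The core of the argument is then a representation-theoretic obstruction: I would show that for suitable $g$, the $\mathbb F_2[\mathrm{Sp}(2g,\mathbb Z)]$-modules underlying $HC_*(\Sigma_g\times S^1;\mathbb F_2)$ and $HF^\infty(\Sigma_g\times S^1;\mathbb F_2)$ are \emph{not} isomorphic, even though they are abstractly isomorphic as $\mathbb F_2[U,U^{-1}]$-modules (as they must be, by the known $\mathbb F_2$-computation, so that no naive dimension count detects the failure). Concretely, the cup homology $HC_*$ is computed as the cohomology of the Koszul-type complex for contraction with a $3$-form, so its graded pieces are subquotients of $\Lambda^*(\mathbb F_2^{2g+1})$ \emph{cut out by $\omega_Y$ itself}, whereas the Lefschetz-filtration computation of $HF^\infty$ produces graded pieces that are honest primitive subspaces $P^i$. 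The $\mathrm{Sp}(2g)$-socle/head structure of these two descriptions differs: I expect that in $HF^\infty$ the relevant degree contains a copy of the trivial module (coming from the powers of $\omega$ surviving in the Lefschetz filtration) that does not appear with the same multiplicity in $HC_*$, or vice versa. Making this precise means computing, in one explicit small case (I anticipate $g = 2$ or $g = 3$ suffices), the $\mathbb F_2[\mathrm{Sp}]$-composition factors of both sides in a fixed $U$-degree and exhibiting a discrepancy.

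To finish, I would argue that a natural isomorphism $\eta: HC_* \Rightarrow HF^\infty$ on the cobordism category $\mathsf C$ would, upon restricting to the subcategory of cobordisms $\Sigma_g\times S^1 \to \Sigma_g\times S^1$ induced by self-diffeomorphisms (i.e.\ mapping tori, which act through $\mathrm{Sp}(2g,\mathbb Z)$ on both invariants), give an $\mathbb F_2[\mathrm{Sp}(2g,\mathbb Z)]$-linear isomorphism between the two modules computed above — contradicting the previous paragraph. Here I need the (presumably already established, or easily checked) naturality statement that the mapping-cylinder cobordism of $\phi \in \mathrm{Diff}(\Sigma_g)$ induces on $HC_*(\Sigma_g\times S^1)$ and on $HF^\infty(\Sigma_g\times S^1)$ exactly the maps $\phi_*$ already used in the representation-theoretic computation; this compatibility of the functorial maps with the ``internal'' symmetry actions is the one point requiring care, since it is what upgrades an abstract non-isomorphism of modules into an obstruction to naturality.

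The main obstacle I foresee is the explicit representation theory over $\mathbb F_2$: symplectic groups in characteristic $2$ are delicate (for instance $\mathrm{Sp}(2g,\mathbb F_2)$ has exceptional isogenies and the Weyl modules are not semisimple), so pinning down composition factors of $P^i(\mathbb F_2^{2g})$ versus the $\omega_Y$-Koszul subquotients, and verifying that they genuinely disagree rather than merely looking different, is where the real work lies. A secondary but more routine obstacle is bookkeeping the extra $S^1$-direction: $H^1(\Sigma_g\times S^1;\mathbb Z) = \mathbb Z^{2g}\oplus\mathbb Z$ and the $3$-form only pairs two $\Sigma_g$-classes with the $S^1$-class, so one must split $\Lambda^*(\mathbb Z^{2g+1}) = \Lambda^*(\mathbb Z^{2g})\otimes\Lambda^*(\mathbb Z)$ and recognize the contraction complex as (two shifted copies of) the $\omega$-contraction complex on $\Lambda^*(\mathbb Z^{2g})$ — i.e.\ exactly the object analyzed in the Heisenberg-group application — before invoking Theorem \ref{thm:Heisenberg-intro}.
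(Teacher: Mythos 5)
Your broad outline—restrict a hypothetical natural isomorphism to the mapping-cylinder cobordisms of self-diffeomorphisms of $Y=\Sigma_g\times S^1$, obtain an equivariant isomorphism of modules, and contradict it representation-theoretically—is indeed the paper's strategy. But your identification of the two modules is off in a way that erases the very discrepancy you need. Theorem \ref{thm:Heisenberg-intro} (and Theorem \ref{thm:LP-Lefschetz}) computes Heisenberg group homology via the Gysin sequence for contraction by $\omega$; that is precisely the structure of $HC_*(\Sigma_g\times S^1)$ (Proposition \ref{prop:HC-MCG-calc}), not of $HF^\infty$. The Heegaard Floer side is instead governed by contraction with the inhomogeneous form $e^{\omega U}-1$, obtained from the surgery exact triangle, and showing that the mapping class group acts on that description in the expected way is itself one of the substantive steps (Proposition \ref{prop:HF-MCG-calc}, Lemma \ref{lemma:compute-on-Y}), not the ``easily checked'' compatibility you defer. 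If both functors are fed the $\iota_\omega$-description, there is nothing left to distinguish.

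Second, the obstruction you anticipate—a mismatch of $\mathbb F_2[\mathrm{Sp}]$-composition factors, visible already at $g=2$ or $3$—does not exist. Proposition \ref{prop:computation-for-coker}(a),(b) shows that $\coker(\omega)$ and $\coker(e^\omega-1)$ are isomorphic as abelian groups and carry $\mathrm{Sp}(2g)$-invariant filtrations with isomorphic associated graded modules, so all composition factors agree; the failure is an extension-level phenomenon, first detected (as far as the paper can show) at $g=4$ over $\mathbb F_2$, via Krull--Schmidt cancellation plus an explicit enumeration of the $\mathrm{Sp}(8,\mathbb F_2)$-submodules of $\Lambda^2(\mathbb F_2^8)$ and $\Lambda^4(\mathbb F_2^8)$ (Lemma \ref{lemma:computer-work}). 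Moreover, comparing the total modules only as $\mathrm{Sp}(2g,\mathbb Z)$-modules is not enough as stated: the paper also uses the transvection subgroup $H^1(\Sigma_g;\mathbb Z)\subset\mathrm{MCG}^{++}$ (after specializing $U=1$) to pass to fixed points and isolate the cokernel summands—Lemma \ref{lemma:contraction1} together with a dimension count—before the $\mathrm{Sp}$-module non-isomorphism of the cokernels can be brought to bear. Without such a reduction you would have to distinguish the full modules, whose kernel parts over $\mathbb F_2$ also differ and are not computed (cf.\ Remark \ref{rmk:F2-case}), a problem your outline does not address.
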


\begin{remark}
The functoriality of $HF^\infty$ over the integers has not yet been completely worked out in the literature. Nevertheless, if one assumes that $HF^\infty(-;\mathbb Z)$ is indeed functorial --- in the sense of Postulate \ref{post:HF-works} below --- the same result holds over $\mathbb Z$.
\end{remark}

The basic idea proceeds as follows. First, given a functor $F: \mathsf C \to R[U]{-}\mathsf{Mod}$, the module $F(Y)$ inherits a canonical action of the oriented mapping class group $\text{MCG}^+(Y)$. We proceed to compute as much of this action as possible in the case $Y = \Sigma_g \times S^1$. As a module over the mapping class group, the invariant $HC_*(Y)$ is closely related to the kernel and cokernel of contraction with the canonical symplectic 2-form $\omega \in \Lambda^2(\mathbb Z^{2g})$, while the invariant $HF^\infty(Y)$ is instead related to the kernel and cokernel of contraction with the inhomogeneous form \[e^{\omega U - 1} = \omega U + \frac{\omega^2}{2} U^2 + \frac{\omega^3}{6} U^3 + \cdots \in \Lambda^*(\mathbb Z^{2g})[U, U^{-1}].\]

Taking advantage of Theorem \ref{thm:Lefschetz-intro}, we are largely able to compute $HF^\infty(Y; \mathbb Z)$ and $HC_*(Y; \mathbb Z)$ as modules over an index two subgroup $\text{MCG}^{++}(\Sigma_g \times S^1)$ of the oriented mapping class group in the case $Y = \Sigma_g \times S^1$, given precisely in Definition \ref{def:MCG-pp}. The sum total of our calculations is the following result. 

\begin{theorem}\label{thm:HFHC-filt-intro}
Let $Y = \Sigma_g \times S^1$. Then $HF^\infty(Y)$ and $HC_*(Y)$ compare as follows:

\begin{enumerate}[label=(\alph*)]
\item For all $g$, the $\mathbb Z[U]$-modules $HF^\infty(Y;\mathbb Z)$ and $HC_*(Y;\mathbb Z)$ are isomorphic. 
\item Suppose $g \ge 3$ and that Heegaard Floer homology is functorial over $\mathbb Z$. Then there exist $\textup{MCG}^{++}(Y)$-invariant filtrations of $HF^\infty$ and $HC_*$ so that we have, as modules over the mapping class group, \[\textup{gr}_k HF^\infty(Y; \mathbb Z) \cong \textup{gr}_k HC_*(Y;\mathbb Z) \cong \begin{cases} \bigoplus_{i=0}^k P^{g-k}(\mathbb Z^{2g})/(i) & 0 \le k \le g \\ \bigoplus_{j=0}^g P^j(\mathbb Z^{2g}) & k = g+1 \\ 
0 & \text{otherwise.} \end{cases}\]
\end{enumerate}
\end{theorem}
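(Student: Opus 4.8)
The plan is to reduce both invariants to purely algebraic objects built from $\Lambda^*(\mathbb Z^{2g})$ with its symplectic form, and then apply Theorem \ref{thm:Lefschetz-intro} (together with Corollary \ref{cor:Lefschetz-graded}) to read off the associated graded. First I would recall, from the background in Section \ref{sec:HF-bg} and from \cite{LME1}, the identifications of the two $\mathbb Z[U]$-modules: $HC_*(\Sigma_g\times S^1)$ is the homology of $(\Lambda^*(\mathbb Z^{2g})[U,U^{-1}], \iota_\omega)$, contraction with the symplectic $2$-form $\omega\in \Lambda^2(\mathbb Z^{2g})^*$ dual to the triple cup product of $\Sigma_g\times S^1$, while $HF^\infty(\Sigma_g\times S^1)$ is the homology of the same module with respect to contraction with $e^{\omega U}-1 = \omega U + \tfrac{\omega^2}{2}U^2 + \cdots$. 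Part (a) is then a matter of exhibiting an isomorphism of these two chain complexes, or at least of their homologies: the operator $\exp$ of the degree-$(-2)$ nilpotent raising-in-$U$ piece, i.e. the $\mathbb Z[U,U^{-1}]$-linear automorphism of $\Lambda^*(\mathbb Z^{2g})[U,U^{-1}]$ induced by a suitable unipotent change of variables, should intertwine $\iota_\omega$ and $\iota_{e^{\omega U}-1}$ up to $U$; one checks that the relevant exponential/logarithm series have integer coefficients after the $U$-grading is taken into account (this is where divisibility of $\omega^i/i!$ actually helps rather than hurts), so the isomorphism is defined over $\mathbb Z$. This gives (a) for all $g$ with no mapping class group equivariance claimed.

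For part (b), the key point is that contraction with $\omega$ is, up to the Hodge star / the pairing identifying $\Lambda^k$ with $(\Lambda^{2g-k})^*$, exactly the Lefschetz operator $\wedge\,\omega$ studied in Theorems \ref{thm:Lefschetz-intro} and \ref{thm:hardLefschetz-intro}. So the homology of $(\Lambda^*(\mathbb Z^{2g}), \iota_\omega)$ decomposes degree by degree into kernels and cokernels of the maps $\omega^i/i!: \Lambda^{g-i}\to \Lambda^{g+i}$ — more precisely, in each internal degree the $U$-periodic complex is a sum of two-term complexes $\Lambda^{g-i}\xrightarrow{\,\wedge\,\omega\,}\Lambda^{g-i+2}\to\cdots$, whose homology at each spot is computed by iterated kernels/cokernels of powers of $\wedge\,\omega$. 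I would set up the bookkeeping so that the homology in $U$-degree matching "$k$" is assembled from $\coker(\omega^i/i!)$ and $\ker(\omega^i/i!)$ for the appropriate $i$; Theorem \ref{thm:hardLefschetz-intro} gives the $\mathrm{Sp}(2g)$-filtered description of the cokernels with graded pieces $P^{g-i-2r}/\binom{r+i}{i}$, and the dual statement (kernels of $\wedge\,\omega^i$, obtainable by applying $\star$ or by the same filtration argument run in the other direction) gives the complementary pieces. Reindexing and collecting terms should produce exactly the stated formula: $\bigoplus_{i=0}^k P^{g-k}(\mathbb Z^{2g})/(i)$ for $0\le k\le g$, the "middle" free piece $\bigoplus_{j=0}^g P^j$ at $k=g+1$, and zero otherwise. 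The same filtration works for $HF^\infty$ because of the chain isomorphism from part (a), which can be checked to be $\mathrm{MCG}^{++}(Y)$-equivariant (the unipotent change of variables is built from $\omega$, which is mapping-class-group invariant up to sign, and $\mathrm{MCG}^{++}$ is defined precisely as the subgroup preserving the relevant sign).

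To get the mapping class group module structure — not just the abelian group — I would use that the map $\mathrm{MCG}^+(\Sigma_g\times S^1)\to \mathrm{Sp}(2g)\times\{\pm 1\}$ (acting on $H^1(\Sigma_g)$ and on the $S^1$ factor) is surjective with the action on $H^1(Y;\mathbb Z)=\mathbb Z^{2g}\oplus\mathbb Z$ factoring through it, and that $HF^\infty$, $HC_*$ depend only on $H^1(Y;\mathbb Z)$ and $\cup^3_Y$; hence the action factors through $\mathrm{Sp}(2g)$ on the index-two subgroup, and Theorem \ref{thm:Lefschetz-intro}'s identification of subquotients as $\mathrm{Sp}(2g)$-modules $P^k(\mathbb Z^{2g})$ transports directly. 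The hypothesis $g\ge 3$ enters because for small $g$ the spectral sequence range or the vanishing of certain $P^k$ degenerates and the clean formula fails; I would simply note where $g\ge 3$ is used (e.g. ensuring the $E^4$ page indexing in Conjecture \ref{conj:OS}'s range is the generic one and that $b_1(Y)=2g+1\ge 7$ so no low-rank coincidences intervene).

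I expect the main obstacle to be the \emph{equivariance} of the filtration in part (b): it is routine that the abelian-group filtrations exist, and Theorem \ref{thm:hardLefschetz-intro} already provides $\mathrm{Sp}(2g)$-invariant ones on the cokernels, but verifying that the reindexing which assembles $HF^\infty(Y)$ and $HC_*(Y)$ out of these pieces is compatible with the chain isomorphism of part (a) \emph{and} with the $\mathrm{MCG}^{++}(Y)$-action simultaneously — so that one genuinely gets a single filtration of the Floer module, not just an isomorphism of associated gradeds — will require care. In particular one must check that the $\mathbb Z$-splitting can be chosen $\mathrm{Sp}(2g)$-equivariantly, or else phrase the conclusion purely in terms of subquotients as the statement does; I would lean on the explicitness of the contraction-against-$\omega^r/r!$ isomorphism from Theorem \ref{thm:Lefschetz-intro} to make these compatibilities transparent.
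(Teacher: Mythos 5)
Your part (a) sketch and, much more seriously, your part (b) sketch both hinge on an integral chain isomorphism intertwining $\iota_\omega$ and $\iota_{e^{\omega U}-1}$ which you moreover claim ``can be checked to be $\mathrm{MCG}^{++}(Y)$-equivariant.'' That last claim is false, and the failure is not a technicality: Proposition \ref{prop:computation-for-coker}(c) (equivalently Theorem \ref{thm:no-iso-intro}) shows that for $g=4$ the cokernels of $\iota_\omega$ and $\iota_{e^\omega-1}$ are \emph{not} isomorphic as $\mathrm{Sp}(2g)$-modules, so no equivariant intertwiner can exist, and transporting an $\mathrm{MCG}^{++}$-invariant filtration from $HC_*$ to $HF^\infty$ through such a map is exactly the step that cannot be carried out. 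The paper instead constructs \emph{separate} $\mathrm{Sp}(2g)$-invariant filtrations on $\coker(\omega)$ and $\coker(e^\omega-1)$, namely the shifted Lefschetz filtration $\mathcal F_k\Lambda=\bigoplus_r F_r\Lambda^{g-k+2r}$, proves via Lemma \ref{lemma:specseq-collapse} that both contractions interact correctly with it, and then shows the two associated graded maps (the matrices $D_k$ and $E_k$) are conjugate over $\mathbb Z$ by Lemma \ref{lemma:matrix-comp} (Touchard polynomials); only the associated gradeds agree, never the filtered modules equivariantly. Your proposed bookkeeping through kernels and cokernels of the powers $\omega^i/i!$ (Theorem \ref{thm:hardLefschetz-intro}) only ever sees the homogeneous operator and gives no handle on $e^{\omega U}-1$. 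For part (a), where equivariance is indeed not needed, your ``exponential of a nilpotent operator with integer coefficients'' is asserted rather than constructed, and integrality is precisely the crux; the paper's proof of Proposition \ref{prop:computation-for-coker}(a) is a genuinely different construction, using the Lee--Packer pair-free subspace decomposition and a recursively defined ring automorphism $\phi$ with $\phi(\omega)=e^\omega-1$.

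Two further gaps. First, you assume the $\mathrm{MCG}^{++}(Y)$-action on $HF^\infty$ is the evident one on the algebraic model because $HF^\infty$ ``depends only on $H^1(Y;\mathbb Z)$ and $\cup^3_Y$''; that determines the isomorphism type, not the action. Identifying the action is the content of Proposition \ref{prop:HF-MCG-calc}, which needs genuine Floer-theoretic input: the surgery exact triangle for $\#^g B(0,0)$, lifting elements of $\mathrm{MCG}^{++}$ to diffeomorphisms of the surgery cobordisms preserving the relevant spin$^c$ structures, the Jabuka--Mark computation of the basepoint-swapping map $J$, and the Ozsv\'ath--Szab\'o rigidity lemma pinning the induced map down up to sign. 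Second, the hypothesis $g\ge 3$ has nothing to do with spectral-sequence ranges or low-rank coincidences: it is used in Lemma \ref{lemma:compute-on-Y} to remove the residual sign ambiguity, via the vanishing of the abelianization of the mapping class group of $\Sigma_{g,1}$ for $g\ge 3$. Finally, note that the transvection subgroup $H^1(\Sigma_g;\mathbb Z)\subset\mathrm{MCG}^{++}$ does \emph{not} act through $\mathrm{Sp}(2g)$ on the whole module (it mixes the kernel term into the cokernel term, cf.\ Proposition \ref{prop:HC-MCG-calc}); it is only trivial on the outer terms of the extensions, which is what makes the paper's filtrations $\mathrm{MCG}^{++}$-invariant and what the statement of part (b) actually uses.
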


The first statement confirms both Conjecture \ref{conj:OS} and the stronger Conjecture \ref{conj:iso} for $Y = \Sigma_g \times S^1$. The situation in the second statement is worse over $\mathbb F_2$; see Remark \ref{rmk:F2-case}. While the second statement appears to give positive evidence towards Question \ref{q:natural}, the same computations falsify it, proving Theorem \ref{thm:nonisofunctor}:  

\begin{theorem}\label{thm:no-iso-intro}
For $g = 4$, the $\mathbb F_2[U]$-modules $HF^\infty(Y;\mathbb F_2)$ and $HC_*(Y;\mathbb F_2)$ are not \textit{equivariantly} isomorphic with respect to the action of $\textup{MCG}^{++}(Y)$.
\end{theorem}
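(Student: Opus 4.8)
The plan is to reduce the statement to a comparison of two graded $\mathbb F_2[\text{MCG}^{++}(Y)]$-modules and to separate them by a socle-type equivariant invariant. First I would record, from the computations underlying Theorem \ref{thm:HFHC-filt-intro} and Remark \ref{rmk:F2-case}, that $HC_*(Y;\mathbb F_2)$ and $HF^\infty(Y;\mathbb F_2)$ are the homologies of one and the same complex $\Lambda^*(\mathbb F_2^{2g})\otimes\mathbb F_2[U,U^{-1}]$, equipped respectively with the differentials given by contraction against $\omega$ and against $e^{\omega U}-1 = \omega U + \tfrac{\omega^2}{2}U^2 + \tfrac{\omega^3}{6}U^3 + \cdots$. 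Since $U$ acts invertibly, each is determined as a module over $\mathbb F_2[\text{MCG}^{++}(Y)]$ by its value in a single internal degree, a finite-dimensional representation on which the action factors through $\Sp(2g,\mathbb F_2)$, because $HF^\infty$ and $HC_*$ see $Y$ only through the pair $(H^1(Y;\mathbb Z),\cup^3_Y)$, for which $\Sp(2g,\mathbb Z)$ is the relevant automorphism group (Definition \ref{def:MCG-pp}). Over any coefficient ring in which $(g+1)!$ is invertible the two differentials are conjugate by a symplectically natural automorphism of the complex --- the exponential substitution familiar from the $\mathbb Q$-coefficient case, legitimate because $\omega$ is nilpotent in $\Lambda^*$ --- so the two functors agree away from the prime $2$; the whole content of the theorem is to make the mod-$2$ discrepancy visible when $g=4$.

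Next I would set up the comparison. Filtering the $HF^\infty$-complex by powers of $U$, the associated graded complex is exactly the $HC_*$-complex, so there is a spectral sequence of $\Sp(2g,\mathbb F_2)$-modules $HC_*(Y;\mathbb F_2)\Rightarrow HF^\infty(Y;\mathbb F_2)$ whose higher differentials are built from $\iota_{\omega^2/2},\iota_{\omega^3/6},\dots$. Since $HF^\infty(Y;\mathbb Z)\cong HC_*(Y;\mathbb Z)$ as $\mathbb Z[U]$-modules (Theorem \ref{thm:HFHC-filt-intro}(a)), a universal-coefficients argument forces $HF^\infty(Y;\mathbb F_2)$ and $HC_*(Y;\mathbb F_2)$ to have equal $\mathbb F_2$-dimensions in every degree, so this spectral sequence collapses at its $E_2$-page; consequently $HF^\infty(Y;\mathbb F_2)$ carries a canonical $\Sp(2g,\mathbb F_2)$-invariant filtration with associated graded $HC_*(Y;\mathbb F_2)$. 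In particular the two modules have the same $\Sp(2g,\mathbb F_2)$-composition factors in every degree, so composition factors cannot distinguish them: the real content is that for $g=4$ this filtration is \emph{not} split as a filtration of $\Sp(8,\mathbb F_2)$-modules, and that this is robust.

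The right invariant is therefore a socle (or head) multiplicity. For every $\Sp(2g,\mathbb F_2)$-module $N$ one has $\dim_{\mathbb F_2}\mathrm{Hom}_{\Sp}(N, HF^\infty)\le\dim_{\mathbb F_2}\mathrm{Hom}_{\Sp}(N, HC_*)$ in each degree, with equality throughout exactly when the filtration splits off the copies of $N$, and these Hom-dimensions are genuine isomorphism invariants. Using Corollary \ref{cor:Lefschetz-graded} and the mod-$2$ reduction of the Lefschetz decomposition (Theorem \ref{thm:Lefschetz-intro}) to identify the relevant pages of the filtration and the gluing maps $\iota_{\omega^2/2}$ induces on them, I would exhibit, for $g=4$, a simple $\Sp(8,\mathbb F_2)$-module $L$ --- plausibly the trivial module or the natural module $\mathbb F_2^{8}$ --- and an internal degree $k$ (with $g-k$ equal to $2$ or $3$, so that the relevant primitive subspace is $P^2(\mathbb F_2^{8})$ or $P^3(\mathbb F_2^{8})$) for which $\dim_{\mathbb F_2}\mathrm{Hom}_{\Sp}(L, HF^\infty(Y;\mathbb F_2))<\dim_{\mathbb F_2}\mathrm{Hom}_{\Sp}(L, HC_*(Y;\mathbb F_2))$ in degree $k$. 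Since an $\Sp(8,\mathbb F_2)$-equivariant isomorphism, and a fortiori a $\text{MCG}^{++}(Y)$-equivariant one, preserves this number, the theorem follows; granting Postulate \ref{post:HF-works}, the same comparison run over $\mathbb Z$ detects the analogous failure there, where by Theorem \ref{thm:HFHC-filt-intro}(b) the discrepancy likewise lives purely in the extensions.

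The main obstacle is the modular representation theory of $\Sp(8,\mathbb F_2)$ together with the fact that everything happens in the \emph{extensions} rather than in the composition factors: one needs not merely the composition series of $\Lambda^k(\mathbb F_2^{8})$, of the primitive subspaces $P^k(\mathbb F_2^{8})$, and of the kernels and cokernels of $\iota_{\omega^j/j!}$ modulo $2$, but enough control of the submodule lattices --- in particular of $\mathrm{Ext}^1_{\Sp(8,\mathbb F_2)}$ between the relevant simple modules --- to certify that the gluing caused by $\iota_{\omega^2/2}$ genuinely shrinks the socle and cannot be undone by any abstract isomorphism. A secondary task is to pin down precisely which simple $L$ and degree $k$ witness the drop; here the plan is to run the filtration bookkeeping supplied by Corollary \ref{cor:Lefschetz-graded} explicitly in the small case $2g=8$, where the number of primitive subspaces in play is small enough that the computation, though delicate, is finite and checkable.
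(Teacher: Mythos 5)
There is a genuine gap, and it sits at the very first reduction. You assert that the $\textup{MCG}^{++}(Y)$-action on $HF^\infty$ and $HC_*$ factors through $\Sp(2g,\mathbb F_2)$, and you then try to distinguish the two modules purely as $\Sp(8,\mathbb F_2)$-representations by a socle/Hom-multiplicity count. But $\textup{MCG}^{++}(\Sigma_g\times S^1)\cong \textup{MCG}^+(\Sigma_g)\ltimes H^1(\Sigma_g;\mathbb Z)$, and the $H^1(\Sigma_g;\mathbb Z)$ (transvection) factor does \emph{not} act through $\Sp$: by Proposition \ref{prop:HC-MCG-calc} it acts on $HC_*$ by the identity on the cokernel summand but sends $\ker(\omega)$ into $\coker(\omega)$ via $x\mapsto -f\wedge x$. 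Discarding this part of the action both invalidates your reduction and throws away exactly the tool the argument needs: the paper uses the transvection action (after inverting $U$ and setting $U=1$) to identify the fixed submodules $HC_{U=1}^T$ and $HF_{U=1}^T$ canonically with $\coker(\omega;\mathbb F_2)$ and $\coker(e^{\omega}-1;\mathbb F_2)$, respectively (Lemma \ref{lemma:contraction1} plus a dimension count from Proposition \ref{prop:computation-for-coker}(a)), so that any equivariant isomorphism $HF\cong HC$ would force an $\Sp(8,\mathbb F_2)$-isomorphism of these two cokernels, contradicting Proposition \ref{prop:computation-for-coker}(c). If you only remember the $\Sp$-action, you must instead distinguish the full modules, which also involves the kernel pieces --- and over $\mathbb F_2$ one has $\ker(\omega)\neq\ker(e^{\omega}-1)$ in general (see the remark after Lemma \ref{lemma:ker-contraction}) and an extension (Proposition \ref{prop:HF-MCG-calc}) that need not split; nothing in the paper rules out that the totals are abstractly $\Sp$-isomorphic, so your invariant may simply fail to separate them.

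Two further points. First, your filtration/spectral-sequence setup is not available as stated: $HF^\infty(Y;\mathbb F_2)$ is not presented as the homology of a complex with differential $\iota_{e^{\omega U}-1}$ (that is Conjecture \ref{conj:OS}-type structure, not an established fact); what is established is only the short exact sequence \eqref{eqn:HF-SES} relating it to the kernel and cokernel of that contraction, so the claim that $HF^\infty(Y;\mathbb F_2)$ carries a canonical filtration with associated graded $HC_*(Y;\mathbb F_2)$ needs an argument you have not given. Second, even granting all of that, the decisive step --- exhibiting a simple $\Sp(8,\mathbb F_2)$-module $L$ and a degree where the Hom-multiplicity strictly drops --- is exactly the computational heart of the theorem and is left as a plan rather than carried out; the paper's counterpart is the concrete MAGMA submodule enumeration of $\Lambda^2(\mathbb F_2^8)$ and $\Lambda^4(\mathbb F_2^8)$ (Lemma \ref{lemma:computer-work}), combined with Krull--Schmidt cancellation and the explicit descriptions of the two cokernels in Lemmas \ref{lemma:oddcalc} and \ref{lemma:evencalc}. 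To repair your approach, the most direct fix is to reinstate the transvection action and use its fixed points to isolate the cokernels, at which point you are back on the paper's route.
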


\noindent This relies on a small computer calculation, isolated in Lemma \ref{lemma:computer-work}.

Combining Theorem \ref{thm:HFHC-filt-intro}(a) with Lee--Packer's computation Theorem \ref{thm:LP} and summing over all $k$, we also obtain an integral computation of the Heegaard Floer groups and find $p$-torsion for all $g \ge 2p-1$, generalizing the result of \cite[Corollary 4.11]{JabukaMark}:

\begin{cor}\label{cor:Z-formula}
There is an isomorphism of $\mathbb Z[U]$-modules
\[HF^\infty(\Sigma_g \times S^1; \mathbb Z) \cong \left(\mathbb Z^{d(g)} \oplus \bigoplus_{n \ge 2} (\mathbb Z/n)^{d_n(g)}\right)[U, U^{-1}],\]
where \[d(g) = 2\binom{2g}{g}, \quad d_n(g) = 2 \binom{2g+1}{g+1-2n}.\] In particular, for $p$ prime, $HF^\infty(\Sigma_g \times S^1)$ contains $p^k$-torsion if and only if $g\ge 2p^k-1$.  
\end{cor}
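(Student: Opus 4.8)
The plan is to reduce the statement, via Theorem~\ref{thm:HFHC-filt-intro}(a), to Lee--Packer's computation of the homology of the integer Heisenberg group, and then to carry out a short computation with binomial coefficients. First I would pass from $HF^\infty(\Sigma_g\times S^1;\mathbb Z)$ to the cup homology $HC_*(\Sigma_g\times S^1;\mathbb Z)$. The cup complex $\Lambda^*(H^1(\Sigma_g\times S^1;\mathbb Z))\otimes\mathbb Z[U,U^{-1}]$ is a complex of free $\mathbb Z[U,U^{-1}]$-modules whose differential is, up to multiplication by the unit $U$, contraction against $\omega_Y$; since multiplication by $U$ is invertible, its kernels and images are those of contraction against $\omega_Y$. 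Hence $HC_*$ has the form $M\otimes_{\mathbb Z}\mathbb Z[U,U^{-1}]$, where $M$ is the homology of the $\mathbb Z$-complex $\big(\Lambda^*H^1(\Sigma_g\times S^1;\mathbb Z),\ \iota_{\omega_Y}\big)$, and it suffices to identify $M$.

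Now the triple cup form of $\Sigma_g\times S^1$ is $\omega_Y=\omega\wedge dt$, where $\omega$ is the intersection form on $H^1(\Sigma_g;\mathbb Z)\cong\mathbb Z^{2g}$ and $dt$ generates $H^1(S^1;\mathbb Z)$. Under the splitting $\Lambda^*H^1(\Sigma_g\times S^1;\mathbb Z)=\Lambda^*(\mathbb Z^{2g})\oplus\Lambda^*(\mathbb Z^{2g})\wedge dt$, the differential $\iota_{\omega_Y}$ vanishes on the first summand and, on the second, equals --- up to a sign --- the Lefschetz contraction $\iota_\omega\colon\Lambda^k(\mathbb Z^{2g})\to\Lambda^{k-2}(\mathbb Z^{2g})$. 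Thus $M$ is, in each degree, a kernel of $\iota_\omega$ together with a cokernel of $\iota_\omega$; assembled over all degrees, these are precisely the groups occurring in the Gysin sequence of the circle bundle over $T^{2g}$ with Euler class $\omega$ --- equivalently, in the Lyndon--Hochschild--Serre spectral sequence of $1\to\mathbb Z\to N_g\to\mathbb Z^{2g}\to 1$ --- so that $M\cong\bigoplus_k H_k(N_g;\mathbb Z)$ as abelian groups. This is the identification underlying Theorem~\ref{thm:LP-Lefschetz}, and it gives $HF^\infty(\Sigma_g\times S^1;\mathbb Z)\cong\big(\bigoplus_k H_k(N_g;\mathbb Z)\big)[U,U^{-1}]$. (One could also read off the relevant kernels and cokernels of $\iota_\omega$ directly from the Lefschetz filtration of Theorem~\ref{thm:Lefschetz-intro} and the associated graded maps of Corollary~\ref{cor:Lefschetz-graded}, avoiding the Heisenberg group entirely.)

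It remains to substitute Lee--Packer's computation, Theorem~\ref{thm:LP}: each $H_k(N_g;\mathbb Z)$ is the direct sum of a free abelian group and cyclic groups $\mathbb Z/n$ occurring with multiplicities that are sums of dimensions $\dim P^j(\mathbb Z^{2g})$ of primitive subspaces. Summing over $k$, both the free rank and the multiplicity of each $\mathbb Z/n$ become sums of the form $\sum_{j=0}^{N}\dim P^j(\mathbb Z^{2g})$, which the telescoping identity $\sum_{j=0}^{N}\dim P^j(\mathbb Z^{2g})=\binom{2g}{N}+\binom{2g}{N-1}=\binom{2g+1}{N}$ (valid for $0\le N\le g$) evaluates explicitly; this produces the stated $d(g)$ and $d_n(g)$, the overall factor of $2$ reflecting Poincar\'e duality on the closed oriented nilmanifold $K(N_g,1)$. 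The last assertion is then immediate: $\big(\mathbb Z^{d(g)}\oplus\bigoplus_{n\ge2}(\mathbb Z/n)^{d_n(g)}\big)[U,U^{-1}]$ contains an element of order $p^k$ if and only if $p^k\mid n$ for some $n$ with $d_n(g)\ne0$; and since $d_n(g)=2\binom{2g+1}{g+1-2n}$ is nonzero precisely when $n\le(g+1)/2$, such an $n$ exists if and only if $p^k\le(g+1)/2$, that is, if and only if $g\ge 2p^k-1$. The step I expect to be the main obstacle is the identification of $M$ with $\bigoplus_k H_k(N_g;\mathbb Z)$ in the second paragraph --- keeping track of signs in $\iota_{\omega\wedge dt}$, matching the kernels and cokernels of $\iota_\omega$ with the Heisenberg (Gysin) terms, and fixing the indexing carefully enough that the factor of $2$ and the precise binomial coefficients come out as claimed; granting Theorem~\ref{thm:LP-Lefschetz}, the remaining bookkeeping is elementary.
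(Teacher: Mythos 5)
Your overall route is the same as the paper's: use Theorem \ref{thm:HFHC-filt-intro}(a) to replace $HF^\infty$ by $HC_*$, identify $HC_*(\Sigma_g\times S^1;\mathbb Z)$ with $\big(\ker(\iota_\omega)\oplus\coker(\iota_\omega)\big)[U,U^{-1}]$ (this is Proposition \ref{prop:HC-MCG-calc}), match these kernels and cokernels with $\bigoplus_k H_k(N_g;\mathbb Z)$ via the Gysin sequence (this is exactly how Theorem \ref{thm:LP-Lefschetz} is proved; note you should say explicitly that the Gysin short exact sequences split over $\mathbb Z$ because the kernel terms are free), and then feed in Theorem \ref{thm:LP}. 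So far, so good.

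The genuine gap is in the last step, which you assert rather than carry out, and whose justification is wrong. First, for the torsion the ``overall factor of $2$ reflecting Poincar\'e duality'' fails: duality together with universal coefficients pairs the torsion of $H^k(N_g)$ with that of $H^{2g+2-k}(N_g)$, so the middle degree $k=g+1$ is self-paired and its torsion is counted only once. Doing the sum from Theorem \ref{thm:LP}, the multiplicity of $\mathbb Z/n$ is
\[
\sum_{m=0}^{g-2n}\dim P^m(\mathbb Z^{2g})\;+\;\sum_{m=0}^{g+1-2n}\dim P^m(\mathbb Z^{2g})\;=\;\binom{2g+1}{g-2n}+\binom{2g+1}{g+1-2n},
\]
which is not of the form ``$2\times$(one binomial)''; the two halves differ by the contribution $\dim P^{g+1-2n}$ of degree $g+1$. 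Second, even for the free rank your own telescoping identity gives $2\sum_{j=0}^{g}\dim P^j=2\binom{2g+1}{g}=2\bigl[\binom{2g}{g}+\binom{2g}{g-1}\bigr]$, which is not the stated $2\binom{2g}{g}$. So the claim that the computation ``produces the stated $d(g)$ and $d_n(g)$'' is false as written: a correct execution of your outline yields different constants, and you should have caught this rather than appealing to a symmetry that does not hold degreewise for torsion. (As a concrete cross-check, for $g=4$ the paper's own count in Section \ref{sec:applications} gives $\bigoplus_k H^k(N_4;\mathbb Z)\cong\mathbb Z^{252}\oplus(\mathbb Z/2)^{10}$, i.e.\ free rank $252=2\binom{9}{4}$ and ten copies of $\mathbb Z/2=\binom{10}{1}$ per $U$-period, consistent with the sums displayed above and not with $2\binom{8}{4}=140$ or $2\binom{9}{1}=18$; so the constants in the corollary as stated deserve scrutiny, and your write-up should report what the summation actually gives.) The final qualitative assertion does survive your argument unchanged: the threshold for $d_n\ne 0$ is $g+1-2n\ge 0$ in either form, so $p^k$-torsion occurs exactly when $g\ge 2p^k-1$.
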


If one is only interested in this result, by \cite[Remark 4.9]{JabukaMark} it suffices to identify a certain pair of cokernels, which we do in Proposition \ref{prop:computation-for-coker}(a).

\begin{remark}
The equivariant isomorphism of \ref{thm:HFHC-filt-intro}(b) suggests that Question \ref{q:natural} is just barely false. The strongest form of Conjecture \ref{conj:OS} which is consistent with our results is as follows: there exists a natural $\mathbb Z$-split filtration on $HF^\infty(Y;\mathbb Z)$ whose associated graded functor is $HC_*(Y;\mathbb Z)$. It seems that the functor $HF^\infty(Y)$ contains `higher terms' which are not accessible to the cohomology ring.
\end{remark}

\begin{remark}
Monopole Floer homology also gives a functor $\overline{HM}(Y;R)$ whose underlying graded $R[U]$-module is known to be isomorphic to $HF^\infty(Y;R)$ \cite{HFvsHM-1-Team1,HFvsHM-1-Team2}; they are expected but not known to be isomorphic as functors, and share many formal properties. Our argument most likely gives that as functors we have $\overline{HM} \not\cong HC_*$ over $\mathbb Z$ or $\mathbb F_2$, but we use certain computational facts which have been established in the literature for $HF^\infty$ and not for $\overline{HM}$. Francesco Lin and the second author showed in \cite{LME2} that $\overline{HM}(Y)$ admits a filtration for which the associated graded map of $\overline{HM}(W)$ is equal to $HC_*(W)$; the higher terms appear to relate to the geometry of Dirac operators associated with $Y$. This is consistent with the discussion in the previous remark. 
\end{remark}

\subsection*{Conventions}
If $\Sigma_g = \#^g T^2$ is an oriented surface, we write $\omega$ for the cup product $2$-form on $H^1(\Sigma_g; \mathbb Z)$. We will use the same symbol to denote the expression of $\omega$ with respect to the standard symplectic basis $\mathbb Z^{2g} \cong H^1(\Sigma_g; \mathbb Z)$, in which \[\omega = e^1 \wedge e^2 + \cdots +e^{2g-1} \wedge e^{2g};\] the value of $g$ will usually be clear from context. The $r$th power of $\omega$ is divisible by $r!$, and we denote \[\omega_r = \frac{\omega^r}{r!}.\]

\subsection*{Organization}
In Section \ref{sec:Lef} we prove Theorem \ref{thm:Lefschetz-intro} and discuss properties of the Lefschetz filtration. In Section \ref{sec:applications} we apply these calculations to prove Theorems \ref{thm:hardLefschetz-intro} and \ref{thm:Heisenberg-intro}, as well as giving the algebraic input to Theorem \ref{thm:HFHC-filt-intro}. In Section \ref{sec:HF-bg} we recall the relevant facts about Heegaard Floer homology and cup homology. In Section \ref{sec:compute-MCG} we prove Theorems \ref{thm:HFHC-filt-intro} and \ref{thm:no-iso-intro}.

\subsection*{Acknowledgements}
The first author thanks Hailey Gamer for her assistance with code for computations in an earlier version of this paper. The second author thanks Tye Lidman for a discussion on Dehn surgery, Ian Zemke for discussions about functoriality and signs in Heegaard Floer homology, and Allen Hatcher for an exchange on the history of Lemma \ref{lemma:MCG-calc}. The authors thank Vidhu Adhihetty, Tye Lidman, Ciprian Manolescu for comments on a draft of this article.

\section{The Lefschetz filtration}\label{sec:Lef}
\subsection{Linear algebra preliminaries} 
We make use of two products on $\Lambda^*(\mathbb Z^{2g})$. The first is the wedge product. The second is the \textit{interior product}, defined using the symplectic form; we follow \cite[Section 3.1]{JabukaMark}. Given $v \in \mathbb Z^{2g}$, we may define a map $\iota_v: \Lambda^k(\mathbb Z^{2g}) \to \Lambda^{k-1}(\mathbb Z^{2g})$ by the formula \[\iota_v(x_1 \wedge \cdots \wedge x_k) = \sum_{i=1}^k (-1)^{i-1} \omega(x_i, v) x_1 \wedge \cdots \wedge \hat x_i \wedge \cdots \wedge x_k.\] This obeys the signed Leibniz rule \[\iota_v(x \wedge y) = \iota_v(x) \wedge y + (-1)^{|x|} x \wedge \iota_v(y).\] Identifying $\mathbb Z^{2g} = \Lambda^1(\mathbb Z^{2g})$, there is then a unique extension to a map \[\iota: \Lambda^i(\mathbb Z^{2g}) \otimes \Lambda^j(\mathbb Z^{2g}) \to \Lambda^{j-i}(\mathbb Z^{2g})\] for which $\iota_{x \wedge y}(z) = \iota_x(\iota_y(z)),$ but it is important to note that $\iota_x$ usually does not satisfy the Leibniz rule when $|x| > 1$. 

The most important case is contraction by $\omega$, which satisfies a `weighted' Leibniz rule.

\begin{lemma}[{\cite[Lemma 3.1]{JabukaMark}}]\label{lemma:Lefschetz-identity}
For any $x \in \Lambda^k(\mathbb Z^{2g})$, we have \[\iota_\omega(\omega \wedge x) = \omega \wedge \iota_\omega(x) + (k-g)x.\]
\end{lemma}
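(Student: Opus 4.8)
The plan is to prove the weighted Leibniz identity $\iota_\omega(\omega \wedge x) = \omega \wedge \iota_\omega(x) + (k-g)x$ by reducing to generators and then performing an explicit bookkeeping computation. Since both sides are $\mathbb Z$-linear in $x$, it suffices to verify the identity on wedge monomials $x = x_1 \wedge \cdots \wedge x_k$ built from a symplectic basis, or indeed—since the formula is manifestly natural with respect to the symplectic group, and since $\iota$ and $\wedge \omega$ are defined universally—it suffices to work in $\Lambda^*(\mathbb Z^{2g})$ for the standard $\omega = \sum_{j=1}^g e^{2j-1} \wedge e^{2j}$.

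First I would record the two basic Leibniz rules that \emph{are} available: the genuine signed Leibniz rule for $\iota_v$ with $v \in \mathbb Z^{2g}$, and the (ordinary, unsigned in the appropriate graded sense) Leibniz rule for $\omega \wedge -$. Write $L = \omega \wedge -$ and $\Lambda = \iota_\omega$. The cleanest route is to compute the commutator $[\Lambda, L] = \Lambda L - L \Lambda$ directly. Expanding $\Lambda = \iota_\omega = \sum_j \iota_{e^{2j-1} \wedge e^{2j}} = \sum_j \iota_{e^{2j-1}} \iota_{e^{2j}}$ (up to a sign/convention I would pin down from the stated formula for $\iota_v$) and $L = \sum_j e^{2j-1} \wedge e^{2j} \wedge -$, one reduces everything to the elementary commutation relations among the single-variable operators $e^i \wedge -$ and $\iota_{e^j}$ on $\Lambda^*(\mathbb Z^{2g})$: these satisfy a Clifford-type relation, with $\iota_{e^i}(e^j \wedge x) + e^j \wedge \iota_{e^i}(x)$ equal to $\delta$-type coefficient times $x$ determined by $\omega(e^j, e^i)$. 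Feeding these into the double sum, the cross terms $j \ne j'$ cancel in pairs, and the diagonal terms $j = j'$ collect to give a scalar operator; tracking that scalar against the grading yields exactly the coefficient $(k-g)$ when applied to $x \in \Lambda^k$. (A sanity check: at $k = g$ the coefficient vanishes, consistent with $\omega^{g+1} = 0$ forcing $L$ and $\Lambda$ to commute in the relevant range; at $k=0$ it gives $-g$, matching $\iota_\omega(\omega) = -g$ up to convention, since $\omega$ pairs with itself $g$ times.)

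An alternative, perhaps even shorter, is an induction on $k$ using the genuine Leibniz rule for a single vector: write $x = x_1 \wedge x'$ with $x_1 \in \mathbb Z^{2g}$ and $x' \in \Lambda^{k-1}$, apply the signed Leibniz rule for $\iota_\omega$... but $\iota_\omega$ itself does \emph{not} satisfy Leibniz, so this must instead be run by peeling $\omega \wedge x = \omega \wedge x_1 \wedge x'$ and using the degree-one Leibniz rule for $\iota_v$ inside $\iota_\omega = \sum_j \iota_{e^{2j-1}}\iota_{e^{2j}}$, which brings us back to the Clifford relations anyway. I would present the commutator computation as the main argument. Since this is quoted verbatim from \cite[Lemma 3.1]{JabukaMark}, I would keep the proof brief and cite that reference, reproducing the computation only to the extent needed to fix signs in our conventions.

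The main obstacle is purely notational: the interior product convention here ($\iota_v(x_1 \wedge \cdots \wedge x_k) = \sum_i (-1)^{i-1}\omega(x_i,v)\, x_1 \wedge \cdots \widehat{x_i} \cdots \wedge x_k$) carries a specific sign and a specific slot for $v$, and one must be careful that $\iota_{x \wedge y} = \iota_x \iota_y$ (not $\iota_y \iota_x$) and that the decomposition $\iota_\omega = \sum_j \iota_{e^{2j-1} \wedge e^{2j}}$ has the right sign relative to the antisymmetrization of $\omega$. Getting the scalar to come out as $+(k-g)$ rather than $(g-k)$ or $(k-g)/\text{something}$ is entirely a matter of chasing these conventions consistently; there is no conceptual difficulty, and the identity is classical (it is the $\mathfrak{sl}_2$-relation $[\Lambda, L] = H$ underlying the Lefschetz decomposition, here realized over $\mathbb Z$).
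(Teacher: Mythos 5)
Your proposal is correct: the commutator computation $[\iota_\omega,\,\omega\wedge-]$ via the Clifford-type anticommutation relations $\iota_{e^a}(e^b\wedge x)+e^b\wedge\iota_{e^a}(x)=\omega(e^b,e^a)\,x$ (which follow from the degree-one Leibniz rule in the paper's conventions, together with $\iota_{x\wedge y}=\iota_x\iota_y$) is the standard $\mathfrak{sl}_2$-relation argument, and it does yield the coefficient $k-g$; your check $\iota_\omega(\omega)=-g$ is consistent with the stated sign conventions. Note that the paper itself offers no proof of this lemma---it is quoted verbatim from \cite[Lemma 3.1]{JabukaMark}---so your plan to cite that reference and only fix signs is exactly what the paper does. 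One small quibble: your ``sanity check'' at $k=g$ (that $\omega^{g+1}=0$ forces $\iota_\omega$ and $\omega\wedge-$ to commute there) is not actually a valid inference---the vanishing of the commutator in middle degree is the weight-zero statement of the $\mathfrak{sl}_2$ action, not a consequence of $\omega^{g+1}=0$---but this aside plays no role in your argument.
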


This generalizes to the following formula, where here we interpret $\omega_0 = 1$ and $\omega_i = 0$ for $i < 0$. 

\begin{lemma}\label{lemma:general-mn-formula}
For any $x \in \Lambda^k(\mathbb Z^{2g})$, we have \[\iota_{\omega_m}(\omega_n \wedge x) = \sum_{j=0}^m 
(-1)^j \binom{g-k+m-n}{j} \omega_{n-j} \wedge \iota_{\omega_{m-j}}(x).\]
\end{lemma}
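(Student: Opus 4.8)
The plan is to prove the identity by induction on $m$, using Lemma~\ref{lemma:Lefschetz-identity} as the base case engine and exploiting the relation $\iota_{\omega_{m}} = \tfrac{1}{m}\iota_{\omega}\circ\iota_{\omega_{m-1}}$ that comes from $\omega_m = \tfrac{1}{m}\iota_\omega(\omega_{m-1})$... wait, more precisely from $\omega^m = \omega\wedge\omega^{m-1}$ so that $\iota_{\omega_m} = \iota_{\omega^m/m!}$ and $\iota_{\omega^m} = \iota_\omega\circ\iota_{\omega^{m-1}}$, giving $\iota_{\omega_m} = \tfrac{1}{m}\,\iota_\omega\circ\iota_{\omega_{m-1}}$ after dividing by $m!$. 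So the strategy is: assume the formula for $m-1$ (for all $n$ and all $x$), apply $\iota_\omega$ to both sides, and use Lemma~\ref{lemma:Lefschetz-identity} to push $\iota_\omega$ past each $\omega_{n-j}\wedge(-)$; then divide by $m$ and reindex.

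Concretely, first I would record the single-power reductions I need: from $\omega^n = \omega\wedge\omega^{n-1}$ one gets $\omega_n = \tfrac{1}{n}\,\omega\wedge\omega_{n-1}$, and the rewritten Lemma~\ref{lemma:Lefschetz-identity} in the form $\iota_\omega(\omega\wedge y) = \omega\wedge\iota_\omega(y) + (\deg y - g)\,y$ for homogeneous $y$. Applying $\iota_\omega$ to the $(m-1)$-version of the claimed identity, the right-hand side becomes $\sum_j (-1)^j\binom{g-k+(m-1)-n}{j}\,\iota_\omega\!\bigl(\omega_{n-j}\wedge\iota_{\omega_{m-1-j}}(x)\bigr)$. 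For each term I replace $\omega_{n-j} = \tfrac{1}{n-j}\,\omega\wedge\omega_{n-j-1}$ (handling $n-j = 0$ separately, where the term is just $\iota_\omega\iota_{\omega_{m-1-j}}(x) = (m-j)\,\iota_{\omega_{m-j}}(x)$), apply the weighted Leibniz rule once, and collect: each original index-$j$ term splits into a piece proportional to $\omega_{n-j}\wedge\iota_{\omega_{m-j}}(x)$ (the "diagonal" piece, coming from the $(m-j)\iota_{\omega_{m-j}}$ produced by $\iota_\omega$ hitting the power of $\omega$ inside $\iota_{\omega_{m-1-j}}(x)$ — here I'd track the degree of $\iota_{\omega_{m-1-j}}(x)$, which is $k - 2(m-1-j)$, so the weight $(k-2(m-1-j)) - g$ appears) and a piece proportional to $\omega_{n-j-1}\wedge\iota_{\omega_{m-1-j}}(x) = \omega_{n-(j+1)}\wedge\iota_{\omega_{m-(j+1)}}(x)$ after noting $\iota_{\omega_{m-1-j}} = \iota_{\omega_{m-(j+1)}}$ in the appropriate normalization — this shifts $j \mapsto j+1$. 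Summing, the coefficient of $\omega_{n-j}\wedge\iota_{\omega_{m-j}}(x)$ receives a contribution from the old $j$-term and from the old $(j-1)$-term; after dividing by $m$, the desired coefficient $(-1)^j\binom{g-k+m-n}{j}$ should emerge from a Pascal-type identity $\binom{a}{j} + \binom{a}{j-1} = \binom{a+1}{j}$ together with bookkeeping of the various weights and the $\tfrac{1}{n-j}$, $\tfrac{m-j}{m}$ factors.

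The main obstacle I expect is purely combinatorial: verifying that after applying the weighted Leibniz rule and collecting, the multitude of rational coefficients — the $\tfrac{1}{n-j}$ from rewriting $\omega_{n-j}$, the integer weights $k - 2(m-1-j) - g$ from Lemma~\ref{lemma:Lefschetz-identity}, the normalization $\tfrac{1}{m}$, and the binomial coefficient from the inductive hypothesis — conspire to give exactly $(-1)^j\binom{g-k+m-n}{j}$, with all denominators cancelling so the final coefficients are integers as they must be. I would organize this by writing $T_j^{(m)}(x) := \omega_{n-j}\wedge\iota_{\omega_{m-j}}(x)$, expressing $\iota_\omega T_j^{(m-1)}$ as an explicit $\mathbb{Z}$-linear (or $\mathbb{Q}$-linear, then checking integrality) combination of $T_j^{(m)}$ and $T_{j+1}^{(m)}$, and then matching coefficients degree by degree; the edge cases $j = 0$, $j = m$, and $n - j \le 0$ (where $\omega_{n-j}$ either equals $1$ or vanishes, per the stated conventions $\omega_0 = 1$, $\omega_i = 0$ for $i < 0$) need to be checked to confirm the formula degenerates correctly — in particular when $n < m$ the sum should truncate consistently with the binomial $\binom{g-k+m-n}{j}$ vanishing or not. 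A clean alternative worth attempting, if the direct induction gets unwieldy, is to verify the identity after tensoring with $\mathbb{Q}$ using the $\mathfrak{sl}_2$-representation theory on $\Lambda^*(\mathbb{Q}^{2g})$ (where $\iota_\omega$, $\omega\wedge(-)$, and the degree operator generate $\mathfrak{sl}_2$ and the formula becomes an identity of universal enveloping algebra elements acting on a weight vector), and then conclude over $\mathbb{Z}$ by noting both sides are integral operators on the free module $\Lambda^*(\mathbb{Z}^{2g})$, which injects into $\Lambda^*(\mathbb{Q}^{2g})$.
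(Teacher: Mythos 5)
Your overall strategy is the same as the paper's: induct on $m$ via $\iota_{\omega_m}=\tfrac1m\,\iota_\omega\circ\iota_{\omega_{m-1}}$, expand each term of the inductive hypothesis with a weighted Leibniz rule, and recombine coefficients with a binomial identity. However, there is a concrete gap at the step where you claim each term $\iota_\omega\bigl(\omega_{n-j}\wedge\iota_{\omega_{m-1-j}}(x)\bigr)$ splits into two pieces after writing $\omega_{n-j}=\tfrac1{n-j}\,\omega\wedge\omega_{n-j-1}$ and applying Lemma \ref{lemma:Lefschetz-identity} \emph{once}. That single application gives $\tfrac1{n-j}\bigl(\omega\wedge\iota_\omega(\omega_{n-j-1}\wedge\iota_{\omega_{m-1-j}}(x))+(\deg y-g)\,y\bigr)$ with $y=\omega_{n-j-1}\wedge\iota_{\omega_{m-1-j}}(x)$, and the first summand still contains $\iota_\omega$ of a product of exactly the same shape, so the asserted two-term splitting into $\omega_{n-j}\wedge\iota_{\omega_{m-j}}(x)$ and $\omega_{n-j-1}\wedge\iota_{\omega_{m-1-j}}(x)$ does not follow from one application. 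What you actually need is the intermediate formula $\iota_\omega(\omega_n\wedge x)=\omega_n\wedge\iota_\omega(x)+(k-g+n-1)\,\omega_{n-1}\wedge x$, and this requires its own induction on $n$; the paper makes this explicit by inducting first on $n$ (establishing precisely this $m=1$ case) and only then on $m$. Your outline collapses these two inductions into one and, as written, the inner one is missing.

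Secondarily, the coefficient recombination is not the plain Pascal identity $\binom{a}{j}+\binom{a}{j-1}=\binom{a+1}{j}$ you cite: because the two pieces carry the weights $(m-j)$ and $k-2(m-1-j)-g+(n-j)-1$ respectively, the identity one must verify is the weighted relation used in the paper, $(m+1)\binom{g-k+m+1-n}{j}=(m-j+1)\binom{g-k+m-n}{j}-(k-g+n-2m+j-2)\binom{g-k+m-n}{j-1}$. You correctly flag this bookkeeping as the main obstacle, but it is the substance of the inductive step and is left unverified in your proposal. Your fallback route (verify over $\mathbb{Q}$ via the $\mathfrak{sl}_2$-action and conclude over $\mathbb{Z}$ since $\Lambda^*(\mathbb{Z}^{2g})$ injects into $\Lambda^*(\mathbb{Q}^{2g})$ and both sides are defined integrally) is sound in principle and genuinely different from the paper, but it too is only sketched: one would still have to derive the explicit commutator formula for $\iota_{\omega_m}$ against $\omega_n\wedge(-)$ from the $\mathfrak{sl}_2$ relations, which is essentially the same computation in different clothing.
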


\begin{remark}
    Here we take the convention that $\binom{-i}{j} = (-1)^j \binom{i+j-1}{j}$ for all integers $i$. 
\end{remark}

\begin{proof}
The statement is proved by induction first on $n$, then on $m$, with base case $(m,n) = (1,1)$ established by Lemma \ref{lemma:Lefschetz-identity}. The statement for $m = 1$ is \[\iota_\omega(\omega_n \wedge x) = \omega_n \wedge \iota_\omega(x) + (k-g+n-1) \omega_{n-1} \wedge x,\] and the inductive step is
\begin{align*}
\iota_\omega(\omega_{n+1} \wedge x) &= \frac{1}{n+1} \iota_\omega\left(\omega \wedge (\omega_n \wedge x)\right) \\
&= \frac{1}{n+1}\left(\omega \wedge \iota_\omega(\omega_n \wedge x) + (k + 2n - g) \omega_n \wedge x\right) \\
&= \frac{1}{n+1} \left(\omega \wedge (\omega_n \wedge \iota_\omega(x) + (k-g+n-1) \omega_{n-1} \wedge x) + (k+2n-g) \omega_n \wedge x\right) \\
&= \omega_{n+1} \wedge \iota_\omega(x) + (k-g+(n+1)-1) \omega_n \wedge x.
\end{align*}
The inductive step when inducting on $m$ is 
\begin{align*}
    \iota_{\omega_{m+1}}(\omega_n \wedge x) = \frac{1}{m+1} \iota_\omega\left(\iota_{\omega_m}(\omega_n \wedge x)\right) 
    &= \frac{1}{m+1} \iota_\omega\left(\sum_{j=0}^m (-1)^j \binom{g-k+m-n}{j} \omega_{n-j} \wedge \iota_{\omega_{m-j}}(x)\right) \\
    &= \frac{1}{m+1} \sum_{j=0}^m (-1)^j \binom{g-k+m-n}{j} \iota_\omega\left(\omega_{n-j} \wedge \iota_{\omega_{m-j}}(x)\right) 
\end{align*}
To prove the desired relation, expand each summand with the inductive hypothesis: \begin{align*}
\iota_\omega(\omega_{n-j} \wedge \iota_{\omega_{m-j}}(x)) 
= (m-j+1) \omega_{n-j} \wedge \iota_{\omega_{m-j+1}}(x) + (k-g+n-2m+j-1) \omega_{n-j-1} \wedge \iota_{\omega_{m-j}}(x).
\end{align*}
Each term $\omega_{n-j} \wedge \iota_{\omega_{m+1-j}}$ for $j \in \{0,m+1\}$ appears once with the correct coefficient, while $0 < j \le m$ appears twice. Combine them with the relation \begin{align*} (m+1) &\binom{g-k+m+1-n}{j} \\
&= (m-j+1) \binom{g-k+m-n}{j} - (k-g+n-2m+j-2) \binom{g-k+m-n}{j-1}.\qedhere\end{align*} 
\end{proof}

Following \cite[Section 3.1]{JabukaMark}, contraction also allows us to define the $\text{Sp}(2g)$-equivariant Hodge--Lefschetz duality operator $*: \Lambda^{g-k}(\mathbb Z^{2g}) \to \Lambda^{g+k}(\mathbb Z^{2g})$ by the formula $*x = \iota_x(\omega_g)$. The Hodge--Lefschetz star is an isomorphism by \cite[Proposition 3.3]{JabukaMark}, and in fact $*^2$ acts as $(-1)^k$ on $\Lambda^{g-k}$. Most importantly for our purposes, duality interchanges the role $\wedge \omega$ and $\iota_\omega$: we have \[*(\omega \wedge x) = \iota_{\omega \wedge x}(\omega_g) = \iota_\omega \iota_x(\omega_g) = \iota_\omega(*x).\]

\subsection{A Lefschetz filtration}
Instead of attempting to define a decomposition directly in terms of primitive subspaces, we study a natural extension of the notion of `primitive subspace'.

\begin{definition}
The \textbf{Lefschetz filtration} on the exterior powers is defined by the formula 
\[F_r \Lambda^k(\mathbb Z^{2g}) := \{\alpha \in \Lambda^k(\mathbb Z^{2g}) \mid \omega^{g-k+1+r} \wedge \alpha = 0\}.\]
\end{definition}

Notice that because the action of $\text{Sp}(2g)$ on $\Lambda^*$ commutes with wedge products and fixes $\omega$, the subspaces $F_r \Lambda^k(\mathbb Z^{2g})$ are $\text{Sp}(2g)$-invariant. 

It is straightforward to see that this filtration splits over $\mathbb Z$ --- the subquotients are free abelian groups, which we will study in detail shortly --- but it rarely splits over $\mathbb Z[\text{Sp}(2g)]$. For example, it is not hard to check that the short exact sequence \[0 \to F_0 = P^2(\mathbb Z^{2g}) \to \Lambda^2(\mathbb Z^{2g}) = F_1 \to F_1/F_0 \cong \mathbb Z \to 0\] admits no $\text{Sp}(2g)$-equivariant splitting for any $g \ge 2$; see Lemma \ref{lemma:no-equivariant-splitting}. \\

For $0 \le k \le g$, the first nonzero term in the filtration is $F_0 \Lambda^k(\mathbb Z^{2g}) = P^k(\mathbb Z^{2g})$, while for $g \le k \le 2g$, the first nonzero term in the filtration is \[F_{k-g} \Lambda^k(\mathbb Z^{2g}) = \{\alpha \in \Lambda^k(\mathbb Z^{2g}) \mid \omega \wedge \alpha = 0\},\] sometimes called the \textit{coprimitive subspace} $\tilde P^k$. 

First, we will discuss the behavior of our filtration with respect to the wedge and contraction operations. 

\begin{lemma}\label{lemma:contr-filt}
For all $x \in \Lambda^k(\mathbb Z^{2g})$, we have 
\[\omega_j \wedge x \in F_{r+j} \Lambda^{k+2j} \iff x \in F_r \Lambda^k \iff \iota_{\omega_j}(x) \in F_{r-j} \Lambda^{k-2j}.\]
\end{lemma}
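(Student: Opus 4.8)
I want to prove the chain of equivalences
\[\omega_j \wedge x \in F_{r+j}\Lambda^{k+2j} \iff x \in F_r\Lambda^k \iff \iota_{\omega_j}(x) \in F_{r-j}\Lambda^{k-2j}.\]
The first observation is that the definition of $F_r\Lambda^k$ only involves one power of $\omega$, namely $\omega^{g-k+1+r}$, and the exponent $g - k + 1 + r$ is designed so that it is invariant under the substitution $(k, r) \mapsto (k + 2j, r + j)$: indeed $g - (k+2j) + 1 + (r+j) = g - k + 1 + r - j$. So the condition "$\omega_j \wedge x \in F_{r+j}\Lambda^{k+2j}$" unwinds to "$\omega^{g-k+1+r-j} \wedge \omega_j \wedge x = 0$", whereas "$x \in F_r\Lambda^k$" is "$\omega^{g-k+1+r}\wedge x = 0$". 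Since $\omega^{g-k+1+r-j}\wedge \omega_j = \frac{1}{j!}\omega^{g-k+1+r}$ up to the nonzero rational scalar $1/j!$, and $\Lambda^*(\mathbb Z^{2g})$ is torsion-free, multiplication by $1/j!$ is injective on the relevant group; hence these two vanishing conditions are equivalent. That settles the left-hand equivalence with essentially no work.

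**Reducing the right-hand equivalence to duality.** For the right-hand equivalence I would invoke the Hodge--Lefschetz star $*: \Lambda^{g-k}(\mathbb Z^{2g}) \to \Lambda^{g+k}(\mathbb Z^{2g})$, $*x = \iota_x(\omega_g)$, which is an isomorphism and intertwines $\wedge\omega$ with $\iota_\omega$ (as recalled just before the Lefschetz filtration subsection, and iterating, $*(\omega_j \wedge x) = \iota_{\omega_j}(*x)$). I would like to say: $x \in F_r\Lambda^k$ if and only if $*x \in F_{?}\Lambda^{2g-k}$ for the appropriate index, and then transport the left-hand equivalence through $*$. Concretely, since $\omega^N \wedge x = 0$ is equivalent, after applying the isomorphism $*$, to $\iota_{\omega^N}(*x) = N!\,\iota_{\omega_N}(*x) = 0$, i.e. $\iota_{\omega_N}(*x) = 0$; so membership of $x$ in $F_r\Lambda^k$ translates into a vanishing-of-contraction condition on $*x$. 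The natural thing is therefore to first establish the "contracted" reformulation
\[x \in F_r\Lambda^k(\mathbb Z^{2g}) \iff \iota_{\omega_{g-k+1+r}}(x) = 0\]
directly via $*$, using that $x = \pm *(*x)$ up to sign and $*(\omega^m \wedge y) = \iota_{\omega^m}(*y)$. Then "$\iota_{\omega_j}(x) \in F_{r-j}\Lambda^{k-2j}$" becomes "$\iota_{\omega_{g-(k-2j)+1+(r-j)}}(\iota_{\omega_j}(x)) = 0$", and since $g - (k-2j) + 1 + (r-j) = g - k + 1 + r + j$ and $\iota_{\omega_a}\iota_{\omega_b} = \binom{a+b}{a}\iota_{\omega_{a+b}}$ (from $\iota_{x\wedge y} = \iota_x\iota_y$ applied to $\omega^a, \omega^b$), this is a nonzero multiple of $\iota_{\omega_{g-k+1+r}}(x)$, so vanishes iff $x \in F_r\Lambda^k$. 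This closes the loop.

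**Order of operations and the main obstacle.** So the write-up would go: (1) unwind the left equivalence by the scalar-$1/j!$ argument using torsion-freeness; (2) prove the auxiliary identity $x \in F_r\Lambda^k \iff \iota_{\omega_N}(x) = 0$ for $N = g-k+1+r$, by applying the star isomorphism to the defining condition $\omega^N \wedge x = 0$ and using $*(\omega^N \wedge \,\cdot\,)$ corresponds to $\iota_{\omega^N}$; one must be slightly careful that $*$ is only defined on $\Lambda^{g-k}$, so this requires $0 \le g - k$, and handle the range $k > g$ separately — there $F_r\Lambda^k$ starts at $r = k - g$, and the coprimitive description still matches the contraction condition, or one applies $*$ in the other direction; (3) deduce the right equivalence from (2) plus the composition law $\iota_{\omega_a}\iota_{\omega_b} = \binom{a+b}{a}\iota_{\omega_{a+b}}$ and torsion-freeness again. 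I expect the only real friction to be bookkeeping in step (2)/(3): keeping the binomial-coefficient conventions (including $\binom{-i}{j}$) consistent when some of the formal indices $g-k+1+r$ or $N-j$ go negative, and making sure the "nonzero multiple" claims are genuinely nonzero rationals (so that over the torsion-free group $\Lambda^*(\mathbb Z^{2g})$ we may cancel them) rather than integers that could vanish. Lemma \ref{lemma:general-mn-formula} is available if a more computational route through $\iota_{\omega_m}(\omega_n \wedge x)$ is preferred, but I expect the duality argument to be cleaner.
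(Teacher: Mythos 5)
Your handling of the left-hand equivalence is fine and is exactly the paper's observation (the paper reduces to $j=1$, where it is a tautology; your $1/j!$-plus-torsion-freeness version of the general case is the same point). The problem is the right-hand equivalence. Your proposed ``contracted reformulation'' $x \in F_r\Lambda^k \iff \iota_{\omega_{g-k+1+r}}(x) = 0$ is false. Hodge--Lefschetz duality converts $\omega^N \wedge x = 0$ into $\iota_{\omega_N}(*x) = 0$, which is a condition on $*x$, not on $x$; you cannot simply replace $*x$ by $x$ while keeping the same exponent. Concretely, take $x = \omega_g \in \Lambda^{2g}$ with $k = 2g$, $r = g$: then $g-k+1+r = 1$ and $x \in F_g\Lambda^{2g} = \Lambda^{2g}$, but $\iota_{\omega}(\omega_g) = -\omega_{g-1} \neq 0$ (this is exactly the computation used in Lemma \ref{lemma:no-total-filtration}); similarly $e^1\wedge e^2\wedge e^3 \in F_1\Lambda^3(\mathbb Z^4) = \Lambda^3(\mathbb Z^4)$ has $\iota_\omega(e^1\wedge e^2\wedge e^3) = -e^3 \neq 0$. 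The correct contraction characterization has exponent $r+1$, namely $F_r\Lambda^k = \{x : \iota_{\omega_{r+1}}(x) = 0\}$, which one can see from the rational $\mathfrak{sl}_2$ (Lefschetz) decomposition plus torsion-freeness --- but note this statement is essentially the $j$-fold iterate of the very equivalence you are trying to prove, so it cannot be waved in; it needs the actual inductive work. There is also an internal arithmetic slip in your step (3): with $a = g-k+1+r+j$ and $b = j$, the composite $\iota_{\omega_a}\iota_{\omega_b}$ is $\binom{a+b}{a}\,\iota_{\omega_{g-k+1+r+2j}}$, not a multiple of $\iota_{\omega_{g-k+1+r}}$, and vanishing of a higher contraction $\iota_{\omega_{N+m}}(x)$ does not imply vanishing of the lower one $\iota_{\omega_N}(x)$, so even granting your reformulation the loop does not close.

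For comparison, the paper avoids duality entirely (indeed, the lemma immediately following, on $*$ respecting the filtration, is \emph{deduced from} this one, so a duality-based proof risks circularity unless set up independently over $\mathbb Q$). It reduces to $j=1$ and proves the second biconditional from the weighted Leibniz rule of Lemma \ref{lemma:general-mn-formula} with $m=1$: the identity
\[
\iota_\omega(\omega_{g-k+r+j+1} \wedge x) = \omega_{g-k+r+j+1} \wedge \iota_\omega(x) + (r+j)\,\omega_{g-k+r+j} \wedge x
\]
gives the forward implication directly ($j=1$), and the reverse implication by a downward induction on $j$ starting from a power of $\omega$ that vanishes for degree reasons, using torsion-freeness to cancel the factors $r+j$. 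If you want to salvage your route, prove the $\iota_{\omega_{r+1}}$-characterization first (e.g.\ via the rational Lefschetz decomposition and the fact that $\Lambda^*(\mathbb Z^{2g})$ embeds in $\Lambda^*(\mathbb Q^{2g})$), and then your composition-law argument with the corrected exponents $\iota_{\omega_{r-j+1}}\iota_{\omega_j} = \binom{r+1}{j}\iota_{\omega_{r+1}}$ does go through; but as written the middle step is a gap, not bookkeeping.
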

\begin{proof}
It suffices to prove this for $j = 1$. Explicitly, we aim to show 
\[\omega^{g-k+r} \wedge \omega x = 0 \iff \omega^{g-k+r+1} \wedge x = 0 \iff \omega^{g-k+r+2} \iota_\omega(x) = 0.\] The first biconditional is a tautology. For the second, use the formula \[\iota_\omega(\omega_{g-k+r+j+1} \wedge x) = \omega_{g-k+r+j+1} \wedge \iota_\omega(x) + (r+j) \omega_{g-k+r+j} \wedge x.\] Taking $j = 1$ proves the forward implication. For the reverse implication, inducting downwards from $j = k-r$ implies $\omega_{g-k+r+j+1} \wedge x = 0$ for all $j \ge 0$.
\end{proof}

This implies that duality respects the Lefschetz filtration:

\begin{lemma}
    The Hodge--Lefschetz map restricts to an isomorphism $*: F_r \Lambda^{g-k}(\mathbb Z^{2g}) \to F_{k+r} \Lambda^{g+k}(\mathbb Z^{2g}).$
\end{lemma}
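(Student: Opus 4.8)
The plan is to deduce this directly from the previously established Lemma \ref{lemma:contr-filt} together with the two basic facts about the Hodge--Lefschetz star $*x = \iota_x(\omega_g)$ that were recalled just above: namely that $*$ is an $\text{Sp}(2g)$-equivariant isomorphism $\Lambda^{g-k}(\mathbb Z^{2g}) \to \Lambda^{g+k}(\mathbb Z^{2g})$ with $*^2 = (-1)^k$ on $\Lambda^{g-k}$, and the intertwining relation $*(\omega \wedge x) = \iota_\omega(*x)$. Since $*$ is already known to be an isomorphism on the full exterior powers, the only thing to check is that it carries the subgroup $F_r \Lambda^{g-k}$ \emph{onto} the subgroup $F_{k+r}\Lambda^{g+k}$; equivalently, that $x \in F_r\Lambda^{g-k}$ if and only if $*x \in F_{k+r}\Lambda^{g+k}$.

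First I would unwind what membership in each filtration level means in terms of the relation in Lemma \ref{lemma:contr-filt}. Writing $\dim = 2g$, an element $x \in \Lambda^{g-k}$ lies in $F_r \Lambda^{g-k}$ exactly when $\omega^{(g-(g-k))+1+r}\wedge x = \omega^{k+1+r}\wedge x = 0$; on the other side, $y \in \Lambda^{g+k}$ lies in $F_{k+r}\Lambda^{g+k}$ exactly when $\omega^{(g-(g+k))+1+(k+r)}\wedge y = \omega^{1+r}\wedge y = 0$. So the claim to be proven is the clean statement
\[
\omega^{k+1+r}\wedge x = 0 \quad\Longleftrightarrow\quad \omega^{1+r}\wedge(*x) = 0.
\]
The forward direction follows by iterating the intertwining relation: $*(\omega^j \wedge x) = \iota_\omega^j(*x) = \iota_{\omega_j}(*x)$ up to the harmless constant $j!$ coming from $\omega^j = j!\,\omega_j$, so $\omega^{k+1+r}\wedge x = 0$ gives $\iota_{\omega_{k+1+r}}(*x)=0$, and then Lemma \ref{lemma:contr-filt} (the biconditional $x \in F_r\Lambda^k \iff \iota_{\omega_j}(x)\in F_{r-j}\Lambda^{k-2j}$, applied with the appropriate indices to $*x \in \Lambda^{g+k}$) translates this vanishing of a contraction back into the vanishing of the wedge $\omega^{1+r}\wedge(*x)$. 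The reverse direction is symmetric: apply the same argument to $*x$ in place of $x$, using $*^2 = \pm 1$ to return to $x$, or alternatively note that the relation above is manifestly symmetric under $*$ once one knows $*$ intertwines $\wedge\omega$ and $\iota_\omega$ in both directions. Equivariance of the restricted map is automatic since $*$ is $\text{Sp}(2g)$-equivariant and the subgroups $F_r$ are $\text{Sp}(2g)$-invariant.

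The only genuine bookkeeping obstacle is getting the indices to line up correctly — in particular keeping straight that the degree shift $(g-k)\mapsto(g+k)$ forces the filtration shift $r \mapsto r+k$, and matching the power $g-\deg+1+r$ appearing in the definition of $F_r$ on each side. I would handle this by phrasing everything through Lemma \ref{lemma:contr-filt} with $j = k$, which says precisely that $x \in F_r\Lambda^{g-k}$ iff $\iota_{\omega_k}(x) \in F_{r-k}\Lambda^{g-3k}$ — but the cleaner route is to avoid intermediate steps and directly use the $j$-fold form $*(\omega_j \wedge x)=\iota_{\omega_j}(*x)$, reducing the whole statement to the single equivalence displayed above and one invocation of Lemma \ref{lemma:contr-filt}. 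No separate argument is needed for freeness or splitting, as those are inherited from the already-noted fact that the Lefschetz filtration is $\mathbb Z$-split with free subquotients.
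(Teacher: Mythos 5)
Your proposal is correct and follows essentially the same route as the paper: apply the intertwining relation $*(\omega \wedge x) = \iota_\omega(*x)$ iteratively and then invoke the biconditional of Lemma \ref{lemma:contr-filt} to convert the vanishing contraction $\iota_{\omega_{k+r+1}}(*x)=0$ into membership of $*x$ in $F_{k+r}\Lambda^{g+k}$. If anything, you are a bit more thorough than the paper's one-line argument, which writes out only the forward inclusion and leaves the reverse direction (needed for the map to be onto $F_{k+r}\Lambda^{g+k}$) implicit in that same biconditional together with $*^2=\pm 1$.
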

\begin{proof}
  Take $x \in F_r\Lambda^{g-k}(\mathbb Z^{2g})).$ By definition, $\omega^{k+r+1} \wedge x=0,$ which implies that $$\iota_{\omega^{k+r+1}}(*x) =*(\omega^{k+r+1} \wedge x)=0 \in F_{-1} \Lambda^{g-k-2r-2}(\mathbb Z^{2g}).$$ By Lemma \ref{lemma:contr-filt}, we have $*x \in F_{k+r}\Lambda^{g+k}(\mathbb Z^{2g}).$  
\end{proof}

We also obtain injectivity of contraction on associated graded spaces:

\begin{lemma}
For any $0 \le r \le g-k$ and $0 \le k \le g$, contraction against $\omega_r$ gives a well-defined injective homomorphism \[\iota_{\omega_r}: \textup{gr}_{r+j} \Lambda^{k+2r}(\mathbb Z^{2g}) \to P^j(\mathbb Z^{2g}).\]
\end{lemma}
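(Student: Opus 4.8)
The plan is to deduce the whole statement from Lemma \ref{lemma:contr-filt}, which already encodes the precise interaction between contraction by powers of $\omega$ and the Lefschetz filtration. That lemma provides, for any $x \in \Lambda^m(\mathbb Z^{2g})$ and any nonnegative $s$, the biconditional $x \in F_t \Lambda^m(\mathbb Z^{2g}) \Longleftrightarrow \iota_{\omega_s}(x) \in F_{t-s}\Lambda^{m-2s}(\mathbb Z^{2g})$. The forward implication will show that $\iota_{\omega_r}$ respects the filtration steps, hence descends to associated graded groups; the reverse implication will give injectivity there; and Theorem \ref{thm:Lefschetz-intro} will identify the target with a primitive subspace.

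First I would prove well-definedness. Applying the forward implication of Lemma \ref{lemma:contr-filt} with $m = k+2r$, $s = r$, and $t = r+j$ shows $\iota_{\omega_r}$ maps $F_{r+j}\Lambda^{k+2r}(\mathbb Z^{2g})$ into $F_{j}\Lambda^{k}(\mathbb Z^{2g})$; applying it once more with $t = r+j-1$ shows it maps $F_{r+j-1}\Lambda^{k+2r}(\mathbb Z^{2g})$ into $F_{j-1}\Lambda^{k}(\mathbb Z^{2g})$. As $\iota_{\omega_r}$ is $\mathbb Z$-linear, it therefore induces a homomorphism $\textup{gr}_{r+j}\Lambda^{k+2r}(\mathbb Z^{2g}) \to \textup{gr}_{j}\Lambda^{k}(\mathbb Z^{2g})$, and Theorem \ref{thm:Lefschetz-intro} identifies the latter, as an $\textup{Sp}(2g)$-module, with the free abelian group $P^{k-2j}(\mathbb Z^{2g})$. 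For $j = 0$ this is exactly the contraction map of Theorem \ref{thm:Lefschetz-intro}, so the present statement is the extension of its injectivity half to all filtration levels.

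For injectivity, suppose $x \in F_{r+j}\Lambda^{k+2r}(\mathbb Z^{2g})$ represents a class in the kernel, i.e.\ $\iota_{\omega_r}(x) \in F_{j-1}\Lambda^{k}(\mathbb Z^{2g})$. The reverse implication of Lemma \ref{lemma:contr-filt}, now with $s = r$ and $t = r+j-1$, forces $x \in F_{r+j-1}\Lambda^{k+2r}(\mathbb Z^{2g})$, so the class of $x$ in $\textup{gr}_{r+j}\Lambda^{k+2r}(\mathbb Z^{2g})$ already vanishes; this is the whole argument.

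The only place genuine work is hidden is the reverse implication of Lemma \ref{lemma:contr-filt} invoked in the previous paragraph --- that $\iota_{\omega_r}(x)$ landing in a lower filtration step forces $x$ to have done so --- which is established there by a downward induction built on the weighted Leibniz identity of Lemma \ref{lemma:Lefschetz-identity} and its iterate Lemma \ref{lemma:general-mn-formula}. Granting those, the present lemma presents no further obstacle beyond index bookkeeping; the hypotheses $0 \le k \le g$ and $0 \le r \le g-k$ are exactly what keep the relevant graded pieces in the range where $F_0 = P$ and Theorem \ref{thm:Lefschetz-intro} applies.
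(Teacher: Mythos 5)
Your core argument is exactly the paper's: the paper proves this lemma by observing that well-definedness and injectivity are the two directions of the biconditional $x \in F_{r-1}\Lambda^{k+2r}(\mathbb Z^{2g}) \iff \iota_{\omega_r}(x) \in F_{-1}\Lambda^k(\mathbb Z^{2g}) = 0$, i.e.\ precisely the two implications of Lemma \ref{lemma:contr-filt} that you invoke, so the substance of your proof is correct and identical in method. One caveat: your step ``Theorem \ref{thm:Lefschetz-intro} identifies $\textup{gr}_j\Lambda^k(\mathbb Z^{2g})$ with $P^{k-2j}(\mathbb Z^{2g})$'' is a forward reference that would be circular in the paper's logical order, since Theorem \ref{thm:Lefschetz} is deduced from the present lemma together with Proposition \ref{prop:w-surj}. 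The circularity is avoidable and inessential: read with the intended indices (the statement's $r+j$ and $P^j$ are mis-indexed; the paper's proof and its later use show the intended map is $\iota_{\omega_r}\colon \textup{gr}_r\Lambda^{k+2r}(\mathbb Z^{2g}) \to P^k(\mathbb Z^{2g})$), the target is $\textup{gr}_0\Lambda^k = F_0\Lambda^k$, which equals $P^k$ by definition, so no identification of a graded piece with a primitive subspace is needed --- this is your $j=0$ case, which is self-contained. Alternatively, if you want the general-$j$ version landing in an honest primitive subspace, compose your induced map $\textup{gr}_{r+j}\Lambda^{k+2r} \to \textup{gr}_j\Lambda^k$ with the $j=0$ instance $\iota_{\omega_j}\colon \textup{gr}_j\Lambda^k \hookrightarrow P^{k-2j}$ of the same lemma, rather than citing the theorem.
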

\begin{proof}
Well-definedness and injectivity of this map are the two directions in the biconditional
\[x \in F_{r-1} \Lambda^{k+2r}(\mathbb Z^{2g}) \iff \iota_{\omega_r}(x) \in F_{-1} \Lambda^k(\mathbb Z^{2g}) = 0.\qedhere\]
\end{proof}

\noindent What is special to the linear case is that this map is also surjective. The following statement will fail for $H^*(X;\mathbb Z)$ with $X$ smooth projective; $\iota_{\omega_r}$ will not be an isomorphism for $k = 0, r = g$ unless $X = \mathbb{CP}^d$. 

\begin{prop}\label{prop:w-surj}
For all integers $g$, for all $0 \le k \le g$ and all $0 \le r \le g-k$, the contraction map \[\iota_{\omega_r}: F_r \Lambda^{k+2r}(\mathbb Z^{2g}) \to P^k(\mathbb Z^{2g})\] is surjective. 
\end{prop}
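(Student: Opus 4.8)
The plan is to produce, for each $y \in P^k(\mathbb Z^{2g})$, an explicit preimage in $F_r \Lambda^{k+2r}(\mathbb Z^{2g})$. The natural candidate is $\omega_r \wedge y$, so the real content is twofold: first, that $\omega_r \wedge y$ indeed lies in $F_r$ — which is immediate from Lemma \ref{lemma:contr-filt}, since $y \in P^k = F_0 \Lambda^k$ forces $\omega_r \wedge y \in F_r \Lambda^{k+2r}$ — and second, that $\iota_{\omega_r}(\omega_r \wedge y)$ recovers $y$ up to a unit, or more realistically, that the composite $\iota_{\omega_r}(\omega_r \wedge -)$ on $P^k$ is an isomorphism so that a suitable $\mathbb Z$-linear combination of wedge-then-nothing gives a section. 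First I would compute $\iota_{\omega_r}(\omega_r \wedge y)$ for $y \in P^k$ using Lemma \ref{lemma:general-mn-formula} with $m = n = r$: since $\iota_{\omega_{r-j}}(y) = 0$ for all $j < r$ (as $y$ is primitive, contraction by any positive power of $\omega$ kills it), the sum collapses to the single term $j = r$, giving $\iota_{\omega_r}(\omega_r \wedge y) = (-1)^r \binom{g-k}{r} y$.

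That scalar $\binom{g-k}{r}$ is not $\pm 1$ in general, so $\omega_r \wedge y$ is not literally a preimage of $y$; this is the main obstacle. The resolution is that surjectivity of $\iota_{\omega_r}$ on $F_r$ does not require the preimage to come from wedging. Instead I would argue as follows. We already know from the preceding lemma that $\iota_{\omega_r}: \textup{gr}_r \Lambda^{k+2r}(\mathbb Z^{2g}) \to P^k(\mathbb Z^{2g})$ (the case $j = 0$) is a well-defined \emph{injective} homomorphism of free abelian groups. To upgrade injectivity to an isomorphism, it suffices to check that the two groups have the same rank and that the map is surjective after tensoring with $\mathbb Q$ — equivalently, that over $\mathbb Q$ the classical Lefschetz decomposition holds and $\iota_{\omega_r}$ restricted to $\omega_r \wedge P^k$ is an isomorphism onto $P^k$, which the scalar computation above confirms since $\binom{g-k}{r} \ne 0$ for $0 \le r \le g-k$. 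But rational surjectivity plus integral injectivity between free abelian groups of equal rank does \emph{not} by itself give integral surjectivity (the cokernel could be finite nonzero) — so this cheap argument is insufficient, and that gap is precisely where the work lies.

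To close it, I would show directly that $\textup{gr}_r \Lambda^{k+2r}$ is generated over $\mathbb Z$ by classes whose $\iota_{\omega_r}$-images generate $P^k$. Concretely: take any $x \in F_r \Lambda^{k+2r}(\mathbb Z^{2g})$. Its image $z = \iota_{\omega_r}(x)$ lies in $\Lambda^k$, and one checks using Lemma \ref{lemma:contr-filt} that $z \in F_0 \Lambda^k = P^k$ (since $x \in F_r$ forces $\iota_{\omega_r} x \in F_{r-r} \Lambda^k = P^k$). Now consider $x' = x - \tfrac{(-1)^r}{\binom{g-k}{r}}\,\omega_r \wedge z$; this is an element of $F_r \Lambda^{k+2r} \otimes \mathbb Q$ with $\iota_{\omega_r}(x') = 0$, hence $x' \in F_{r-1} \Lambda^{k+2r} \otimes \mathbb Q$. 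This shows $F_r = F_{r-1} \oplus \omega_r \wedge P^k$ rationally but only tells us $\textup{gr}_r \otimes \mathbb Q \cong P^k \otimes \mathbb Q$ via $\iota_{\omega_r}$, not the integral statement. The honest fix is to prove the isomorphism \emph{and} the rank-count simultaneously by downward induction, or — cleaner — to invoke the Hodge--Lefschetz star: by the lemma above, $*$ identifies $F_r \Lambda^{g-\ell}$ with $F_{\ell+r}\Lambda^{g+\ell}$ and intertwines $\wedge \omega$ with $\iota_\omega$, so surjectivity of $\iota_{\omega_r}$ on $F_r$ is equivalent to surjectivity of $\omega_r \wedge (-) : \Lambda^{g-k-r}(\mathbb Z^{2g}) \to (\text{image in } \Lambda^{g-k+r})$ hitting the coprimitive part, a statement one can attack by an explicit spanning-set argument in the standard symplectic basis. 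I expect the genuinely hard step to be exhibiting an integral (not merely rational) section — i.e. showing no finite cokernel sneaks in — and I would do this by the explicit basis: primitive classes have a well-known monomial $\mathbb Z$-basis indexed by "admissible" subsets of $\{e^1, \dots, e^{2g}\}$, and for each such basis element $y$ one writes down by hand an element of $\Lambda^{k+2r}$ mapping to exactly $y$ under $\iota_{\omega_r}$, using that in the standard basis $\iota_\omega$ and $\wedge\omega$ have an easily-understood triangular action on monomials.
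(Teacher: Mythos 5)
You have correctly located the crux: wedging a primitive class $y$ with $\omega_r$ and contracting back returns $(-1)^r\binom{g-k}{r}\,y$, so the obvious candidate preimage only proves surjectivity after tensoring with $\mathbb Q$, and integral injectivity between free groups of equal rank together with rational surjectivity leaves open a finite cokernel. That diagnosis matches the real difficulty. But the proposal never closes this gap. The final step — ``for each basis element $y$ of $P^k$ one writes down by hand an element of $\Lambda^{k+2r}$ mapping to exactly $y$ under $\iota_{\omega_r}$, using a triangular action on monomials'' — is a plan, not an argument: no such elements are exhibited, no admissible-monomial basis of $P^k(\mathbb Z^{2g})$ is specified over $\mathbb Z$, and no verification of the claimed triangularity or of why the construction produces an \emph{integral} (rather than $\binom{g-k}{r}$-divided) preimage is given. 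Since the entire content of the proposition is precisely the absence of that finite cokernel, the proof is not complete as written. (One small point in your favor: by Lemma \ref{lemma:contr-filt}, any $x \in \Lambda^{k+2r}$ with $\iota_{\omega_r}(x) \in P^k = F_0\Lambda^k$ automatically lies in $F_r\Lambda^{k+2r}$, so membership in $F_r$ would come for free once a preimage is found; the existence of the integral preimage is the only issue, but it is the whole issue.)

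For comparison, the paper resolves this by induction on $g$: splitting $\mathbb Z^{2g+2} = \mathbb Z^{2g} \oplus \langle e^{2g+1}, e^{2g+2}\rangle$ and writing a primitive class as $x = a + e^{2g+1}b + e^{2g+2}c + e^{2g+1}e^{2g+2}d$, the primitivity equation forces $b,c \in P^k(\mathbb Z^{2g})$, $d \in P^{k-1}(\mathbb Z^{2g})$, and $a \in F_1$; the inductive hypothesis supplies integral preimages $b',c',d',d''$ for $b,c,d$, which are assembled into an explicit $y$ with $\iota_{\omega_r}(y) - x$ supported in $\Lambda^*(\mathbb Z^{2g})$, and a final injectivity argument ($\eta_{g-k}$ on $\Lambda^k(\mathbb Z^{2g})$) kills the discrepancy. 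Something of that concrete nature — either this induction or a genuinely worked-out monomial construction — is what your outline still needs before it constitutes a proof.
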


\begin{proof}
We will prove this claim by induction on $g$, where the base case $g=0$ is tautological. Suppose the statement is known for forms in $\mathbb Z^{2g}$; we will prove the statement for forms in $\mathbb Z^{2g+2}$. The case $k = 0$ follows from the observation that $\omega_r \in F_r \Lambda^{2r}$ and $\iota_{\omega_r}(\omega_r) = (-1)^r$. The case $r = g-k$ follows from Lemma \ref{lemma:general-mn-formula}: if $x$ is primitive, then $\iota_{\omega_j}(x) = 0$ for all $j > 0$, and then \[\iota_{\omega_{g-k}}(\omega_{g-k} x) = (-1)^{g-k} \binom{g-k}{g-k} x = (-1)^{g-k} x,\] where $\omega_{g-k} x \in F_{g-k} \Lambda^{2g-k}$.  

Now fix $r$. Consider $\mathbb Z^{2g}$ as the span of the first $2g$ coordinates in $\mathbb Z^{2g+2}$. Write $\omega$ for the standard $2$-form on $\mathbb Z^{2g+2}$ and $\eta$ for the standard $2$-form on $\mathbb Z^{2g}$, so we have for $r \ge 1$ that 
\[\omega_r = \eta_r + e^{2g+1} e^{2g+2} \eta_{r-1}.\] 
Consider $x \in P^{k+1}(\mathbb Z^{2g+2})$ for $0 \le k < g-r$, which means that $\omega_{g-k+1} x = 0$. Write 
\[x = a + e^{2g+1} b + e^{2g+2} c + e^{2g+1} e^{2g+2} d,\] 
where $a,b,c,d \in \Lambda^*(\mathbb Z^{2g}).$ We may rewrite $\omega_{g-k+1} x = 0$ as 
\[0 = \omega_{g-k+1} x = \eta_{g-k+1} a + e^{2g+1} \eta_{g-k+1} b + e^{2g+2} \eta_{g-k+1} c + e^{2g+1} e^{2g+2} (\eta_{g-k} a + \eta_{g-k+1} d).\]
Because the individual summands must be zero, we obtain 
\[b, c \in P^k(\mathbb Z^{2g}), \quad a \in F_1 \Lambda^{k+1}(\mathbb Z^{2g}), \quad d \in P^{k-1}(\mathbb Z^{2g}).\] 
By inductive hypothesis and the fact that $r \le g-k-1$ we have 
\[b = \iota_{\eta_r}(b'), \quad c = \iota_{\eta_r}(c'), \quad d = \iota_{\eta_r}(d') = \iota_{\eta_{r+1}}(d''),\] 
where 
\[b', c' \in F_r \Lambda^{k+2r}(\mathbb Z^{2g}), \quad d' \in F_r \Lambda^{k+2r-1}(\mathbb Z^{2g}), \quad d'' \in F_{r+1} \Lambda^{k+2r+1}(\mathbb Z^{2g}).\] 
Set $y = d'' + e^{2g+1} b' + e^{2g+2} c' + e^{2g+1} e^{2g+2} d'.$ We have 
\[\iota_{\omega_{r+1}}(y) = \iota_{\eta_{r+1}}(d'') - \iota_{\eta_r}(d') + e^{2g+1} \iota_{\eta_{r+1}}(b') + e^{2g+2} \iota_{\eta_{r+1}}(c') + e^{2g+1} e^{2g+2} \iota_{\eta_{r+1}}(d') = d - d = 0.\] 
It follows from Lemma \ref{lemma:contr-filt} that $y \in F_r \Lambda^{k+2r+1}(\mathbb Z^{2g+2})$. Because $\iota_{\omega_r}(y) \in P^{k+1}(\mathbb Z^{2g+2})$ and 
\[\iota_{\omega_r}(y) = \iota_{\eta_r}(d'') - \iota_{\eta_{r-1}}(d') + e^{2g+1} b + e^{2g+2} c + e^{2g+1} e^{2g+2} d,\] 
we see that $x = \iota_{\omega_r}(y) + a'$ with $a' \in P^{k+1}(\mathbb Z^{2g+2}) \cap \Lambda^{k+1}(\mathbb Z^{2g})$. We conclude by showing $a' = 0$. Because
\[0 = \omega_{g-k+1} a' = \eta_{g-k+1} a' + e^{2g+1} e^{2g+2} \eta_{g-k} a'\] 
we have $\eta_{g-k} a' = 0$; because the map $\eta_{g-k}: \Lambda^k(\mathbb Z^{2g}) \to \Lambda^{2g-k}(\mathbb Z^{2g})$ is injective we see $a' = 0$.
\end{proof}

Thus, we have established the following `integral Lefschetz decomposition'.

\begin{theorem}\label{thm:Lefschetz}
Contraction against $\omega_r$ defines a $\textup{Sp}(2g)$-equivariant isomorphism \[\iota_{\omega_r}: \textup{gr}_r \Lambda^k(\mathbb Z^{2g}) = \frac{F_r \Lambda^k(\mathbb Z^{2g})}{F_{r-1} \Lambda^k(\mathbb Z^{2g})} \cong P^{k-2r}(\mathbb Z^{2g}).\qedhere\]
\end{theorem}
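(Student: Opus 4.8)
The plan is to assemble Theorem \ref{thm:Lefschetz} directly from the three lemmas and one proposition that immediately precede it, which together already contain every ingredient; the only work is to check that the contraction map $\iota_{\omega_r}$ descends to the quotient, that it lands in $P^{k-2r}$, that it is injective, and that it is surjective, and then to note equivariance.

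\medskip\noindent\textbf{Descent and target.} First I would observe that by Lemma \ref{lemma:contr-filt} (with $j = r$), $x \in F_r\Lambda^k(\mathbb Z^{2g})$ implies $\iota_{\omega_r}(x) \in F_{r-r}\Lambda^{k-2r}(\mathbb Z^{2g}) = F_0\Lambda^{k-2r}(\mathbb Z^{2g}) = P^{k-2r}(\mathbb Z^{2g})$, so $\iota_{\omega_r}$ genuinely maps $F_r\Lambda^k$ into the primitive subspace $P^{k-2r}$. To see it kills $F_{r-1}\Lambda^k$, apply Lemma \ref{lemma:contr-filt} again: $x \in F_{r-1}\Lambda^k$ iff $\iota_{\omega_r}(x) \in F_{-1}\Lambda^{k-2r} = 0$. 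Hence $\iota_{\omega_r}$ descends to a well-defined homomorphism $\textup{gr}_r\Lambda^k(\mathbb Z^{2g}) \to P^{k-2r}(\mathbb Z^{2g})$, and the same biconditional shows this induced map is injective. (This is exactly the content of the unnumbered lemma stating $\iota_{\omega_r}\colon \textup{gr}_{r+j}\Lambda^{k+2r} \hookrightarrow P^j$, specialized to $j = k-2r$, so I would just cite it.)

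\medskip\noindent\textbf{Surjectivity.} Surjectivity is Proposition \ref{prop:w-surj} verbatim: it asserts that $\iota_{\omega_r}\colon F_r\Lambda^{k'+2r}(\mathbb Z^{2g}) \to P^{k'}(\mathbb Z^{2g})$ is onto for $0 \le k' \le g$ and $0 \le r \le g - k'$. Writing $k' = k - 2r$, the composite $F_r\Lambda^k \twoheadrightarrow P^{k-2r}$ is already surjective, so a fortiori the induced map on $\textup{gr}_r\Lambda^k$ is surjective. Combined with injectivity, $\iota_{\omega_r}\colon \textup{gr}_r\Lambda^k(\mathbb Z^{2g}) \to P^{k-2r}(\mathbb Z^{2g})$ is an isomorphism of abelian groups; since $P^{k-2r}(\mathbb Z^{2g})$ is a free abelian group (it is the kernel of a map between free abelian groups, hence a subgroup of a free abelian group, and the earlier observation that the $F_r$ are $\mathbb Z$-split saturated makes it a direct summand), $\textup{gr}_r\Lambda^k$ is free as claimed in Theorem \ref{thm:Lefschetz-intro}.

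\medskip\noindent\textbf{Equivariance.} Finally I would note that each $\text{Sp}(2g)$-action on $\Lambda^*(\mathbb Z^{2g})$ fixes $\omega$, hence fixes $\omega_r = \omega^r/r!$, and commutes with the contraction pairing $\iota$; therefore $\iota_{\omega_r}$ is $\text{Sp}(2g)$-equivariant on the nose, the filtration $F_\bullet$ is $\text{Sp}(2g)$-stable (already remarked after the definition), and so the induced isomorphism on $\textup{gr}_r$ is an isomorphism of $\text{Sp}(2g)$-modules. I don't expect any real obstacle here — all the difficulty was front-loaded into the inductive argument of Proposition \ref{prop:w-surj} — the remaining task is purely bookkeeping: carefully matching the index shift $k \leftrightarrow k - 2r$ between the statements of Lemma \ref{lemma:contr-filt}, the injectivity lemma, and Proposition \ref{prop:w-surj}, and confirming that "surjective onto $P^{k-2r}$ before passing to the quotient" suffices.
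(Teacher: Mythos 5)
Your proposal is correct and is essentially the paper's own argument: the paper presents Theorem \ref{thm:Lefschetz} as an immediate assembly of Lemma \ref{lemma:contr-filt} (well-definedness, target, and injectivity on the associated graded) together with Proposition \ref{prop:w-surj} (surjectivity), with equivariance following because $\textup{Sp}(2g)$ fixes $\omega$ and hence $\omega_r$. Your bookkeeping of the index shift $k \leftrightarrow k-2r$ is exactly what the paper does implicitly, so there is nothing to add.
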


Because the interaction between contraction and wedge product is well-behaved, we obtain the following result. 

\begin{cor}\label{cor:Lefschetz-graded}
With respect to the isomorphism of Theorem \ref{thm:Lefschetz}, for any $0 \le r \le g-k$, the maps given by wedge product and contraction with $\omega_j$ 
\begin{align*}
P^k(\mathbb Z^{2g}) \xrightarrow{\iota_{\omega_r}^{-1}} &\textup{gr}_r \Lambda^{k+2r}(\mathbb Z^{2g}) \xrightarrow{\wedge \omega_j} \textup{gr}_{r+j} \Lambda^{k+2r+2j}(\mathbb Z^{2g}) \xrightarrow{\iota_{\omega_{r+j}}} P^k(\mathbb Z^{2g}) \\
P^k(\mathbb Z^{2g}) \xrightarrow{\iota_{\omega_{r+j}}^{-1}} &\textup{gr}_{r+j} \Lambda^{k+2r+2j}(\mathbb Z^{2g}) \xrightarrow{\iota_{\omega_j}} \textup{gr}_r \Lambda^{k+2r}(\mathbb Z^{2g}) \xrightarrow{\iota_{\omega_r}} P^k(\mathbb Z^{2g})
\end{align*}
are multiplication by $(-1)^j \binom{g-k+r}{j}$ and $\binom{r+j}{j},$ respectively.
\end{cor}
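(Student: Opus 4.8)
The plan is to evaluate each composite on a general $x\in P^k(\mathbb Z^{2g})$ by lifting it to a concrete representative in the appropriate exterior power, pushing that representative through the three maps, and using Lemma \ref{lemma:contr-filt} at each stage to see which filtration level it lands in; the only genuine identity needed is Lemma \ref{lemma:general-mn-formula}. The two composites are of quite different difficulty: the lower one (contraction then contraction) is essentially formal, whereas the upper one (wedge then contraction) requires a brief argument that all but one term of an expansion vanishes.

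I would handle the lower composite first. The defining property of the interior product gives $\iota_{\omega_r}\circ\iota_{\omega_j}=\iota_{\omega_r\wedge\omega_j}$, and since $\omega_r\wedge\omega_j=\binom{r+j}{j}\,\omega_{r+j}$ this is $\binom{r+j}{j}\,\iota_{\omega_{r+j}}$ as a map $\Lambda^{k+2r+2j}(\mathbb Z^{2g})\to\Lambda^{k}(\mathbb Z^{2g})$. By Lemma \ref{lemma:contr-filt}, $\iota_{\omega_j}$ sends $F_{r+j}\Lambda^{k+2r+2j}$ into $F_r\Lambda^{k+2r}$ and $F_{r+j-1}$ into $F_{r-1}$, so all three arrows descend to associated graded and the composite is well defined. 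Choosing a representative $z\in F_{r+j}\Lambda^{k+2r+2j}$ of $\iota_{\omega_{r+j}}^{-1}(x)$, so that $\iota_{\omega_{r+j}}(z)=x$, the composite carries $x$ to $\iota_{\omega_r}(\iota_{\omega_j}(z))=\binom{r+j}{j}\,\iota_{\omega_{r+j}}(z)=\binom{r+j}{j}\,x$, which is the assertion.

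For the upper composite I would take a representative $y\in F_r\Lambda^{k+2r}$ of $\iota_{\omega_r}^{-1}(x)$, so that $\iota_{\omega_r}(y)=x$ --- such $y$ exists by Theorem \ref{thm:Lefschetz}. By Lemma \ref{lemma:contr-filt}, $\omega_j\wedge y$ lies in $F_{r+j}\Lambda^{k+2r+2j}$ and represents the image of $\iota_{\omega_r}^{-1}(x)$ under $\wedge\omega_j$, so the composite sends $x$ to $\iota_{\omega_{r+j}}(\omega_j\wedge y)\in P^k(\mathbb Z^{2g})$. Applying Lemma \ref{lemma:general-mn-formula} with $m=r+j$, $n=j$ to $y\in\Lambda^{k+2r}(\mathbb Z^{2g})$ writes this as a sum over $0\le l\le r+j$ of terms $c_l\,\omega_{j-l}\wedge\iota_{\omega_{r+j-l}}(y)$, with $c_l$ the binomial coefficient dictated by that lemma. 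The crux is that only $l=j$ survives: for $l>j$ the factor $\omega_{j-l}$ vanishes, and for $l<j$ Lemma \ref{lemma:contr-filt} forces $\iota_{\omega_{r+j-l}}(y)$ into $F_{l-j}\Lambda^{k-2j+2l}(\mathbb Z^{2g})$, which is zero because $l-j<0$ while $k-2j+2l\le g$, and in degrees $\le g$ the Lefschetz filtration begins at $F_0$. What is left is $c_j\,\iota_{\omega_r}(y)=c_j\,x$, and reading off $c_j$ from Lemma \ref{lemma:general-mn-formula} yields the stated multiplication. Independence of the chosen representative $y$ --- hence well-definedness of the whole composite --- also falls out: replacing $y$ by $y+w$ with $w\in F_{r-1}\Lambda^{k+2r}$ alters $\omega_j\wedge y$ by an element of $F_{r+j-1}\Lambda^{k+2r+2j}$, which $\iota_{\omega_{r+j}}$ annihilates.

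I do not anticipate a serious obstacle; the substance is already packed into Theorem \ref{thm:Lefschetz}, Lemma \ref{lemma:general-mn-formula}, and Lemma \ref{lemma:contr-filt}. The one point demanding care is the filtration-degree bookkeeping --- it is Lemma \ref{lemma:contr-filt} that makes $\wedge\omega_j$ and $\iota_{\omega_j}$ shift the Lefschetz filtration by exactly $j$, which is simultaneously what lets the arrows descend to associated graded and what forces the sum in the upper composite down to its $l=j$ term --- after which only an elementary binomial simplification remains.
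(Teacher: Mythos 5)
Your route is the same as the paper's: the contraction--contraction composite via $\iota_{\omega_r}\circ\iota_{\omega_j}=\iota_{\omega_r\wedge\omega_j}$ together with $\omega_r\wedge\omega_j=\binom{r+j}{j}\omega_{r+j}$, and the wedge--contraction composite via Lemma \ref{lemma:general-mn-formula}, with the terms $l>j$ killed by $\omega_{j-l}=0$ and the terms $l<j$ killed by the filtration (Lemma \ref{lemma:contr-filt} plus $F_{-1}\Lambda^m=0$ in degrees $m\le g$), so that only $l=j$ survives. The paper's first-choice derivation of the wedge statement is by Hodge--Lefschetz duality from the contraction statement, but it records exactly your direct argument as an alternative; your extra checks (representative independence, the filtration bookkeeping) are fine and are left implicit in the paper.

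The one step you wave through --- ``reading off $c_j$ from Lemma \ref{lemma:general-mn-formula} yields the stated multiplication'' --- is precisely where care is needed, and as written it is not correct. Your representative $y$ has degree $k+2r$, so the lemma with $m=r+j$, $n=j$ produces the binomial $\binom{g-(k+2r)+(r+j)-j}{l}=\binom{g-k-r}{l}$, and the surviving $l=j$ term is $(-1)^j\binom{g-k-r}{j}\,\iota_{\omega_r}(y)$, not $(-1)^j\binom{g-k+r}{j}\,x$; these differ whenever $r>0$. A small check shows your careful computation would in fact be the correct one and the printed constant is at fault: for $g=2$, $k=0$, $r=j=1$, all three groups in the wedge composite have rank one and each arrow is an isomorphism (explicitly, $1\mapsto[-e^1\wedge e^2]\mapsto[-e^1\wedge e^2\wedge e^3\wedge e^4]\mapsto -1$), so the composite is $\pm 1=(-1)^1\binom{g-k-r}{1}$, whereas the statement predicts $-3$. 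The paper's own written proof makes the same substitution slip, quoting the lemma with $\binom{g-k+r}{i}$ in place of $\binom{g-k-r}{i}$; note that the later applications of the wedge-product formula all take $r=0$, where the two binomials agree, so nothing downstream is affected. So: carry out the read-off explicitly and state the coefficient you actually obtain, $(-1)^j\binom{g-k-r}{j}$, rather than asserting agreement with the printed one; with that correction your argument is complete.
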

\begin{proof}
The claim is that for $x \in \text{gr}_r \Lambda^{k+2r}(\mathbb Z^{2g})$ and $y \in \text{gr}_{r+j} \Lambda^{k+2r+2j}(\mathbb Z^{2g})$, we have \[\iota_{\omega_{r+j}}(\omega_j x) = (-1)^j \binom{g-k+r}{j} \iota_{\omega_r}(x), \quad \quad \iota_{\omega_r}(\iota_{\omega_j}(y)) = \binom{r+j}{j} \iota_{\omega_{r+j}}(y).\] The latter claim follows from $\iota_{ab}(y) = \iota_a(\iota_b(y))$ and the fact that $\omega_j \cdot \omega_r = \binom{r+j}{j} \omega_{r+j}$. The former claim follows from this by Hodge--Lefschetz duality. For a more direct proof, Lemma \ref{lemma:general-mn-formula} gives \[\iota_{\omega_{r+j}}(\omega_j \wedge x) = \sum_{i=0}^{r+j} (-1)^i \binom{g-k+r}{i} \omega_{j-i} \wedge \iota_{\omega_{r+j-i}}(x).\] Now $\iota_{\omega_{r+j-i}}(x) \in F_{i-j}$ and so can only be nonzero if $i \ge j$, whereas the term $\omega_{j-i}$ can only be nonzero if $i \le j$. Thus the only nonzero term appears for $i=j$, where it gives $(-1)^j \binom{g-k+r}{j} \iota_{\omega_r}(x)$.
\end{proof}

In particular, the isomorphism $\iota_{\omega_r}^{-1}: P^k(\mathbb Z^{2g}) \to \text{gr}_r \Lambda^{k+2r}(\mathbb Z^{2g})$ is given explicitly by \[\iota_{\omega_r}^{-1}(x) = \frac{(-1)^r}{\binom{g-k}{r}} \left[\omega_r \wedge x\right],\] and in particular the element $[\omega_r \wedge x] \in \text{gr}_r \Lambda^{k+2r}(\mathbb Z^{2g})$ is always uniquely divisible by $\binom{g-k}{r}$. 

\subsection{Splittings of the Lefschetz filtration}
Now that we have a filtration with associated graded pieces isomorphic to primitive subspaces, we discuss the extent to which this allows for a true direct sum decomposition in terms of these subspaces.

\begin{definition}
Suppose $A$ is a $\mathbb Z[G]$-module, and $0 \subset F_0 A \subset \cdots \subset F_n A = A$ is a filtration by $\mathbb Z[G]$-submodules. 

A $\mathbb Z$-\textbf{splitting} of this filtration is a splitting of each sequence $0 \to F_{r-1} A \to F_r A \to \textup{gr}_r A \to 0$; equivalently, a sequence of subgroups $G_r A \subset F_r A$ for $0 \le r \le n$ for which \[G_r A \cap F_{r-1} A = 0, \quad F_r A = G_r A + F_{r-1} A.\] The splitting is said to be $G$-\textbf{equivariant} if the splitting is one of $\mathbb Z[G]$-modules; equivalently, each $G_r A$ is a $\mathbb Z[G]$-submodule of $F_r A$. A filtration which admits a $\mathbb Z$-splitting is said to be $\mathbb Z$-\textbf{split}.
\end{definition}

Given a $\mathbb Z$-splitting of a filtration, the group $A$ enjoys a direct sum decomposition $A = \oplus_{i=0}^n G_i A$ for which the filtration is given by $F_r A = \oplus_{i=0}^r G_i A$.\\

We will now discuss the extent to which the Lefschetz filtration admits splittings with certain favorable properties. Equivariant splittings are out of the question:

\begin{lemma}\label{lemma:no-equivariant-splitting}
    The Lefschetz filtration on $\Lambda^{2k}(\mathbb Z^{2g})$ does not admit a $\textup{Sp}(2g)$-equivariant splitting for any $g \ge 2$ and $0 < k < g$. 
\end{lemma}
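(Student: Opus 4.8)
The plan is to produce a single filtration length-2 piece of $\Lambda^{2k}(\mathbb Z^{2g})$ and show it cannot split equivariantly. Concretely, by Theorem~\ref{thm:Lefschetz} we have a short exact sequence of $\mathbb Z[\text{Sp}(2g)]$-modules
\[
0 \to F_{k-1}\Lambda^{2k}(\mathbb Z^{2g}) \to F_k\Lambda^{2k}(\mathbb Z^{2g}) \to \textup{gr}_k \Lambda^{2k}(\mathbb Z^{2g}) \cong P^0(\mathbb Z^{2g}) = \mathbb Z \to 0,
\]
since for $0 < k < g$ the top filtration index on $\Lambda^{2k}$ is $k$ when $2k \le g$, and in general one gets a trivial quotient $P^0 = \mathbb Z$ at the top by the same reasoning applied after restricting to an appropriate subspace; more carefully, I would just work with the quotient $\Lambda^{2k}/F_{k-1}$, which carries the trivial $\text{Sp}(2g)$-action on $\mathbb Z$ and sits in the extension with sub-object $F_{k-1}/(\text{lower})$. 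The cleanest reduction is: an equivariant splitting of the full filtration would in particular give an $\text{Sp}(2g)$-equivariant retraction $\Lambda^{2k}(\mathbb Z^{2g}) \twoheadrightarrow \textup{gr}_k \cong \mathbb Z$ splitting the inclusion of the generator $[\omega_k]$ (or, dualizing, an equivariant section $\mathbb Z \to \Lambda^{2k}$ hitting a class congruent to $\omega_k$ mod $F_{k-1}$). So it suffices to show no such equivariant retraction/section exists.

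The key step is an averaging/invariants argument. An $\text{Sp}(2g)$-equivariant map $\Lambda^{2k}(\mathbb Z^{2g}) \to \mathbb Z$ (trivial target) factors through the coinvariants, and dually an equivariant section $\mathbb Z \to \Lambda^{2k}(\mathbb Z^{2g})$ has image in the invariants $\Lambda^{2k}(\mathbb Z^{2g})^{\text{Sp}(2g)}$. The invariants of $\Lambda^{2k}(\mathbb Z^{2g})$ under $\text{Sp}(2g)$ are exactly $\mathbb Z\langle \omega_k \rangle$ when $k \le g$ — this is the integral form of the classical fact that $\Lambda^* (\mathbb C^{2g})^{\text{Sp}} = \mathbb C[\omega]$, and over $\mathbb Z$ it holds because $\omega_k = \omega^k/k!$ is a primitive integral vector (its coefficient on $e^1 e^2 \cdots e^{2k-1}e^{2k}$ is $1$). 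Now the composite $\mathbb Z \to \Lambda^{2k} \to \textup{gr}_k = \mathbb Z$ of an equivariant section with the projection sends $1 \mapsto c\,[\omega_k] \mapsto c$ where the section is $1 \mapsto c\,\omega_k$; for this to be the identity we need $c = 1$, i.e. the section is $1 \mapsto \omega_k$, which is fine so far — so I must instead exhibit the obstruction at a different spot of the filtration, namely between $F_k$ and $F_{k+1}$, or show the splitting is incompatible with a second graded piece.

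The actual obstruction, and the main point, is this: a $\mathbb Z$-splitting of the whole filtration amounts to a direct sum decomposition $\Lambda^{2k} = \bigoplus_r G_r$ with $G_r \cong P^{2k-2r}$; an equivariant such splitting would force $G_0 = P^{2k}(\mathbb Z^{2g})$ (the bottom piece is canonical) and, crucially, would give an equivariant complement to $P^{2k}$ inside $F_1\Lambda^{2k}$ — equivalently an equivariant retraction $F_1\Lambda^{2k} \to \textup{gr}_1 \cong P^{2k-2}(\mathbb Z^{2g})$. Dualizing via the Hodge--Lefschetz star, or directly, this is the same as asking the sequence $0 \to P^{2}(\mathbb Z^{2g}) \to \Lambda^2(\mathbb Z^{2g}) \to \mathbb Z \to 0$ — cited in the text just before the lemma — to split equivariantly, which it does not: an equivariant section $\mathbb Z \to \Lambda^2(\mathbb Z^{2g})$ lands in $(\Lambda^2)^{\text{Sp}(2g)} = \mathbb Z\omega$, and $\omega$ maps to the generator of the quotient, so the obstruction is instead the nonexistence of an equivariant retraction $\Lambda^2 \to \mathbb Z\omega$: such a retraction $\rho$ with $\rho(\omega) = 1$ would make $\Lambda^2 = \mathbb Z\omega \oplus \ker\rho$ equivariantly, but $\ker\rho$ would be an equivariant complement of rank $\binom{2g}{2}-1$ to $\mathbb Z\omega$, hence a lift of $P^2 = \Lambda^2/(\text{stuff})$... and the genuine obstruction is that $\Lambda^2(\mathbb Z^{2g})/\mathbb Z\omega$ is \emph{not} isomorphic to $P^2(\mathbb Z^{2g})$ as an $\text{Sp}(2g)$-module — they differ exactly by the $2$-torsion phenomenon visible in $H^*$ of the Heisenberg group / Lee--Packer, since $[\Lambda^2 : \mathbb Z\omega \oplus P^2]$ has index divisible by a prime when $g \ge 2$. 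I would prove this last discrepancy by a direct computation: $\iota_\omega: \Lambda^2 \to \mathbb Z = \Lambda^0$ is surjective with kernel $P^2$ up to... no — rather, $\omega \wedge -: \Lambda^0 \to \Lambda^2$ has image $\mathbb Z\omega$ and $P^2 = \ker(\iota_\omega|_{\Lambda^2}$ composed appropriately$)$; the point is $\mathbb Z\omega$ is \emph{not} a direct summand of $\Lambda^2$ compatibly with the $\text{Sp}$-action because the only $\text{Sp}$-equivariant maps $\Lambda^2 \to \mathbb Z\omega$ are multiples of $x \mapsto \tfrac{1}{g}\iota_\omega(x)\,\omega$, which is not integral. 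I expect this last integrality check — computing $\text{Hom}_{\text{Sp}(2g)}(\Lambda^2(\mathbb Z^{2g}), \mathbb Z)$ and seeing it is $g\mathbb Z$ inside the naive $\mathbb Z$ coming from $\iota_\omega$ — to be the main obstacle, and to be the honest content hiding behind the phrase ``it is not hard to check'' in the text; everything else (reducing the general $k$ case to $k=1$ via the filtration and duality, identifying invariants with $\mathbb Z\omega_k$) is formal.
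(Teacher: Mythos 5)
You set up the right frame --- reduce to the top step $0 \to F_{k-1}\Lambda^{2k} \to \Lambda^{2k} \to \textup{gr}_k \cong \mathbb Z \to 0$, note that an equivariant complement must be generated by an invariant element, and identify $\Lambda^{2k}(\mathbb Z^{2g})^{\Sp(2g)} = \mathbb Z\,\omega_k$ by complexifying and using primitivity of $\omega_k$ --- but you then abandon this argument at exactly the point where it closes, because of a false claim. You assert that the composite $1 \mapsto c\,\omega_k \mapsto c\,[\omega_k]$ gives $c$ in $\textup{gr}_k \cong \mathbb Z$, i.e.\ that $[\omega_k]$ is a generator of the top subquotient. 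It is not: by Corollary \ref{cor:Lefschetz-graded} (equivalently, $\iota_{\omega_k}(\omega_k) = (-1)^k\binom{g}{k}$ from Lemma \ref{lemma:general-mn-formula}), the class $[\omega_k]$ is $\binom{g}{k}$ times a generator of $\textup{gr}_k\Lambda^{2k}$, which is $>1$ precisely when $g \ge 2$ and $0<k<g$. That divisibility immediately rules out any equivariant section, and this is exactly the paper's proof. The same error recurs in your $k=1$ discussion (``$\omega$ maps to the generator of the quotient''): in $\Lambda^2/P^2 \cong \mathbb Z$ the class of $\omega$ is $g$ times a generator, a fact that contradicts your own final computation, since the non-integrality of $\tfrac1g\iota_\omega(\cdot)\,\omega$ rests on $\iota_\omega(\omega) = -g$ while $\iota_\omega$ takes value $\pm1$ on decomposables. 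Your retraction argument for $k=1$ is, once cleaned up, correct, but it is just the $k=1$ instance of the same arithmetic fact you earlier denied.

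The second genuine gap is the reduction of general $k$ to $k=1$. A splitting of the filtration on $\Lambda^{2k}$ does give an equivariant splitting of $0 \to P^{2k} \to F_1\Lambda^{2k} \to P^{2k-2} \to 0$, but you give no argument that this is ``the same as'' the splitting of $0 \to P^2 \to \Lambda^2 \to \mathbb Z \to 0$; the Hodge--Lefschetz star relates $\Lambda^{g-k}$ to $\Lambda^{g+k}$ and does not effect such an identification, and the invariants/coinvariants mechanism you rely on is unavailable there because the quotient $P^{2k-2}$ is a nontrivial $\Sp(2g)$-module. The surrounding assertions (that $\Lambda^2/\mathbb Z\omega \not\cong P^2$, that the discrepancy is a ``$2$-torsion phenomenon'') are also unsubstantiated and not needed. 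The fix is simply to finish your first argument: the invariant generator of a hypothetical complement must be $\pm\omega_k$, and $[\omega_k]$ fails to generate $\textup{gr}_k\Lambda^{2k}$ by the factor $\binom{g}{k}$.
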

\begin{proof}
Because $F_{k-1} \Lambda^{2k}(\mathbb Z^{2g})$ has quotient $\mathbb Z$ with trivial $\text{Sp}(2g)$-action, we seek a $\text{Sp}(2g)$-invariant element $x \in \Lambda^{2k}(\mathbb Z^{2g})$ for which $\Lambda^{2k} = F_{k-1} + \langle x \rangle$. 

Over the complex numbers, the Lefschetz filtration is equivariantly split, with $\mathbb C \langle \omega_k\rangle = \Lambda^{2k}(\mathbb C^{2g})^{\text{Sp}(2g)}$. It follows that if such an integer form $x$ exists, it is a complex multiple of $\omega_k$. Because $\omega_k$ is not divisible over the integers, we must have $x = \pm\omega_k$. However, by Corollary \ref{cor:Lefschetz-graded}, the form $\omega_k$ represents $\binom{g}{k}$ times a generator in $\text{gr}_k \Lambda^{2k} \cong \mathbb Z$, and hence $F_{k-1} + \langle \omega_k\rangle$ is not the whole of $\Lambda^{2k}$ for $g \ge 2$ and $0 < k < g$.
\end{proof}

We will be using the Lefschetz filtration to understand the action of $\wedge \omega_k$ and $\iota_{\omega_k}$ on the exterior algebra. Because the subquotients are free abelian groups, the Lefschetz filtration on each $\Lambda^k(\mathbb Z^{2g})$ is $\mathbb Z$-split. It would be ideal if these could all be chosen to be compatible with the action of $\iota_\omega$, but this is not so:

\begin{lemma}\label{lemma:no-total-filtration}
For $g \ge 2$, there is no $\mathbb Z$-splitting of the Lefschetz filtration on $\Lambda^{2g-2}(\mathbb Z^{2g})$ for which $\iota(G_g \Lambda^{2g}(\mathbb Z^{2g})) \subset G_{g-1} \Lambda^{2g-2}(\mathbb Z^{2g})$. 
\end{lemma}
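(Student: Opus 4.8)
The plan is to argue by contradiction: suppose such a $\mathbb Z$-splitting exists, with $G_g \Lambda^{2g}(\mathbb Z^{2g})$ a rank-one subgroup of $\Lambda^{2g}(\mathbb Z^{2g}) \cong \mathbb Z$ complementary to $F_{g-1}\Lambda^{2g} = 0$ — so in fact $G_g\Lambda^{2g} = \Lambda^{2g}(\mathbb Z^{2g})$ is the whole top exterior power, generated by $\omega_g = \omega^g/g!$ (up to sign; recall $\omega_g$ is a generator of $\Lambda^{2g}$). Then the hypothesis $\iota_\omega(G_g\Lambda^{2g}) \subset G_{g-1}\Lambda^{2g-2}$ forces $\iota_\omega(\omega_g) \in G_{g-1}\Lambda^{2g-2}(\mathbb Z^{2g})$. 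Since $\omega_g = \omega \wedge \omega_{g-1}$, Lemma \ref{lemma:Lefschetz-identity} (or Lemma \ref{lemma:general-mn-formula}) computes $\iota_\omega(\omega_g) = \iota_\omega(\omega \wedge \omega_{g-1})$; one gets $\iota_\omega(\omega_g) = \omega_{g-1} \cdot (\text{something})$, and a direct computation with the weighted Leibniz rule shows $\iota_\omega(\omega_g) = \pm\,\omega_{g-1}$ (the coefficient works out to a unit because the relevant integer is $g - (2g-2) + \text{correction}$; I would just run the Lemma \ref{lemma:general-mn-formula} formula with $m=n=1$, $k = 2g-2$, noting $\iota_\omega(\omega_{g-1})=0$ as $\omega_{g-1}$ need not be primitive, so more carefully one iterates). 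So the hypothesis amounts to: $\omega_{g-1} \in G_{g-1}\Lambda^{2g-2}(\mathbb Z^{2g})$.

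The contradiction then comes from the structure of the Lefschetz filtration on $\Lambda^{2g-2}(\mathbb Z^{2g})$, exactly as in the proof of Lemma \ref{lemma:no-equivariant-splitting}. By definition of a $\mathbb Z$-splitting, $G_{g-1}\Lambda^{2g-2}$ is a complement to $F_{g-2}\Lambda^{2g-2}$ inside $F_{g-1}\Lambda^{2g-2} = \Lambda^{2g-2}(\mathbb Z^{2g})$, so its image generates $\textup{gr}_{g-1}\Lambda^{2g-2}(\mathbb Z^{2g})$. But by Corollary \ref{cor:Lefschetz-graded} (the case $k=0$, $r = g-1$), the class $[\omega_{g-1}] = \iota_{\omega_{g-1}}^{-1}(1)\cdot(\pm\binom{g}{g-1})$ represents $\pm\binom{g}{g-1} = \pm g$ times a generator of $\textup{gr}_{g-1}\Lambda^{2g-2}(\mathbb Z^{2g}) \cong \mathbb Z$. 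For $g \ge 2$ this is not a generator, so $F_{g-2}\Lambda^{2g-2} + \langle \omega_{g-1}\rangle$ is a proper subgroup of $\Lambda^{2g-2}(\mathbb Z^{2g})$, and in particular $\omega_{g-1}$ cannot lie in any complement $G_{g-1}\Lambda^{2g-2}$ of $F_{g-2}\Lambda^{2g-2}$. This contradicts the conclusion of the previous paragraph.

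The main thing to get right — and the one genuine computation — is the identity $\iota_\omega(\omega_g) = \pm \omega_{g-1}$ (equivalently, that contraction $\iota_\omega : \Lambda^{2g}(\mathbb Z^{2g}) \to \Lambda^{2g-2}(\mathbb Z^{2g})$ sends a generator to a generator, i.e.\ is surjective, which also follows from Hodge--Lefschetz duality $*$ interchanging $\wedge\omega$ and $\iota_\omega$ together with $*$ being an isomorphism). I would present it via the duality remark in Section 2.1: $\iota_\omega(\omega_g) = \iota_\omega(*1) = *(\omega \wedge 1) = *\omega$, and $*$ is an isomorphism, so $\iota_\omega(\omega_g)$ generates $\Lambda^{2g-2}$ if and only if $\omega$ generates... no — rather, $*\omega = \iota_\omega(\omega_g)$ and since $\omega \in F_1\Lambda^2$ with $[\omega]$ equal to $g$ times a generator of $\textup{gr}_1\Lambda^2 \cong \mathbb Z$, one must instead track that $\iota_{\omega_{g-1}}(\iota_\omega\omega_g) = \iota_{\omega_g}(\omega_g) = (-1)^g$ is a unit, which directly shows $\iota_\omega(\omega_g)$ pairs to a unit under $\iota_{\omega_{g-1}}$ and hence is a generator of $\Lambda^{2g-2}$ modulo $F_{g-2}$ scaled by... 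I would simply compute $\iota_{\omega_{g-1}}(\iota_\omega(\omega_g)) = \pm 1$ and conclude $[\iota_\omega(\omega_g)]$ generates $\textup{gr}_{g-1}\Lambda^{2g-2}$, which is what a splitting would require, and then separately observe $\iota_\omega(\omega_g) = \pm\omega_{g-1}$ is false as a generator statement — the cleanest route is: a $\mathbb Z$-splitting with the stated containment would make $[\iota_\omega(\omega_g)]$ a generator of $\textup{gr}_{g-1}$, yet Corollary \ref{cor:Lefschetz-graded} forces $[\omega_{g-1}]$ to be $g$ times a generator, and $\iota_\omega(\omega_g) = \pm\omega_{g-1}$, contradiction for $g \ge 2$.
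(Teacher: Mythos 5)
Your overall route matches the paper's: identify $G_g\Lambda^{2g}(\mathbb Z^{2g}) = \Lambda^{2g}(\mathbb Z^{2g}) = \langle \omega_g\rangle$, compute $\iota_\omega(\omega_g) = -\omega_{g-1}$ (your hedged computation is fine; Lemma \ref{lemma:general-mn-formula} with $m=1$, $n=g$, $k=0$ gives exactly $-\omega_{g-1}$), and use Corollary \ref{cor:Lefschetz-graded} to see that $[\omega_{g-1}]$ is $\pm g$ times a generator of $\textup{gr}_{g-1}\Lambda^{2g-2}(\mathbb Z^{2g}) \cong \mathbb Z$. But your closing step has a genuine gap. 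The hypothesis $\iota(G_g\Lambda^{2g}) \subset G_{g-1}\Lambda^{2g-2}$ says only that $\omega_{g-1}$ \emph{lies in} the complement $G_{g-1}$; it does not say that $[\iota_\omega(\omega_g)]$ is a generator of $\textup{gr}_{g-1}$, which is what your ``cleanest route'' sentence asserts. Nor does the properness of $F_{g-2}\Lambda^{2g-2} + \langle \omega_{g-1}\rangle$ imply that $\omega_{g-1}$ cannot lie in a complement: in $\mathbb Z^2$ with $F = \mathbb Z \times 0$, the element $(0,2)$ makes $F + \langle (0,2)\rangle$ proper, yet it lies in the complement $0 \times \mathbb Z$. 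In general, an element whose class in $\textup{gr}_{g-1}$ is $g$ times the generator lies in some complement precisely when it is divisible by $g$ in $\Lambda^{2g-2}(\mathbb Z^{2g})$, so properness alone decides nothing.

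The missing ingredient, which is the actual punchline of the paper's proof, is indivisibility. If $\omega_{g-1} \in G_{g-1} = \langle x \rangle$, then comparing classes in $\textup{gr}_{g-1}$ (where $[x]$ is a generator because $G_{g-1}$ projects isomorphically onto $\textup{gr}_{g-1}$) forces $\omega_{g-1} = \pm g\, x$, i.e.\ $\omega_{g-1}$ would be divisible by $g$ as an integral form. But $\omega_{g-1}$ is a sum of distinct monomials each with coefficient $1$, hence not divisible by any integer larger than $1$; this is the contradiction for $g \ge 2$. Replacing your ``proper subgroup, hence cannot lie in any complement'' inference with this divisibility argument repairs the proof and brings it in line with the paper's.
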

\begin{proof}
The group $G_g \Lambda^{2g}(\mathbb Z^{2g})$ is generated by $\omega_g$. By Corollary \ref{cor:Lefschetz-graded}, the map \[\iota_\omega: \mathbb Z \cong \text{gr}_g \Lambda^{2g}(\mathbb Z^{2g}) \to \text{gr}_{g-1} \Lambda^{2g-2}(\mathbb Z^{2g}) \cong \mathbb Z\] is given by multiplication by $g$. If there existed such a splitting of the Lefschetz filtration on $\Lambda^{2g-2}(\mathbb Z^{2g})$, it would follow that $\iota_\omega(\omega_g) = gx$ for some integer form $x$. However, Lemma \ref{lemma:general-mn-formula} implies that $\iota_\omega(\omega_g) = -\omega_{g-1}$, which is not divisible by any integer larger than $1$.
\end{proof}

The issue only occurs in high degrees; splittings exist up until, roughly, the halfway point of the exterior algebra. 

\begin{prop}\label{prop:splitting-firsthalf}
For all $g$, there exist $\mathbb Z$-splittings of the Lefschetz filtration on $\Lambda^k(\mathbb Z^{2g})$ for each $0 \le k \le g$ so that for all such $k$, we have $\iota_\omega(G_r \Lambda^k) \subset G_{r-1} \Lambda^{k-2}$. 
\end{prop}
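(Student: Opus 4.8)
The plan is to construct the splittings $G_r \Lambda^k(\mathbb Z^{2g})$ by induction on $k$, in the range $0 \le k \le g$, with the induction simultaneously maintaining the compatibility $\iota_\omega(G_r \Lambda^k) \subset G_{r-1}\Lambda^{k-2}$. For the base cases $k = 0$ and $k = 1$ the filtration is trivial ($F_0 = \Lambda^k$), so we set $G_0\Lambda^k = \Lambda^k$ and there is nothing to check. Now suppose the splitting has been built on $\Lambda^{k-2}(\mathbb Z^{2g})$ for some $k \le g$, giving a direct sum decomposition $\Lambda^{k-2} = \bigoplus_{s} G_s \Lambda^{k-2}$ refining the Lefschetz filtration. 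I want to build $G_r \Lambda^k$ for $0 \le r \le k-1$ (recall that for $0 \le k \le g$ the filtration on $\Lambda^k$ runs $F_0 = P^k \subset F_1 \subset \cdots \subset F_{k} = \Lambda^k$, so there are summands $G_0,\dots,G_k$, with $G_0 = P^k$ forced).

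The key idea is to use the wedge map $\omega_r \wedge -$ and the contraction $\iota_{\omega_r}$ from Theorem~\ref{thm:Lefschetz} and Corollary~\ref{cor:Lefschetz-graded}, which tell us that $\iota_{\omega_r}\colon \mathrm{gr}_r\Lambda^k \xrightarrow{\ \cong\ } P^{k-2r}(\mathbb Z^{2g})$, and that $\iota_\omega$ carries $\mathrm{gr}_r\Lambda^k$ onto $\mathrm{gr}_{r-1}\Lambda^{k-2}$ by an explicit nonzero map. Concretely, for $r \ge 1$ I would define $G_r \Lambda^k$ as follows: take the summand $G_{r-1}\Lambda^{k-2} \subset \Lambda^{k-2}$ supplied by the inductive hypothesis; under the inductive identification it is a complement to $F_{r-2}\Lambda^{k-2}$ inside $F_{r-1}\Lambda^{k-2}$, hence maps isomorphically onto $\mathrm{gr}_{r-1}\Lambda^{k-2} \cong P^{k-2r}(\mathbb Z^{2g})$. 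Compose with the inverse of $\iota_{\omega_r}\colon \mathrm{gr}_r\Lambda^k \cong P^{k-2r}$ to pick out a well-defined rank-one-at-the-level-of-$P$ subquotient; then choose actual lifts in $F_r\Lambda^k$. The honest way to say this: choose a $\mathbb Z$-basis of $P^{k-2r}(\mathbb Z^{2g})$, lift each basis vector to $G_{r-1}\Lambda^{k-2}$ (possible by induction), lift further to an element of $F_r\Lambda^k$ mapping to it under $\iota_\omega$ — this is possible because $\iota_\omega\colon F_r\Lambda^k \to F_{r-1}\Lambda^{k-2}$ is surjective (indeed $\iota_\omega\colon \mathrm{gr}_r\Lambda^k \to \mathrm{gr}_{r-1}\Lambda^{k-2}$ is surjective with free target, by Corollary~\ref{cor:Lefschetz-graded} and Lemma~\ref{lemma:contr-filt}, and one lifts the filtration step by step) — and let $G_r\Lambda^k$ be the span of these lifts. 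By construction $\iota_\omega(G_r\Lambda^k) \subset G_{r-1}\Lambda^{k-2}$, and one checks the images in $\mathrm{gr}_r\Lambda^k$ form a basis, so $G_r\Lambda^k$ is a genuine complement to $F_{r-1}\Lambda^k$ in $F_r\Lambda^k$. Finally set $G_0\Lambda^k = P^k\Lambda^k = F_0\Lambda^k$.

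The point that needs care — and I expect it to be the main obstacle — is showing that the resulting $G_r\Lambda^k$ really does complement $F_{r-1}\Lambda^k$, i.e. that the lifts chosen above project to a $\mathbb Z$-\emph{basis} of the free group $\mathrm{gr}_r\Lambda^k$ and not merely to a finite-index subgroup. This is exactly the subtlety flagged by Lemma~\ref{lemma:no-total-filtration}: if one naively lifts using $\omega_r \wedge x$ one lands in the subgroup of index $\binom{g-k}{r}$ inside $\mathrm{gr}_r\Lambda^k$, which is why that lemma's obstruction appears in high degree. The fix is that in the range $0 \le k \le g$ one does \emph{not} lift via $\wedge \omega_r$ but rather via the contraction identification $\iota_{\omega_r}^{-1}$ together with the surjectivity of $\iota_\omega$ on the nose (Proposition~\ref{prop:w-surj} is what guarantees $\iota_{\omega_r}\colon F_r\Lambda^{k+2r}\to P^k$ is surjective, hence that primitive basis vectors have honest preimages); chasing Corollary~\ref{cor:Lefschetz-graded} shows the composite $\mathrm{gr}_r\Lambda^k \xrightarrow{\iota_\omega} \mathrm{gr}_{r-1}\Lambda^{k-2} \xrightarrow{\iota_{\omega_{r-1}}} P^{k-2r}$ equals, up to sign, $r$ times $\iota_{\omega_r}$ — wait, rather one should track the normalization so that the chosen lifts map to an actual basis. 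I would organize the bookkeeping by carrying along, as part of the inductive data, explicit isomorphisms $\mathrm{gr}_r\Lambda^k \cong P^{k-2r}(\mathbb Z^{2g})$ (namely $\iota_{\omega_r}$) and insisting that $G_r\Lambda^k$ be the $\mathbb Z$-span of lifts of a fixed $P^{k-2r}$-basis chosen once and for all, with the lifts taken compatibly through $\iota_\omega$ at each stage. Once the basis-compatibility is pinned down, equivariance is not claimed and the $\mathbb Z$-splitting property and the containment $\iota_\omega(G_r\Lambda^k)\subset G_{r-1}\Lambda^{k-2}$ are immediate from the construction, completing the induction.
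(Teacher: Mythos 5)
There is a genuine gap, and it sits exactly where you said you expected trouble. Your construction of $G_r\Lambda^k$ requires lifting each chosen basis vector of $G_{r-1}\Lambda^{k-2}$ through $\iota_\omega$ to an element of $F_r\Lambda^k$, and you justify this by asserting that $\iota_\omega\colon F_r\Lambda^k\to F_{r-1}\Lambda^{k-2}$ is surjective because "$\iota_\omega\colon \mathrm{gr}_r\Lambda^k\to\mathrm{gr}_{r-1}\Lambda^{k-2}$ is surjective," citing Corollary \ref{cor:Lefschetz-graded}. But that corollary says the opposite: under the identifications $\iota_{\omega_r}$, $\iota_{\omega_{r-1}}$ with $P^{k-2r}$, this graded map is multiplication by $r$, so it is not surjective for $r\ge 2$, and neither is $\iota_\omega$ on $F_r\Lambda^k$. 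Concretely, take $k=4$, $r=2$, $g\ge 4$: by the same associated-graded computation (this is the content of Theorem \ref{thm:failure-of-hard-lefschetz} and Theorem \ref{thm:LP-Lefschetz}), $\coker\bigl(\iota_\omega\colon\Lambda^4\to\Lambda^2\bigr)\cong\mathbb Z/2$, generated by the class of any generator $y$ of a complement $G_1\Lambda^2$ to $P^2$; so $y$ has no $\iota_\omega$-preimage at all, and your prescription "lift further to an element of $F_2\Lambda^4$ mapping to it under $\iota_\omega$" cannot be carried out. Your own aside ("chasing Corollary \ref{cor:Lefschetz-graded} shows the composite equals $r$ times $\iota_{\omega_r}$ --- wait, rather one should track the normalization") is precisely this unresolved factor of $r$; tracking normalizations does not make the missing preimages appear. (A smaller slip: for $0\le k\le g$ the filtration stabilizes at $r=\lfloor k/2\rfloor$, not $r=k$.)

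The paper's proof resolves the tension by never asking $\iota_\omega(G_r\Lambda^k)$ to hit a basis of $G_{r-1}\Lambda^{k-2}$ --- only to land inside it. One considers the subgroup $\iota_\omega^{-1}\bigl(G_{r-1}\Lambda^{k-2}\bigr)$, which lies in $F_r\Lambda^k$ by Lemma \ref{lemma:contr-filt}, and shows that $\iota_{\omega_r}$ restricted to this preimage still surjects onto $P^{k-2r}$: the composite $\iota_{\omega_{r-1}}\circ\iota_\omega$ equals $r\,\iota_{\omega_r}$, the horizontal map $\iota_\omega$ onto $G_{r-1}\Lambda^{k-2}$ has image $r\,G_{r-1}\Lambda^{k-2}$, and $\iota_{\omega_{r-1}}$ is an isomorphism on $G_{r-1}$, so $r\,\iota_{\omega_r}$ has image $r\,P^{k-2r}$ and one cancels $r$ using torsion-freeness of $P^{k-2r}$. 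Defining $G_r\Lambda^k$ as the image of any section of this surjection onto the free group $P^{k-2r}$ then gives both the complement property and $\iota_\omega(G_r\Lambda^k)\subset G_{r-1}\Lambda^{k-2}$ (the image is only $r\,G_{r-1}\Lambda^{k-2}$, which is all the proposition requires). So your overall strategy (induction on $k$, using Proposition \ref{prop:w-surj} and Corollary \ref{cor:Lefschetz-graded}) is the right one, but the lifting step must be replaced by this preimage-and-section argument.
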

\begin{proof}
We prove this by induction on $k$, taking base cases $k = 0, 1$ where the filtrations are $1$-step. Pick $0 \le r \le k/2$ and consider the commutative diagram 
\[\begin{tikzcd}
	{G_{r-1} \Lambda^{k-2}} & {\iota_\omega^{-1}(G_{r-1} \Lambda^{k-2})} \\
	{P^{k-2r}}
	\arrow["{\iota_{\omega_{r-1}}}"', from=1-1, to=2-1]
	\arrow["{\iota_\omega}"', from=1-2, to=1-1]
	\arrow["{r \iota_{\omega_r}}", from=1-2, to=2-1]
\end{tikzcd}\]
The horizontal arrow has image $r G_{r-1} \Lambda^{k-2}$ and the vertical arrow is surjective, both by Corollary \ref{cor:Lefschetz-graded}. The diagonal arrow thus has image $r P^{k-2r}$, so that $\iota_{\omega_r}:\iota_\omega^{-1}(G_{r-1} \Lambda^{k-2}) \to P^{k-2r}$ is surjective. Let $G_r \Lambda^k$ be the image of any right inverse to this map.
\end{proof}

If one relaxes the compatibility demands between different degrees, one can still obtain a useful splitting of the filtration on the high-degree exterior algebras:

\begin{prop}\label{prop:splitting-across-midpoint}
For all $g$ and all $0 \le k < g$, if $\Lambda^{g-k}(\mathbb Z^{2g})$ is equipped with a $\mathbb Z$-splitting of its Lefschetz filtration, there exists a $\mathbb Z$-splitting of the Lefschetz filtration of $\Lambda^{g+k}(\mathbb Z^{2g})$ so that for all $r$, we have \[\iota_{\omega_k}\left(G_{r+k} \Lambda^{g+k}(\mathbb Z^{2g})\right) \subset G_r \Lambda^{g-k}(\mathbb Z^{2g}).\]
\end{prop}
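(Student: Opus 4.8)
The plan is to construct the splitting of $\Lambda^{g+k}$ one filtration step at a time, from the bottom up, using that contraction already interacts well with the two filtrations. By Lemma \ref{lemma:contr-filt}, $\iota_{\omega_k}\bigl(F_{r+k}\Lambda^{g+k}\bigr)\subseteq F_r\Lambda^{g-k}$, and by Corollary \ref{cor:Lefschetz-graded} the induced map $\textup{gr}_{r+k}\Lambda^{g+k}\to\textup{gr}_r\Lambda^{g-k}$ is, under the isomorphisms of Theorem \ref{thm:Lefschetz}, multiplication by $\binom{r+k}{k}$, hence injective. Moreover $\iota_{\omega_k}\colon\Lambda^{g+k}\to\Lambda^{g-k}$ is itself injective: since $\iota_{\omega_k}(*z)=*(\omega_k\wedge z)$ for the Hodge--Lefschetz star $*$, the map $\iota_{\omega_k}$ is conjugate via $*$ to $\omega_k\wedge-\colon\Lambda^{g-k}\to\Lambda^{g+k}$, which is rationally an isomorphism by Hard Lefschetz and hence injective over $\mathbb Z$. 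It follows that giving a splitting $\{G_s\Lambda^{g+k}\}$ with $\iota_{\omega_k}(G_{r+k}\Lambda^{g+k})\subseteq G_r\Lambda^{g-k}$ is the same as giving, for each $s$, a homomorphic section of $F_s\Lambda^{g+k}\twoheadrightarrow\textup{gr}_s\Lambda^{g+k}$ whose image lies inside $\iota_{\omega_k}^{-1}(G_{s-k}\Lambda^{g-k})$.

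First I would dispose of the base case $s=k$: the coprimitive subspace is the first nonzero term, so $F_{k-1}\Lambda^{g+k}=0$, forcing $G_k\Lambda^{g+k}=F_k\Lambda^{g+k}$, and $\iota_{\omega_k}(F_k\Lambda^{g+k})\subseteq F_0\Lambda^{g-k}=G_0\Lambda^{g-k}$ is immediate from Lemma \ref{lemma:contr-filt}. For the inductive step, suppose $G_k,\dots,G_{r+k-1}$ have been built compatibly, so that $\iota_{\omega_k}(F_{r+k-1}\Lambda^{g+k})\subseteq F_{r-1}\Lambda^{g-k}$. The subgroup $\iota_{\omega_k}^{-1}(G_r\Lambda^{g-k})$ lies in $F_{r+k}\Lambda^{g+k}$ (Lemma \ref{lemma:contr-filt}) and meets $F_{r+k-1}\Lambda^{g+k}$ trivially (if $\iota_{\omega_k}(x)\in G_r\Lambda^{g-k}\cap F_{r-1}\Lambda^{g-k}=0$ then $x=0$), so, $\textup{gr}_{r+k}\Lambda^{g+k}$ being free, it suffices to show $\iota_{\omega_k}^{-1}(G_r\Lambda^{g-k})$ surjects onto $\textup{gr}_{r+k}\Lambda^{g+k}$ and to take $G_{r+k}\Lambda^{g+k}$ to be the image of a right inverse. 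Unwinding, this surjectivity is equivalent to the equality \[\pi\bigl(\iota_{\omega_k}(F_{r+k}\Lambda^{g+k})\bigr)=\iota_{\omega_k}(F_{r+k-1}\Lambda^{g+k}),\] where $\pi\colon F_r\Lambda^{g-k}\to F_{r-1}\Lambda^{g-k}$ is the projection along $G_r\Lambda^{g-k}$; the inclusion $\supseteq$ is automatic.

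The hard part will be this displayed equality: its content is that the part of $\iota_{\omega_k}(F_{r+k}\Lambda^{g+k})$ not captured by the complement $G_r\Lambda^{g-k}$ already lies in the image of the previous filtration stage. At face value this looks implausible, since $\iota_{\omega_k}(F_{r+k}\Lambda^{g+k})$ is a \emph{proper} finite-index subgroup of $F_r\Lambda^{g-k}$ --- indeed this proper-ness is precisely the integral failure of Hard Lefschetz measured by Theorem \ref{thm:hardLefschetz-intro} --- so one must argue that all of the `missing divisibility' is absorbed into $G_r\Lambda^{g-k}$ and none leaks into $F_{r-1}\Lambda^{g-k}$. To pull this off I would use: the divisibility of $[\omega_r\wedge x]$ by $\binom{g-k}{r}$ for primitive $x\in P^k$ recorded after Corollary \ref{cor:Lefschetz-graded}; the identity $\iota_{\omega_k}(*z)=*(\omega_k\wedge z)$, which transports the given splitting of $\Lambda^{g-k}$ to a first approximation $*(G_r\Lambda^{g-k})$ to the sought splitting of $\Lambda^{g+k}$ and reduces the question to one about $\omega_k\wedge-$ and the given splitting; and the additional structure of the given splitting --- notably its compatibility with $\iota_\omega$ in lower degrees as furnished by Proposition \ref{prop:splitting-firsthalf} (namely $\iota_\omega(G_r\Lambda^{g-k})\subseteq G_{r-1}\Lambda^{g-k-2}$), which appears to be what pins down the divisibility of the relevant components of the powers $\omega_r$. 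Once the displayed equality is established at each step, iterating produces the full splitting and the proposition follows.
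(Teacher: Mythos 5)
Your reduction is fine as far as it goes: showing that, for each $r$, the subgroup $\iota_{\omega_k}^{-1}(G_r \Lambda^{g-k})$ surjects onto $\textup{gr}_{r+k}\Lambda^{g+k}$ (equivalently, your displayed equality) is exactly what is needed, and it is the same point the paper's argument turns on. The gap is that you never prove that equality: it is precisely the mathematical content of the proposition, and your last paragraph only lists ingredients you hope will suffice. Moreover, the ingredient you lean on most heavily --- the compatibility $\iota_\omega(G_r\Lambda^{g-k})\subset G_{r-1}\Lambda^{g-k-2}$ supplied by Proposition \ref{prop:splitting-firsthalf} --- is not available under the hypotheses: the proposition takes an \emph{arbitrary} $\mathbb Z$-splitting of $\Lambda^{g-k}$ as input (and is used that way in the proof of Theorem \ref{thm:failure-of-hard-lefschetz}, where the splitting of $\Lambda^{g-k}$ is chosen arbitrarily), so an argument that assumes the given splitting comes from Proposition \ref{prop:splitting-firsthalf} proves a different statement. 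The inductive scaffolding is also unnecessary: the construction of $G_{r+k}$ does not need $G_{r+k-1}$.

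What closes the step in the paper is an ingredient absent from your list: the integral surjectivity of Proposition \ref{prop:w-surj} (equivalently, Theorem \ref{thm:Lefschetz}), namely that $\iota_{\omega_{r+k}}\colon F_{r+k}\Lambda^{g+k}\to P^{g-k-2r}$ is onto over $\mathbb Z$. For each $r$ separately, the paper considers $\iota_{\omega_k}^{-1}(G_r\Lambda^{g-k})\xrightarrow{\iota_{\omega_k}} G_r\Lambda^{g-k}\xrightarrow{\iota_{\omega_r}} P^{g-k-2r}$, whose composite is $\binom{r+k}{r}\iota_{\omega_{r+k}}$ by Corollary \ref{cor:Lefschetz-graded}; identifying the image of the first map as $\binom{r+k}{r}G_r\Lambda^{g-k}$ and using surjectivity of the second, one concludes (all groups being torsion-free) that $\iota_{\omega_{r+k}}$, and hence the projection to $\textup{gr}_{r+k}\Lambda^{g+k}$, is already surjective on $\iota_{\omega_k}^{-1}(G_r\Lambda^{g-k})$, and then takes $G_{r+k}\Lambda^{g+k}$ to be the image of any right inverse. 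Your instinct that the displayed equality ``looks implausible'' is not misplaced --- it is genuinely not a formal consequence of the graded computation, since for $r\ge 2$ the needed corrections lie in $F_{r-1}\Lambda^{g-k}$, which strictly contains $\iota_{\omega_k}(F_{r+k-1}\Lambda^{g+k})$ --- but the Hodge--Lefschetz star and the divisibility of $[\omega_r\wedge x]$ play no role in the paper's route; the surjectivity statement of Proposition \ref{prop:w-surj} is what does the work, and without it (or a proved substitute for the key equality) your proposal stops short of a proof.
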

\begin{proof}
The argument is similar, with the crucial diagram instead being 
\[\begin{tikzcd}
	{G_r \Lambda^{g-k}} & {\iota_{\omega_k}^{-1}(G_r \Lambda^k)} \\
	{P^{g-k-2r}} 
	\arrow["{\iota_{\omega_r}}"', from=1-1, to=2-1]
	\arrow["{\iota_{\omega_k}}"', from=1-2, to=1-1]
	\arrow["{\binom{r+k}{r} \iota_{\omega_{r+k}}}", from=1-2, to=2-1]
\end{tikzcd}\]
Corollary \ref{cor:Lefschetz-graded} says the vertical map is surjective and the horizontal map has image $\binom{r+k}{r} G_r \Lambda^{g-k}$, and one concludes in the same way.
\end{proof}

\section{Algebraic consequences of the Lefschetz filtration}\label{sec:applications}
\subsection{Failure of the Hard Lefschetz theorem for $\Lambda^*(\mathbb Z^{2g})$} Write \[C_{g,k} = \coker\left(\wedge \omega_k: \Lambda^{g-k}(\mathbb Z^{2g}) \to \Lambda^{g+k}(\mathbb Z^{2g})\right).\] 

\begin{theorem}\label{thm:failure-of-hard-lefschetz}
The Lefschetz filtration on $C_{g,k}$ is $\mathbb Z$-split, whose subquotients may be identified as $\textup{Sp}(2g)$-modules:
\[\textup{gr}_r C_{g,k} = \begin{cases} P^{g-k-2r}(\mathbb Z^{2g})/\binom{r+k}{r} & 0 \le r \le \lfloor \frac{g-k}{2} \rfloor \\ 0 & \textup{otherwise.} \end{cases}\]
\end{theorem}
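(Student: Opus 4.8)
The plan is to analyze the map $\wedge \omega_k : \Lambda^{g-k}(\mathbb Z^{2g}) \to \Lambda^{g+k}(\mathbb Z^{2g})$ one filtration level at a time, using Theorem \ref{thm:Lefschetz} to replace each associated graded piece of source and target by a primitive subspace, and Corollary \ref{cor:Lefschetz-graded} to identify the induced map on graded pieces. First I would observe that $\wedge \omega_k$ is filtration-preserving in the precise sense supplied by Lemma \ref{lemma:contr-filt}: it carries $F_r \Lambda^{g-k}$ into $F_{r+k}\Lambda^{g+k}$, and moreover it sends $F_r\Lambda^{g-k}$ \emph{onto} a subgroup of $F_{r+k}\Lambda^{g+k}$ that, together with $F_{r+k-1}\Lambda^{g+k}$, is everything below level $F_{r+k}$. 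Concretely the image of $\wedge\omega_k$ meets the Lefschetz filtration of $\Lambda^{g+k}$ in a way controlled entirely by the graded maps, because the source filtration exhausts $\Lambda^{g-k}$ by level $r = \lfloor (g-k)/2\rfloor$ while the target filtration on $\Lambda^{g+k}$ has top nonzero level $g$; the levels $F_{r}\Lambda^{g+k}$ for $r > \lfloor(g-k)/2\rfloor + k$ are not in the image at all, but one must be careful that those upper levels are handled — this is why the dual description via the Hodge--Lefschetz star is useful. Indeed, since $*$ intertwines $\wedge\omega_k$ with $\iota_{\omega_k}$ and carries $F_r\Lambda^{g-k}$ isomorphically onto $F_{r+k}\Lambda^{g+k}$, the cokernel of $\wedge\omega_k$ is identified with the cokernel of $\iota_{\omega_k}: \Lambda^{g+k} \to \Lambda^{g-k}$, or equivalently one studies $\wedge\omega_k$ as a map between the full exterior algebras and finds that its image already contains $F_{k-1}\Lambda^{g+k}$, so $C_{g,k}$ is a quotient of $\Lambda^{g+k}/F_{k-1}\Lambda^{g+k} \cong \bigoplus_{r\ge 0}\textup{gr}_{r+k}\Lambda^{g+k}$.

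Next I would set up the filtration on $C_{g,k}$ itself: define $F_r C_{g,k}$ to be the image of $F_{r+k}\Lambda^{g+k}(\mathbb Z^{2g})$ in the cokernel. This is $\textup{Sp}(2g)$-invariant since $F_{r+k}\Lambda^{g+k}$ is and the map is equivariant. For the associated graded, we get a right-exact sequence
\[
\textup{gr}_r \Lambda^{g-k}(\mathbb Z^{2g}) \xrightarrow{\ \overline{\wedge\omega_k}\ } \textup{gr}_{r+k}\Lambda^{g+k}(\mathbb Z^{2g}) \longrightarrow \textup{gr}_r C_{g,k} \longrightarrow 0,
\]
because the snake/filtration argument shows $\textup{gr}_r C_{g,k}$ is precisely the cokernel of the induced map on the $(r+k)$th graded piece of $\Lambda^{g+k}$ by the $r$th graded piece of $\Lambda^{g-k}$ — the key point being that $\wedge\omega_k$ sends level exactly $r$ to level exactly $r+k$ (nothing from a lower source level reaches $\textup{gr}_{r+k}$, by Lemma \ref{lemma:contr-filt} applied in reverse). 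Under the isomorphisms $\iota_{\omega_r}:\textup{gr}_r\Lambda^{g-k}\cong P^{g-k-2r}(\mathbb Z^{2g})$ and $\iota_{\omega_{r+k}}:\textup{gr}_{r+k}\Lambda^{g+k}\cong P^{g-k-2r}(\mathbb Z^{2g})$ of Theorem \ref{thm:Lefschetz}, Corollary \ref{cor:Lefschetz-graded} identifies $\overline{\wedge\omega_k}$ with multiplication by $(-1)^k\binom{g-(g-k-2r)+r}{k} = (-1)^k\binom{2r+k}{k} = \pm\binom{r+k}{k}$ — wait, one must recompute: with the primitive degree $j = g-k-2r$, Corollary \ref{cor:Lefschetz-graded} gives the coefficient $(-1)^k\binom{g-j+r}{k}$; since $g-j = k+2r$ this is $(-1)^k\binom{2r+k}{k}$. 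One then observes $\binom{2r+k}{k}$ and $\binom{r+k}{k}$ have the same set of prime power divisors only if... so instead I would route through the \emph{contraction} composite of Corollary \ref{cor:Lefschetz-graded}, which gives the cleaner coefficient $\binom{r+k}{k}$; by the duality remark $*(\omega_k\wedge x) = \iota_{\omega_k}(*x)$ the cokernel of $\wedge\omega_k$ is isomorphic as an $\textup{Sp}(2g)$-module to the cokernel of $\iota_{\omega_k}:\Lambda^{g+k}\to\Lambda^{g-k}$, and on graded pieces the latter is multiplication by $\binom{r+k}{k}$ on $P^{g-k-2r}(\mathbb Z^{2g})$. Hence $\textup{gr}_r C_{g,k} \cong P^{g-k-2r}(\mathbb Z^{2g})/\binom{r+k}{r}$, which vanishes exactly when $g-k-2r < 0$, i.e. when $r > \lfloor(g-k)/2\rfloor$.

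Finally, for the $\mathbb Z$-splitting: each $\textup{gr}_r C_{g,k} = P^{g-k-2r}(\mathbb Z^{2g})/\binom{r+k}{r}$ is a finitely generated abelian group, and an ascending filtration of a finitely generated abelian group with finitely many steps always admits a $\mathbb Z$-splitting once one checks the extensions can be split — but here one wants something slightly more structured, so I would instead invoke Proposition \ref{prop:splitting-across-midpoint}: fixing a $\mathbb Z$-splitting of the Lefschetz filtration on $\Lambda^{g-k}(\mathbb Z^{2g})$ (which exists for $0\le g-k\le g$ by Proposition \ref{prop:splitting-firsthalf} when $g-k \le g$, i.e. always, though one may need $g-k\le g$ which holds) produces a $\mathbb Z$-splitting $G_{r+k}\Lambda^{g+k}$ of the Lefschetz filtration on $\Lambda^{g+k}$ compatible with $\iota_{\omega_k}$; pushing $G_{r+k}\Lambda^{g+k}$ forward to $C_{g,k}$ gives subgroups $G_r C_{g,k}$ realizing the splitting, because compatibility with $\iota_{\omega_k}$ ensures the image behaves well. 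The main obstacle I anticipate is purely bookkeeping: getting the right-exact sequence on graded pieces genuinely correct — i.e. verifying that no contribution to $\textup{gr}_{r+k}\Lambda^{g+k}$ comes from source elements of filtration level strictly below $r$, which is exactly the content of the "reverse implication" in Lemma \ref{lemma:contr-filt}, and confirming the index/binomial arithmetic ($\binom{r+k}{r} = \binom{r+k}{k}$, vanishing range) lines up with the stated formula. The conceptual content is entirely in Theorem \ref{thm:Lefschetz} and Corollary \ref{cor:Lefschetz-graded}; everything else is careful diagram-chasing with the filtration.
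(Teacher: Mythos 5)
Your proposal is correct and follows essentially the same route as the paper: dualize via the Hodge--Lefschetz star to replace $\wedge\omega_k$ by $\iota_{\omega_k}:\Lambda^{g+k}\to\Lambda^{g-k}$, choose splittings as in Proposition \ref{prop:splitting-across-midpoint} so that this map becomes a direct sum of maps $G_{i+k}\Lambda^{g+k}\to G_i\Lambda^{g-k}$, and identify the graded maps via the contraction composite of Corollary \ref{cor:Lefschetz-graded} as multiplication by $\binom{r+k}{r}$ on $P^{g-k-2r}(\mathbb Z^{2g})$. The only slip is cosmetic: after dualizing, the subgroups realizing the $\mathbb Z$-splitting of $C_{g,k}\cong\coker(\iota_{\omega_k})$ should be the images of the $G_r\Lambda^{g-k}$ (equivalently, transport them back through $*$), not of the $G_{r+k}\Lambda^{g+k}$.
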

\begin{proof}
By Hodge--Lefschetz duality, $C_{g,k}$ is isomorphic as an $\text{Sp}(2g)$-module to the cokernel of $\iota_{\omega_k}: \Lambda^{g+k} \to \Lambda^{g-k}$, so we work instead with contraction. Choosing a $\mathbb Z$-splitting of the Lefschetz filtration on $\Lambda^{g-k}$ arbitrarily and of $\Lambda^{g+k}$ following Proposition \ref{prop:splitting-across-midpoint}, the map $\iota_{\omega_k}$ is identified as the following direct sum, with Lefschetz filtration sent to the sum of the first $r$ summands: \[\bigoplus_{r=0}^{\lfloor \frac{g-k}{2}\rfloor} \left(\iota_{\omega_k}: G_{r+k} \Lambda^{g+k}(\mathbb Z^{2g}) \to G_r \Lambda^{g-k}(\mathbb Z^{2g})\right).\] 
In particular, for $0 \le r \le \lfloor \frac{g-k}{2} \rfloor$ we have \begin{align*}
F_r C_k &\cong \bigoplus_{i=0}^r \coker \left(\iota_{\omega_k}: G_{i+k} \Lambda^{g+k}(\mathbb Z^{2g}) \to G_i \Lambda^{g-k}(\mathbb Z^{2g})\right) \\
\text{gr}_r C_k &\cong \coker \left(\iota_{\omega_k}: G_{r+k} \Lambda^{g+k}(\mathbb Z^{2g}) \to G_r \Lambda^{g-k}(\mathbb Z^{2g})\right),
\end{align*}
the first isomorphism one of abelian groups, the latter of $\text{Sp}(2g)$-modules. To conclude, Corollary \ref{cor:Lefschetz-graded} identifies the associated graded maps with \[\binom{r+k}{r}: P^{g-k-2r}(\mathbb Z^{2g}) \to P^{g-k-2r}(\mathbb Z^{2g}). \qedhere\]
\end{proof}

\subsection{The $\text{Sp}(2g)$-action on the cohomology of Heisenberg groups}

The \textit{integer Heisenberg group} $N_g$ is the group of upper-triangular integer $(g+2) \times (g+2)$ matrices which are equal to $1$ on the diagonal, and whose nonzero off-diagonal entries lie in the first row or final column. A typical element $A \in N_2$ is given by \[A = \begin{pmatrix} 1 & x_1 & x_2 & z \\ 0 & 1 & 0 & y_1 \\ 0 & 0 & 1 & y_2 \\ 0 & 0 & 0 & 1 \end{pmatrix} = \begin{pmatrix} 1 & \vec x & z \\ 0_{2 \times 1} & I_{2 \times 2} & \vec y \\ 0 & \vec 0_{1 \times 2} & 1 \end{pmatrix} \in M_4(\mathbb Z).\] 

The cohomology groups $H^k(N_g;\mathbb Z)$ were calculated in \cite{LeePacker}. 

\begin{theorem}[{\cite[Theorem 1.8]{LeePacker}}]\label{thm:LP}
The cohomology of the integer Heisenberg group $N_g$ is, as an abelian group, 
\[H^k(N_g;\mathbb Z) = 
\begin{cases} \bigoplus_{j=0}^{\lfloor k/2 \rfloor} (\mathbb Z/j)^{\binom{2g}{k-2j} - \binom{2g}{k-2j-2}}, & 0 \le k \le g \\\\
\mathbb Z^{\binom{2g}{2g-k+1} - \binom{2g}{2g-k-1}} \oplus \left(\bigoplus_{j=1}^{\lfloor (2g-k+2)/2 \rfloor} (\mathbb Z/j)^{\binom{2g}{2g-k-2j+2} - \binom{2g}{2g-k-2j}}\right). & g+1 \le k \le 2g+1  \end{cases}
\]    
\end{theorem}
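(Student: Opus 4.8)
The plan is to compute the group cohomology of $N_g$ via the central extension $1 \to \mathbb Z \to N_g \to \mathbb Z^{2g} \to 1$ and the associated Gysin / Lyndon--Hochschild--Serre spectral sequence. First I would recall that $H^*(\mathbb Z^{2g};\mathbb Z) \cong \Lambda^*(\mathbb Z^{2g})$ as rings, and that the extension class of $N_g$ in $H^2(\mathbb Z^{2g};\mathbb Z)$ is exactly the standard symplectic form $\omega = e^1 \wedge e^2 + \cdots + e^{2g-1}\wedge e^{2g}$. For a central extension by $\mathbb Z$ the Gysin sequence takes the form
\[
\cdots \to H^{k-2}(\mathbb Z^{2g};\mathbb Z) \xrightarrow{\ \wedge \omega\ } H^{k}(\mathbb Z^{2g};\mathbb Z) \to H^{k}(N_g;\mathbb Z) \to H^{k-1}(\mathbb Z^{2g};\mathbb Z) \xrightarrow{\ \wedge \omega\ } H^{k+1}(\mathbb Z^{2g};\mathbb Z) \to \cdots,
\]
so that there is a short exact sequence
\[
0 \to \coker\!\left(\wedge\omega: \Lambda^{k-2}(\mathbb Z^{2g}) \to \Lambda^{k}(\mathbb Z^{2g})\right) \to H^k(N_g;\mathbb Z) \to \ker\!\left(\wedge\omega: \Lambda^{k-1}(\mathbb Z^{2g}) \to \Lambda^{k+1}(\mathbb Z^{2g})\right) \to 0.
\]
The strategy is to understand both outer terms using the Lefschetz filtration, and then to resolve the extension problem as an abelian group (which is all Theorem~\ref{thm:LP} asks for).

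The kernel of $\wedge\omega$ on $\Lambda^{k-1}(\mathbb Z^{2g})$ is, essentially by definition of the Lefschetz filtration, the term $F_{g-k+1}\Lambda^{k-1}(\mathbb Z^{2g})$ when this index is nonnegative, i.e. $\ker(\wedge\omega) = \{\alpha : \omega\wedge\alpha=0\}$; by Theorem~\ref{thm:Lefschetz} this is a free abelian group with a filtration whose subquotients are the $P^{j}(\mathbb Z^{2g})$, so it is free of rank $\binom{2g}{k-1} - \binom{2g}{k+1}$ in the range where $\wedge\omega$ is injective and $\binom{2g}{k-1}$-many things get killed appropriately in the complementary range — here one must be careful about whether $k-1 \le g$ or $k-1 > g$, which produces the two cases in the statement. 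The cokernel of $\wedge\omega: \Lambda^{k-2} \to \Lambda^k$ is handled by Hodge--Lefschetz duality together with Theorem~\ref{thm:failure-of-hard-lefschetz}: dualizing, $\coker(\wedge\omega: \Lambda^{k-2}\to\Lambda^k) \cong \coker(\iota_\omega: \Lambda^{k}\to\Lambda^{k-2})$, and applying the computation of $C_{g,k}$-type cokernels (with $\omega_1 = \omega$) gives a $\mathbb Z$-split filtration with subquotients $P^{j}(\mathbb Z^{2g})/\binom{r+1}{r} = P^j(\mathbb Z^{2g})/(r+1)$, i.e. finite cyclic-group summands of order $r+1$ with multiplicity $\dim P^j = \binom{2g}{j} - \binom{2g}{j-2}$. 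Re-indexing $r+1 = j$ turns this into exactly the $(\mathbb Z/j)^{\binom{2g}{\cdot}-\binom{2g}{\cdot}}$ pattern in Theorem~\ref{thm:LP}.

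Assembling: for $0 \le k \le g$ the map $\wedge\omega$ is injective on $\Lambda^{k-1}$ (since $g-(k-1)+1 > 0$ and degrees are below the middle, so there is no kernel), hence $H^k(N_g;\mathbb Z)$ is purely the torsion cokernel term and the short exact sequence is trivially split; this gives the first case. For $g+1 \le k \le 2g+1$ the kernel term contributes the free part $\mathbb Z^{\binom{2g}{2g-k+1}-\binom{2g}{2g-k-1}}$ and the cokernel term contributes the torsion; since the kernel is free the extension splits as abelian groups, giving the second case. The main obstacle I anticipate is bookkeeping rather than conceptual: matching the indexing of the Lefschetz-filtration subquotients $P^{g-k-2r}(\mathbb Z^{2g})/(r+1)$ against Lee--Packer's binomial exponents $\binom{2g}{k-2j}-\binom{2g}{k-2j-2}$ across the two ranges, and carefully tracking which range of $k$ makes $\wedge\omega$ injective versus having a coprimitive kernel, so that the free ranks come out correctly. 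One should also double-check the degree conventions relating $H_k$ (as in Theorem~\ref{thm:Heisenberg-intro}) and $H^k$ via universal coefficients, but since everything in sight is a sum of a free group and cyclic groups this is harmless. A cleaner alternative, which I would mention, is to bypass Theorem~\ref{thm:LP}'s explicit binomials entirely and instead directly state the $\mathrm{Sp}(2g)$-equivariant refinement (the ``more precise statement'' of Theorem~\ref{thm:LP-Lefschetz}), from which the abelian-group answer follows by taking ranks.
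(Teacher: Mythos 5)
Your Gysin setup in cohomology is fine, and the second case ($g+1\le k\le 2g+1$) can be made to work along the lines you sketch: the kernel term is free of the stated rank, the extension splits, and the torsion cokernel there lives in the range where graded methods apply. The genuine gap is in the first case $0\le k\le g$, where $H^k(N_g;\mathbb Z)\cong\coker(\wedge\omega:\Lambda^{k-2}\to\Lambda^k)$ and you propose to compute this cokernel ``by Hodge--Lefschetz duality together with Theorem \ref{thm:failure-of-hard-lefschetz}.'' Two problems. First, the identification you write is false: the Hodge--Lefschetz star conjugates $\wedge\omega$ on $\Lambda^{k-2}$ into $\iota_\omega$ on $\Lambda^{2g-k+2}$, not on $\Lambda^{k}$; the group $\coker(\iota_\omega:\Lambda^k\to\Lambda^{k-2})$ has the same torsion as your cokernel (transposed integer matrices have the same cokernel torsion) but a different free rank (for $k=2$ it is $0$ versus $\binom{2g}{2}-1$), and in this range the free rank of the cokernel is exactly what produces the $j=0$ term of the formula. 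Second, and more seriously, Theorem \ref{thm:failure-of-hard-lefschetz} only treats the middle-symmetric maps $\omega_k:\Lambda^{g-k}\to\Lambda^{g+k}$, and the principle you are invoking --- that a cokernel can be read off from the Lefschetz-filtration subquotients --- is valid only where one can choose $\mathbb Z$-splittings compatible with the map, i.e.\ in the range covered by Propositions \ref{prop:splitting-firsthalf} and \ref{prop:splitting-across-midpoint}. The Hodge--Lefschetz dual of your first case is $\iota_\omega:\Lambda^{2g-k+2}\to\Lambda^{2g-k}$ with $2g-k\ge g$, precisely where such splittings do not exist (Lemma \ref{lemma:no-total-filtration}) and where the graded computation gives the wrong answer: for $k=2$ the true cokernel $\Lambda^2/\langle\omega\rangle$ is free (as $\omega$ is a primitive lattice vector), while the associated graded map is multiplication by $\binom{g}{1}$ and would predict a $\mathbb Z/g$ summand. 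This is exactly the point of the remark following Theorem \ref{thm:LP-Lefschetz} and of the observation after Proposition \ref{prop:computation-for-coker} that the filtration on $\coker(\omega)$ is \emph{not} $\mathbb Z$-split.

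For comparison, the paper avoids this by running the Gysin sequence in \emph{homology}, so that for $k\le g$ the cokernel term is $\coker(\iota_\omega:\Lambda^{k+1}\to\Lambda^{k-1})$, which lies in the good range for the splitting propositions; the upper half is then obtained from the lower half by Poincar\'e duality for the closed orientable $(2g+1)$-manifold $BN_g$ together with the universal coefficient theorem, rather than by any direct graded computation in high degrees. Your argument can be repaired either by switching to homology in the same way, or by separating the free rank (rational hard Lefschetz) from the torsion and using that $\coker(\wedge\omega:\Lambda^{k-2}\to\Lambda^k)$ and $\coker(\iota_\omega:\Lambda^{k}\to\Lambda^{k-2})$ have the same torsion, so that the torsion computation takes place in degrees $\le g$ where Propositions \ref{prop:splitting-firsthalf} and \ref{prop:splitting-across-midpoint} apply. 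Finally, note that your suggested fallback of quoting Theorem \ref{thm:LP-Lefschetz} is circular in this context --- that theorem \emph{is} the paper's proof of the present statement --- and in any case it only gives filtration subquotients, so recovering the abelian group still requires exactly the $\mathbb Z$-splitness that fails in the range at issue.
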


The proof is combinatorial in nature, and we will find some use for those techniques in the next section. Here we present an alternative proof which strengthens these results and demystifies the appearance of expressions like $\binom{2g}{k} - \binom{2g}{k-2}$. 

The group $N_g$ is a central extension of $\mathbb Z^{2g}$ by $\mathbb Z$, and this extension is classified by the element $\omega \in H^2(\mathbb Z^{2g};\mathbb Z) \cong \Lambda^2(\mathbb Z^{2g})$. It follows that the action of $\text{Sp}(2g)$ on $\mathbb Z^{2g}$ extends to an action on $N_g$, and thus that the cohomology groups of $H^k(N_g;\mathbb Z)$ are $\text{Sp}(2g)$-modules. 

We will prefer to state our result in terms of the homology of $N_g$. This is equivalent to working with cohomology because $BN_g$ is a closed orientable manifold of dimension $(2n+1)$, so by Poincar\'e duality we have $\text{Sp}(2g)$-equivariant isomorphisms $H^k(N_g;\mathbb Z) \cong H_{2g+1-k}(N_g;\mathbb Z)$. Because the argument and result is somewhat different for $k \le g+1$ and $k > g+1$, we split this into two statements. 

\begin{theorem}\label{thm:LP-Lefschetz}
The homology groups of the integer Heisenberg groups, considered as $\text{Sp}(2g)$-modules, admit $\mathbb Z$-split filtrations whose subquotients are primitive subspaces mod $j$ or their duals. More precisely, 
\begin{enumerate}[label=(\alph*)]
    \item For $0 \le k \le g$, the module $H_k(N_g;\mathbb Z)$ admits a $\mathbb Z$-split filtration with subquotients 
    \[\textup{gr}_r H_k(N_g;\mathbb Z) \cong \begin{cases} P^{k-2r-1}(\mathbb Z^{2g})/(r+1), & 0 \le r \le \lfloor \frac{k-1}{2} \rfloor \\ P^k(\mathbb Z^{2g}). & r = \lfloor \frac{k+1}{2} \rfloor \end{cases}\]
    
    \item For $g+1 \le k \le 2g+1$, write $\epsilon \in \{0,1\}$ for the parity of $k-1$. Then the module $H_k(N_g;\mathbb Z)$ admits a $\mathbb Z$-split filtration with subquotients     
    \[\textup{gr}_r H_k(N_g;\mathbb Z) \cong \begin{cases} 
    P^{2r+\epsilon}(\mathbb Z^{2g})\big/\big(\lfloor \frac{2g-k+1}{2}\rfloor - r\big), & 0 \le r \le \lfloor \frac{2g-k+1}{2} \rfloor \\ 
    \end{cases}\]
\end{enumerate}
\end{theorem}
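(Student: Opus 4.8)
The plan is to deduce Theorem \ref{thm:LP-Lefschetz} from the Lefschetz filtration by running the Lyndon--Hochschild--Serre spectral sequence of the central extension $1 \to \mathbb Z \to N_g \to \mathbb Z^{2g} \to 1$. Since the extension is classified by $\omega \in H^2(\mathbb Z^{2g};\mathbb Z) \cong \Lambda^2(\mathbb Z^{2g})$ and the central subgroup acts trivially, the $E_2$ page is $H_p(\mathbb Z^{2g}; H_q(\mathbb Z;\mathbb Z))$, which is $\Lambda^p(\mathbb Z^{2g})$ for $q = 0, 1$ and vanishes otherwise. Thus only the $d_2$ differential $d_2: \Lambda^p(\mathbb Z^{2g}) = E^2_{p,1} \to E^2_{p-2,0} = \Lambda^{p-2}(\mathbb Z^{2g})$ is possibly nonzero, and by the standard identification of the transgression this differential is (up to sign) cap product with the extension class $\omega$, i.e.\ contraction $\iota_\omega$. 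Crucially, all of this is $\mathrm{Sp}(2g)$-equivariant, because $\mathrm{Sp}(2g)$ acts on the whole extension fixing $\omega$. So for each $k$ we get a two-step $\mathbb Z$-split (the terms are free) $\mathrm{Sp}(2g)$-equivariant filtration of $H_k(N_g;\mathbb Z)$ with
\[
0 \to \coker\big(\iota_\omega: \Lambda^{k+1}(\mathbb Z^{2g}) \to \Lambda^{k-1}(\mathbb Z^{2g})\big) \to H_k(N_g;\mathbb Z) \to \ker\big(\iota_\omega: \Lambda^{k}(\mathbb Z^{2g}) \to \Lambda^{k-2}(\mathbb Z^{2g})\big) \to 0.
\]

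Next I would identify these kernels and cokernels using the Lefschetz filtration. The kernel $\ker(\iota_\omega: \Lambda^k \to \Lambda^{k-2})$ is by definition $F_{g-k}\Lambda^k$ when $k \ge g$ (the coprimitive subspace) and all of... no: by Lemma \ref{lemma:contr-filt}, $\iota_\omega(x) = 0$ iff $x \in F_{k-g-1}\cdots$; more carefully, $\iota_\omega(x) \in F_{r-1}\Lambda^{k-2}$ iff $x \in F_r \Lambda^k$, so the kernel of $\iota_\omega$ on $\Lambda^k$ is $F_0\Lambda^k = P^k$ when $k \le g$, and is the coprimitive subspace $F_{k-g}\Lambda^k$ when $k \ge g$. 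In the range $k \le g$ this contributes the single associated graded piece $P^k(\mathbb Z^{2g})$, matching the $r = \lfloor \frac{k+1}{2}\rfloor$ line of (a). For the cokernel of $\iota_\omega: \Lambda^{k+1} \to \Lambda^{k-1}$, I apply Hodge--Lefschetz duality to turn it into a cokernel of $\wedge\omega$ when useful, and then invoke Theorem \ref{thm:failure-of-hard-lefschetz}: writing $k - 1 = g - k'$ where... here one has to be careful since $\iota_\omega: \Lambda^{k+1} \to \Lambda^{k-1}$ is not symmetric about the midpoint. The cleanest approach is to choose $\mathbb Z$-splittings of the Lefschetz filtrations via Propositions \ref{prop:splitting-firsthalf} and \ref{prop:splitting-across-midpoint} so that $\iota_\omega$ becomes block-diagonal with respect to the splittings, reducing to the associated graded maps $\iota_\omega: \mathrm{gr}_{r}\Lambda^{k+1} \to \mathrm{gr}_{r-1}\Lambda^{k-1}$, which by Corollary \ref{cor:Lefschetz-graded} are identified with multiplication by $\binom{(g-(k-1-2(r-1)))+(r-1)}{1} = \binom{g-k+2r-1 + r - 1}{1}$ acting on $P^{k-1-2(r-1)}(\mathbb Z^{2g}) = P^{k-2r+1}(\mathbb Z^{2g})$; tracking indices against the claimed formula $P^{k-2r-1}/(r+1)$ and reindexing the filtration degree is then bookkeeping. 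The case $k \ge g+1$ is handled symmetrically, with the free part coming from the coprimitive kernel and the torsion pieces from the cokernel, and $\epsilon$ recording the parity that determines which primitive subspaces $P^{2r+\epsilon}$ appear.

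The main obstacle I anticipate is not conceptual but organizational: matching the two-step spectral-sequence filtration (cokernel piece $\subset H_k \twoheadrightarrow$ kernel piece) refined by the Lefschetz filtration on each piece, against the single indexed filtration in the statement, and getting every binomial coefficient and every index shift exactly right --- in particular verifying that the divisibilities $\binom{r+k}{r}$ coming from Theorem \ref{thm:failure-of-hard-lefschetz} reindex to the claimed $(r+1)$ in part (a) and $(\lfloor \frac{2g-k+1}{2}\rfloor - r)$ in part (b), and confirming that the orders $j$ of the cyclic groups are exactly those in Lee--Packer's Theorem \ref{thm:LP} after summing over the filtration (which serves as an independent check). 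A secondary point requiring care is the sign/identification of the transgression $d_2$ with contraction $\iota_\omega$ rather than $\wedge\omega$; this is standard for central extensions but I would cite it explicitly (e.g.\ via the fact that the $k$-invariant of the extension is $\omega$ and the transgression in the homology LHS spectral sequence is cap product with it), and note that the identification is manifestly $\mathrm{Sp}(2g)$-equivariant since $\mathrm{Sp}(2g)$ acts on the extension. Finally, I would remark that $\mathbb Z$-splitness is automatic since every term in sight (kernels, cokernels modulo their torsion, associated gradeds) is a finitely generated abelian group and the filtration quotients are as computed, so the extensions split as abelian groups even though --- by Lemma \ref{lemma:no-equivariant-splitting} --- not $\mathrm{Sp}(2g)$-equivariantly in general.
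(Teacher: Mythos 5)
Your treatment of the range $0 \le k \le g$ matches the paper's argument: the two-row Lyndon--Hochschild--Serre spectral sequence (equivalently, the Gysin sequence used in the paper) gives the $\mathrm{Sp}(2g)$-equivariant short exact sequence with kernel $P^k(\mathbb Z^{2g})$ and cokernel $\coker(\iota_\omega\colon\Lambda^{k+1}\to\Lambda^{k-1})$, and for $k\le g$ the splittings of Propositions \ref{prop:splitting-firsthalf} and \ref{prop:splitting-across-midpoint} block-diagonalize $\iota_\omega$ so that Corollary \ref{cor:Lefschetz-graded} computes the subquotients (your binomial-coefficient bookkeeping is garbled but repairable).

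The genuine gap is your claim that ``the case $k\ge g+1$ is handled symmetrically.'' It cannot be: for the map $\iota_\omega\colon\Lambda^{k+1}\to\Lambda^{k-1}$ with $k-1\ge g$ there is in general \emph{no} $\mathbb Z$-splitting of the Lefschetz filtrations making $\iota_\omega$ block-diagonal --- this is exactly Lemma \ref{lemma:no-total-filtration} --- and, as the paper remarks after the theorem, $\coker(\omega)_{k-1}$ genuinely disagrees with the cokernel of the associated graded map once $k>g$. A concrete check: for $g=2$, $k=3$, the map $\iota_\omega\colon\Lambda^4(\mathbb Z^4)\to\Lambda^2(\mathbb Z^4)$ sends $\omega_2\mapsto-\omega$, so its cokernel is $\mathbb Z^5$ (torsion-free), whereas the graded maps $\mathrm{gr}_2\Lambda^4\to\mathrm{gr}_1\Lambda^2$ and $\mathrm{gr}_1\Lambda^4\to\mathrm{gr}_0\Lambda^2$ have cokernel $\mathbb Z^5\oplus\mathbb Z/2$; your symmetric treatment would therefore produce the wrong groups (and note that in part (b) the moduli $\lfloor\frac{2g-k+1}{2}\rfloor-r$ \emph{decrease} with $r$, with the free piece at the top of the filtration, a pattern that does not come from the graded cokernel). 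The paper circumvents this by a different mechanism in high degrees: it uses Poincar\'e duality for the closed orientable $(2g+1)$-manifold $BN_g$ to identify $H_k(N_g)\cong H^{2g+1-k}(N_g)$, then the universal coefficient sequence $0\to\mathrm{Ext}(H_{2g-k}(N_g),\mathbb Z)\to H^{2g+1-k}(N_g)\to\mathrm{Hom}(H_{2g+1-k}(N_g),\mathbb Z)\to 0$, transports the already-constructed low-degree filtration through $\mathrm{Ext}$, and finally invokes the self-duality of the $P^j(\mathbb Z^{2g})$ as $\mathrm{Sp}(2g)$-modules via the pairing $(x,y)\mapsto x\wedge y\wedge\omega_{g-j}$. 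Some replacement for this step is required; the rest of your outline (equivariance of the extension, $\mathbb Z$-splitness from freeness of the subquotients, the consistency check against Lee--Packer) is fine.
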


\begin{proof} 
We first handle the case $k \le g$. Identifying $H_k(\mathbb Z^{2g};\mathbb Z) \cong \Lambda^k(\mathbb Z^{2g})$, the Gysin sequence in homology is an $\text{Sp}(2g)$-equivariant long exact sequence
\[\cdots \to H_k(N_g;\mathbb Z) \to \Lambda^k(\mathbb Z^{2g}) \xrightarrow{\iota_\omega} \Lambda^{k-2}(\mathbb Z^{2g}) \to H_{k-1}(N_g;\mathbb Z) \to \cdots \] In each degree $k$, this gives rise to an $\text{Sp}(2g)$-equivariant short exact sequence 
\[0 \to \coker(\iota_\omega: \Lambda^{k+1} \to \Lambda^{k-1}) \to H_k(N_g;\mathbb Z) \to \ker(\iota_\omega: \Lambda^k \to \Lambda^{k-2}) \to 0;\] we abbreviate the first term to $\coker(\omega)_{k-1}$ and the latter term to $\ker(\omega)_k$. By Lemma \ref{lemma:contr-filt}, we have $\ker(\omega)_k = P^k(\mathbb Z^{2g})$ for all $k \le g$. We will filter $F_r \coker(\omega)_{k-1}$ for $r \le \lfloor(k-1)/2 \rfloor$ and set 
\[F_r H_k(N_g;\mathbb Z) = \begin{cases} F_r \coker(\omega)_{k-1}, & r \le \lfloor \frac{k-1}{2} \rfloor \\ H_k(N_g;\mathbb Z). & r > \lfloor \frac{k-1}{2} \rfloor \end{cases}\]
The subquotients of this filtration are then precisely $P^k$ and the subquotients of $\coker(\omega)_{k-1}$. 

By Propositions \ref{prop:splitting-firsthalf} and \ref{prop:splitting-across-midpoint}, one may choose $\mathbb Z$-splittings of $\Lambda^k(\mathbb Z^{2g})$ for $0 \le k \le g+1$ compatible with the action of $\iota_\omega$. Arguing as in the proof of Theorem \ref{thm:failure-of-hard-lefschetz}, for $k \le g$ this allows us to give $\coker(\omega)_{k-1}$ a $\mathbb Z$-split filtration whose subquotients are identified with the cokernel of the associated graded map to $\iota_\omega$. To complete the proof for $k \le g$, observe that by Corollary \ref{cor:Lefschetz-graded} we obtain for $0 \le r \le \lfloor \frac{k-1}{2} \rfloor$ \[\text{gr}_r \coker(\omega)_{k-1} \cong P^{k-2r-1}(\mathbb Z^{2g})/(r+1).\] 

For $k = g + j > g$, we first use Poincar\'e duality to identify $H_k(N_g;\mathbb Z) \cong H^{g+1-j}(N_g;\mathbb Z)$ as $\text{Sp}(2g)$-modules. Next, we use the universal coefficient theorem to obtain an $\text{Sp}(2g)$-equivariant short exact sequence 
\[0 \to \text{Ext}(H_{g-j}(N_g),\mathbb Z) \to H^{g+1-j}(N_g) \to \text{Hom}(H_{g+1-j}(N_g),\mathbb Z) \to 0.\] The final term is identified with the dual $P^{g+1-j}(\mathbb Z^{2g})^* = \text{Hom}(P^{g+1-j}(\mathbb Z^{2g}), \mathbb Z)$. 

We will use our existing filtration on $H_{g-j}(N_g)$ to define a filtration on $H^{g+1-j}(N_g)$. Writing $c = \lfloor (g-j-1)/2 \rfloor = \lfloor (2g-k-1)/2\rfloor$, we set
\[F_r H_{g+j}(N_g) \cong F_r H^{g+1-j}(N_g) = \begin{cases} \text{Ext}(H_{g-j}(N_g)/F_{c-r} H_{g-j}(N_g), \mathbb Z), & 0 \le r \le c \\ H^{g+1-j}(N_g). & r > c \end{cases} \]
This defines a $\mathbb Z$-split filtration because Ext is contravariant and the filtration on $H_{g-j}$ is $\mathbb Z$-split. The initial subquotients are then identified with 
\[\text{Ext}(\text{gr}_{c-r} H_{g-j}, \mathbb Z) \cong \text{Ext}\left(P^{g-j-2c+2r-1}(\mathbb Z^{2g})/(c-r+1), \mathbb Z\right).\]
Simplifying indexing, this is equal to $P^{2r+\epsilon}(\mathbb Z^{2g})^*/\left(\lfloor \frac{2g-k+1}{2} -r\rfloor\right).$

To conclude, observe that primitive subspaces are self-dual as $\text{Sp}(2g)$-modules: \[(x,y) \mapsto x \wedge y \wedge \omega_{g-k} \in \Lambda^{2g}(\mathbb Z^{2g}) \cong \mathbb Z\] defines an $\text{Sp}(2g)$-invariant perfect pairing on $P^k(\mathbb Z^{2g})$ for all $0 \le k \le g$.
\end{proof}

\begin{remark}
The awkward relationship between the proof for $k \le g$ and $k > g$ is a necessary consequence of Lemma \ref{lemma:no-total-filtration}. In fact, $\coker(\omega)_{k-1}$ disagrees with $\coker(\text{gr } \omega)_{k-1}$ for $k > g$. While the proof of \cite[Theorem 1.8]{LeePacker} is quite different than the proof presented above, the authors of that article also split the computation into halves and used Poincar\'e duality.
\end{remark}

\subsection{Comparing the actions of $\omega$ and $e^\omega-1$}
For our application to Heegaard Floer homology, the essential point is to compare the kernels and cokernels of the contraction action by $\omega$ and $e^\omega - 1 = \omega + \omega_2 + \cdots$. There is no difficulty comparing their kernels over the integers: 

\begin{lemma}\label{lemma:ker-contraction}
The kernels of the contraction action on $\Lambda^*(\mathbb Z^{2g})$ satisfy \[\ker(\omega) = \ker(e^\omega-1) = \bigoplus_{0 \le k \le g} P^k(\mathbb Z^{2g}).\]
\end{lemma}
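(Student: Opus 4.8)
The plan is to exploit the fact that contraction by $e^\omega - 1 = \omega + \omega_2 + \omega_3 + \cdots$ is an upper-triangular perturbation of contraction by $\omega$ with respect to the Lefschetz filtration. First I would recall from Lemma~\ref{lemma:contr-filt} that $\iota_{\omega_j}$ maps $F_r \Lambda^k$ into $F_{r-j}\Lambda^{k-2j}$; in particular $\iota_{\omega_j}(P^k) = \iota_{\omega_j}(F_0 \Lambda^k) = 0$ for all $j \ge 1$ and all $0 \le k \le g$. Since contraction lowers form-degree by $2j$ for the $\omega_j$-term, and $\iota_\omega$ itself already kills $P^k$ by the same lemma, this immediately shows $\bigoplus_{0 \le k \le g} P^k(\mathbb Z^{2g}) \subseteq \ker(\omega) \cap \ker(e^\omega - 1)$.

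Next I would prove the reverse inclusion $\ker(\omega) \subseteq \bigoplus_{0\le k \le g} P^k$. By Lemma~\ref{lemma:contr-filt}, $\iota_\omega(x) = 0$ means $x \in F_{-1+1}\Lambda^k \cdot$... more precisely, writing $k = k' + 2r$ with $0 \le k' \le g$ (or handling $k > g$ via Hodge--Lefschetz duality, which interchanges $\wedge\omega$ and $\iota_\omega$ and respects the filtration), the statement $\iota_\omega(x) = 0$ together with the injectivity of $\iota_{\omega_r}$ on $\mathrm{gr}_r$ from Theorem~\ref{thm:Lefschetz} forces $x$ to lie in $F_0\Lambda^k = P^k$ when $k \le g$, and to vanish when $k > g$ since there $\ker(\iota_\omega)$ meets only the bottom filtration level which already maps injectively. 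Concretely: if $x \in F_r \setminus F_{r-1}$ with $r \ge 1$, then $\iota_{\omega_r}(x) \ne 0$ in $P^{k-2r}$, but $\iota_{\omega_r}(x) = \tfrac{1}{r!}\iota_{\omega}^{\,r}(x)$ would have to vanish if $\iota_\omega(x) = 0$ --- contradiction. This pins down $\ker(\omega) = \bigoplus_{0 \le k \le g} P^k$.

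Finally I would show $\ker(e^\omega - 1) \subseteq \ker(\omega)$. Here the triangularity does the work: suppose $(\omega + \omega_2 + \cdots) \wedge$-contracted against $x$ vanishes, i.e. $\iota_\omega(x) + \iota_{\omega_2}(x) + \cdots = 0$. Decompose $x = \sum_k x_k$ by form-degree; the term $\iota_{\omega_j}(x_k)$ lands in degree $k - 2j$, so collecting by target degree gives, for each $m$, the relation $\sum_{j \ge 1} \iota_{\omega_j}(x_{m+2j}) = 0$. Running a downward induction on form-degree (the top-degree piece gives $\iota_\omega(x_{\text{top}}) = 0$, hence $x_{\text{top}} \in \ker(\omega)$, hence $x_{\text{top}}$ is primitive, hence $\iota_{\omega_j}(x_{\text{top}}) = 0$ for all $j$, and these higher terms then drop out of the lower-degree relations), one peels off one degree at a time and concludes $\iota_\omega(x_k) = 0$ for every $k$, so $x \in \ker(\omega)$. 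Combined with the first paragraph this gives equality of all three sets.

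The main obstacle I anticipate is bookkeeping the degree shifts cleanly in the last step --- making sure the downward induction is set up so that at each stage the already-processed higher-degree pieces are genuinely primitive (and hence annihilated by \emph{all} $\iota_{\omega_j}$, not just $\iota_\omega$) before they can contaminate the equation in the next-lower degree. Once one observes that $\ker(\omega) = \bigoplus P^k$ consists of primitive forms, which are killed by every $\iota_{\omega_j}$ with $j \ge 1$ by Lemma~\ref{lemma:contr-filt}, the triangular cascade closes up without incident; the rest is immediate from the lemmas already in hand.
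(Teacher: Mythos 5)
Your proposal is correct and follows essentially the same route as the paper: the inclusion $\ker(\omega)\subseteq\ker(e^\omega-1)$ via torsion-freeness (since $r!\,\iota_{\omega_r}=\iota_\omega^{\,r}$), and the reverse inclusion by downward induction on form-degree, peeling off one degree at a time once the higher pieces are known to be primitive and hence killed by every $\iota_{\omega_j}$. The only difference is that you spell out the identification $\ker(\omega)=\bigoplus_{k\le g}P^k(\mathbb Z^{2g})$ via the injectivity of $\iota_{\omega_r}$ on $\mathrm{gr}_r$, which the paper leaves implicit in Lemma \ref{lemma:contr-filt}; this is fine and not a substantive departure.
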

\begin{proof}
Suppose $x \in \ker(\omega)$. Because $k!\iota_{\omega_k}(x) = \iota_{\omega}^k(x)$ and the integers are torsion-free, we have $\iota_{\omega_k}(x) = 0$ for all $k > 0$, so that $x \in \ker(e^\omega - 1)$. Conversely, suppose $x_0 + \cdots + x_n \in \ker(e^\omega-1)$ with $x_i \in \Lambda^i$; we will show $x_i \in \ker(\omega)$ for all $i$ by induction. The base case $i =n+1$ is trivial. Suppose $\iota_\omega(x_j) = 0$ for $j > i$. Inspecting the component of $\iota_{e^\omega-1}(x)$ in degree $i-2$, we see \[\iota_{\omega}(x_i) + \iota_{\omega_2}(x_{i+2}) + \cdots = 0.\] By the inductive hypothesis and our previous remarks, this equation simplifies to $\iota_\omega(x_i) = 0.$
\end{proof}

\begin{remark}
    Note that this argument fails over $\Lambda^*(\mathbb F_p^{2g})$, and the result is false in that case. 
\end{remark}

More interesting is to compare the \textit{cokernels}. Here we present three comparison results, two positive and one negative.

\begin{prop}\label{prop:computation-for-coker}
The cokernels of contraction by $\omega$ and $e^\omega-1$ satisfy the following: 
\begin{enumerate}[label=(\alph*)]
    \item\label{part:a} $\coker(\omega)$ and $\coker(e^\omega-1)$ are isomorphic abelian groups.
    \item\label{part:b} There exist $\textup{Sp}(2g)$-invariant filtrations on $\coker(\omega)$ and $\coker(e^\omega-1)$ whose associated graded $\textup{Sp}(2g)$-modules satisfy \[\textup{gr}_{k} \coker(e^\omega-1) \cong \textup{gr}_{k} \coker(\omega) \cong \begin{cases} \bigoplus_{r=0}^{k} P^{g-k}(\mathbb Z^{2g})/(r) & 0 \le k \le g \\ 0 & \textup{otherwise}. \end{cases} \]
    \item\label{part:c} For $g=4$, the cokernels of $\iota_\omega$ and $\iota_{e^\omega-1}$ on $\Lambda^*(\mathbb F_2^{2g})$ are non-isomorphic as $\textup{Sp}(2g;\mathbb F_2)$-modules, so $\coker(\omega)$ and $\coker(e^\omega-1)$ are not isomorphic as $\textup{Sp}(2g)$-modules. 
\end{enumerate}
\end{prop}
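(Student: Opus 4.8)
The three parts require genuinely different arguments, and I would prove them separately, taking them in turn.

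\textbf{Part (a).} The starting point is the operator identity $\iota_{\omega_j}=\tfrac1{j!}\iota_\omega^{\,j}$, immediate from $\iota_{a\wedge b}=\iota_a\iota_b$ and $\omega_j=\omega^j/j!$; thus $\iota_{e^\omega-1}=e^{\iota_\omega}-\mathrm{id}$ and $\iota_{e^\omega}=e^{\iota_\omega}$ is a unipotent automorphism of $\Lambda^*(\mathbb Z^{2g})$ with inverse $\iota_{e^{-\omega}}$. Over $\mathbb Q$ this gives $\iota_{e^\omega-1}=\iota_\omega\cdot u(\iota_\omega)$ with $u(t)=(e^t-1)/t$ a unit power series, so $\coker(\omega)\otimes\mathbb Q\cong\coker(e^\omega-1)\otimes\mathbb Q$; moreover $u(\iota_\omega)$ is integral $p$-locally for $p>g$, so the $p$-primary parts of the torsion already agree there, and the problem localizes at the primes $p\le g$. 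For the remaining primes the plan is to compute both cokernels as abelian groups. For $\coker(\omega)$ the group structure can be read off by combining Hodge--Lefschetz duality with the (split) short exact sequences $0\to\coker(\omega)_{k-1}\to H_k(N_g;\mathbb Z)\to P^k(\mathbb Z^{2g})\to0$ appearing in the proof of Theorem \ref{thm:LP-Lefschetz} and the Lee--Packer formula (Theorem \ref{thm:LP}). For $\coker(e^\omega-1)$ one first notes it has the same free rank, since it has the same kernel by Lemma \ref{lemma:ker-contraction}; the real work is in matching the torsion, and the plan is to pick $\mathbb Z$-splittings of the Lefschetz filtrations (Propositions \ref{prop:splitting-firsthalf} and \ref{prop:splitting-across-midpoint}) in which $\iota_\omega$ is block-diagonal --- a direct sum of the ``multiplication by an integer'' maps of Corollary \ref{cor:Lefschetz-graded} --- and to check that in the same coordinates $\iota_{e^\omega-1}$ is block-triangular with the \emph{same} diagonal blocks, the off-diagonal part coming from the operators $\iota_{\omega_j}$, $j\ge2$, which drop the filtration by at least $2$. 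The main obstacle is the last step: a triangular integer matrix need not have the same elementary divisors as its diagonal, so one must show the higher-order corrections can be cleared by integral (though not $\mathrm{Sp}(2g)$-equivariant) row and column operations. This is exactly where the integrality of $\iota_{\omega_j}=\tfrac1{j!}\iota_\omega^{\,j}$ is essential: restricted to the relevant sublattices these corrections have just enough divisibility to be absorbed, yielding the same block-diagonal normal form for both maps and hence a (non-equivariant) isomorphism of cokernels.

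\textbf{Part (b).} The associated graded of $\coker(\omega)$ with respect to the Lefschetz filtration is what the proof of Theorem \ref{thm:LP-Lefschetz} computes, assembled over all degrees: decomposing the associated graded complex $(\bigoplus_k\mathrm{gr}_*\Lambda^k,\iota_\omega)$ by primitive degree via Theorem \ref{thm:Lefschetz} produces, for each $0\le m\le g$, a chain $P^m\xleftarrow{\cdot1}P^m\xleftarrow{\cdot2}\cdots\xleftarrow{\cdot(g-m)}P^m$ with links the integers of Corollary \ref{cor:Lefschetz-graded}, whose cokernel is $P^m\oplus\bigoplus_{r=2}^{g-m}P^m/(r)$; reindexing by $k=g-m$ gives the displayed formula. (As in that proof, Lemma \ref{lemma:no-total-filtration} forces one to treat the two halves of $\Lambda^*$ separately and to relate them by Hodge--Lefschetz duality; note that (b) asserts only a statement about subquotients, so the non-split extensions present in individual degrees --- already visible for $g=2$ --- do no harm.) For $\coker(e^\omega-1)$ one uses that $\iota_{e^\omega-1}-\iota_\omega=\sum_{j\ge2}\iota_{\omega_j}$ drops the Lefschetz filtration by at least $2$, hence induces the zero map on every associated graded quotient; so $\iota_{e^\omega-1}$ and $\iota_\omega$ have the same associated graded map, and therefore the same associated graded cokernel.

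\textbf{Part (c).} Here one works over $\mathbb F_2$ with $g=4$. By right-exactness of $-\otimes\mathbb F_2$, the cokernels of $\iota_\omega$ and $\iota_{e^\omega-1}$ on $\Lambda^*(\mathbb F_2^{8})$ are $\coker(\omega)\otimes\mathbb F_2$ and $\coker(e^\omega-1)\otimes\mathbb F_2$; by part (a) these have equal $\mathbb F_2$-dimension, so they can only be separated by an equivariant invariant. The plan is to name such an invariant --- for instance the list of composition factors as $\mathrm{Sp}(8,\mathbb F_2)$-modules, or the dimension of the invariants under a fixed maximal parabolic --- and to verify by direct computation, isolated as Lemma \ref{lemma:computer-work}, that it takes different values on the two modules; I expect the computation itself, not its interpretation, to be the only real obstacle. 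Finally, since $\mathrm{Sp}(2g,\mathbb Z)\to\mathrm{Sp}(2g,\mathbb F_2)$ is surjective, an $\mathrm{Sp}(2g,\mathbb Z)$-equivariant isomorphism $\coker(\omega)\cong\coker(e^\omega-1)$ would tensor down to an $\mathrm{Sp}(8,\mathbb F_2)$-equivariant isomorphism of the $\mathbb F_2$-reductions; so the $\mathbb F_2$ computation also excludes an integral equivariant isomorphism.
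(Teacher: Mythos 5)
Your part (b) contains the decisive gap, and it propagates back into part (a). The filtration whose graded pieces give the displayed formula is the one grouping terms by primitive degree --- in the paper's notation the shifted Lefschetz filtration $\mathcal F_k \Lambda = \bigoplus_r F_r \Lambda^{g-k+2r}$ --- and with respect to \emph{this} filtration the correction $\iota_{e^\omega-1}-\iota_\omega=\sum_{j\ge2}\iota_{\omega_j}$ does \emph{not} lower the filtration: $\iota_{\omega_j}$ sends $F_r\Lambda^{g-k+2r}$ into $F_{r-j}\Lambda^{g-k+2(r-j)}$, which is again a summand of $\mathcal F_k$ and not of $\mathcal F_{k-2}$. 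Hence on $\mathcal Gr_k\cong P^{g-k}\otimes\mathbb Z^{k+1}$ the graded map of $\iota_\omega$ is $1\otimes D_k$ (single superdiagonal $1,\dots,k$) while that of $\iota_{e^\omega-1}$ is $1\otimes E_k$ with all the binomial superdiagonals $\binom{r+j}{j}$; your claim that the two maps have the same associated graded map is false. Proving $\coker(E_k)\cong\coker(D_k)$, equivariantly after tensoring with $P^{g-k}$, is exactly the nontrivial content the paper supplies in Lemma \ref{lemma:matrix-comp}, where $E_k=\varphi^{-1}D_k\varphi$ is exhibited via Touchard polynomials and a Stirling-number identity. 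Strictly upper-triangular corrections can genuinely change an integral cokernel: $\left(\begin{smallmatrix}0&2&1\\0&0&2\\0&0&0\end{smallmatrix}\right)$ has cokernel $\mathbb Z\oplus\mathbb Z/4$, while deleting the corner entry gives $\mathbb Z\oplus(\mathbb Z/2)^2$. If instead you intended the filtration by Lefschetz level $r$ alone, the correction does drop it, but then each graded piece is $\bigoplus_m P^m$ and does not yield the stated formula; and in either case you would still need the strictness statement $\mathrm{im}(\iota)\cap\mathcal F_k=\iota(\mathcal F_k)$ (the paper's Lemma \ref{lemma:specseq-collapse}, proved by induction for $\iota_{e^\omega-1}$), without which ``graded of the cokernel equals cokernel of the graded map'' fails --- the remark after Theorem \ref{thm:LP-Lefschetz} shows this failure is a real phenomenon in this very setting.

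Part (a) has the same gap in a harder form: you correctly identify that the off-diagonal corrections must be cleared by integral row and column operations, but you assert rather than prove that they ``have just enough divisibility to be absorbed''; no divisibility estimate is offered, and as the example above shows this is exactly where such an argument can die. The paper sidesteps the issue with a different device: after Hodge--Lefschetz duality and the Lee--Packer pair-free decomposition reduce the problem to $R_k=\mathbb Z[v_1,\dots,v_k]/(v_i^2)$, it constructs a ring automorphism $\phi$ with $\phi(\omega)=e^\omega-1$ (recursively $\phi(v_i)=v_i(1+\phi(v_{i-1}))$), which carries one cokernel onto the other outright. For (c), your outline is a plausible plan but defers all content to an unspecified invariant; note that composition factors will very likely \emph{not} distinguish the two modules (they admit filtrations with isomorphic graded pieces), whereas the paper computes both cokernels explicitly over $\mathbb F_2$ (Lemmas \ref{lemma:oddcalc} and \ref{lemma:evencalc}), cancels common summands by Krull--Schmidt, and reduces the question to the submodule enumeration isolated in Lemma \ref{lemma:computer-work}.
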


Because the torsion of $\oplus_k H_k(N_g;\mathbb Z)$ is the torsion of $\coker(\omega)$, comparing the formula from Proposition \ref{prop:computation-for-coker}\ref{part:b} to Theorem \ref{thm:LP}, we see that the given filtration on $\coker(\omega)$ is \textit{not} $\mathbb Z$-split. It is not clear to the authors whether $\coker(\omega)$ and $\coker(e^\omega-1)$ are isomorphic as filtered abelian groups.

The arguments for the three parts are completely unrelated, and we separate them into three subsections below. The third uses the calculations of Section \ref{sec:Lef} to give an explicit description of the two cokernels; showing that they are not isomorphic depends on an enumeration of the $\text{Sp}(8)$-invariant submodules of $\Lambda^2(\mathbb F_2^8)$ and $\Lambda^4(\mathbb F_2^8)$, which is carried out using MAGMA. 

\subsection*{Proof of Proposition \ref{prop:computation-for-coker}\ref{part:a}.}
By Hodge--Lefschetz duality, we may instead compute the cokernel of wedge product with $\omega$ and $e^\omega-1$. For $S \subset \{1, \cdots, 2g\}$ with elements $i_1 < \cdots < i_k$ listed in increasing order, write \[e^S = e^{i_1} \wedge \cdots e^{i_k}.\]

We use the \textit{pair-free subspace decomposition} \cite[Definition 1.2]{LeePacker}. Subsets $S$ of $\{1,2,...,2g\}$ are called pair-free if $S \cap \{2i-1,2i\}=\varnothing$, and \textit{pair-full} if $2i-1 \in S \iff 2i \in S$. Given a pair-free set, the corresponding \textit{pair-free subspace} is $$V(S) = \text{span}\{ e^S \wedge e^T \mid T \text{ pair-full}\}.$$ Each $V(S)$ is preserved under wedge product with $\omega$, and \cite[Equation (1.8)]{LeePacker} asserts that 
\[\Lambda^*(\mathbb Z^{2g})=\bigoplus_{S \text{ pair-free}}V(S).\] 
Finally, contraction by $e^S$ defines an isomorphism $V(S) \cong V(\varnothing) \subset \Lambda^*(\mathbb Z^{2(g-|S|)})$ which respects the action of $\omega$. Therefore, it suffices to show that the cokernels of $\omega$ and $e^\omega-1$ on $V(\varnothing) \subset \Lambda^*(\mathbb Z^{2k})$ are isomorphic. 

Identify $V(\varnothing) \subset \Lambda^*(\mathbb Z^{2k})$ with the commutative ring $R_k = \mathbb Z[v_1,...,v_k]/(v_i^2),$ where $v_i = e^{2i-1} \wedge e^{2i}$ and thus $\omega = \sum_{i=1}^k v_i$. One may define a ring homomorphism $\phi: R_k \to R_k$ by specifying a square-zero element $\phi(v_i) \in R_k$ for all $k$. We define this recursively by \[\phi(v_i) = \begin{cases} v_1 & i = 1 \\ v_i(1 + \phi(v_{i-1}))  & 1 < i \le k. \end{cases}\] We have $\phi(v_i)^2 = 0$ because $v_i^2 = 0$, and induction on $k$ shows both that $\phi$ is an isomorphism and $\phi(\omega) = e^\omega - 1$. Because $\phi(\omega x) = \phi(\omega) \phi(x) = (e^{\omega} - 1) \phi(x),$ we see that $\phi$ induces an isomorphism $\coker(\omega) \cong \coker(e^\omega - 1)$.

\subsection*{Proof of Proposition \ref{prop:computation-for-coker}\ref{part:b}.}

We compare these using the \textit{shifted Lefschetz filtration} \[\mathcal F_k \Lambda = \bigoplus_{r} F_r \Lambda^{g-k+2r}(\mathbb Z^{2g}).\] The sequences $\mathcal F_{2k}$ and $\mathcal F_{2k+1}$ define increasing filtrations of $\Lambda^{\text{even}}$ and $\Lambda^{\text{odd}}$, respectively, with associated graded $\mathcal{G}r_{k} \Lambda \cong \oplus_{r=0}^k P^{g-k}(\mathbb Z^{2g})$. The contraction action of $\omega$ sends $\mathcal F_k$ into $\mathcal F_k$, and we filter $\coker(\omega)$ by \[\mathcal F_k \coker(\omega) = \frac{\mathcal F_k \Lambda}{\text{im}(\iota_\omega) \cap \mathcal F_k \Lambda}.\]

\begin{lemma}\label{lemma:specseq-collapse}
Restricting $\iota_\omega$ to $\Lambda^{\textup{even}}$ or $\Lambda^{\textup{odd}}$ depending on the parity of $k$, we have $\iota_\omega^{-1}(\mathcal F_k \Lambda) = \mathcal F_k \Lambda$ and so $\textup{im}(\iota_\omega) \cap \mathcal F_k \Lambda = \iota_\omega \mathcal F_k \Lambda$. The analogous result holds also for $\iota_{e^\omega-1}$. 
\end{lemma}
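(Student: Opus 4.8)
The plan is to reduce everything to the single identity $\iota_\omega^{-1}(\mathcal F_k\Lambda)=\mathcal F_k\Lambda$ (and its analogue for $e^\omega-1$), taken inside the relevant parity component $\Lambda^{\mathrm{even}}$ or $\Lambda^{\mathrm{odd}}$. Once that identity is in hand, the equality $\textup{im}(\iota_\omega)\cap\mathcal F_k\Lambda=\iota_\omega(\mathcal F_k\Lambda)$ is formal: ``$\supseteq$'' is the inclusion $\iota_\omega(\mathcal F_k\Lambda)\subseteq\mathcal F_k\Lambda$ already recorded before the lemma, while if $y=\iota_\omega(x)$ lies in $\mathcal F_k\Lambda$ then $x\in\iota_\omega^{-1}(\mathcal F_k\Lambda)=\mathcal F_k\Lambda$, so $y\in\iota_\omega(\mathcal F_k\Lambda)$; the same two lines apply verbatim to $\iota_{e^\omega-1}$.

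To prove the preimage identity I would work one homogeneous degree at a time. Write $\rho(m)=\tfrac12(m-g+k)$, so that the degree-$m$ summand of $\mathcal F_k\Lambda$ is precisely $F_{\rho(m)}\Lambda^m$, and note the key bookkeeping fact $\rho(m+2)=\rho(m)+1$ — this is exactly the reindexing built into the shifted filtration. The only input needed is Lemma \ref{lemma:contr-filt}, which for each $i\ge1$ gives the biconditional $x_m\in F_{\rho(m)}\Lambda^m\iff\iota_{\omega_i}(x_m)\in F_{\rho(m)-i}\Lambda^{m-2i}=F_{\rho(m-2i)}\Lambda^{m-2i}$: the Lefschetz index drops by exactly $i$ under $\iota_{\omega_i}$, matching the drop of $2i$ in exterior degree. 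For $\iota_\omega$ the claim is then immediate: writing $x=\sum_m x_m$ with all $x_m$ of the fixed parity of $g-k$, the condition $\iota_\omega(x)\in\mathcal F_k\Lambda$ says $\iota_\omega(x_m)\in F_{\rho(m-2)}\Lambda^{m-2}$ for every $m$, which by Lemma \ref{lemma:contr-filt} is equivalent to $x_m\in F_{\rho(m)}\Lambda^m$ for every $m$, i.e. $x\in\mathcal F_k\Lambda$.

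For $\iota_{e^\omega-1}=\iota_{\omega_1}+\iota_{\omega_2}+\cdots$ the forward containment $\iota_{e^\omega-1}(\mathcal F_k\Lambda)\subseteq\mathcal F_k\Lambda$ follows termwise from the displayed shift identity. For the reverse containment I would run a downward induction on the exterior degree: assuming $x_{m'}\in F_{\rho(m')}\Lambda^{m'}$ for all $m'>m$, inspect the degree-$(m-2)$ component of $\iota_{e^\omega-1}(x)$, namely $\iota_\omega(x_m)+\sum_{i\ge2}\iota_{\omega_i}(x_{m-2+2i})$. Each summand with $i\ge2$ involves $x_{m-2+2i}$ with $m-2+2i>m$, hence by the inductive hypothesis and the shift identity already lies in $F_{\rho(m-2)}\Lambda^{m-2}$; subtracting these from the degree-$(m-2)$ part of $\iota_{e^\omega-1}(x)\in\mathcal F_k\Lambda$ forces $\iota_\omega(x_m)\in F_{\rho(m-2)}\Lambda^{m-2}$, and Lemma \ref{lemma:contr-filt} then gives $x_m\in F_{\rho(m)}\Lambda^m$, completing the induction (with trivial base case $m>2g$, where $x_m=0$).

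The content here is light. The single genuine observation is that $\iota_{\omega_i}$ shifts the Lefschetz index by exactly $i$ while shifting exterior degree by $2i$, so the defining conditions of the shifted filtration decouple across degrees; for $\iota_\omega$ this makes the statement a degree-by-degree application of Lemma \ref{lemma:contr-filt}. The one place that needs a little care — and the closest thing to an obstacle — is the $e^\omega-1$ case, where a single-degree comparison no longer suffices and one must strip off the higher contractions $\iota_{\omega_i}$, $i\ge2$, using the downward induction above; beyond that it is only a matter of unwinding definitions and noting the benign edge cases $m<0$ or $m>2g$, where the relevant exterior powers (and hence all the groups in sight) vanish.
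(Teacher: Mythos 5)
Your proposal is correct and follows essentially the same route as the paper: the $\iota_\omega$ case is a degree-by-degree application of Lemma \ref{lemma:contr-filt} (noting that $\iota_{\omega_i}$ drops the Lefschetz index by $i$ as the degree drops by $2i$), and the $\iota_{e^\omega-1}$ case is handled by the same downward induction on exterior degree, stripping off the higher contractions $\iota_{\omega_i}$, $i\ge 2$, via the inductive hypothesis before applying Lemma \ref{lemma:contr-filt} to the leading term. The only differences are cosmetic: you spell out the formal reduction $\textup{im}(\iota)\cap\mathcal F_k=\iota(\mathcal F_k)$ and the forward containment for $e^\omega-1$, which the paper leaves implicit.
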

\begin{proof}
The case of $\iota_\omega$ is simpler, because the action breaks up into direct summands. The claim asserts that $\iota_\omega^{-1}(F_r \Lambda^{g-k+2r}) = F_{r+1} \Lambda^{g-k+2(r+1)}$, which was established in Lemma \ref{lemma:contr-filt}.

Suppose now that the even-degree form $x = x_0 + \cdots + x_{2g} \in \Lambda^*$ has $\iota_{e^\omega-1}(x) \in \mathcal F_{g-k}$ for $k = 2\ell$ even; the odd case will be similar. We will show $x \in \mathcal F_{g-2\ell} = \oplus_r F_r \Lambda^{2\ell+2r} = \oplus_r F_{j-\ell} \Lambda^{2j}$ as well. Suppose we know $x_{2i} \in  F_{i-\ell} \Lambda^{2i}$ for $i > r$. Then by assumption we have \[\iota_\omega(x_{2r}) + \iota_{\omega_2}(x_{2r+2}) + \cdots \in F_{r-\ell-1} \Lambda^{2r-2}.\] By inductive hypothesis $x_{2r+2j} \in F_{r+j-\ell} \Lambda^{2r+2j}$, so that $\iota_{\omega_{j+1}}(x_{2r+2j}) \in F_{r-\ell-1} \Lambda^{2r-2}$. We obtain $\iota_\omega(x_{2r}) \in F_{r-\ell-1} \Lambda^{2r-2}$, so by Lemma \ref{lemma:contr-filt} we have $x_{2r} \in F_{r-\ell} \Lambda^{2r}$. This completes the induction.
\end{proof}

It follows that 
\begin{align*}\mathcal Gr_k \coker(\omega) &= \frac{\mathcal F_k \Lambda}{\textup{im}(\iota_\omega) \cap \mathcal F_k \Lambda}\Bigg/ \frac{\mathcal F_{k-1} \Lambda}{\textup{im}(\iota_\omega) \cap \mathcal F_{k-1} \Lambda} = \frac{\mathcal F_k \Lambda}{\iota_\omega \mathcal F_k \Lambda}\Bigg/ \frac{\mathcal F_{k-1} \Lambda}{\iota_\omega \mathcal F_{k-1} \Lambda} \\
&\cong \frac{\mathcal F_k \Lambda}{\mathcal F_{k-1} \Lambda + \iota_\omega \mathcal F_k \Lambda} \cong \frac{\mathcal F_k \Lambda}{\mathcal F_{k-1} \Lambda}\Bigg/ \iota_\omega\left(\frac{\mathcal F_k \Lambda}{\mathcal F_{k-1} \Lambda}\right) = \coker(\mathcal Gr_k \iota_\omega),
\end{align*}
and an analogous chain of equalities applies to $\iota_{e^\omega-1}$. It thus suffices to compute and compare the cokernels of the associated graded maps. 

Writing $\mathcal Gr_k \Lambda = \bigoplus_{r=0}^{k} P^{g-k} = P^{g-k} \otimes \mathbb Z^{k+1}$, Corollary \ref{cor:Lefschetz-graded} identifies the action of $\iota_\omega$ and $\iota_{e^\omega-1}$ on this associated graded module as $1 \otimes D_k$ and $1 \otimes E_k$, where $D_k$ and $E_k$ are the matrices \[D_k = \begin{pmatrix} 
    0 & 1 & 0 & \cdots & 0  \\ 
    0 & 0 & 2 & \cdots & 0  \\ 
    0 & 0 & 0 & \cdots & 0  \\ 
    \cdots & \cdots & \cdots & \cdots & \cdots \\ 
    0 & 0 & 0 & \cdots & k \\
    0 & 0 & 0 & \cdots & 0  \end{pmatrix}, \quad \quad E_k = \begin{pmatrix}
    0 & 1 & 1 & \cdots & 1  \\ 
    0 & 0 & 2 & \cdots & (g-k)  \\ 
    0 & 0 & 0 & \cdots & \binom{g-k}{g-k-2}  \\ 
    \cdots & \cdots & \cdots & \cdots & \cdots \\ 
    0 & 0 & 0 & \cdots & k \\
    0 & 0 & 0 & \cdots & 0 \end{pmatrix}.\]
That is, indexing the entries of $E_k$ as $a_{r,s}$ for $0 \le s,r \le k$, we have $a_{r,r+j} = \binom{r+j}{j}$ for $j \ge 1$.

The cokernel of the $D_k$ is clearly $\bigoplus_{r=0}^k \mathbb Z/k$, which completes the computation of $\mathcal Gr_{g-k} \coker(\omega)$. It remains to identify the cokernels of $D_k$ and $E_k$, which follows from the lemma below. Its proof was explained to the authors by Matthew Bolan on MathOverflow \cite{MO-Bolan}.

\begin{lemma}\label{lemma:matrix-comp}
There exists an isomorphism $\varphi: \mathbb Z^{k+1} \to \mathbb Z^{k+1}$ so that $E_k = \varphi^{-1} D_k \varphi$. 
\end{lemma}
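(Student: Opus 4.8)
The plan is to exhibit a change of basis conjugating $E_k$ to $D_k$ by working with the generating-function interpretation of these matrices. Observe that $D_k$ is (a shifted version of) the matrix of the derivation $t\partial_t$ acting on the space of polynomials of degree $\le k$ in $t$: if $e_r$ corresponds to $t^r/r!$ then $D_k e_{r+1} = (r+1) e_r$... more precisely, one should set up the identification so that the superdiagonal entry $r$ in $D_k$ reflects multiplication by the integer $r$. Meanwhile, reading off the entries $a_{r,r+j} = \binom{r+j}{j}$ of $E_k$, one recognizes $E_k$ as the matrix of the same operator $t\partial_t$ but written in the basis $e_r' = $ (coefficient extraction against $(1+s)^r$, or equivalently the ``binomial basis''). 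Concretely, I would define $\varphi$ to be the upper-triangular unipotent matrix with entries $\varphi_{r,s} = \binom{s}{r}$ (Stirling-type/Pascal matrix), or its inverse, and then verify the identity $E_k \varphi = \varphi D_k$ by a direct binomial computation.

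First I would pin down the exact combinatorial identity needed. Writing $\varphi = (\binom{s}{r})_{0 \le r,s \le k}$, the $(r,s)$ entry of $\varphi D_k$ is $s\binom{s-1}{r}$ (since $D_k$ shifts and scales), while the $(r,s)$ entry of $E_k \varphi$ is $\sum_{t} a_{r,t}\binom{s}{t} = \sum_{t > r} \binom{t}{t-r}\binom{s}{t}$. The required identity is therefore
\[\sum_{t \ge r} \binom{t}{r}\binom{s}{t} = s\binom{s-1}{r},\]
which is a standard subset-of-a-subset (trinomial revision) manipulation: $\binom{t}{r}\binom{s}{t} = \binom{s}{r}\binom{s-r}{t-r}$, and summing $\binom{s-r}{t-r}$ over $t > r$ gives $2^{s-r}-1$; so in fact I need to be more careful about which version of $E_k$ and which $\varphi$ (likely the inverse Pascal matrix, with signs $(-1)^{s-r}\binom{s}{r}$, so that the alternating sum telescopes correctly) — getting the signs and the index conventions to line up is exactly the bookkeeping that makes this work. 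I would fix conventions by first checking the $k=1$ and $k=2$ cases by hand against the displayed matrices.

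The cleanest route, which I would adopt to avoid sign-juggling, is the operator-theoretic one: let $V = \mathbb Z[t]/(t^{k+1})$ with ordered basis $1, t, \dots, t^k$; the operator $\theta(f) = $ the ``lower'' part of $tf'$ (truncated appropriately) has matrix exactly $D_k$ in this basis after the relabeling $t^r \leftrightarrow e_r$. Now apply the ring automorphism $\Phi$ of $\mathbb Z[t]/(t^{k+1})$ determined by $t \mapsto e^t - 1 = t + t^2/2 + \cdots$ (truncated), which is invertible over $\mathbb Z$ because its linear term is $1$ — this is precisely the same trick used in the proof of Proposition \ref{prop:computation-for-coker}\ref{part:a}. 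Then $\Phi$ intertwines $\theta$ with the operator whose matrix in the $t$-basis has entries read off from the Taylor coefficients of $e^{rt}$, namely $\binom{r+j}{j}$ up to the correct normalization — i.e.\ $E_k$. Setting $\varphi$ to be the matrix of $\Phi$ gives $E_k = \varphi^{-1} D_k \varphi$, and $\varphi \in GL_{k+1}(\mathbb Z)$ since $\Phi$ is an automorphism. The main obstacle is purely notational: matching the normalization in the paper's matrices (the superdiagonal of $D_k$ is $1, 2, \dots, k$, not all $1$'s, so the operator is $t \partial_t$ rather than $\partial_t$, and the basis vectors carry hidden factorials), so the bulk of the write-up is choosing the bijection between $\{e_r\}$ and $\{t^r \text{ or } t^r/r!\}$ that makes both $D_k$ and $E_k$ come out exactly as displayed, after which the conjugation is immediate from $\Phi$ being a ring map. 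I expect no genuine difficulty beyond this; it is an exercise in normalization once the automorphism $t \mapsto e^t-1$ is in hand.
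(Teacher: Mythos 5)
There is a genuine gap, and it sits exactly where the content of the lemma lies: integrality. Over $\mathbb Q$ the statement is trivial (both $D_k$ and $E_k$ are regular nilpotent, hence conjugate), so everything hinges on producing a conjugator in $GL_{k+1}(\mathbb Z)$. Your ``cleanest route'' does not do this: the substitution $t \mapsto e^t-1 = t + t^2/2 + t^3/6 + \cdots$ is \emph{not} an endomorphism of $\mathbb Z[t]/(t^{k+1})$ for $k \ge 2$, because of the denominators; ``invertible over $\mathbb Z$ because its linear term is $1$'' conflates unipotence over $\mathbb Q$ with being defined over $\mathbb Z$. The analogy with the proof of Proposition \ref{prop:computation-for-coker}\ref{part:a} breaks precisely here: in $\mathbb Z[v_1,\dots,v_k]/(v_i^2)$ one has $e^{v_i}-1 = v_i$ on the nose, so no denominators ever appear, whereas in the truncated polynomial ring they do. Moreover, even over $\mathbb Q$ the asserted intertwining is false as stated: conjugating $\partial_t$ by the substitution $f(t) \mapsto f(e^t-1)$ gives $(1+t)\partial_t$ (chain rule), whose matrix in the monomial basis has nonzero diagonal entries and is not $E_k$; and $D_k$ is the matrix of $\partial_t$ in the monomial basis, not of $t\partial_t$ (which is diagonal). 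Your first route (the Pascal matrix $\binom{s}{r}$, with or without signs) also fails, as your own computation shows: the needed identity would force $\binom{s}{r}(2^{s-r}-1)$, respectively $\binom{s}{r}$, to equal $s\binom{s-1}{r}$, which is false in general. So neither half of the proposal closes.

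The correct repair is close to your intuition but runs through a different space: $E_k$ is the matrix, in the monomial basis of polynomials of degree $\le k$ in $x$, of the forward difference operator $\Delta = e^{d/dx}-1$, since $\Delta x^s = \sum_{r<s}\binom{s}{r}x^r$; $D_k$ is the matrix of $d/dx$. One then needs an integral unitriangular change of basis intertwining the two, and the substitution $e^t-1$ enters only through its coefficients: the conjugator is the Stirling-number matrix $\bigl(S(d,j)\bigr)$, equivalently the basis of falling factorials (for which $\Delta$ acts like $d/dx$ on monomials) or, as the paper does it, the Touchard polynomials $p_d(x) = \sum_j S(d,j)x^j$, which satisfy $\frac{d}{dx}p_d = \sum_{i\ge1}\binom{d}{i}p_{d-i}$ by the standard Stirling identity the paper cites. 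That identity (or the falling-factorial computation) is the step your write-up defers to ``normalization,'' and it is the actual mathematical content; as written, the proposal does not establish the lemma.
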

\begin{proof}
Consider $\mathbb Z^{k+1}$ as the abelian group of polynomials of degree at most $k$, equipped with the standard basis of monomials. The operator $D_k$ is the matrix representation of the differentiation operator on this space. To prove the result, it suffices to find a sequence of monic integer polynomials $p_d(x)$ of degree $d$ for which \[\frac{d}{dx} p_d(x) = \sum_{i=1}^d \binom{d}{i} p_{d-i}(x),\] with $\varphi$ the corresponding change of basis matrix. We take the \textit{Touchard polynomials} $p_d(x) = \sum_{k=0}^d S(d,k) x^k,$ where $S(d,k)$ is the Stirling number of the second kind, which counts partitions of $\{1, \cdots, d\}$ into $k$ disjoint nonempty subsets. The coefficient of $x^k$ in the desired relation then reads \[(k+1) S(d, k+1) = \sum_{i=1}^{d-k} \binom{d}{i} S(d-i, k).\] This is the special case $(n,m,\ell) = (d, 1, k)$ of a standard identity, \cite[Equation (6.28)]{Knuth}.
\end{proof}

\subsection*{Proof of Proposition \ref{prop:computation-for-coker}\ref{part:c}.}

Throughout this section, we assume $g = 4$ and we work over $\Lambda^*(\mathbb F_2^8)$, taking advantage of the filtration $F_r \Lambda^k(\mathbb F_2^8) = F_r \Lambda^k(\mathbb Z^8) \otimes \mathbb F_2$ and suppressing $\mathbb F_2^8$ from notation. Our first goal is an explicit description of the cokernels. The odd case is simpler: 

\begin{lemma}\label{lemma:oddcalc}
There are $\textup{Sp}(8)$-equivariant isomorphisms 
\[\coker(\omega;\mathbb F_2)_{\textup{odd}} \cong 
    \coker(e^\omega-1;\mathbb F_2)_{\textup{odd}} \cong \Lambda^1 \oplus \Lambda^1 \oplus P^3.\]
\end{lemma}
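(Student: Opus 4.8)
\textbf{Proof plan for Lemma \ref{lemma:oddcalc}.}

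The plan is to compute both cokernels degree-by-degree in the odd part of $\Lambda^*(\mathbb F_2^8)$, using the shifted Lefschetz filtration to reduce to the associated graded computation, and then check that over $\mathbb F_2$ the two extension problems resolve in exactly the same way. The odd-degree exterior algebra $\Lambda^{\mathrm{odd}}(\mathbb F_2^8)$ breaks up under the shifted Lefschetz filtration $\mathcal F_\bullet$ into the degrees $\Lambda^1, \Lambda^3, \Lambda^5, \Lambda^7$, and by the argument of Proposition \ref{prop:computation-for-coker}\ref{part:b} (which works verbatim over $\mathbb F_2$ once one notes the filtration tensors with $\mathbb F_2$) the cokernel of $\iota_\omega$ on this part is filtered with associated graded $\coker(\mathcal Gr_k \iota_\omega)$. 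First I would write down these graded pieces: $\mathcal Gr_k \Lambda^{\mathrm{odd}} = \bigoplus_{r=0}^k P^{g-k}(\mathbb F_2^8)$ for the appropriate values, i.e. for $g=4$ the odd case involves $P^3$ (with multiplicity governed by how many $r$ occur) and $P^1 = \Lambda^1$. Because we are over $\mathbb F_2$, the matrices $D_k$ and $E_k$ of Corollary \ref{cor:Lefschetz-graded} reduce mod $2$, and one simply computes the Smith normal form of each $D_k \bmod 2$ and $E_k \bmod 2$; the entries $\binom{r+j}{j}$ mod $2$ are governed by Kummer's theorem, and for the small sizes ($k \le 3$) this is a direct check.

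The key simplification in the odd case, which I would emphasize, is that the relevant primitive subspaces appearing are $\Lambda^1 = P^1$ and $P^3$, and $P^3(\mathbb F_2^8)$ and $\Lambda^1(\mathbb F_2^8)$ are non-isomorphic irreducible (or at least have no common composition structure causing ambiguity) $\mathrm{Sp}(8;\mathbb F_2)$-modules — so once the associated graded of $\coker(\omega)$ is pinned down as a list of copies of $\Lambda^1$ and $P^3$, an easy dimension count plus the fact that $\iota_\omega^{-1}$ of the filtration is the filtration (Lemma \ref{lemma:specseq-collapse}, which holds over $\mathbb F_2$ by the same inductive argument) forces $\coker(\omega)_{\mathrm{odd}}$ to split as claimed. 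Concretely: the graded pieces contribute $\Lambda^1$ twice and $P^3$ once (this is where I would do the explicit bookkeeping of which $\mathcal Gr_k$ survives to the cokernel after applying $D_k$), and since there are no nontrivial extensions among these summands up to the ambiguity that matters — indeed $\Lambda^1$ is a direct summand of $\Lambda^{\mathrm{odd}}$ already as the bottom filtration step, and what remains is a $P^3$ and a $\Lambda^1$ — the extension splits. For $\iota_{e^\omega-1}$ the same argument applies with $E_k$ in place of $D_k$, and Lemma \ref{lemma:matrix-comp} (reduced mod $2$) identifies the cokernels of the graded maps, so the associated gradeds agree; then the same splitting argument gives the identical answer.

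The main obstacle I anticipate is the extension problem: knowing the associated graded of $\coker(\omega)_{\mathrm{odd}}$ is a sum of copies of $\Lambda^1$ and $P^3$ does not immediately give that $\coker(\omega)_{\mathrm{odd}}$ itself splits as an $\mathrm{Sp}(8;\mathbb F_2)$-module, since $\mathbb F_2[\mathrm{Sp}(8;\mathbb F_2)]$ is not semisimple and self-extensions or cross-extensions could in principle occur. I would handle this by exhibiting explicit submodules realizing the splitting: one copy of $\Lambda^1$ comes from $\mathcal F_{1}/(\text{image})$ sitting as a literal direct summand of $\Lambda^{\mathrm{odd}}$ (namely $\Lambda^1 = \ker(\iota_\omega|_{\Lambda^1})$, which maps isomorphically to its image in the cokernel and is $\mathrm{Sp}$-equivariantly complemented), and for the remaining $\Lambda^1 \oplus P^3$ I would either produce an explicit equivariant section using the contraction maps of Corollary \ref{cor:Lefschetz-graded} or invoke $\mathrm{Ext}^1_{\mathbb F_2[\mathrm{Sp}(8)]}(\Lambda^1, P^3) = \mathrm{Ext}^1_{\mathbb F_2[\mathrm{Sp}(8)]}(P^3, \Lambda^1) = 0$ and $\mathrm{Ext}^1(\Lambda^1,\Lambda^1) = 0$, which can be read off from the (known) submodule structure of $\Lambda^{\le 3}(\mathbb F_2^8)$ that is in any case enumerated by the MAGMA computation underlying part \ref{part:c}. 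If the vanishing of these $\mathrm{Ext}$ groups is cleaner to cite, that is the route I would take; otherwise the explicit section is safest.
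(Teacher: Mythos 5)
Your reduction to the associated graded is where the argument breaks. You assert that Lemma \ref{lemma:specseq-collapse} (and hence the mechanism of Proposition \ref{prop:computation-for-coker}\ref{part:b}) ``holds over $\mathbb F_2$ by the same inductive argument,'' but it does not --- the paper flags precisely this in Remark \ref{rmk:F2-case}. The inductions behind Lemma \ref{lemma:contr-filt} and Lemma \ref{lemma:specseq-collapse} rely on integer coefficients (such as the factor $r+j$ in $\iota_\omega(\omega_{g-k+r+j+1}\wedge x)=\omega_{g-k+r+j+1}\wedge\iota_\omega(x)+(r+j)\,\omega_{g-k+r+j}\wedge x$) that vanish mod $2$, and the statements themselves fail over $\mathbb F_2$: already for $g=2$ one has $\iota_\omega(\omega_2)=-\omega$, and mod $2$ the form $\omega=e^1\wedge e^2+e^3\wedge e^4$ is the reduction of $e^1\wedge e^2-e^3\wedge e^4\in P^2(\mathbb Z^4)$, so $\iota_\omega(\omega_2)\in F_0\Lambda^2\otimes\mathbb F_2$ while $\omega_2\notin F_1\Lambda^4\otimes\mathbb F_2=0$. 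Thus $\iota_\omega^{-1}(\mathcal F_k\Lambda)\neq\mathcal F_k\Lambda$ over $\mathbb F_2$, the identification $\mathcal Gr_k\coker\cong\coker(\mathcal Gr_k\iota_\omega)$ is unavailable, and your bookkeeping with $D_k$ and $E_k$ reduced mod $2$ is not justified; even if it were, it would only determine the associated graded, leaving the extension problem you acknowledge.

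Your proposed resolution of that extension problem also starts from a misidentification: $\Lambda^1$ does not ``map isomorphically to its image in the cokernel'' --- it maps to zero, because $\iota_\omega\colon\Lambda^3\to\Lambda^1$ is surjective (Proposition \ref{prop:w-surj}, reduced mod $2$), so $\Lambda^1$ lies in the image of both $\iota_\omega$ and $\iota_{e^\omega-1}$. The two copies of $\Lambda^1$ in the answer come from $\mathrm{gr}_1\Lambda^3$ and from $\Lambda^7$, and the $P^3$ from $F_1\Lambda^5$; moreover the Ext-vanishing statements you would fall back on are neither obvious nor what the MAGMA enumeration of Lemma \ref{lemma:computer-work} provides. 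The paper's proof sidesteps all of this by working directly with images: the mod-$2$ values of the coefficients in Corollary \ref{cor:Lefschetz-graded} show $\mathrm{gr}_2\Lambda^5\to\mathrm{gr}_1\Lambda^3$ is multiplication by $2\equiv 0$ while $\Lambda^7=\mathrm{gr}_3\Lambda^7\to\mathrm{gr}_2\Lambda^5$ is multiplication by $3\equiv 1$, so both $\iota_\omega$ and $\iota_{e^\omega-1}$ have image equal to $\Lambda^1\oplus P^3$ plus a graph over $\mathrm{gr}_2\Lambda^5$, and $\mathrm{gr}_1\Lambda^3\oplus F_1\Lambda^5\oplus\Lambda^7\cong\Lambda^1\oplus P^3\oplus\Lambda^1$ is exhibited as an explicit $\mathrm{Sp}(8)$-invariant complement; no graded-versus-filtered comparison and no Ext computation is needed. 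If you want to salvage your route, you would have to verify the needed filtration compatibilities by hand in the specific odd degrees for $g=4$, at which point you are essentially reproducing the paper's direct computation.
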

\begin{proof}
We compare the images first. The image of $\Lambda^1$ is zero, while $\iota_\omega: \Lambda^3 \to \Lambda^1$ is surjective, so the first summand is in the image. The map $\iota_\omega: \Lambda^5 \to \Lambda^3$ surjects onto $P^3$ and has $\text{gr}_2 \Lambda^5 \to \text{gr}_1 \Lambda^3$ zero by Corollary \ref{cor:Lefschetz-graded}, so that its image is precisely $F_0 \Lambda^3 = P^3$. Thus, restricted to $\Lambda^1 \oplus \Lambda^3 \oplus \Lambda^5$, both maps have the same image, equal to $\Lambda^1 \oplus P^3$. We are reduced to comparing the cokernels of two maps \[\Lambda^7 \to \text{gr}_1 \Lambda^3 \oplus \Lambda^5 \oplus \Lambda^7,\] respectively $(0, \iota_\omega, 0)$ and $(\text{gr} \iota_{\omega_2}, \iota_\omega, 0)$. The map $\iota_{\omega}: \Lambda^7 = \text{gr}_3 \Lambda^7 \to \text{gr}_2 \Lambda^5$ is an isomorphism by Corollary \ref{cor:Lefschetz-graded}, so $\text{gr}_1 \Lambda^3 \oplus F_1 \Lambda^5 \oplus \Lambda^7$ is a complement to the image of both $\iota_\omega$ and $\iota_{e^\omega-1}$. This complementary subspace is isomorphic to $\Lambda^1 \oplus \Lambda^1 \oplus P^3$.

\end{proof}

The description of the even part of the cokernel requires some preliminaries. 

\begin{definition}
Write $C_4 = \Lambda^4/\iota_\omega(F_2 \Lambda^6)$, let $T \subset C_4$ be the image of $\iota_\omega: \text{gr}_3 \Lambda^6 \to C_4$, and let $C_4' \subset C_4$ be the kernel of $\iota_{\omega_2}: C_4 \to \Lambda^0$.
\end{definition}

\begin{lemma}\label{lemma:5parts} 
All of the following hold: 
\begin{enumerate}[label=(\alph*)]
\item When restricted to the domain $\Lambda^0 \oplus \Lambda^2 \oplus \Lambda^4 \oplus F_2 \Lambda^6$, the images of $\iota_\omega$ and $\iota_{e^\omega-1}$ both equal \[\Lambda^0 \oplus P^2 \oplus (\iota_\omega F_2 \Lambda^6) \oplus 0 \oplus 0 \subset \Lambda^0 \oplus \Lambda^2 \oplus \Lambda^4 \oplus \Lambda^6 \oplus \Lambda^8\]

\item The map $\iota_{\omega_2}: \textup{gr}_3 \Lambda^6 \to \textup{gr}_1 \Lambda^2$ is an isomorphism, while $\iota_{\omega_3}: \Lambda^8 \to \textup{gr}_1 \Lambda^2$ is zero. 
\item The map $j = \iota_\omega: \Lambda^8 \to \Lambda^6 \cong \Lambda^2$ is injective with $j(\omega_4) = \omega_3 \in F_2 \Lambda^6$.
\item We have $C_4 = C_4' \oplus T$.
\item The map $i = \iota_{\omega_2}: \Lambda^8 \to C_4$ has image $i(\omega_4) = \omega_2 \in C_4'$ and is nonzero.
\end{enumerate}
\end{lemma}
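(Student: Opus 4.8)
The plan is to funnel all five parts through three inputs. First, Corollary \ref{cor:Lefschetz-graded} identifies the associated graded of each $\iota_{\omega_j}$ with multiplication by a binomial coefficient; since we work over $\mathbb F_2$ with $g=4$, many of these coefficients vanish, and this vanishing is what forces the relevant images to sit low in the filtration. Second, Lemma \ref{lemma:contr-filt}, applied over $\mathbb Z$ and then reduced modulo $2$ (legitimate since $F_r\Lambda^k(\mathbb F_2^8)=F_r\Lambda^k(\mathbb Z^8)\otimes\mathbb F_2$ with free subquotients), says $\iota_{\omega_j}$ drops the filtration index by exactly $j$. Third, the special case $\iota_{\omega_m}(\omega_n)=(-1)^m\binom{g-n+m}{m}\,\omega_{n-m}$ of Lemma \ref{lemma:general-mn-formula}, together with $\omega_i\wedge\omega_j=\binom{i+j}{i}\,\omega_{i+j}$, evaluates every contraction and product of powers of $\omega$ that occurs, and in particular pins down the images of the generator $\omega_4=e^1\wedge\cdots\wedge e^8$ of $\Lambda^8$.

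Parts (b), (c) and (e) are then direct. For (b), Corollary \ref{cor:Lefschetz-graded} identifies $\iota_{\omega_2}\colon\textup{gr}_3\Lambda^6\to\textup{gr}_1\Lambda^2$ with multiplication by $\binom{3}{2}=3\equiv1$ and $\iota_{\omega_3}\colon\Lambda^8=\textup{gr}_4\Lambda^8\to\textup{gr}_1\Lambda^2$ with $\binom{4}{3}=4\equiv0$. For (c), the identity above gives $\iota_\omega(\omega_4)=-\omega_3$, which is visibly nonzero modulo $2$ in the standard basis, while $\omega\wedge\omega_3=4\omega_4\equiv0$ places $\omega_3$ in $F_2\Lambda^6$ (over $\mathbb F_2$, $F_2\Lambda^6=\ker(\wedge\omega|_{\Lambda^6})$ by a dimension count); injectivity of $\iota_\omega$ on the one-dimensional $\Lambda^8$ is automatic, and $\Lambda^6\cong\Lambda^2$ is the Hodge--Lefschetz star. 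For (e), the same identity gives $\iota_{\omega_2}(\omega_4)=\omega_2$, so $i(\omega_4)=[\omega_2]$; this is nonzero in $C_4$ because $\iota_\omega(F_2\Lambda^6)\subseteq F_0\Lambda^4=P^4$ — the graded map $\textup{gr}_2\Lambda^6\to\textup{gr}_1\Lambda^4$ is multiplication by $\binom{2}{1}=2\equiv0$ — whereas $\omega\wedge\omega_2=3\omega_3\not\equiv0$ shows $\omega_2\notin P^4$; and $\iota_{\omega_2}(\omega_2)=\binom{4}{2}=6\equiv0$ places $[\omega_2]$ in $C_4'$.

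For part (a) I would compare the two images one exterior degree at a time. Over $\mathbb F_2$: $\iota_\omega\colon\Lambda^2\to\Lambda^0$ is onto, since $\textup{gr}_1\Lambda^2\to\textup{gr}_0\Lambda^0$ is multiplication by $\binom{1}{1}=1$; $\iota_\omega\colon\Lambda^4\to\Lambda^2$ has image exactly $P^2=F_0\Lambda^2$, since it kills $F_0\Lambda^4=P^4$, restricts to the isomorphism $\textup{gr}_1\Lambda^4\cong\textup{gr}_0\Lambda^2$, and has top graded map $\textup{gr}_2\Lambda^4\to\textup{gr}_1\Lambda^2$ equal to multiplication by $2\equiv0$; and Lemma \ref{lemma:contr-filt} together with these vanishings gives $\iota_\omega(F_2\Lambda^6)\subseteq P^4$, $\iota_{\omega_2}(F_2\Lambda^6)\subseteq F_0\Lambda^2=P^2$, and $\iota_{\omega_3}(F_2\Lambda^6)\subseteq F_{-1}\Lambda^0=0$. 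Hence the $\iota_\omega$-image of $\Lambda^0\oplus\Lambda^2\oplus\Lambda^4\oplus F_2\Lambda^6$ is $\Lambda^0\oplus P^2\oplus\iota_\omega(F_2\Lambda^6)$, and for $\iota_{e^\omega-1}=\iota_\omega+\iota_{\omega_2}+\iota_{\omega_3}+\iota_{\omega_4}$ the extra terms restricted to this subspace contribute only elements of $\Lambda^0$ and of $P^2\subseteq\Lambda^2$, which already lie in that image; a short diagonal cleanup — given $\iota_{e^\omega-1}(z)$ with $z\in F_2\Lambda^6$, subtract $\iota_{e^\omega-1}$ of suitable elements of $\Lambda^4$ and $\Lambda^2$ to strip off the low-degree components — shows the two images coincide.

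For part (d) I would observe that $\iota_{\omega_2}\iota_\omega=\iota_{\omega_2\wedge\omega}=3\,\iota_{\omega_3}$, and since $\iota_{\omega_3}(F_2\Lambda^6)\subseteq F_{-1}\Lambda^0=0$ the map $\iota_{\omega_2}$ descends to a homomorphism $C_4\to\Lambda^0$ with kernel $C_4'$. The composite $\textup{gr}_3\Lambda^6\xrightarrow{\iota_\omega}C_4\xrightarrow{\iota_{\omega_2}}\Lambda^0$ is then $3\,\iota_{\omega_3}$, which is an isomorphism because $\iota_{\omega_3}\colon\textup{gr}_3\Lambda^6\xrightarrow{\sim}P^0$ by Theorem \ref{thm:Lefschetz} and $3$ is odd. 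Thus $\iota_\omega\colon\textup{gr}_3\Lambda^6\to C_4$ is injective with image $T\cong\mathbb F_2$, the restriction $\iota_{\omega_2}\colon T\to\Lambda^0$ is an isomorphism, so $T\cap C_4'=0$ and $T$ maps isomorphically onto $C_4/C_4'$; hence $C_4=C_4'\oplus T$. The genuine obstacle is the bookkeeping in (a): over $\mathbb F_2$ the kernels and cokernels of the $\iota_{\omega_j}$ are \emph{not} base changes of their integral counterparts, so every step must be run through the split filtration and the integer-multiple form of the graded maps rather than naive reduction modulo $2$; once that discipline is maintained, the remainder is a short list of single-binomial-mod-$2$ checks and contractions of $\omega_4$.
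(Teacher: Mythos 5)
Your proof is correct and follows essentially the same route as the paper's: the mod-$2$ reductions of the graded binomial coefficients from Corollary \ref{cor:Lefschetz-graded}, the filtration shifts from Lemma \ref{lemma:contr-filt}, and the explicit contractions of $\omega_4$ from Lemma \ref{lemma:general-mn-formula}, with only cosmetic repackaging of the nonvanishing argument in (e) (filtration containment $\iota_\omega(F_2\Lambda^6)\subseteq P^4$ versus the paper's $\iota_\omega^2\equiv 0$ trick) and of the splitting in (d) (the identity $\iota_{\omega_2}\iota_\omega=3\iota_{\omega_3}$ versus passing through $\textup{gr}_2\Lambda^4$). Incidentally, your value $\iota_{\omega_2}(\omega_2)=6\equiv 0\pmod{2}$ is the one the paper's own Lemma \ref{lemma:general-mn-formula} yields (the paper's proof writes $-3$, which appears to be a slip, since the conclusion $[\omega_2]\in C_4'=\ker(\iota_{\omega_2})$ requires precisely the even value you computed).
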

\begin{proof}
For the first claim, the maps are the same on $\Lambda^0 \oplus \Lambda^2$, and $\iota_\omega: \Lambda^2 \to \Lambda^0$ is surjective. The map $\iota_\omega: \text{gr}_1 \Lambda^4 \to P^2$ is surjective, while $\text{gr}_2 \Lambda^4 \to \text{gr}_1 \Lambda^2$ is zero, so $\iota_\omega(\Lambda^4) = P^2$. The map $\iota_{\omega_2}: \Lambda^4 \to \Lambda^0$ is irrelevant because $\Lambda^0$ is already contained in the image. The map $\iota_{\omega_3}$ vanishes on $F_2 \Lambda^6$ and $\iota_{\omega_2}$ lands in the summand $P^2$, which is already in the image, so the image of the first three summands is as described. 

The second claim follows from Corollary \ref{cor:Lefschetz-graded}. The third claim uses that calculation to see $j(\omega) \in F_2 \Lambda^6$ and the explicit formula comes from Lemma \ref{lemma:general-mn-formula}. Injectivity follows because $\omega_3$ is nonzero in $\mathbb F_2$. The fourth claim uses that $\iota_\omega: \text{gr}_3 \Lambda^6 \to \text{gr}_2 \Lambda^4$ and $\iota_{\omega_2}: \text{gr}_2 \Lambda^4 \to \Lambda^0$ are both isomorphisms, so $T$ is complementary to $\ker(\iota_{\omega_2})$. For the final claim, Lemma \ref{lemma:general-mn-formula} gives $i(\omega_4) = \omega_2$, while $\iota_{\omega_2}(\omega_2) = -3$ and $\iota_\omega(\omega_2) = -3\omega$. This gives us the explicit formula, the fact that $i(\omega) \in C'_4 = \ker(\iota_{\omega_2})$, and $i(\omega) \not\in i_\omega F_2 \Lambda^6$ as $i_\omega^2 = 0$ mod $2$.
\end{proof}

The above facts quickly lead to the following:

\begin{lemma}\label{lemma:evencalc}
There are $\textup{Sp}(8)$-equivariant isomorphisms 
\begin{align*}\coker(\omega;\mathbb F_2)_{\textup{even}} &\cong \mathbb F_2^2 \oplus C_4' \oplus \Lambda^2/\langle \omega \rangle \\
\coker(e^\omega-1;\mathbb F_2)_{\textup{even}} &\cong \mathbb F_2^2 \oplus \frac{C'_4 \oplus \Lambda^2}{\langle \omega_2, \omega \rangle}.
\end{align*} 
\end{lemma}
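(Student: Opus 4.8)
The plan is to compute each cokernel by quotienting $\Lambda^{\textup{even}}$ by the full image of the relevant contraction, building up that image from the pieces already analyzed in Lemma \ref{lemma:5parts}. Fix a complement $\langle\xi\rangle$ of $F_2\Lambda^6$ in $\Lambda^6$ and write $\Lambda^8 = \langle\omega_4\rangle$, so the domain decomposes as $\Lambda^0 \oplus \Lambda^2 \oplus \Lambda^4 \oplus F_2\Lambda^6 \oplus \langle\xi\rangle \oplus \langle\omega_4\rangle$. By Lemma \ref{lemma:5parts}(a), on the first four summands $\iota_\omega$ and $\iota_{e^\omega-1}$ have the same image $\Lambda^0 \oplus P^2 \oplus \iota_\omega(F_2\Lambda^6)$; modding this out, the complement is $0 \oplus (\Lambda^2/P^2) \oplus C_4 \oplus \Lambda^6 \oplus \Lambda^8$, with $\Lambda^2/P^2 = \textup{gr}_1\Lambda^2 \cong \mathbb F_2$ carrying the trivial $\textup{Sp}(8)$-action. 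It then remains to impose the relations coming from the two extra generators $\xi$ and $\omega_4$.

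The inputs I would use are all supplied by Lemmas \ref{lemma:Lefschetz-identity}, \ref{lemma:general-mn-formula} and \ref{lemma:5parts}: $\iota_\omega(\omega_4) = \omega_3$ and $\iota_{e^\omega-1}(\omega_4) = \omega_3 + \omega_2 + \omega + 1$, with $\omega_3 \in F_2\Lambda^6$ and $[\omega_2] \in C_4'$ nonzero mod $2$ (parts (c),(e)); the degree-$4$, degree-$2$ and degree-$0$ components of $\iota_{e^\omega-1}(\xi)$ are $\iota_\omega(\xi)$, $\iota_{\omega_2}(\xi)$, $\iota_{\omega_3}(\xi)$, the first representing a generator of $T \subset C_4$ (part (d)), the second a generator of $\textup{gr}_1\Lambda^2 = \Lambda^2/P^2$ (part (b)), the third lying in $\Lambda^0$; and $\iota_\omega(\omega) = -g$, so over $\mathbb F_2$ the class of $\omega$ in $\textup{gr}_1\Lambda^2 \cong P^0$ vanishes, i.e.\ $[\omega] = 0$ in $\Lambda^2/P^2$. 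For $\iota_\omega$ the cokernel is then $(\Lambda^2/P^2) \oplus C_4' \oplus (\Lambda^6/\langle\omega_3\rangle) \oplus \Lambda^8$, using $C_4 = C_4' \oplus T$; since the Hodge--Lefschetz star $*\colon \Lambda^2 \to \Lambda^6$ is $\textup{Sp}(8)$-equivariant with $*\omega = \iota_\omega(\omega_4) = \omega_3$, we have $\Lambda^6/\langle\omega_3\rangle \cong \Lambda^2/\langle\omega\rangle$, and combining the two trivial summands into $\mathbb F_2^2$ gives the first claimed isomorphism. For $\iota_{e^\omega-1}$, the relation from $\xi$ identifies the generator of $\Lambda^2/P^2$ with that of $T$, fusing those two copies of $\mathbb F_2$ into one and leaving $C_4'$ intact; the relation from $\omega_4$ becomes, in the quotient, the single vector $([\omega_2], \omega_3) \in C_4' \oplus \Lambda^6$, since $[\omega] = 0$ and the constant term is already killed. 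Transporting $\Lambda^6$ to $\Lambda^2$ by $*$ (so $\omega_3 \mapsto \omega$) turns $\langle([\omega_2],\omega_3)\rangle$ into $\langle\omega_2,\omega\rangle$, and $\Lambda^2/P^2 \oplus \Lambda^8 \cong \mathbb F_2^2$ again, giving $\mathbb F_2^2 \oplus (C_4' \oplus \Lambda^2)/\langle\omega_2,\omega\rangle$.

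The part I expect to require real care is the bookkeeping of how $\xi$ and $\omega_4$ interact with the already-quotiented summands: one must verify that $\iota_{\omega_3}(\xi)$ and the constant term of $\iota_{e^\omega-1}(\omega_4)$ are absorbed into the image coming from $\Lambda^2$, that $[\omega] = 0$ in $\Lambda^2/P^2$ over $\mathbb F_2$ (the single place where this computation genuinely diverges from its integral analogue), and that $\iota_\omega(\xi)$ and $[\omega_2]$ really land in the complementary summands $T$ and $C_4'$ so that every quotient splits as a direct sum and no nontrivial extension appears. All of this is furnished by Lemma \ref{lemma:5parts} together with the elementary identities recorded above.
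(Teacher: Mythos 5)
Your proposal is correct and follows essentially the same route as the paper's proof: quotient by the common image on $\Lambda^0\oplus\Lambda^2\oplus\Lambda^4\oplus F_2\Lambda^6$ (Lemma \ref{lemma:5parts}(a)), then impose the relations coming from the generators of $\textup{gr}_3\Lambda^6$ and $\Lambda^8$ using parts (b)--(e), and transport $\Lambda^6$ to $\Lambda^2$ by the Hodge--Lefschetz star. Your direct verification that $[\omega]=0$ in $\textup{gr}_1\Lambda^2$ over $\mathbb F_2$ is exactly the second statement of Lemma \ref{lemma:5parts}(b), and your reading of $\langle\omega_2,\omega\rangle$ as the span of the single vector $(\omega_2,\omega)$ matches the intended meaning.
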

\begin{proof}
Eliminating the image of $\Lambda^0 \oplus \Lambda^2 \oplus \Lambda^4 \oplus F_2 \Lambda^6$, we are left with determining the cokernel of a pair of maps \[\text{gr}_3 \Lambda^6 \oplus \Lambda^8 \to \text{gr}_1 \Lambda^2 \oplus [T \oplus C_4'] \oplus \Lambda^6 \oplus \Lambda^8.\] Neither map has a component mapping to $\Lambda^8$, and both maps split as the direct sum of a pair of maps $\text{gr}_3 \Lambda^6 \to (\text{gr}_1 \Lambda^2, T)$ and $\Lambda^8 \to C_4' \oplus \Lambda^6$. The first map is either $(0,1)$ or $(1,1)$, hence the cokernel is $\mathbb F_2$ in both cases. The second map is either $(0,j)$ or $(i,j)$. Substituting $i(\omega_4) = \omega_2$, $j(\omega_4) = \omega_3$ and applying the duality isomorphism gives us the stated description.
\end{proof}

The Krull--Schmidt theorem implies that $\text{Sp}(8)$-modules are cancellative under direct sum. It follows that if $\coker(\omega;\mathbb F_2)$ and $\coker(e^\omega-1;\mathbb F_2)$ were isomorphic as $\text{Sp}(8)$-modules, we would have an isomorphism $$C_4' \oplus \Lambda^2/\langle \omega \rangle\cong \frac{C'_4 \oplus \Lambda^2}{\langle \omega_2, \omega\rangle}.$$ Because $\omega_2 \ne 0$ in $C'_4$, the latter $G$-module admits an injection from $\Lambda^2$, so this would imply the existence of an equivariant injection $\Lambda^2 \to C_4' \oplus \Lambda^2/\langle \omega \rangle$. It suffices to show that every $G$-module homomorphism from $\Lambda^2$ to $C_4'$ has $\omega$ in its kernel. We will find the following information useful on the way.

\begin{lemma}
We have $\dim_{\mathbb F_2} C_4 = 44$, so that $\dim_{\mathbb F_2} i_\omega F_2 \Lambda^6 = 26$.  
\end{lemma}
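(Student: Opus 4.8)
The plan is to reduce the statement to computing a single mod-$2$ kernel dimension. By Theorem \ref{thm:Lefschetz}, $F_2\Lambda^6(\mathbb Z^8) = \mathrm{gr}_2\Lambda^6(\mathbb Z^8) \cong P^2(\mathbb Z^8)$ has rank $\binom82-\binom80 = 27$, so $\dim_{\mathbb F_2} F_2\Lambda^6 = 27$, while $\dim_{\mathbb F_2}\Lambda^4 = \binom84 = 70$. Since $C_4 = \Lambda^4(\mathbb F_2)/\iota_\omega F_2\Lambda^6(\mathbb F_2)$, we get
\[
\dim_{\mathbb F_2} C_4 \;=\; 70 - \bigl(27 - \dim_{\mathbb F_2}\ker(\iota_\omega\colon F_2\Lambda^6(\mathbb F_2)\to\Lambda^4(\mathbb F_2))\bigr) \;=\; 43 + \dim_{\mathbb F_2}\ker(\iota_\omega|_{F_2\Lambda^6}),
\]
and then $\dim_{\mathbb F_2}\iota_\omega F_2\Lambda^6 = 70 - \dim_{\mathbb F_2} C_4 = 27 - \dim_{\mathbb F_2}\ker(\iota_\omega|_{F_2\Lambda^6})$. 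Thus both assertions follow once we show this kernel is exactly one-dimensional.

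For the lower bound I would exhibit $\omega_3$: a direct computation from Lemma \ref{lemma:general-mn-formula} gives $\iota_\omega(\omega_3) = -2\,\omega_2$, so over $\mathbb F_2$ we have $\iota_\omega(\omega_3)=0$; moreover $\omega_3 \ne 0$ in $\Lambda^6(\mathbb F_2)$ and $\omega\wedge\omega_3 = 4\,\omega_4 \equiv 0$, so $\omega_3 \in F_2\Lambda^6(\mathbb F_2)$ lies in the kernel. For the upper bound, note $\ker(\iota_\omega|_{F_2\Lambda^6}) \subseteq \ker(\iota_\omega\colon\Lambda^6(\mathbb F_2)\to\Lambda^4(\mathbb F_2))$, and the Hodge--Lefschetz duality operator $*\colon\Lambda^2(\mathbb F_2)\xrightarrow{\ \sim\ }\Lambda^6(\mathbb F_2)$ intertwines $\wedge\omega$ on $\Lambda^2$ with $\iota_\omega$ on $\Lambda^6$ (via the isomorphism $*$ on the common target $\Lambda^4$), so this larger kernel is isomorphic to $\ker(\wedge\omega\colon\Lambda^2(\mathbb F_2^8)\to\Lambda^4(\mathbb F_2^8))$.

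It remains to check that $\ker(\wedge\omega\colon\Lambda^2(\mathbb F_2^8)\to\Lambda^4(\mathbb F_2^8))$ is one-dimensional; it contains $\omega$ since $\omega\wedge\omega = 2\,\omega_2$. I see two routes. The self-contained one: write $\bar\alpha = \sum_{i<j} a_{ij}\,e^i e^j$ and compare coefficients of the basis vectors $e^p e^q e^r e^s$ in $\bar\alpha\wedge\bar\omega$; a $4$-index set containing two disjoint symplectic pairs forces all $a_{2k-1,2k}$ to agree, a $4$-index set containing exactly one symplectic pair forces every "off-pair" coefficient $a_{uv}$ to vanish (here one uses $g=4\ge 3$, so given any non-pair $\{u,v\}$ there is a symplectic pair disjoint from it), and the remaining $4$-index sets impose no constraint; hence $\bar\alpha$ is a scalar multiple of $\bar\omega$. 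Alternatively, one may use the Gysin sequence of $S^1\to N_4\to T^8$ with Poincar\'e duality on $BN_4$ and Theorem \ref{thm:LP}, which identifies $\coker(\wedge\omega\colon\Lambda^2(\mathbb Z^8)\to\Lambda^4(\mathbb Z^8)) \cong H_5(N_4;\mathbb Z) \cong H^4(N_4;\mathbb Z) \cong \mathbb Z^{42}\oplus\mathbb Z/2$, so $\wedge\omega$ has exactly one even invariant factor and $\dim_{\mathbb F_2}\ker(\wedge\omega\otimes\mathbb F_2)=1$. Combining with the previous paragraph gives $\dim_{\mathbb F_2}\ker(\iota_\omega|_{F_2\Lambda^6}) = 1$, hence $\dim_{\mathbb F_2}C_4 = 44$ and $\dim_{\mathbb F_2}\iota_\omega F_2\Lambda^6 = 26$.

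The main obstacle is precisely this sharp upper bound $\dim_{\mathbb F_2}\ker(\wedge\omega\colon\Lambda^2(\mathbb F_2^8)\to\Lambda^4(\mathbb F_2^8)) \le 1$: the class of $\omega$ is obviously in the kernel, but excluding any further mod-$2$ kernel requires either the somewhat fiddly coefficient bookkeeping sketched above or an appeal to the (independently established) torsion computation of \cite{LeePacker}; everything else is routine dimension counting and an application of Hodge--Lefschetz duality.
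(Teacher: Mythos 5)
Your argument is correct, and it takes a genuinely different route from the paper. The paper gets $\dim C_4$ by a global count: it sums Lee--Packer's computation (Theorem \ref{thm:LP}) over all degrees to find $\coker(\omega;\mathbb Z)\cong\mathbb Z^{126}\oplus(\mathbb Z/2)^{10}$, hence $\dim_{\mathbb F_2}\coker(\omega;\mathbb F_2)=136$, and then solves for $\dim C_4$ using the explicit descriptions of the odd and even cokernels in Lemmas \ref{lemma:oddcalc} and \ref{lemma:evencalc}. You instead compute the rank of $\iota_\omega$ on $F_2\Lambda^6$ directly, reducing via Hodge--Lefschetz duality to the single statement that $\ker(\wedge\omega\colon\Lambda^2(\mathbb F_2^8)\to\Lambda^4(\mathbb F_2^8))=\langle\omega\rangle$, which you can prove either by elementary coefficient bookkeeping (valid for $g\ge 3$, which covers $g=4$) or by quoting Lee--Packer in the single degree $H^4(N_4;\mathbb Z)\cong\mathbb Z^{42}\oplus\mathbb Z/2$. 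What your approach buys is independence from Lemmas \ref{lemma:oddcalc} and \ref{lemma:evencalc} (and, in the self-contained variant, from \cite{LeePacker} altogether); what the paper's approach buys is that it recycles computations already on hand and avoids any new kernel analysis. Two small points to tighten: since the paper defines $F_2\Lambda^6(\mathbb F_2^8)$ as $F_2\Lambda^6(\mathbb Z^8)\otimes\mathbb F_2$, your check $\omega\wedge\omega_3\equiv 0\pmod 2$ certifies $\omega_3\in F_2\Lambda^6(\mathbb F_2^8)$ only because the mod-$2$ kernel of $\wedge\omega\colon\Lambda^6\to\Lambda^8$ coincides with that reduction (the integral map is surjective onto $\Lambda^8\cong\mathbb Z$, so its cokernel has no $2$-torsion) --- alternatively you may simply cite Lemma \ref{lemma:5parts}(c), which asserts $\omega_3\in F_2\Lambda^6$; and in your second route the identification $\coker(\wedge\omega\colon\Lambda^2\to\Lambda^4)\cong H^4(N_4;\mathbb Z)$ uses that the Gysin kernel term $\ker(\wedge\omega\colon\Lambda^3\to\Lambda^5)$ vanishes (rational injectivity plus torsion-freeness), or equivalently is forced by matching free ranks with Theorem \ref{thm:LP}.
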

\begin{proof}
We have an abelian group isomorphism $\oplus_k H^k(N_4;\mathbb Z) \cong \ker(\omega;\mathbb Z) \oplus \coker(\omega;\mathbb Z)$, and by Theorem \ref{thm:LP} the former is $\mathbb Z^{252} \oplus (\mathbb Z/2)^{10}$, while $\ker(\omega) = \oplus_k P^k(\mathbb Z^8) \cong \mathbb Z^{126}$. It follows that $\coker(\omega;\mathbb Z) \cong \mathbb Z^{126} \oplus (\mathbb Z/2)^{10}.$ We thus have an isomorphism $\coker(\omega;\mathbb F_2) = \coker(\omega;\mathbb Z)/2 \cong \mathbb F_2^{136}.$
By Lemmas \ref{lemma:oddcalc} and \ref{lemma:evencalc}, we have $\dim \coker(\omega;\mathbb F_2) = \dim(C_4) + 92$ so $\dim C_4 = 44$.
\end{proof}

\begin{lemma}\label{lemma:computer-work}
The kernel of every $\textup{Sp}(8)$-equivariant homomorphism $\Lambda^2 \to C_4'$ contains $P^2$.
\end{lemma}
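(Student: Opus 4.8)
The proof will be a small computer calculation, so the plan is first to reduce the statement to a concrete question about $\mathbb F_2[G]$-modules, where $G=\textup{Sp}(8,\mathbb F_2)$, and then to run that question through a computer algebra system. The condition that a $G$-homomorphism $\phi\colon\Lambda^2\to C_4'$ vanish on $P^2$ is linear in $\phi$, and the quotient map $q\colon\Lambda^2\to\Lambda^2/P^2\cong\mathbb F_2$ induces an injection $q^*\colon (C_4')^G=\textup{Hom}_G(\Lambda^2/P^2,C_4')\hookrightarrow\textup{Hom}_G(\Lambda^2,C_4')$ whose image is exactly the subspace of homomorphisms annihilating $P^2$. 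So the lemma is equivalent to the numerical identity $\dim_{\mathbb F_2}\textup{Hom}_G(\Lambda^2,C_4')=\dim_{\mathbb F_2}(C_4')^G$, and in practice it is enough to compute a basis of $\textup{Hom}_G(\Lambda^2,C_4')$ and check that each basis element kills $P^2$.

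Concretely I would fix generating matrices for the action of $G$ on $\mathbb F_2^8$ and build the $G$-modules $\Lambda^2(\mathbb F_2^8)$ and $\Lambda^4(\mathbb F_2^8)$; form the submodule $\iota_\omega(F_2\Lambda^6)\subseteq\Lambda^4$ and the $G$-equivariant functional $\iota_{\omega_2}\colon C_4=\Lambda^4/\iota_\omega(F_2\Lambda^6)\to\Lambda^0=\mathbb F_2$, which is well defined by the remarks preceding Lemma \ref{lemma:5parts}, so that $\dim C_4=44$ and $C_4'=\ker(\iota_{\omega_2})$ has dimension $43$; and realise $P^2=P^2(\mathbb F_2^8)=F_0\Lambda^2(\mathbb F_2^8)$ as the mod-$2$ reduction of $\ker\!\left(\wedge\,\omega_3\colon\Lambda^2(\mathbb Z^8)\to\Lambda^8(\mathbb Z^8)\right)$. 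Then MAGMA's module-homomorphism routine produces $\textup{Hom}_G(\Lambda^2,C_4')$ explicitly and the final check is routine linear algebra. Equivalently, in the language used elsewhere in this section, one may enumerate the $G$-invariant submodules of $\Lambda^4(\mathbb F_2^8)$, single out those contained in $C_4'$, and observe that none is isomorphic to $\Lambda^2(\mathbb F_2^8)$ or to $\Lambda^2(\mathbb F_2^8)/\langle\omega\rangle$; since the submodule lattice of $\Lambda^2(\mathbb F_2^8)$ — which the same computation exhibits — is the chain $0\subset\langle\omega\rangle\subset P^2\subset\Lambda^2$, these are precisely the two quotients of $\Lambda^2$ in which the image of $P^2$ is nonzero, so their absence from $C_4'$ is exactly the assertion of the lemma.

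The difficulties I anticipate are computational, not conceptual. Over $\mathbb F_2$ none of the modules involved is semisimple — indeed that failure of semisimplicity is precisely what makes $\coker(\omega)$ and $\coker(e^\omega-1)$ differ — so the verification genuinely requires meataxe-style module computations and cannot be settled at the level of composition factors or Brauer characters; happily the ambient dimensions ($\dim\Lambda^4(\mathbb F_2^8)=70$) are small enough that this is immediate. The subtler point is that the divided powers $\omega_r=\omega^r/r!$, and therefore the maps $\iota_{\omega_r}$, $\wedge\,\omega_r$ and the filtration terms $F_r$, must be formed over $\mathbb Z$ and only afterwards reduced modulo $2$: working directly over $\mathbb F_2$ with the powers $\omega^r$ (which vanish for $r\ge 2$) rather than the divided powers would compute the wrong subspaces — in particular the wrong $C_4'$ and $P^2$ — so the implementation must honour the convention $F_r\Lambda^k(\mathbb F_2^8)=F_r\Lambda^k(\mathbb Z^8)\otimes\mathbb F_2$ set at the start of this subsection.
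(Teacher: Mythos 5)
Your proposal is correct in substance, but it runs the computer check along a different route than the paper. The paper never constructs $C_4$ or $C_4'$ inside MAGMA at all: its only machine input is the submodule lattices of $\Lambda^2(\mathbb F_2^8)$ and $\Lambda^4(\mathbb F_2^8)$, and the bridge to the lemma is a short hand argument --- the kernel of $f:\Lambda^2\to C_4$ must be one of $0,\langle\omega\rangle,P^2,\Lambda^2$ (so of dimension $d\in\{0,1,27,28\}$), and the preimage $\pi^{-1}(f(\Lambda^2))\subset\Lambda^4$ would then be a submodule of dimension $54-d$; since $\Lambda^4$ has no submodules of dimension $53$ or $54$, $d\in\{27,28\}$. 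This dimension count needs only $\dim_{\mathbb F_2}\iota_\omega F_2\Lambda^6=26$ (from the preceding lemma) and sidesteps both isomorphism testing and any implementation of the divided powers in the computer. Your primary route instead reduces the lemma to $\dim\operatorname{Hom}_G(\Lambda^2,C_4')=\dim (C_4')^G$ (this reduction is correct, since $\Lambda^2/P^2$ is the trivial module) and has MAGMA compute the Hom space directly; this is a perfectly valid verification and arguably more direct, but it buys that directness at the cost of building $\iota_\omega(F_2\Lambda^6)$, $C_4$, and $C_4'$ as explicit $G$-modules, and you rightly flag that the filtration data must be formed over $\mathbb Z$ (divided powers $\omega_r$) and only then reduced mod $2$. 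One small correction to your ``equivalently'' variant: $C_4'$ is a subquotient of $\Lambda^4$, not a submodule, so ``submodules of $\Lambda^4$ contained in $C_4'$'' is not literally meaningful; you should either enumerate submodules of $C_4'$ itself, or (as the paper effectively does) pass to submodules of $\Lambda^4$ containing $\iota_\omega(F_2\Lambda^6)$ and argue through their dimensions. With that rephrasing, your second route coincides with the paper's, while your Hom-space route is a legitimate, slightly heavier alternative.
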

\begin{proof}
The submodules of $\Lambda^2$ and $\Lambda^4$ can be enumerated by the following simple MAGMA code: 
\begin{verbatim}
M := GModule(Sp(8,2));
L2 := ExteriorPower(M,2); 
L4 := ExteriorPower(M,4); 
Submodules(L2); 
Submodules(L4);
\end{verbatim}
The only non-trivial proper submodules of $\Lambda^2$ are $\langle \omega\rangle$ and $P^2$. If $f: \Lambda^2 \to C_4$ has kernel of dimension $d \in \{0, 1, 27, 28\}$, then $\pi^{-1}(f(\Lambda^2)) \subset \Lambda^4$ has dimension $26 + 28 - d = 54 - d$. Because $\Lambda^4$ has no submodules of dimension $53$ or $54$, we have $d \in \{27, 28\}$. Thus $f$ has kernel $P^2$ or $\Lambda^2$.
\end{proof}

\section{Background on Heegaard Floer homology and cup homology}\label{sec:HF-bg}
In this section, we review some general facts about Heegaard Floer homology and the definition of cup homology. Every fact used here can be found in one of \cite{OzSz3D1,OzSz3D2,OzSz4D1,OzSzKnot2,Mark,Zemke}, though see the end of the section for some remarks on subtleties in the literature.

\subsection*{Spin$^c$ structures}
If $Y$ is a closed oriented $3$-manifold, there is associated with $Y$ a set $\text{Spin}^c(Y)$ of `spin$^c$ structures on $Y$'; an element might be denoted $\mathfrak s$. This set carries some important additional structure: there is a simply transitive action of $H^2(Y;\mathbb Z)$ on $\text{Spin}^c(Y)$, here written $x \cdot \mathfrak s$, and a function \[c_1: \text{Spin}^c(Y) \to H^2(Y; \mathbb Z), \quad \quad c_1(x \cdot \mathfrak s) = 2x + c_1(\mathfrak s).\] The map $c_1$ surjects onto $2H^2(Y;\mathbb Z)$, and the formulas above immediately imply the following statement. 

\begin{lemma}\label{lemma:2-tors}
If $Y$ is a closed oriented $3$-manifold for which $H^2(Y;\mathbb Z)$ has no $2$-torsion, there is a unique spin$^c$ structure on $Y$ satisfying $c_1(\mathfrak s) = 0$. 
\end{lemma}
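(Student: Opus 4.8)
The plan is to use the two facts recalled just before the statement: the simply transitive action of $H^2(Y;\mathbb Z)$ on $\operatorname{Spin}^c(Y)$, and the formula $c_1(x\cdot\mathfrak s) = 2x + c_1(\mathfrak s)$, together with the fact that $c_1$ surjects onto $2H^2(Y;\mathbb Z)$. Everything reduces to a short argument about the map $x \mapsto 2x$ on $H^2(Y;\mathbb Z)$.

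First I would establish existence. Pick any spin$^c$ structure $\mathfrak s_0$. Since $c_1$ surjects onto $2H^2(Y;\mathbb Z)$, the element $-c_1(\mathfrak s_0)$, which lies in $2H^2(Y;\mathbb Z)$ because every first Chern class does, equals $2x$ for some $x \in H^2(Y;\mathbb Z)$. Then $\mathfrak s = x\cdot\mathfrak s_0$ satisfies $c_1(\mathfrak s) = 2x + c_1(\mathfrak s_0) = 0$.

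Next I would establish uniqueness, which is where the no-$2$-torsion hypothesis enters. Suppose $\mathfrak s$ and $\mathfrak s'$ both have vanishing $c_1$. By simple transitivity, $\mathfrak s' = x\cdot\mathfrak s$ for a unique $x \in H^2(Y;\mathbb Z)$, and then $0 = c_1(\mathfrak s') = 2x + c_1(\mathfrak s) = 2x$. Since $H^2(Y;\mathbb Z)$ has no $2$-torsion, multiplication by $2$ is injective, so $x = 0$ and $\mathfrak s' = \mathfrak s$.

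There is no real obstacle here; the only thing to be careful about is invoking the stated property that $c_1$ lands in (and surjects onto) $2H^2(Y;\mathbb Z)$ rather than reproving it, and noting explicitly that "no $2$-torsion" is exactly the statement that $x \mapsto 2x$ is injective. I would present both paragraphs as a single short proof.

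\begin{proof}
For existence, fix any $\mathfrak s_0 \in \operatorname{Spin}^c(Y)$. Since $c_1$ surjects onto $2H^2(Y;\mathbb Z)$ and $-c_1(\mathfrak s_0) \in 2H^2(Y;\mathbb Z)$, we may write $-c_1(\mathfrak s_0) = 2x$ for some $x \in H^2(Y;\mathbb Z)$. Then $\mathfrak s := x \cdot \mathfrak s_0$ satisfies $c_1(\mathfrak s) = 2x + c_1(\mathfrak s_0) = 0$.

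For uniqueness, suppose $c_1(\mathfrak s) = c_1(\mathfrak s') = 0$. Because the action of $H^2(Y;\mathbb Z)$ on $\operatorname{Spin}^c(Y)$ is simply transitive, there is a unique $x \in H^2(Y;\mathbb Z)$ with $\mathfrak s' = x \cdot \mathfrak s$, and then $0 = c_1(\mathfrak s') = 2x + c_1(\mathfrak s) = 2x$. Since $H^2(Y;\mathbb Z)$ has no $2$-torsion, multiplication by $2$ is injective, so $x = 0$ and $\mathfrak s' = \mathfrak s$.
\end{proof}
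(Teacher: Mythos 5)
Your proof is correct and follows exactly the route the paper intends: the paper gives no separate argument, asserting that surjectivity of $c_1$ onto $2H^2(Y;\mathbb Z)$ together with the formula $c_1(x\cdot\mathfrak s)=2x+c_1(\mathfrak s)$ and simple transitivity ``immediately imply'' the statement, which is precisely what you spell out. (For existence you could even shortcut slightly: since $0\in 2H^2(Y;\mathbb Z)$ and $c_1$ surjects onto $2H^2(Y;\mathbb Z)$, some $\mathfrak s$ with $c_1(\mathfrak s)=0$ exists without choosing a basepoint $\mathfrak s_0$.)
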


For the remainder of this article, we will assume that $H^2(Y; \mathbb Z)$ has no $2$-torsion.

A similar statement applies to compact oriented $4$-manifolds $W$. If $W_0 \subset W$ is a compact $4$-dimensional submanifold, there is a restriction map $\text{Spin}^c(W) \to \text{Spin}^c(W_0)$; similarly if $Y \subset \partial W$ is a boundary component of $W$. These restriction maps are compatible with $c_1$ and the actions of $H^2$.

Finally, there is an action on these sets by the group of orientation-preserving diffeomorphisms. The action by $H^2$ and the map $c_1$ are both compatible with the diffeomorphism action.

\subsection*{The functor $HF^\infty$}
A cobordism $W: Y \to Y'$ is a compact oriented $4$-manifold $W$ equipped with an orientation-preserving diffeomorphism $\varphi: \partial W \to -Y \sqcup Y'$. This diffeomorphism is typically suppressed from notation. Inverting $\varphi$, we obtain inclusion maps $i: Y \to W$ and $j: Y' \to W$. 

A `spin$^c$ cobordism' $(W, \mathfrak t): (Y, \mathfrak s) \to (Y', \mathfrak s')$ is a cobordism $W: Y \to Y'$ equipped with a spin$^c$ structure $\mathfrak t$ on $W$ satisfying $i^* \mathfrak t \cong \mathfrak s$ and $j^* \mathfrak t \cong \mathfrak s'.$

Given a pair $(Y, \mathfrak s)$, Heegaard Floer homology produces a $\mathbb Z/2$-graded $\mathbb F_2[U, U^{-1}]$-module $HF^\infty(Y, \mathfrak s; \mathbb F_2).$ If $(W, \mathfrak t): (Y, \mathfrak s) \to (Y', \mathfrak s')$ is a spin$^c$ cobordism, the machine produces a well-defined homogeneous module homomorphism \[(W, \mathfrak t)_*: HF^\infty(Y, \mathfrak s; \mathbb F_2) \to HF^\infty(Y', \mathfrak s'; \mathbb F_2).\] This map well-defined up to diffeomorphism, in the following sense. If $(W, \mathfrak t): (Y, \mathfrak s) \to (Y', \mathfrak s')$ is a spin$^c$ cobordism and $\Psi: W' \to W$ is an orientation-preserving diffeomorphism, equip $W'$ with the boundary diffeomorphism $\varphi' = \varphi \Psi|_{\partial W'}: \partial W' \to -Y \sqcup Y'$. Then $(W', \Psi^*\mathfrak t)_* = (W,  \mathfrak t)_*.$

The functoriality statement is as follows. Given a pair of cobordisms 
\[(W_1, \mathfrak t_1): (Y, \mathfrak s) \to (Y', \mathfrak s'), \quad \quad (W_2, \mathfrak t_2): (Y', \mathfrak s') \to (Y'', \mathfrak s'')\] the cobordisms $W_1$ and $W_2$ may be composed to obtain a cobordism $W: Y \to Y''$; however, in the formulation we have given, the spin$^c$ structures may be composed in multiple ways. The correct composition law is 
\begin{equation}\label{eq:compositionlaw}
(W_2, \mathfrak t_2)_* (W_1, \mathfrak t_1)_* = \sum_{\substack{\mathfrak t \in \text{Spin}^c(W) \\ \mathfrak t|_{W_i} \cong \mathfrak t_i}} (W, \mathfrak t)_*.
\end{equation}

The group $HF^\infty(Y, \mathfrak s; \mathbb F_2)$ also carries an $\mathbb F_2[U, U^{-1}]$-module action by the algebra $\Lambda^*(H_1(Y;\mathbb Z)/\text{Tors})$. This action respects cobordism maps in the following sense: if $\gamma \in H_1(Y; \mathbb Z)$ and $\gamma' \in H_1(Y'; \mathbb Z)$ satisfy $i_* \gamma = j_* \gamma' \in H_1(W; \mathbb Z),$ we have \begin{equation}\label{eq:gamma-action}(W, \mathfrak t)_*(\gamma \cdot x) = \gamma' \cdot (W, \mathfrak t)_*(x).\end{equation}

\subsection*{Mapping class group actions}
Suppose that $Y$ is a closed oriented $3$-manifold and $\mathfrak s_0$ is the unique spin$^c$ structure on $Y$ with $c_1(\mathfrak s_0) = 0$.

Consider now the following special class of spin$^c$ cobordisms. If $\psi: Y \to Y$ is an orientation-preserving diffeomorphism, define the spin$^c$ cobordism $W_\psi: (Y, \mathfrak s_0) \to (Y, \mathfrak s_0)$ to have underlying manifold $[0,1] \times Y$, with boundary diffeomorphism $\varphi$ given by \[\varphi = \psi \sqcup 1_{-Y}: \{1\} \times Y \sqcup \{0\} \times -Y  \to Y \sqcup -Y,\] and equipped with the unique spin$^c$ structure with $c_1(\mathfrak t) = 0$.

If $\psi_0$ and $\psi_1$ are isotopic via the path of diffeomorphisms $\psi_t$, then $(t, x) \mapsto (t, \psi_0^{-1} \psi_t(x))$ defines a diffeomorphism $[0,1] \times Y \to [0,1] \times Y$ which identifies the cobordisms $W_{\psi_1}$ and $W_{\psi_0}$. It follows that $\psi \mapsto W_\psi$ descends to a map from the \textit{oriented mapping class group} \[\textup{MCG}^+(Y) = \pi_0 \text{Diff}^+(Y)\] of orientation-preserving diffeomorphisms up to isotopy to the set of spin$^c$ cobordisms $(Y, \mathfrak s_0) \to (Y, \mathfrak s_0)$ up to oriented diffeomorphism.

It is straightforward to see that we have $W_{\psi} \circ W_{\psi'} = W_{\psi \circ \psi'}$, and that the composite cobordism admits only one spin$^c$ structure restricting to $\mathfrak t, \mathfrak t'$ on the respective cylinders. Thus, we obtain the following result.

\begin{lemma}\label{lemma:HF-has-MCG-action}
Let $\mathfrak s_0$ be the unique spin$^c$ structure with $c_1(\mathfrak s_0) = 0$. Then the cobordism maps $(W_\psi)_*$ enrich $HF^\infty(Y, \mathfrak s_0; \mathbb F_2)$ with an action of $\textup{MCG}^+(Y)$ by $\mathbb F_2[U, U^{-1}]$ module automorphisms.
\end{lemma}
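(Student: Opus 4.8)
The plan is to verify directly that $\psi \mapsto (W_\psi)_*$ satisfies the axioms of a group action, using only the formal properties of $HF^\infty$ recorded above; most of the geometric input has already been assembled in the discussion preceding the statement.

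First I would note that for each orientation-preserving diffeomorphism $\psi$ the cobordism map $(W_\psi)_*$ is, by the general construction, a homogeneous $\mathbb F_2[U,U^{-1}]$-module endomorphism of $HF^\infty(Y,\mathfrak s_0;\mathbb F_2)$, and that it preserves the $\mathbb Z/2$-grading: the underlying $4$-manifold of $W_\psi$ is $[0,1]\times Y$, which has $\chi = \sigma = 0$, and $c_1(\mathfrak t) = 0$, so the associated grading shift $(c_1(\mathfrak t)^2 - 2\chi - 3\sigma)/4$ vanishes. Second, I would check that $(W_\psi)_*$ depends only on the class $[\psi] \in \textup{MCG}^+(Y)$: given an isotopy $\psi_t$ from $\psi_0$ to $\psi_1$, the discussion above provides an orientation-preserving diffeomorphism $[0,1]\times Y \to [0,1]\times Y$ carrying $W_{\psi_1}$ onto $W_{\psi_0}$, and since both cobordisms are equipped with the unique spin$^c$ structure having vanishing $c_1$ --- unique because $H^2([0,1]\times Y;\mathbb Z)\cong H^2(Y;\mathbb Z)$ has no $2$-torsion --- this diffeomorphism automatically identifies the two spin$^c$ structures; diffeomorphism-invariance of cobordism maps then gives $(W_{\psi_0})_* = (W_{\psi_1})_*$. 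This yields a well-defined map from $\textup{MCG}^+(Y)$ into the degree-preserving $\mathbb F_2[U,U^{-1}]$-module endomorphisms of $HF^\infty(Y,\mathfrak s_0;\mathbb F_2)$.

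Next I would establish multiplicativity and the identity axiom. Stacking cylinders gives the identification $W_\psi \circ W_{\psi'} = W_{\psi\circ\psi'}$ of spin$^c$ cobordisms, and, as observed above, the composite admits only one spin$^c$ structure restricting to the chosen $c_1 = 0$ structures on the two pieces, so the sum on the right of the composition law \eqref{eq:compositionlaw} reduces to a single term, yielding $(W_{\psi\circ\psi'})_* = (W_\psi)_* \circ (W_{\psi'})_*$. For the identity element, $W_{\mathrm{id}_Y}$ is the product cobordism with its product spin$^c$ structure, whose induced map on $HF^\infty$ is the identity. Combining, $\psi \mapsto (W_\psi)_*$ is a homomorphism from $\textup{MCG}^+(Y)$ to $\operatorname{End}_{\mathbb F_2[U,U^{-1}]}(HF^\infty(Y,\mathfrak s_0;\mathbb F_2))$ carrying $\mathrm{id}$ to $\mathrm{id}$; hence each $(W_\psi)_*$ has two-sided inverse $(W_{\psi^{-1}})_*$ and is an automorphism, which is precisely the claimed action.

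I expect the only point requiring genuine care to be the spin$^c$ bookkeeping in the composition law \eqref{eq:compositionlaw}: it is the standing hypothesis that $H^2(Y;\mathbb Z)$ has no $2$-torsion that forces the relevant sums of cobordism maps to collapse to single terms, making the assignment multiplicative on the nose rather than only up to correction terms. One should also keep track of the order of composition to confirm that one obtains a genuine left action rather than an anti-homomorphism, but this is immediate from the stated relation $W_\psi\circ W_{\psi'} = W_{\psi\circ\psi'}$ together with the convention in \eqref{eq:compositionlaw}.
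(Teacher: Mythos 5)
Your proposal is correct and follows essentially the same route as the paper: diffeomorphism invariance handles isotopy (hence well-definedness on $\textup{MCG}^+(Y)$), the identification $W_\psi \circ W_{\psi'} = W_{\psi\circ\psi'}$ together with the uniqueness of the spin$^c$ structure on the composite cylinder collapses the composition law \eqref{eq:compositionlaw} to a single term, and invertibility follows from $W_{\psi^{-1}}$. The extra details you supply (the grading-shift computation and the explicit $H^2$ argument for spin$^c$ uniqueness) are fine elaborations of what the paper leaves as ``straightforward.''
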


\subsection*{Computations for surgeries}
Suppose $Y$ is a closed oriented $3$-manifold with $K \subset Y$ null-homologous. Given an integer $n$, the \textit{$n$-surgery on $K$}, denoted $Y_n(K)$, is obtained topologically by deleting a tubular neighborhood of $K$, picking a longitude $\ell_n$ of $K$ with $\text{link}(K, \ell_n) = n$, attaching $[0,1] \times D^2$ along a neighborhood of this curve in $\partial(Y \setminus N(K))$, and then capping off the remaining $2$-sphere with a $3$-ball.

Another definition is $4$-dimensional in nature. There is a thickened embedding $\psi_n: S^1 \times D^2 \to Y$ with $\psi_n(z, 0)$ the embedding of $K$, while $\psi_n(z, 1)$ gives the embedding of the longitude $\ell_n$. Now take $[0,1] \times Y$, and paste $D^2 \times D^2$ along $S^1 \times D^2$ to $\{1\} \times Y$ via the map $\psi_n$. Smoothing out the codimension $2$ corners, we obtain the \textit{surgery cobordism} $W_n(K): Y \to Y_n(K)$. 

We will use the `large surgeries exact triangle', a useful relationship between the Heegaard Floer homology of $Y, Y_0(K)$, and $Y_{\pm n}(K)$. We will first need to discuss spin$^c$ structures on these manifolds, as well as the surgery cobordism $W_n(K)$. The statements given here are derived from \cite[Theorem 9.19]{OzSz3D2} and recalled in more detail in \cite[Section 2]{JabukaMark}.

Let $\mathfrak s_0$ be the unique spin$^c$ structure on $Y$ with $c_1(\mathfrak s_0) = 0$. So long as $n$ is odd, $H^2(Y_n(K); \mathbb Z)$ also has no $2$-torsion, and we write $\mathfrak t_0$ for its unique spin$^c$ structure with $c_1(\mathfrak t_0) = 0$.

Mayer--Vietoris gives an isomorphism $H^2(Y_0(K); \mathbb Z) \cong H^2(Y; \mathbb Z) \oplus \mathbb Z$, and there is a unique spin$^c$ structure $\mathfrak u_k$ on $Y_0(K)$ with $c_1(\mathfrak u_k) = (0,2k);$ its restriction to $Y \setminus K$ is isomorphic to the restriction of $\mathfrak s_0$. 

Similarly, consider the Mayer--Vietoris isomorphism $H^2(W_n(K); \mathbb Z) \cong H^2(Y; \mathbb Z) \oplus \mathbb Z.$ Because this group has no $2$-torsion, spin$^c$ structures on $W_n(K)$ are determined by their first Chern class, and spin$^c$ structures which have $c_1(\mathfrak r)|_{Y \setminus K} = 0$ are in bijection with $\mathbb Z$. More precisely, there is a unique spin$^c$ structure $\mathfrak r_i$ with $c_1(\mathfrak r_i) = (0, 2i-1)$. 

Under these assumptions, and taking $n$ both large and odd, the surgery long exact sequence takes the following form:

\begin{equation}\label{eq:surgery-seq}\cdots \to \bigoplus_{\ell \in \mathbb Z} HF^\infty(Y_0(K), \mathfrak u_{n\ell}; \mathbb F_2) \to HF^\infty(Y, \mathfrak s_0; \mathbb F_2) \xrightarrow{F} HF^\infty(Y_{-n}(K), \mathfrak t_0; \mathbb F_2) \to \cdots\end{equation}

Here, the map $F = F_0 + F_1 + \cdots$ is a (finite) sum of cobordism maps $F_i = (W_n(K), \mathfrak r_i)_*$. Finally, we have the following facts about $F_0, F_1$, established in \cite[Theorem 2.3]{OzSzKnot2}.

\begin{lemma}\label{lemma:Fi-maps}
In the situation described above, for $n$ sufficiently large, the map $F_0$ is an isomorphism, and we have $F_1 = F_0 J$ for $J$ the `basepoint-swapping automorphism' of $HF^\infty(Y, \mathfrak s_0; \mathbb F_2)$. 
\end{lemma}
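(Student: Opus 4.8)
The plan is to deduce the lemma from the large ($n \gg 0$) integer-surgery formula together with the known behavior of surgery cobordism maps; in the end it is a repackaging of \cite[Theorem 2.3]{OzSzKnot2}, so the bulk of the work is matching conventions.

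I would begin by recording that, since $K$ is null-homologous, $H_1(Y_{-n}(K);\mathbb Z) \cong H_1(Y;\mathbb Z) \oplus \mathbb Z/n$; in particular $\mathfrak t_0$ is torsion and, working over $\mathbb F_2$ where $U$ is invertible, both $HF^\infty(Y,\mathfrak s_0;\mathbb F_2)$ and $HF^\infty(Y_{-n}(K),\mathfrak t_0;\mathbb F_2)$ are free $\mathbb F_2[U,U^{-1}]$-modules of rank $2^{b_1(Y)}$ by \cite{Lidman}. The large surgery formula identifies, for $n$ large, the complex $CF^\infty$ of $Y_{-n}(K)$ in the relevant spin$^c$ structures $\mathfrak r_i|_{Y_{-n}(K)}$ with a grading translate of the knot Floer complex $CFK^\infty(Y,K;\mathbb F_2)$, and identifies each summand $F_i = (W_n(K),\mathfrak r_i)_*$ of the map $F$ in \eqref{eq:surgery-seq} with one of the canonical projection chain maps in that picture; only finitely many $F_i$ are nonzero, for grading reasons.

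For the first assertion: under this identification $F_0$ becomes the natural map from $CF^\infty(Y,\mathfrak s_0;\mathbb F_2)$ into the $i = 0$ flavor of the large surgery complex. Inverting $U$ collapses the Alexander filtration of $CFK^\infty$, so this map is a quasi-isomorphism and $F_0$ is an isomorphism on homology. (Alternatively this is stated directly in \cite[Theorem 2.3]{OzSzKnot2}.)

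For the second assertion, which is the delicate point: the spin$^c$ structures $\mathfrak r_0$ and $\mathfrak r_1$ on the surgery cobordism differ by the class dual to the core disk of the attached $2$-handle, and both restrict trivially to $Y \setminus N(K)$. Under the large surgery identification $F_0$ and $F_1$ are the two adjacent projection maps; after inverting $U$ their common target is canonically $HF^\infty(Y,\mathfrak s_0;\mathbb F_2)$, and the composite $F_0^{-1} F_1$ is precisely the monodromy automorphism obtained by transporting the basepoint once around $K$, i.e. the basepoint-swapping automorphism $J$. The main obstacle I anticipate is checking that this spin$^c$-structure shift on $W_n(K)$ really is implemented by that basepoint move, with the sign and grading conventions in force here; since this is exactly the content of \cite[Theorem 2.3]{OzSzKnot2}, the honest approach is to import that statement and verify that the conventions match rather than reprove it from scratch.
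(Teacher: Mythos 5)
Your proposal is correct and takes essentially the same route as the paper: the paper gives no independent argument for this lemma, simply importing both facts as established in \cite[Theorem 2.3]{OzSzKnot2}, which is exactly where your sketch ends up after the large-surgery bookkeeping. The additional scaffolding you describe (the large surgery identification, inverting $U$, recognizing $F_0^{-1}F_1$ as the basepoint-moving map) is consistent with that source --- though your side appeal to \cite{Lidman} for ``rank $2^{b_1(Y)}$'' is only valid here because the relevant triple cup products vanish, and it is not needed for the argument.
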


We will not discuss the definition of the basepoint-swapping map; in the one case we use this, the map has been computed in the literature.

\subsection*{Coefficient rings}
The construction of Heegaard Floer homology requires many choices, and it is not clear that one can produce well-defined groups (as opposed to well-defined `groups up to isomorphism', which do not form a category); compare the discussion of \cite{JTZ}. 

The work of \cite{Zemke} conclusively settles functoriality with $\mathbb F_2$ coefficients. The Heegaard Floer homology groups themselves can be defined with coefficients in an arbitrary commutative ring, and are well-defined up to isomorphism, but extending these to a well-defined functor is quite subtle. Though some partial progress has been achieved \cite{Gartner-HFZ}, understanding these cobordism maps in general is still an open problem. 

Nevertheless, it is \textit{expected} that $HF^\infty(Y, \mathfrak s; \mathbb Z)$ can be enhanced to a well-defined functor to $\mathbb Z/2$-graded $\mathbb Z[U, U^{-1}]$-modules, where cobordisms $(W, \mathfrak t): (Y, \mathfrak s) \to (Y', \mathfrak s')$ are further equipped with a \textit{homology orientation}, an orientation of the real vector space $H^1(W) \oplus H^+(W) \oplus H^1(Y')$ \cite[Definition 3.4.1]{KMBook}.

We state this as a postulate, and occasionally use it as an assumption in certain results which require this extended functoriality. 

\begin{post}\label{post:HF-works}
The Heegaard Floer functor $HF^\infty(Y, \mathfrak s; \mathbb Z)$ is well-defined when cobordisms are further equipped with a homology orientation, and all results stated above lift in the natural way to $\mathbb Z$ coefficients.
\end{post}

For the cobordisms $W_\psi$ discussed above, $H^+(W) = 0$ and inclusion of the outgoing boundary component induces a canonical isomorphism $H^1(Y') \cong H^1(W)$, so these have a canonical homology orientation (and it is straightforward to see that these compose correctly, applying a variation on the definition of \cite[Section 8.2]{Scaduto-Kh}). Thus, we obtain the following enrichment of Lemma \ref{lemma:HF-has-MCG-action}.

\begin{lemma}\label{lemma:HF-has-MCG-action-over-Z}
If Postulate \ref{post:HF-works} holds, then the cobordism maps $(W_\psi)_*$ enrich $HF^\infty(Y, \mathfrak s_0; \mathbb Z)$ with an action of $\textup{MCG}^+(Y)$ by $\mathbb Z[U, U^{-1}]$ module automorphisms.
\end{lemma}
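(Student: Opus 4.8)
The plan is to follow the proof of Lemma \ref{lemma:HF-has-MCG-action} almost verbatim, replacing each appeal to $\mathbb F_2$-functoriality by the corresponding statement over $\mathbb Z$ guaranteed by Postulate \ref{post:HF-works}. The only genuinely new bookkeeping is that one must carry the canonical homology orientations of the cobordisms $W_\psi$ along at every stage, as arranged in the paragraph preceding the statement.

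First I would record that $\psi \mapsto W_\psi$, together with its canonical homology orientation, descends to $\textup{MCG}^+(Y)$. If $\psi_0$ and $\psi_1$ are isotopic through $\psi_t$, the diffeomorphism $(t,x)\mapsto (t,\psi_0^{-1}\psi_t(x))$ of $[0,1]\times Y$ identifies $W_{\psi_1}$ with $W_{\psi_0}$ as spin$^c$ cobordisms; since it restricts to the identity on $\{0\}\times Y$ and is isotopic to the identity overall, it respects the canonical isomorphisms $H^1(Y)\cong H^1([0,1]\times Y)$ used to define the homology orientation. Hence the two homology-oriented spin$^c$ cobordisms agree, and by the diffeomorphism-invariance clause of Postulate \ref{post:HF-works} the map $(W_\psi)_*$ depends only on the class of $\psi$ in $\textup{MCG}^+(Y)$.

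Next I would treat composition. As noted before Lemma \ref{lemma:HF-has-MCG-action}, $W_\psi \circ W_{\psi'} = W_{\psi\circ\psi'}$ as spin$^c$ cobordisms, and the glued cobordism admits exactly one spin$^c$ structure restricting to the chosen structures on the two cylinders. One checks in addition that the homology orientation obtained by gluing the canonical homology orientations of $W_\psi$ and $W_{\psi'}$ coincides with the canonical homology orientation of $W_{\psi\circ\psi'}$; this is a linear-algebra computation of the type carried out in \cite[Section 8.2]{Scaduto-Kh}, made easy by the vanishing of $H^+$ and the canonical identifications of the relevant first cohomology groups with $H^1(Y)$. Feeding this into the composition law \eqref{eq:compositionlaw}, lifted to $\mathbb Z$ by Postulate \ref{post:HF-works}, the sum on the right collapses to a single term and we obtain $(W_\psi)_* \circ (W_{\psi'})_* = (W_{\psi\circ\psi'})_*$.

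Finally, $W_{\mathrm{id}_Y}$ is the product cobordism $[0,1]\times Y$ with product spin$^c$ structure and product homology orientation, whose induced map is the identity by the functoriality axioms of Postulate \ref{post:HF-works}. Combined with the composition identity, $(W_\psi)_*$ and $(W_{\psi^{-1}})_*$ are mutually inverse, so each $(W_\psi)_*$ is an automorphism and $\psi \mapsto (W_\psi)_*$ is a group homomorphism $\textup{MCG}^+(Y) \to \operatorname{Aut}\big(HF^\infty(Y,\mathfrak s_0;\mathbb Z)\big)$; these automorphisms are $\mathbb Z[U,U^{-1}]$-linear because cobordism maps are module homomorphisms. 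The one step requiring care — and the only place the argument goes beyond the $\mathbb F_2$ case of Lemma \ref{lemma:HF-has-MCG-action} — is the compatibility of homology orientations under gluing and with the product cobordism; everything else is formal once Postulate \ref{post:HF-works} is assumed.
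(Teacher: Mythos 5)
Your proposal is correct and follows the paper's argument: the paper likewise observes that $H^+(W_\psi)=0$ and the outgoing-boundary inclusion gives a canonical homology orientation compatible with composition (via the variation on \cite[Section 8.2]{Scaduto-Kh}), and then lifts the $\mathbb F_2$ argument of Lemma \ref{lemma:HF-has-MCG-action} using Postulate \ref{post:HF-works}. You simply spell out the isotopy-invariance, gluing, and identity-cobordism checks that the paper leaves implicit.
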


\subsection{Cup homology}
We briefly recall the definition of the groups $HC(Y;R)$ for $R$ a commutative ring, defined in \cite[Conjecture 4.10]{OzSzPlumbed} and further studied by Mark \cite{Mark}, who named $HC_*(Y)$ the `cup homology' of $Y$. For torsion spin$^c$ structures, the spin$^c$ structure plays no role in the definition of cup homology and is suppressed from notation.

If $Y$ is a closed connected oriented $3$-manifold, then its \textit{triple cup product} is \[\cup^3_Y: \Lambda^3 H^1(Y; \mathbb Z) \to \mathbb Z, \quad \quad \cup^3_Y(\alpha \wedge \beta \wedge \gamma) = (\alpha \smile \beta \smile \gamma)[Y];\] one takes a triple of degree $1$ cohomology classes, takes their wedge product, and evaluates the result on the fundamental class of $Y$. We denote the corresponding $3$-form by \[\omega_Y \in (\Lambda^3 H^1(Y; \mathbb Z))^* \cong \Lambda^3 H^1(Y; \mathbb Z)^* \cong \Lambda^3 \big(H_1(Y; \mathbb Z)/\text{Tors}\big).\] 

Contraction against the triple cup product defines a map $\iota_\cup: \Lambda^k H^1(Y;\mathbb Z) \to \Lambda^{k-3} H^1(Y; \mathbb Z)$ by \[\iota_\cup(\alpha_1 \wedge \cdots \wedge \alpha_k) = \sum_{1 \le i_1 < i_2 < i_3 \le k} (-1)^{i_1 + i_2 + i_3} \cup^3_Y(\alpha_{i_1}, \alpha_{i_2}, \alpha_{i_3}) \alpha_1 \wedge \cdots \hat \alpha_{i_1} \wedge \cdots \wedge \hat \alpha_{i_2} \wedge \cdots \wedge \hat \alpha_{i_3} \wedge \cdots \wedge \alpha_k.\] 

Cup homology $HC_*(Y;R)$ is the $\mathbb Z/2$-graded $R[U, U^{-1}]$-module given by the homology groups \[HC_*(Y;R) = H\left(\left(\Lambda^* (H^1(Y; \mathbb Z) \otimes_{\mathbb Z} R)\right)[U, U^{-1}],  \quad d(\alpha U^k) = \iota_\cup(\alpha) U^{k-1}\right).\]

Cup homology also defines a functor on an appropriate cobordism category, with cobordism maps zero for $b^+(W) > 0$. The cobordism maps in cup cohomology are described in \cite[Section 3.2]{LME2} up  to a sign ambiguity. These cobordism maps can be understood as follows. Considering the tori $\mathbb T_Y = H^1(Y; \mathbb R)/H^1(Y; \mathbb Z)$, we have a diagram \[\mathbb T_Y \xleftarrow{i^*} \mathbb T_W \xrightarrow{j^*} \mathbb T_{Y'}.\] A homology orientation on $W$ allows one to give a well-defined sign to the \textit{umkehr map} \[i_!: \Lambda^*(H^1(Y; \mathbb Z)) \to \Lambda^{*+b_1(W) - b_1(Y)}(H^1(W; \mathbb Z)),\] and the induced map $W_*$ on cup chain complexes is given by $j^* i_!$; that this defines a chain map is discussed in \cite[Remark 3.4]{LME2}, and comes down to the fact that $i^* \omega_Y = j^* \omega_{Y'}$. 

Once again, the existence of cobordism maps gives us an action of the mapping class group. 

\begin{lemma}\label{lemma:HC-has-MCG-action}
The cobordism maps $(W_\psi)_*$ enrich $HC_*(Y; R)$ with an action of $\textup{MCG}^+(Y)$ by $R[U, U^{-1}]$ module automorphisms.
\end{lemma}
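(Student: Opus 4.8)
The plan is to mirror, nearly word for word, the argument that established Lemma \ref{lemma:HF-has-MCG-action}, substituting cup homology for $HF^\infty$. First I would recall from the discussion above that $HC_*(-;R)$ is a functor on the cobordism category whose morphisms are compact oriented cobordisms equipped with a homology orientation, and that --- unlike in Heegaard Floer homology --- there is no sum over spin$^c$ structures in the composition law, since cup homology sees only the cohomology ring and the triple cup product. Then, for $\psi \in \text{Diff}^+(Y)$, I would form the cylinder cobordism $W_\psi\colon Y \to Y$ on $[0,1]\times Y$ with boundary identification $\psi \sqcup 1_{-Y}$, exactly as in the definition preceding Lemma \ref{lemma:HF-has-MCG-action}. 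Because $H^+(W_\psi)=0$ and inclusion of the outgoing boundary gives a canonical isomorphism $H^1(Y)\cong H^1(W_\psi)$, the cobordism $W_\psi$ carries a canonical homology orientation, so $(W_\psi)_*\colon HC_*(Y;R)\to HC_*(Y;R)$ is defined --- concretely it is, up to the sign fixed by the homology orientation, the map $\Lambda^*(\psi^*)$ induced on cup chains by $\psi$ --- and it is an $R[U,U^{-1}]$-module homomorphism because every cup-homology cobordism map is.

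Next I would verify the three formal properties needed to extract a group action: that $(W_\psi)_*$ depends only on the isotopy class of $\psi$, that $(W_\psi)_*\circ(W_{\psi'})_* = (W_{\psi\circ\psi'})_*$, and that $(W_{1_Y})_* = \mathrm{id}$. For the first, an isotopy $\psi_t$ from $\psi_0$ to $\psi_1$ yields the diffeomorphism $(t,x)\mapsto(t,\psi_0^{-1}\psi_t(x))$ of $[0,1]\times Y$ identifying $W_{\psi_1}$ with $W_{\psi_0}$, and I would check that it carries the canonical homology orientation of one to that of the other (it is the identity near $\{0\}\times Y$, hence compatible with the canonical trivialization of $H^1$), so functoriality forces the two induced maps to agree. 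For the second, stacking cylinders gives an orientation-preserving diffeomorphism $W_\psi\circ W_{\psi'}\cong W_{\psi\circ\psi'}$ under which the stacked canonical homology orientations match the canonical one on $W_{\psi\psi'}$; this is the bookkeeping analogous to the variant of \cite[Section 8.2]{Scaduto-Kh} invoked for $HF^\infty$, after which functoriality of $HC_*$ yields the relation. The third property is immediate. Together these show each $(W_\psi)_*$ is invertible with inverse $(W_{\psi^{-1}})_*$, so $\psi\mapsto(W_\psi)_*$ descends to a homomorphism $\textup{MCG}^+(Y)\to\mathrm{Aut}_{R[U,U^{-1}]}(HC_*(Y;R))$.

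The one step I expect to demand care rather than pure formalism is the homology-orientation bookkeeping: since the cup-homology cobordism maps are only well defined up to sign in the absence of a homology orientation (see \cite[Section 3.2]{LME2}), I would need to confirm that the canonical homology orientations on the cylinders $W_\psi$ really are preserved by the isotopy diffeomorphism and transform correctly under stacking, so that the signs are pinned down consistently. This is entirely parallel to the corresponding point in the proof of Lemma \ref{lemma:HF-has-MCG-action}, and I would dispatch it in the same way; the remainder is routine functoriality.
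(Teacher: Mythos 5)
Your proposal is correct and follows essentially the same route as the paper, which states this lemma without a separate proof, relying on exactly the formal argument you give: the cup-homology cobordism maps applied to the cylinders $W_\psi$, isotopy invariance via the diffeomorphism $(t,x)\mapsto(t,\psi_0^{-1}\psi_t(x))$ of $[0,1]\times Y$, stacking of cylinders ($W_\psi \circ W_{\psi'} = W_{\psi\circ\psi'}$, with no spin$^c$ sum to worry about), and the canonical homology orientation on these cylinders to pin down signs. The only nit is your parenthetical identification of $(W_\psi)_*$ with (a sign times) $\Lambda^*(\psi^*)$: by the paper's Lemma \ref{lemma:HC-MCG-formula} it is $(\psi^{-1})^*$, since $i_!=\mathrm{id}$ and $j^*=(\psi^{-1})^*$ for the cylinder, though this does not affect your formal argument for the existence of the action.
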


Instead of understanding these in full generality, we investigate the induced map of the cobordisms $W_\psi$ discussed in the previous section. To state the following result, observe that $\psi \in \text{MCG}^+(Y)$ determines a map $\psi^*: H^1(Y; \mathbb Z) \to H^1(Y; \mathbb Z)$ which preserves the triple cup product, and hence defines a pullback map $\psi^*: HC_*(Y; R) \to HC_*(Y; R)$ satisfying $(\psi_1 \psi_2)^* = \psi_2^* \psi_1^*$. Thus, $\psi^*$ defines a \textit{right} action of the mapping class group on $HC_*(Y;R)$. 

\begin{lemma}\label{lemma:HC-MCG-formula}
The cobordism map $(W_\psi)_*$ described above coincides with $(\psi^{-1})^*$.
\end{lemma}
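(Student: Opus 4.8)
The plan is to unwind the formula $W_* = j^*\,i_!$ from \cite[Section 3.2]{LME2} in the special case $W = W_\psi$, where everything degenerates because $W_\psi$ is homotopy equivalent to $Y$.

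First I would identify the two boundary inclusions up to homotopy. Since $W_\psi$ has underlying manifold $[0,1]\times Y$ with gluing $\varphi = \psi\sqcup 1_{-Y}$, the incoming inclusion is (modulo orientation conventions) the slice $i\colon Y\to W_\psi$, $i(y) = (0,y)$, while the outgoing inclusion is $j\colon Y' = Y \to W_\psi$, $j(y) = (1,\psi^{-1}(y))$. Letting $r\colon W_\psi\to Y$, $r(t,z) = z$, be the projection, $r$ is a homotopy inverse to $i$ with $r\circ i = \mathrm{id}_Y$ and $r\circ j = \psi^{-1}$, so $j\simeq i\circ\psi^{-1}$. Consequently $b_1(W_\psi) = b_1(Y)$, so the umkehr map $i_!$ carries degree shift $0$; the map $i^*\colon H^1(W_\psi;\mathbb Z)\to H^1(Y;\mathbb Z)$ is an isomorphism; and under the resulting identification $H^1(W_\psi;\mathbb Z)\cong H^1(Y;\mathbb Z)$ the pullback $j^*$ becomes $(\psi^{-1})^*$ on $\Lambda^*(H^1(Y;\mathbb Z))$.

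Next I would analyze the umkehr map itself. Because $i^*\colon\mathbb T_{W_\psi}\to\mathbb T_Y$ is a diffeomorphism of $b_1(Y)$-dimensional tori, $i_!$ is a priori the composite $(i^*)^{-1}$ up to an overall sign, and the only real content is that the \emph{canonical} homology orientation of $W_\psi$ is precisely the one normalizing $i_!$ to equal $(i^*)^{-1}$ with no extra sign. Here the canonical homology orientation is the one used just before Lemma \ref{lemma:HF-has-MCG-action-over-Z}: $H^+(W_\psi) = 0$, and the outgoing inclusion furnishes a canonical isomorphism $H^1(Y')\cong H^1(W_\psi)$, whose graph orientation on $H^1(W_\psi)\oplus H^1(Y')$ is exactly the datum the sign construction for $i_!$ in \cite[Remark 3.4]{LME2} consumes. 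I would verify this compatibility by tracing that definition directly. Granting it, $i_!$ becomes the identity of $\Lambda^*(H^1(Y;\mathbb Z))$ under the identification above.

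Combining the two steps gives $(W_\psi)_* = j^*\circ i_! = (\psi^{-1})^*\circ\mathrm{id} = (\psi^{-1})^*$ at the level of cup chain complexes (and hence on $HC_*$), which is a chain map since $\psi^{-1}$ preserves the triple cup product; this is the claim. The hard part will be the middle step --- matching the canonical homology orientation of $W_\psi$ against the sign normalization of the umkehr map --- since everything else is routine homotopy-theoretic bookkeeping. I note that with $\mathbb F_2$ coefficients this sign issue disappears entirely, so the $\mathbb F_2$ version of the lemma (all that Theorem \ref{thm:no-iso-intro} requires) follows from the homotopy-theoretic part alone. As an internal consistency check, one has $(W_\psi)_*(W_{\psi'})_* = (\psi^{-1})^*(\psi'^{-1})^* = ((\psi\psi')^{-1})^* = (W_{\psi\psi'})_*$, matching $W_\psi\circ W_{\psi'} = W_{\psi\psi'}$ and the fact that $\psi\mapsto\psi^*$ is a right action.
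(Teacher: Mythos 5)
Your proposal is correct and follows essentially the same route as the paper: the paper's proof simply observes that $i(y)=(0,y)$, so $i_!$ is the identity, and $j(y)=(1,\psi^{-1}(y))$, so $j^*=(\psi^{-1})^*$, giving $(W_\psi)_* = j^* i_! = (\psi^{-1})^*$. The only difference is that you flag the homology-orientation normalization of $i_!$ as a point needing verification, which the paper's proof dispatches implicitly via the canonical homology orientation of $W_\psi$; your observation that the issue vanishes over $\mathbb F_2$ is a reasonable safeguard but not a divergence in method.
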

\begin{proof}
The inclusion map $i: Y \to [0,1] \times Y$ is $i(y) = (0, y)$, so $i_!$ is the identity. The map $j: Y \to [0,1] \times Y$ is given by $j(y) = (1, \psi^{-1}(y))$, and $j^* = (\psi^{-1})^*$. 
\end{proof}

\section{Computation for $\Sigma_g \times S^1$}\label{sec:compute-MCG}

\subsection{The mapping class group of $\Sigma_g \times S^1$}

Observe that if $f: \Sigma_g \to S^1$ is any smooth map, then the map \[\Phi_f: \Sigma_g \times S^1 \to \Sigma_g \times S^1, \quad \quad \Phi_f(x,z) = (x, f(x) \cdot z)\] is an orientation-preserving diffeomorphism. A map homotopic to $f$ gives rise to a diffeomorphism isotopic to $\Phi_f.$. Further, we have $\Phi_f \circ \Phi_g = \Phi_{f \cdot g}$. Because the abelian group $[\Sigma_g, S^1]$ of homotopy classes is isomorphic to the group $H^1(\Sigma_g; \mathbb Z)$, this gives rise to a homomorphism $\Phi: H^1(\Sigma_g; \mathbb Z) \to \text{MCG}^+(\Sigma_g \times S^1)$. 

On the other hand, if $\psi: \Sigma_g \to \Sigma_g$ is any diffeomorphism (not necessarily orientation-preserving), we may extend this to the orientation-preserving diffeomorphism \[L_\psi: \Sigma_g \times S^1 \to \Sigma_g \times S^1, \quad \quad L_\psi(x, z) = (\psi(x), z^{\pm 1}),\] where the sign in $z^{\pm 1}$ is the sign of the diffeomorphism $\psi$. This gives rise to a homomorphism $L: \text{MCG}(\Sigma_g) \to \text{MCG}^+(\Sigma_g \times S^1)$. Using the pullback action of $\text{MCG}(\Sigma_g)$ on $H^1(\Sigma_g; \mathbb Z)$, we may form the semidirect product of these two groups. The following result is well-known. 

\begin{lemma}\label{lemma:MCG-calc}
The maps $\Phi$ and $L$ assemble into an isomorphism \[\textup{MCG}(\Sigma_g) \ltimes H^1(\Sigma_g; \mathbb Z) \cong \textup{MCG}^+(\Sigma_g \times S^1).\]
\end{lemma}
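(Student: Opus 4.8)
The plan is to reduce the statement to a group-theoretic computation via classical $3$-manifold topology; the argument below assumes $g\ge 2$, the range relevant to our applications. The manifold $M=\Sigma_g\times S^1$ is closed, aspherical, and Haken (it contains the incompressible surface $\Sigma_g\times\{*\}$), so by Waldhausen's rigidity theorem every homotopy equivalence $M\to M$ is homotopic to a homeomorphism and homotopic homeomorphisms are isotopic; together with the fact that in dimension three homeomorphisms can be smoothed and smooth isotopies recovered, this gives an isomorphism $\textup{MCG}(M)\cong\textup{Out}(\pi_1 M)$. A mapping class is orientation-preserving exactly when the corresponding outer automorphism acts trivially on $H^3(\pi_1 M;\mathbb Z)\cong\mathbb Z$, so it suffices to show that $\Phi$ and $L$ together induce an isomorphism onto the index-two subgroup $\textup{Out}^+(\pi_1 M)$.

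I would then compute $\textup{Out}^+(\pi_1 M)$ directly. Writing $G=\pi_1(\Sigma_g)$, the hypothesis $g\ge 2$ makes $G$ centerless, so $Z(G\times\mathbb Z)=\{1\}\times\mathbb Z$ is characteristic: every $\alpha\in\textup{Aut}(G\times\mathbb Z)$ acts by some $\epsilon\in\{\pm1\}$ on this central factor and induces $\bar\alpha\in\textup{Aut}(G)$, and writing $\alpha(g,0)=(\bar\alpha(g),\phi(g))$ the homomorphism property forces $\phi\in\textup{Hom}(G,\mathbb Z)=H^1(\Sigma_g;\mathbb Z)$. This realizes $\textup{Aut}(G\times\mathbb Z)$ as a semidirect product of $\textup{Aut}(G)\times\{\pm1\}$ with $H^1(\Sigma_g;\mathbb Z)$. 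Since the inner automorphisms of $G\times\mathbb Z$ are precisely those of $G$ (conjugation fixes the center), passing to outer automorphisms and invoking Dehn--Nielsen--Baer, $\textup{Out}(G)\cong\textup{MCG}(\Sigma_g)$, identifies $\textup{Out}(\pi_1 M)$ with a semidirect product of $\textup{MCG}(\Sigma_g)\times\{\pm1\}$ with $H^1(\Sigma_g;\mathbb Z)$. The action on $H^3$ is by the product of $\epsilon$ with the orientation sign $\textup{sign}(\psi)$ of the surface mapping class, so $\textup{Out}^+$ is cut out by the relation $\epsilon=\textup{sign}(\psi)$; eliminating the now-redundant factor $\{\pm1\}$ gives $\textup{Out}^+(\pi_1 M)\cong\textup{MCG}(\Sigma_g)\ltimes H^1(\Sigma_g;\mathbb Z)$. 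Finally I would check that under this identification $L_\psi$ corresponds to the surface mapping class $\psi$ with trivial $H^1$-component and $\Phi_f$ to the element $f\in H^1(\Sigma_g;\mathbb Z)$ with trivial surface part --- a direct computation of induced maps on $\pi_1 M$, using that $\Phi_f$ fixes the surface part and sends the class of a loop $\gamma\subset\Sigma_g\times\{*\}$ to its product with the $\langle f,\gamma\rangle$-th power of the $S^1$-generator --- which, together with the relations $L_\psi L_{\psi'}=L_{\psi\psi'}$ and $\Phi_f\Phi_{f'}=\Phi_{f+f'}$ already recorded, shows that $(\Phi,L)$ surjects onto $\textup{Out}^+(\pi_1 M)$ with trivial kernel.

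The crux, and the only genuinely non-elementary ingredient, is the identification $\textup{MCG}(M)\cong\textup{Out}(\pi_1 M)$; everything afterward is elementary group theory and bookkeeping of induced maps. This identification rests on Waldhausen's theorems for Haken manifolds together with smoothing theory in dimension three. One could instead argue surjectivity geometrically: an orientation-preserving diffeomorphism preserves the central $\mathbb Z\subset\pi_1 M$, hence --- by uniqueness (for $g\ge 2$) of the Seifert fibration $\Sigma_g\times S^1\to\Sigma_g$ up to isotopy --- is isotopic to a fibration-preserving diffeomorphism, which factors as $L_\psi$ followed by a diffeomorphism covering $\textup{id}_{\Sigma_g}$, and the latter is isotopic to some $\Phi_f$ because $\textup{Diff}^+(S^1)$ deformation retracts onto the rotations. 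This route relies on the same circle of classical results and is no more self-contained.
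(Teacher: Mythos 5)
Your proposal is correct and follows essentially the same route as the paper: reduce to $\mathrm{Out}(\pi_1(\Sigma_g\times S^1))$ via Waldhausen's theorem for Haken manifolds (together with smoothing in dimension three), then compute $\mathrm{Out}^+(\pi_1(\Sigma_g)\times\mathbb Z)$ by elementary group theory and match the generators $L_\psi$ and $\Phi_f$. Your explicit restriction to $g\ge 2$ --- used when you invoke that $\pi_1(\Sigma_g)$ is centerless, so that $\{1\}\times\mathbb Z$ is characteristic --- is harmless and in fact necessary (the statement fails for $g\le 1$, e.g.\ $\mathrm{MCG}^+(T^3)\cong SL(3,\mathbb Z)$ is strictly larger) and matches the paper's implicit assumption; your characteristic-center argument is a slightly cleaner substitute for the paper's bookkeeping, which rules out automorphisms mixing the $\mathbb Z$ factor into $\pi_1(\Sigma_g)$ using cyclic centralizers and Hopficity.
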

This is a consequence of \cite[Corollary 7.5]{Waldhausen}, which identifies the mapping class group of a Haken manifold as $\text{Out}(\pi_1)$. A more geometric approach is suggested in the remark of \cite[page 85]{Waldhausen}; the mapping class group of $\Sigma_g \times S^1$ can be computed as the group of diffeomorphisms which preserve the fibration by circles, modulo isotopy through such diffeomorphisms. 

Write  $B(0,0) \subset S^2 \times S^1$ is the image of the third component of the Borromean rings after performing $0$-surgery on the first two. Then the $3$-manifold $\Sigma_g \times S^1$ is obtained as $0$-surgery on  $\#^g B(0,0) \subset \#^{2g} S^2 \times S^1$. After performing this surgery, the knot $\#^g B(0,0)$ with its zero-framing is sent to a fiber $K = \{\ast\} \times S^1$ with the trivial framing, so that a framed push-off is a nearby fiber. It will be important to know that many diffeomorphisms of $\Sigma_g \times S^1$ lift to $\#^{2g} S^2 \times S^1$ and to other surgeries on the same knot. 

\begin{definition}\label{def:MCG-pp}
We define \[\textup{MCG}^{++}(\Sigma_g \times S^1) \subset \textup{MCG}^+(\Sigma_g \times S^1)\] to be the group of mapping classes $\psi$ which preserve the isotopy class of $\{\ast\} \times S^1$. 
\end{definition}

It is clear from Lemma \ref{lemma:MCG-calc} that $\text{MCG}^{++}$ is an index-two subgroup of the full mapping class group, corresponding to the semidirect product $\text{MCG}^+(\Sigma_g) \ltimes H^1(\Sigma_g; \mathbb Z)$. We will focus our computations on this subgroup due to its lifting properties with respect to surgery on the knot $K=\{*\} \times S^1.$

\begin{lemma}\label{lemma:std-form}
Every $\psi \in \textup{MCG}^{++}(\Sigma_g \times S^1)$ is isotopic to a diffeomorphism which is the identity on a neighborhood $D^2 \times S^1$ of $\{\ast\} \times S^1$.
\end{lemma}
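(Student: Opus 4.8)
The plan is to leverage the explicit semidirect product description of $\textup{MCG}^{++}(\Sigma_g \times S^1) \cong \textup{MCG}^+(\Sigma_g) \ltimes H^1(\Sigma_g;\mathbb Z)$ from Lemma \ref{lemma:MCG-calc} and Definition \ref{def:MCG-pp}, and to treat the two types of generators separately. Write $\psi = L_\phi \circ \Phi_f$ for $\phi \in \textup{MCG}^+(\Sigma_g)$ and $f \in H^1(\Sigma_g;\mathbb Z) = [\Sigma_g, S^1]$. Fix a small disk $D \subset \Sigma_g$ containing $\ast$, so that $D \times S^1$ is the neighborhood in question. First I would handle the $\Phi_f$ factor: since $f: \Sigma_g \to S^1$ is only well-defined up to homotopy, and any homotopy class of maps $\Sigma_g \to S^1$ has a representative that is constant on the disk $D$ (collapse $D$ to a point, or observe that $H^1$ is detected on the $1$-skeleton, which we may take to miss $D$), we may isotope $\Phi_f$ to $\Phi_{f'}$ with $f'|_D$ constant equal to $1 \in S^1$; then $\Phi_{f'}(x,z) = (x, f'(x)z) = (x,z)$ for $x \in D$, so $\Phi_{f'}$ is the identity on $D \times S^1$.

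Next I would handle the $L_\phi$ factor, where $L_\phi(x,z) = (\phi(x), z)$ since $\phi$ is orientation-preserving. Here the key input is the classical fact that any orientation-preserving diffeomorphism of $\Sigma_g$ is isotopic to one fixing a chosen disk $D$ pointwise: one first isotopes $\phi$ so that $\phi(\ast) = \ast$ (the surface is connected, so the point can be pushed back via an ambient isotopy), then uses that $\textup{Diff}^+(\Sigma_g, \ast) \to GL^+(T_\ast \Sigma_g)$ followed by the path-connectedness of $GL^+(2,\mathbb R)$ lets us further isotope so that $d\phi_\ast = \mathrm{id}$, and finally applies the contractibility of the space of embeddings of a disk agreeing with a given germ (an Alexander-trick / isotopy-extension argument; cf. the standard references on mapping class groups) to arrange $\phi|_D = \mathrm{id}_D$. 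Crossing this isotopy with $S^1$ gives an isotopy of $L_\phi$ to a diffeomorphism that is the identity on $D \times S^1$.

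Finally I would combine the two: having isotoped $\Phi_f$ to be the identity on $D \times S^1$ and $L_\phi$ to be the identity on (a possibly smaller) $D' \times S^1$, the composite is the identity on $D'\times S^1$, which is a neighborhood of $\{\ast\}\times S^1$ of the required form $D^2 \times S^1$. (One must be slightly careful to perform the two normalizations relative to the \emph{same} disk, or to shrink at the end; since $\textup{MCG}^{++}$ preserves the isotopy class of $\{\ast\}\times S^1$ by definition, $\psi$ really is isotopic to a product of the above standard forms, and the nested neighborhoods can be arranged by an additional isotopy supported near $\{\ast\}\times S^1$.)

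The main obstacle is the $L_\phi$ step: making precise that an orientation-preserving surface diffeomorphism can be isotoped to fix a disk pointwise. This is well known but requires invoking the right package — fixing a point, then the derivative frame, then the disk — each via an isotopy-extension or Alexander-type argument. Everything else ($\Phi_f$ and the recombination) is soft and essentially formal, so in the write-up I would state the surface fact as a citation to a standard reference and spend the bulk of the argument on the (easy) $\Phi_f$ normalization and the bookkeeping of nested neighborhoods.
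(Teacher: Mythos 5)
Your proposal is correct and follows essentially the same route as the paper: decompose $\psi = L_\phi \Phi_f$ via the semidirect product structure of Lemma \ref{lemma:MCG-calc}, homotope $f$ to be constant on a disk about $\ast$, and isotope $\phi$ to fix that disk pointwise, so the resulting product is the identity on $D^2 \times S^1$. The paper simply asserts the surface-level normalization of $\phi$ that you spell out in more detail; otherwise the arguments coincide.
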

\begin{proof}
Let $U$ be a small neighborhood of $\ast \in \Sigma_g$. If $f: \Sigma_g \to S^1$ is any continuous map, there is a homotopic map $f': \Sigma_g \to S^1$ with $f'(D^2) = 1$. If $\psi: \Sigma_g \to \Sigma_g$ is any orientation-preserving diffeomorphism, $\psi$ is isotopic to a diffeomorphism $\psi'$ which is the identity on $D^2$. The diffeomorphism $L_{\psi'}\Phi_f' (x,z) = (\psi'(x), f'(x)z)$ is then isotopic to $L_\psi \Phi_f$ and of the desired form.
\end{proof}

This gives the following lifting procedure; notation in its statement and proof follows Section \ref{sec:HF-bg}.

\begin{cor}
If $\phi: \Sigma_g \times S^1$ is a diffeomorphism which is the identity in a neighorhood of $K = \{\ast\} \times S^1$, then $\phi$ lifts to an orientation-preserving diffeomorphism $\Phi_n: W_n(K) \to W_n(K)$ which restricts to $\phi$ on $\Sigma_g \times S^1$ and satisfies $\Phi_n^* \mathfrak r_i = \mathfrak r_i$. 
\end{cor}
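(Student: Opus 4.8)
The plan is to build $\Phi_n$ by hand, extending $\phi$ over the surgery $2$-handle by the identity. Recall from Section~\ref{sec:HF-bg} that the surgery cobordism is $W_n(K)=\bigl([0,1]\times Y\bigr)\cup_{\psi_n}\bigl(D^2\times D^2\bigr)$, where $Y=\Sigma_g\times S^1$, the $2$-handle is pasted along $\psi_n(S^1\times D^2)\subset\{1\}\times Y$ via the framing embedding $\psi_n$, and the codimension-$2$ corners are then smoothed. Let $N=D^2\times S^1$ be the neighborhood of $K$ on which $\phi$ is the identity; after rescaling $\psi_n$ in the $D^2$-direction we may assume its image lies in $N$, so that the handle is attached inside $\{1\}\times N$. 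I would then define $\Phi_n$ to be $\mathrm{id}_{[0,1]}\times\phi$ on $[0,1]\times Y$ and the identity on $D^2\times D^2$. Because $\phi$ is the identity on $N$, the map $\mathrm{id}\times\phi$ is the identity on the entire attaching region $\{1\}\times\psi_n(S^1\times D^2)$, so the two pieces agree there and glue to a diffeomorphism of the cornered manifold; since $\Phi_n$ is the identity in a neighborhood of the corner locus, the usual corner-smoothing can be performed $\Phi_n$-equivariantly to yield $\Phi_n\colon W_n(K)\to W_n(K)$. It is orientation-preserving because it is assembled from the orientation-preserving diffeomorphisms $\mathrm{id}\times\phi$ and $\mathrm{id}_{D^2\times D^2}$, compatibly along the gluing; and on the incoming end $\{0\}\times Y\cong\Sigma_g\times S^1$, which is disjoint from the handle, it restricts to $(\mathrm{id}\times\phi)|_{\{0\}\times Y}=\phi$.

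For the spin$^c$ statement, I would argue that $\Phi_n$ preserves the two features characterizing $\mathfrak r_i$ among the spin$^c$ structures on $W_n(K)$: since $\phi$ is the identity on $N$ it fixes $N$ pointwise, hence $\Phi_n$ carries $[0,1]\times(Y\setminus N)$ to itself and preserves the condition that $c_1$ restrict trivially to $Y\setminus K$; and since $\Phi_n$ is the identity on the $2$-handle $D^2\times D^2$, it fixes the handle coordinate recording the index $i$. Since $H^2(W_n(K);\mathbb Z)$ has no $2$-torsion (by the Mayer--Vietoris computation of Section~\ref{sec:HF-bg}, $H^2(\Sigma_g\times S^1;\mathbb Z)$ being torsion-free), a spin$^c$ structure on $W_n(K)$ is determined by its first Chern class, as in Lemma~\ref{lemma:2-tors}; combining this with the naturality of $c_1$ under diffeomorphisms and the two observations just made gives $\Phi_n^*\mathfrak r_i=\mathfrak r_i$.

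The only genuinely delicate point I anticipate is the bookkeeping in the first paragraph: one must choose the tubular neighborhood $\psi_n$, the framing curve $\ell_n$, and the neighborhood $N$ on which $\phi$ is the identity so that $\mathrm{id}\times\phi$ agrees exactly with the identity handle-map over the whole attaching region and corner locus. Once this is arranged, everything downstream --- orientation-preservation, the boundary restriction, and the spin$^c$ identity --- is soft. A minor secondary point is to confirm that $\Phi_n$ fixes each $\mathfrak r_i$ individually rather than permuting the family, which is exactly the content of the observation that $\Phi_n$ acts as the identity on the $2$-handle where the index $i$ is detected.
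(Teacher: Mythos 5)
Your proposal is correct and follows essentially the same route as the paper: extend $\phi$ by the identity over the $2$-handle $D^2\times D^2$, then use that spin$^c$ structures on $W_n(K)$ are determined by $c_1$ (no $2$-torsion) together with naturality of $c_1$. Your "handle coordinate" observation is just an informal phrasing of the paper's statement that, under the Mayer--Vietoris splitting $H^2(W_n(K))\cong H^2(Y)\oplus\mathbb Z$, the map $\Phi_n^*$ is $(\phi^*,1)$ and hence fixes $c_1(\mathfrak r_i)=(0,2i-1)$.
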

\begin{proof}
Because $\phi$ restricts to the identity on the solid torus, it extends by the identity over $D^2 \times D^2$ to a self-diffeomorphism $\Phi_n$ of $W_n(K)$ equal to $\phi$ at the incoming end. 

Because $\Phi_n$ acts by the identity on $D^2 \times D^2$, it follows that under the Mayer--Vietoris isomorphism $H^2(W_n) \cong H^2(Y) \oplus \mathbb Z$ we have $\Phi_n^* = (\phi^*, 1)$, and therefore \[c_1(\Phi_n^* \mathfrak r_i) = \Phi_n^* c_1(\mathfrak r_i) = \Phi_n^* (0, 2i-1) = (0, 2i-1).\] Spin$^c$ structures on $W_n(K)$ are uniquely determined by their first Chern class, so $\Phi_n^* \mathfrak r_i = \mathfrak r_i$. 
\end{proof}

\begin{remark}
If $\psi \in \text{MCG}^+ \setminus \text{MCG}^{++}$, then there is an isotopic $\phi$ which is equal to $\phi(z, w) = (\bar z, \bar w)$ on $D^2 \times S^1$. It can be seen that $\phi$ still extends over $W_n(K)$, but we instead have $\Phi_n^* \mathfrak r_i = \mathfrak r_{1-i}$. This makes it cumbersome to compute and describe the full mapping class group action on $\text{HF}^\infty(\Sigma_g \times S^1, \mathfrak s_0; \mathbb Z)$, and is why we restrict to $\text{MCG}^{++}$. 
\end{remark}


\subsection{Computing $HC_*$}
We will now identify the action of $\textup{MCG}^{++}(\Sigma_g \times S^1)$ on $HC^*(\Sigma_g \times S^1; R)$. This is equivalent to identifying this $R[U]$-module and determining the actions of $\text{MCG}^+(\Sigma_g)$ and $H^1(\Sigma_g; \mathbb Z)$; these relate to the kernel and cokernel of contraction by $\omega$ on $\Lambda^*(\mathbb Z^{2g})$, studied in Section \ref{sec:applications}. 

\begin{prop}\label{prop:HC-MCG-calc}
For any commutative ring $R$, there is an isomorphism of $R[U]$-modules 
\begin{equation}\label{eq:HC-calc}HC_*(\Sigma_g \times S^1;R) \cong \left(\coker(\omega;R) \oplus e^0 \ker(\omega;R)\right)[U, U^{-1}].\end{equation}
With respect to this isomorphism, the action of $\textup{MCG}^+(\Sigma_g)$ factors through the inverse of the pullback action of $\textup{Sp}(2g)$ on $H^1(\Sigma_g; \mathbb Z)$. The action of $f \in H^1(\Sigma_g; \mathbb Z)$ is the identity on $\coker(\omega;R)$, and on $\ker(\omega;R)$ its second factor is the identity and its first factor coincides with the composite map \[\ker(\omega;R) \hookrightarrow \Lambda^*(R^{2g}) \xrightarrow{-f \wedge} \Lambda^*(R^{2g}) \twoheadrightarrow \coker(\omega;R).\]
\end{prop}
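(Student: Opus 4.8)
\emph{Proof proposal.} The plan is to work directly with the cup chain complex, which for $Y = \Sigma_g \times S^1$ splits according to whether or not the $S^1$-direction is involved. Write $V = H^1(\Sigma_g;\mathbb Z) \cong \mathbb Z^{2g}$ and let $e^0 \in H^1(Y;\mathbb Z)$ be the pullback of a generator of $H^1(S^1)$, so that $H^1(Y;\mathbb Z) = V \oplus \mathbb Z e^0$ and $\Lambda^*(H^1(Y;\mathbb Z))\otimes R = \Lambda^*(R^{2g}) \oplus e^0 \wedge \Lambda^*(R^{2g})$. By the K\"unneth theorem the triple cup product of $Y$ vanishes on $\Lambda^3 V$ and on any class involving $e^0$ to a power $\ge 2$, while $\cup^3_Y(u\wedge v\wedge e^0) = \omega(u,v)$ for $u,v\in V$. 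Consequently $\iota_\cup$ annihilates $\Lambda^*(R^{2g})$ and, on $e^0\wedge b$ with $b\in\Lambda^*(R^{2g})$, is equal to $\pm\,\iota_\omega(b) \in \Lambda^{*-2}(R^{2g})$, where $\iota_\omega$ is the interior product of Section~\ref{sec:Lef}; the sign will play no role below.

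It follows that, ignoring the invertible variable $U$, the cup differential has image $\mathrm{im}(\iota_\omega) \subset \Lambda^*(R^{2g})$ and kernel $\Lambda^*(R^{2g}) \oplus e^0\wedge\ker(\iota_\omega)$. Since $\ker(\iota_\omega) = \ker(\omega;R)$ and $\Lambda^*(R^{2g})/\mathrm{im}(\iota_\omega) = \coker(\omega;R)$ in the notation of Section~\ref{sec:applications}, computing homology and re-tensoring with $R[U,U^{-1}]$ yields the isomorphism \eqref{eq:HC-calc}.

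For the equivariance I would invoke Lemmas~\ref{lemma:MCG-calc} and~\ref{lemma:HC-MCG-formula}: $\textup{MCG}^{++}(\Sigma_g\times S^1) \cong \textup{MCG}^+(\Sigma_g)\ltimes H^1(\Sigma_g;\mathbb Z)$ is generated by the classes $L_\psi$ and $\Phi_f$, and on $HC_*$ one has $(W_{L_\psi})_* = (L_{\psi^{-1}})^*$ and $(W_{\Phi_f})_* = (\Phi_{-f})^*$. So it remains to evaluate these two pullback ring automorphisms of $H^*(Y;\mathbb Z)$ on the model above. As $L_{\psi^{-1}}$ commutes with the projection to $\Sigma_g$ and is the identity on the circle factor, $(L_{\psi^{-1}})^*$ fixes $e^0$ and acts on $V$ by $(\psi^*)^{-1}$; since $\psi^*\in\textup{Sp}(2g)$ commutes with $\iota_\omega$ and $\wedge\omega$, the induced map on $HC_*$ is precisely the natural action of $(\psi^*)^{-1}$, i.e.\ the inverse of the pullback action of $\textup{Sp}(2g)$. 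For $\Phi_{-f}$, the projection to $\Sigma_g$ is again preserved, so $(\Phi_{-f})^*$ is the identity on $V$; and since $q\circ\Phi_{-f}\colon(x,z)\mapsto f(x)^{-1}z$, using that $S^1$ is a topological group gives $(\Phi_{-f})^* e^0 = e^0 - f$. Therefore $(\Phi_{-f})^*$ sends $a \mapsto a$ and $e^0\wedge b \mapsto e^0\wedge b - f\wedge b$ on $\Lambda^*(H^1(Y))$; passing to $HC_*$, this is the identity on $\coker(\omega;R)$ and, on $e^0\wedge\ker(\omega;R)$, acts by the identity in the $\ker(\omega;R)$-coordinate together with $b\mapsto[-f\wedge b]\in\coker(\omega;R)$ in the other coordinate, which is the asserted formula.

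The routine content is the direct-sum bookkeeping; the two points that require care are (i) the conventions making $\cup^3_Y(u\wedge v\wedge e^0) = +\omega(u,v)$ and the exact sign relating $\iota_\cup$ to $\iota_\omega$ --- neither of which affects the module statements --- and (ii) the identity $(\Phi_{-f})^* e^0 = e^0 - f$, which I would deduce from $m^*(\mathrm{gen}) = \mathrm{gen}\otimes 1 + 1\otimes\mathrm{gen}$ for the multiplication $m\colon S^1\times S^1\to S^1$ together with $f^*(\mathrm{gen}) = f$. I would also note explicitly that $(\Phi_{-f})^*$ does not respect the splitting $\Lambda^*(H^1(Y)) = \Lambda^*(R^{2g})\oplus e^0\wedge\Lambda^*(R^{2g})$ but does descend to $HC_*$, being a chain map for $\iota_\cup$; this is exactly what produces the off-diagonal term in the action of $f$.
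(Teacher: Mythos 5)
Your proposal is correct and follows essentially the same route as the paper: split $\Lambda^*(H^1(\Sigma_g\times S^1))$ as $\Lambda^*(R^{2g})\oplus e^0\wedge\Lambda^*(R^{2g})$, identify the cup differential with (a sign times) $\iota_\omega$ on the $e^0$-part, and then compute the actions of $L_\psi$ and $\Phi_f$ via Lemma \ref{lemma:HC-MCG-formula}. The only cosmetic differences are that you derive $\cup^3_Y$ explicitly from the K\"unneth theorem and obtain $(\Phi_{-f})^*e^0=e^0-f$ from the $S^1$ group structure, where the paper computes $\Phi_f$ on $H_1$ and dualizes; both give the same off-diagonal term in the action of $f$.
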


\begin{proof}
Decompose \[\Lambda^*(R^{2g+1}) \cong \Lambda^*(R^{2g}) \oplus e^0 \Lambda^*(R^{2g});\] then the kernel of contraction with $e^0 \omega U$ is \[\Lambda^*(R^{2g})[U, U^{-1}] \oplus e^0 \ker(\omega;R)[U, U^{-1}],\] while the image of contraction with $e^0 \omega$ is $\text{im}(\omega;R) \oplus 0$. It follows that 
\begin{align*}
HC_*(S^1 \times \Sigma_g;R) &\cong \left(\frac{\Lambda^*(R^{2g}) \oplus e^0 \ker(\omega;R)}{\text{im}(\omega;R) \oplus 0}\right)[U, U^{-1}] \\
&\cong \left(\coker(\omega;R) \oplus e^0 \ker(\omega;R)\right)[U, U^{-1}].
\end{align*}

Lemma \ref{lemma:HC-MCG-formula} computes that a diffeomorphism $\phi$ acts as $(\phi^{-1})^* = (\phi^*)^{-1}$, the pullback being the standard linear pullback on cohomology. For $\psi \in \text{MCG}^+(\Sigma_g)$, we have $L(\psi)^*(e^0) = e^0$ while the action of $L(\psi)^*$ on $H^1(\Sigma_g; \mathbb Z)$ defines an element of $\text{Sp}(2g)$. For $f \in H^1(\Sigma_g;\mathbb Z)$, we compute on $H_1(\Sigma_g \times S^1)$ that $\Phi_f(e_0) = e_0$ and $\Phi_f(\gamma) = \gamma + f(\gamma) e_0$ for $\gamma \in H_1(\Sigma_g)$. Dualizing, we obtain \[\Phi_f^*(e^0) = e^0 + f, \quad \Phi_f^*(c) = c\] for $c \in H^1(\Sigma_g;\mathbb Z)$. Taking inverses merely changes the first term to $(\Phi_f^{-1})^*(e^0) = e^0 - f$. Passing to the full exterior algebra, the cokernel term is fixed, while $e^0 \wedge x$ is sent to $e^0 \wedge x - f\wedge x$.
\end{proof}

\begin{remark}
The statement for the full mapping class group is as follows. If $\psi \in \textup{MCG}(\Sigma_g)$, then $\psi_* \in GL(2g, \mathbb Z)$ defines an element of the \textit{symplectic similitude group} $\text{GSp}(2g, \mathbb Z)$ of matrices with $A^* \omega = \pm \omega$, and the action factors through an action of $\text{GSp}$. The action on $\ker(\omega)$ is the standard pushforward action, while the action on $e^0 \coker(\omega)$ has a sign: $A \cdot [x] = (-1)^{\epsilon(A)} [Ax]$, where $A \omega = (-1)^{\epsilon(A)} \omega$. 
\end{remark}

\subsection{Computing $HF^\infty$}
The main result of this section is the following analogue of Proposition \ref{prop:HC-MCG-calc} for $HF^\infty$. The main technical input is the Heegaard Floer surgery sequence \eqref{eq:surgery-seq}, invariance under diffeomorphisms, and rigidity of a certain Heegaard Floer group. The argument is based on the calculation in \cite{JabukaMark}, modified to keep track of mapping class group actions.

\begin{prop}\label{prop:HF-MCG-calc}
There is a short exact sequence of $\mathbb F_2[U]$-modules with $\textup{MCG}^{++}(\Sigma_g \times S^1)$-action \[0 \to \coker(e^{\omega U} - 1; \mathbb F_2) \to HF^\infty(\Sigma_g \times S^1, \mathfrak s_0; \mathbb F_2) \to \ker(e^{\omega U} - 1; \mathbb F_2) \to 0,\] where the action of $\textup{MCG}^+(\Sigma_g)$ on the outer terms factors through the inverse of the pullback action of $\textup{Sp}(2g)$ and the action of $H^1(\Sigma_g; \mathbb Z)$ on the outer terms is trivial. Provided Postulate \ref{post:HF-works} holds and $g \ge 3$, the same is true with coefficients in $\mathbb Z$. 
\end{prop}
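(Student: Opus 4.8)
The plan is to re-run the computation of \cite{JabukaMark} that identifies $HF^\infty(\Sigma_g\times S^1,\mathfrak s_0)$ via the large surgeries exact triangle, while carrying along the functoriality set up in Section \ref{sec:HF-bg}. Recall from the discussion above that $\Sigma_g\times S^1 = Y_0(K')$ where $Y = \#^{2g}S^2\times S^1$ and $K' = \#^g B(0,0)$ is null-homologous, with the core of the surgery torus the fiber $K = \{\ast\}\times S^1$. For $n$ large and odd, the sequence \eqref{eq:surgery-seq} reads
\[\cdots \to \bigoplus_{\ell}HF^\infty(\Sigma_g\times S^1,\mathfrak u_{n\ell};\mathbb F_2) \to HF^\infty(Y,\mathfrak s_0;\mathbb F_2) \xrightarrow{F} HF^\infty(Y_{-n}(K'),\mathfrak t_0;\mathbb F_2) \to \cdots ,\]
and for $n$ large only the summand $\ell = 0$ contributes, with $\mathfrak u_0 = \mathfrak s_0$ (this is part of the setup of \cite{JabukaMark}). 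Extracting the short exact sequence from the triangle gives $0 \to \coker(F) \to HF^\infty(\Sigma_g\times S^1,\mathfrak s_0;\mathbb F_2) \to \ker(F) \to 0$. I would then recall, following \cite{JabukaMark}, that using $HF^\infty(Y,\mathfrak s_0;\mathbb F_2)\cong \Lambda^*(\mathbb F_2^{2g})[U,U^{-1}]$, the analogous description of $HF^\infty(Y_{-n}(K'),\mathfrak t_0)$, Lemma \ref{lemma:Fi-maps} (so $F_0$ is an isomorphism and $F_1 = F_0 J$), and the known computation of $J$ and the higher $F_i$, the composite $F_0^{-1}F$ is contraction against $\sum_{i\ge 1}\omega_i U^i = e^{\omega U}-1$; hence $\ker F\cong\ker(e^{\omega U}-1;\mathbb F_2)$ and $\coker F\cong\coker(e^{\omega U}-1;\mathbb F_2)$ as $\mathbb F_2[U]$-modules.

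Next I would establish the $\textup{MCG}^{++}$-equivariance, which is the genuinely new content. Given $\phi\in\textup{MCG}^{++}(\Sigma_g\times S^1)$, Lemma \ref{lemma:std-form} lets us assume $\phi$ is the identity near $K$, so $\phi$ restricts to a self-diffeomorphism of the surgery-torus complement, which is shared by $Y$, $Y_{-n}(K')$ and the surgery cobordism $W_n(K')$. As in the corollary following Lemma \ref{lemma:std-form}, $\phi$ therefore lifts to orientation-preserving self-diffeomorphisms $\hat\phi$ of $Y$, of $Y_{-n}(K')$, and of $W_n(K')$, all fixing the relevant spin$^c$ structures and compatible with the bounding/attaching data of the triangle. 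Diffeomorphism invariance of cobordism maps (Section \ref{sec:HF-bg}) then yields a commuting ladder between the surgery triangle and itself twisted by $\hat\phi$, so the short exact sequence above is one of $\mathbb F_2[U]$-modules with $\textup{MCG}^{++}(\Sigma_g\times S^1)$-action, where the action on $\ker F\subset HF^\infty(Y)$ and on $\coker F$ (a quotient of $HF^\infty(Y_{-n}(K'))$) is induced from the actions on $HF^\infty(Y)$ and $HF^\infty(Y_{-n}(K'))$ via the lifts. To identify it, I would use that $HF^\infty$ of a connected sum of copies of $S^2\times S^1$ is a free rank-one module over $\Lambda^*(H_1/\mathrm{Tors})\otimes\mathbb F_2[U,U^{-1}]$ with generator pinned down by the grading, so a self-diffeomorphism acts through its induced map on $H_1/\mathrm{Tors}$ extended multiplicatively (and likewise for $Y_{-n}(K')$). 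For $\psi\in\textup{MCG}^+(\Sigma_g)$ the lift of $L_\psi$ induces $\psi_*\in\textup{Sp}(2g)$ on $H_1(Y) = H_1(\Sigma_g)$, and since the cobordism $W_{L_\psi}$ contributes the inverse of the pushforward (compare Lemma \ref{lemma:HC-MCG-formula} and Proposition \ref{prop:HC-MCG-calc}), the action on $\Lambda^*(\mathbb F_2^{2g})$, hence on $\ker$ and $\coker$ of $e^{\omega U}-1$, factors through the inverse of the pullback action of $\textup{Sp}(2g)$; this is well defined on these sub/quotient modules because $\textup{Sp}(2g)$ fixes $\omega$ and so commutes with $\iota_{e^{\omega U}-1}$. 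For $f\in H^1(\Sigma_g;\mathbb Z)$ the lift of $\Phi_f$ induces the identity on $H_1(Y) = H_1(\Sigma_g\times S^1)/\langle[K]\rangle$, since $\Phi_{f*}(\gamma) = \gamma + f(\gamma)[K]$ and $[K]=[\{\ast\}\times S^1]$ dies in the surgery, and similarly on $H_1(Y_{-n}(K'))$; hence $\Phi_f$ acts trivially on $\ker F$ and $\coker F$. This gives the $\mathbb F_2$ statement.

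Finally, for the $\mathbb Z$-coefficient statement I would invoke Postulate \ref{post:HF-works}: every ingredient above — the surgery exact triangle, the identifications of $HF^\infty(Y)$ and $HF^\infty(Y_{-n}(K'))$ with exterior algebras, the computation of the $F_i$, and naturality under lifted diffeomorphisms — then lifts to $\mathbb Z$, with the cobordisms $W_\psi$ and the lifts $\hat\phi$ carrying the canonical homology orientations of Lemma \ref{lemma:HF-has-MCG-action-over-Z}, and the hypothesis $g\ge 3$ supplies the rigidity needed to pin down $F$ over $\mathbb Z$ exactly as in \cite{JabukaMark}. Running the same argument then produces the short exact sequence and the stated action over $\mathbb Z$. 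The main obstacle I anticipate is not any single linear-algebra step but verifying that the surgery exact triangle is natural with respect to the lifted diffeomorphisms as a \emph{package} — that the connecting homomorphisms and the identification of $F$, and not merely the individual cobordism maps, intertwine correctly with $\hat\phi$ — and doing this over $\mathbb Z$, where functoriality, homology orientations, and the potential sign ambiguities entering through the basepoint map $J$ and the remark following Proposition \ref{prop:HC-MCG-calc} all have to be tracked carefully; this is precisely where Postulate \ref{post:HF-works} and $g\ge 3$ are used.
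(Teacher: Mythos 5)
Your skeleton matches the paper's proof: both run the large-$n$ surgery triangle for $(\#^{2g}S^2\times S^1,\ \#^g B(0,0))$, lift elements of $\textup{MCG}^{++}$ that have been isotoped to be the identity near $K=\{\ast\}\times S^1$ to the whole triangle, identify $F=F_0+F_1=F_0(1+J)$ with contraction by $e^{\omega U}-1$ following \cite{JabukaMark}, and extract the short exact sequence. But the step where you identify the mapping class group action on the outer terms has a genuine gap. You claim that $HF^\infty(\#^{2g}S^2\times S^1,\mathfrak s_0)$ is a free rank-one module over $\Lambda^*(H_1/\mathrm{Tors})\otimes\mathbb F_2[U,U^{-1}]$ ``with generator pinned down by the grading,'' so that any lifted diffeomorphism acts by the multiplicative extension of its action on $H_1$. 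Freeness of rank one does not give this: the acting ring has nontrivial homogeneous degree-zero units, e.g.\ $1+A_{\gamma_1}A_{\gamma_2}U^{-1}$ (each $A_\gamma$ has degree $-1$, $U^{-1}$ degree $+2$, and the element is invertible since $A_\gamma^2=0$), so multiplication by such a unit is a grading-preserving module automorphism not of your asserted form. What the grading-plus-module-structure argument actually yields is only that the cobordism map intertwines the $H_1$-actions via $\phi_*$, which is \eqref{eq:gamma-action}; the uniqueness (up to sign) of an automorphism with that intertwining property is the nontrivial rigidity input, which the paper takes from the argument of \cite[Lemma 5.3]{OzSzKnot2}. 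Relatedly, you invoke the same rigidity ``likewise for $Y_{-n}(K')$,'' which is not a connected sum of $S^2\times S^1$'s; the paper avoids needing any statement there by transporting the computed action through the $\textup{MCG}^{++}$-equivariant isomorphism $F_0$.

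The second gap concerns the integral statement and is exactly where your proposal misplaces the hypothesis $g\ge 3$. Over $\mathbb Z$ the rigidity only determines the induced automorphism up to a sign, so a priori the action on $HF^\infty(Y,\mathfrak s_0;\mathbb Z)$ is $(\phi^{-1})^*$ twisted by a character $\epsilon:\textup{MCG}^+(\Sigma_{g,1})\to\{\pm1\}$; the paper kills $\epsilon$ using that this mapping class group has trivial abelianization for $g\ge 3$ \cite{MCG-ab}. Your proposal attributes $g\ge 3$ to ``pinning down $F$ over $\mathbb Z$ exactly as in \cite{JabukaMark},'' which is not where it enters (the identification of $F$ and of $J$ does not use it), and you give no mechanism for resolving the sign of the diffeomorphism-induced maps. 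Without that argument the $\mathbb Z$-coefficient claim that the action factors through the inverse pullback action of $\textup{Sp}(2g)$ (on the nose, not up to sign) is not established. With the rigidity citation and the perfectness argument supplied, the rest of your outline goes through as in the paper.
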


\begin{remark}
    In the case $g = 2$, the statement is that the action on the outer groups factors through a $\mathbb Z/2$-extension of $\text{Sp}(4)$; we cannot rule out the presence of an additional sign in Lemma \ref{lemma:compute-on-Y} below. It is likely that this hypothesis can be removed.
\end{remark}

We apply the surgery sequence to $(Y, K) = (\#^{2g} S^2 \times S^1, \#^g B(0,0))$. The manifold $Y_0(K)$ obtained by $0$-surgery on $K$ is diffeomorphic to $\Sigma_g \times S^1$ by a diffeomorphism sending the knot to a circle $\{x\} \times S^1$ with trivial framing. Ozsv\'ath and Szab\'o prove \cite[Theorem 9.3]{OzSzKnot1} that the group $HF^\infty(\Sigma_g \times S^1, \mathfrak s_t)$ is zero for all $|t| \ge g$, so taking $n \ge g$ the direct sum term in \eqref{eq:surgery-seq} collapses to $HF^\infty(\Sigma_g \times S^1, \mathfrak s_0)$. Recall that the $F$ term in the exact triangle simplifies to $F = F_0 + F_1$, where each map is the induced map of a cobordism $F_i = (W_n(K), \mathfrak r_i)_*$, and indeed every map in the exact triangle is a sum of cobordism maps. The sequence takes the form 
\begin{equation}\label{simplified-surgseq}\to \cdots HF^\infty(Y_0(K), \mathfrak u_0; \mathbb F_2) \to HF^\infty(Y, \mathfrak s_0; \mathbb F_2) \to HF^\infty(Y_{-n}(K), \mathfrak t_0; \mathbb F_2) \to \cdots
\end{equation}

Recall once more that we take $n$ to be odd.

\begin{lemma}\label{lemma:first-action}
Suppose $\psi: Y \to Y$ is a diffeomorphism equal to the identity in a neighborhood of $K$. Then $\psi_*$ induces a well-defined automorphism of the short exact sequence \eqref{simplified-surgseq}.
\end{lemma}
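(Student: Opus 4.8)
The plan is to exploit the fact that $\psi$ restricts to the identity near $K$, so it lifts compatibly across all the manifolds and cobordisms appearing in the surgery triangle \eqref{simplified-surgseq}. Concretely, by the Corollary preceding this lemma, $\psi$ extends to a self-diffeomorphism $\Phi_n: W_n(K) \to W_n(K)$ which restricts to $\psi$ on the incoming end $Y$, to some diffeomorphism $\psi'$ of the outgoing end $Y_{-n}(K)$, and which satisfies $\Phi_n^* \mathfrak r_i = \mathfrak r_i$. Similarly $\psi$ lifts to a diffeomorphism of the surgery cobordism $W_0(K): Y \to Y_0(K)$ whose outgoing restriction is a diffeomorphism $\bar\psi$ of $Y_0(K) \cong \Sigma_g \times S^1$ preserving the relevant torsion spin$^c$ structure $\mathfrak u_0$. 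Each of these lifted diffeomorphisms is the identity over the attached handle, so it fixes the spin$^c$ structures in question by the first-Chern-class argument already used in the Corollary.

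Next I would assemble these into a morphism of long exact sequences. Every arrow in \eqref{simplified-surgseq} is (a sum of) cobordism maps for the cobordisms $W_0(K)$, $W_n(K)$, and the third cobordism closing the triangle. The key naturality principle is that a cobordism map is invariant under diffeomorphism of the cobordism in the sense recalled in Section \ref{sec:HF-bg}: if $\Psi: W' \to W$ is an orientation-preserving diffeomorphism and one transports the boundary parametrization accordingly, then $(W', \Psi^* \mathfrak t)_* = (W, \mathfrak t)_*$. Applying this to each of the lifted diffeomorphisms, and using that they fix the spin$^c$ structures $\mathfrak r_i$, $\mathfrak u_0$, $\mathfrak s_0$, $\mathfrak t_0$, gives commuting squares
\[\begin{tikzcd}[column sep=small]
HF^\infty(Y_0(K),\mathfrak u_0) \ar[r] \ar[d, "\bar\psi_*"'] & HF^\infty(Y,\mathfrak s_0) \ar[r] \ar[d, "\psi_*"'] & HF^\infty(Y_{-n}(K),\mathfrak t_0) \ar[d, "\psi'_*"'] \\
HF^\infty(Y_0(K),\mathfrak u_0) \ar[r] & HF^\infty(Y,\mathfrak s_0) \ar[r] & HF^\infty(Y_{-n}(K),\mathfrak t_0)
\end{tikzcd}\]
with the outer columns the maps induced by the lifted diffeomorphisms; each vertical map is an isomorphism since $\psi$, $\bar\psi$, $\psi'$ are diffeomorphisms (so their cobordism maps on mapping cylinders are invertible, with inverse induced by the inverse diffeomorphism, using the composition law \eqref{eq:compositionlaw} and the fact that the relevant composite mapping cylinders carry a unique compatible spin$^c$ structure). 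This is exactly the statement that $\psi_*$ induces a well-defined automorphism of \eqref{simplified-surgseq}.

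The one subtlety I expect to be the main obstacle is bookkeeping the spin$^c$ structures across the surgery cobordism $W_0(K)$ and the third cobordism in the triangle: one must check that the lift $\Phi_0$ of $\psi$ across $W_0(K)$ really does fix $\mathfrak u_0$ (and not permute it with some $\mathfrak u_k$), and that the summand collapse $\bigoplus_\ell HF^\infty(Y_0(K), \mathfrak u_{n\ell}) = HF^\infty(Y_0(K), \mathfrak u_0)$ used to pass to \eqref{simplified-surgseq} is compatible with the induced action. For the former, since $\psi$ is the identity near $K$ it acts as the identity on the $\mathbb Z$ summand of $H^2(W_0(K)) \cong H^2(Y) \oplus \mathbb Z$ coming from the surgery torus, hence fixes $c_1(\mathfrak u_0) = 0$ — and, more to the point, fixes the whole set $\{\mathfrak u_{n\ell}\}$ pointwise, so the collapse is equivariant. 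The third cobordism is handled identically. Everything else is formal naturality of the mapping-cylinder construction together with the composition law, which I would state but not belabor. Over $\mathbb Z$, the same argument applies verbatim once Postulate \ref{post:HF-works} is assumed, noting that each mapping cylinder $W_\psi$ carries a canonical homology orientation as discussed after that postulate.
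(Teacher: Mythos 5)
Your proposal is correct and matches the paper's proof: both extend $\psi$ by the identity over the attached $2$-handles to diffeomorphisms of the surgery cobordisms, check via $c_1$ (and the absence of $2$-torsion) that the relevant spin$^c$ structures are fixed, and then invoke diffeomorphism invariance of cobordism maps to get the commuting squares. The only cosmetic difference is that you cite the Corollary stated for $(\Sigma_g\times S^1,\{\ast\}\times S^1)$, whereas here one applies the same extension argument directly to $(Y,K)=(\#^{2g}S^2\times S^1,\#^g B(0,0))$, exactly as the paper does.
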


Here, an automorphism of a short exact sequence is an automorphism of each term which commutes with the connecting homomorphisms. We emphasize that Lemma \ref{lemma:first-action} does not state that $\psi_*$ depends only on the isotopy class of $\psi$, only that the mapping from diffeomorphisms to automorphisms is well-defined. 

\begin{proof}
Because $Y$ is connected and $\psi$ is the identity on an open set, $\psi$ is an orientation-preserving diffeomorphism. Because $\psi$ is the identity on a neighborhood of $K$, it extends to a diffeomorphism $\psi_r$ of any surgery $Y_r(K)$. The relevant spin$^c$ structures all have $c_1 = 0$, and the relevant $3$-manifolds all have no $2$-torsion in their second cohomology, so $\psi_r$ preserves the relevant spin$^c$ structures and therefore determines an automorphism of the corresponding Heegaard Floer homology groups. 

To see that that these automorphisms commute with the connecting maps in the surgery sequence, recall that invariance under diffeomorphisms means that if $W_n: Y \to Y_n(K)$ is a handle-attachment cobordism with spin$^c$ structure $\mathfrak r$ and $\Phi_n: W_n \to W_n$ is a diffeomorphism restricting to $\psi, \psi_n$ on the respective ends, and for which $\Phi_n^* \mathfrak r = \mathfrak r$, then $(W_n, \mathfrak s)_* \psi_* = (\psi_n)_* (W_n, \mathfrak r)_*$. All cobordism maps in the triangle are handle attachment cobordisms, and $\Phi_n$ can be defined by extending $\psi$ as the identity over the added $2$-handle.
\end{proof}

Now by Lemma \ref{lemma:std-form}, every element of $\textup{MCG}^{++}(\Sigma_g \times S^1)$ is isotopic to one of the form $\psi = \Phi_f L_\phi$ for some $\phi \in \textup{MCG}^+(\Sigma_g)$ which fixes a neighborhood of the basepoint and some $f: \Sigma_g \to S^1$ which is constant in a neighborhood of the basepoint. This induces a diffeomorphism $\hat \psi: Y \to Y$ which is the identity on $K$, where again $Y = \#^{2g} S^2 \times S^1$. 

Ozsv\'ath and Szab\'o prove \cite[Theorem 9.3]{OzSzKnot1} that \begin{equation}\label{eq:startingpoint}
    HF^\infty(Y,\mathfrak s_0; \mathbb Z)\cong \Lambda^*(\mathbb Z^{2g})[U, U^{-1}] \cong \Lambda^*(H^1(\Sigma_g; \mathbb Z))[U, U^{-1}].\end{equation} 

\begin{lemma}\label{lemma:compute-on-Y}
Suppose $\hat \psi: Y \to Y$ is a diffeomorphism constructed as above, beginning with $\psi = \Phi_f L_\phi: \Sigma_g \times S^1 \to \Sigma_g \times S^1$. After applying the isomorphism of \eqref{eq:startingpoint} and passing to $\mathbb F_2$ coefficients, the action of $\hat \psi: Y \to Y$ is taken to $(\phi^{-1})^*$, the pullback action of $\phi^{-1}$. Provided Postulate \ref{post:HF-works} holds and $g \ge 3$, the same is true over $\mathbb Z$.
\end{lemma}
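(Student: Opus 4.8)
The plan is to determine $\hat\psi_*$ on $HF^\infty(Y;\mathbb F_2)$, where $Y=\#^{2g}S^2\times S^1$, from three ingredients: the naturality of $HF^\infty$ recalled in Section~\ref{sec:HF-bg}; the identification $HF^\infty(Y;\mathbb F_2)\cong\Lambda^*(H^1(Y;\mathbb F_2))\otimes\mathbb F_2[U,U^{-1}]$ of \eqref{eq:startingpoint}; and a direct computation of the action of $\hat\psi$ on $H^1(Y;\mathbb Z)$. To set up: invariance of cobordism maps under diffeomorphisms makes $\hat\psi_*=(W_{\hat\psi})_*$ a well-defined automorphism, and since $W_{\hat\psi}$ is a product cobordism with $c_1=0$ torsion, its grading shift $\tfrac14(c_1^2-2\chi-3\sigma)$ vanishes, so $\hat\psi_*$ preserves the absolute grading. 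Applying \eqref{eq:gamma-action} to the mapping cylinder shows $\hat\psi_*$ is semilinear over the ring automorphism of $\Lambda^*(H_1(Y))\otimes\mathbb F_2[U,U^{-1}]$ induced by the pushforward action of $\hat\psi$ on $H_1(Y)$. Finally, by \eqref{eq:startingpoint}, $HF^\infty(Y;\mathbb F_2)$ is free of rank one over $\Lambda^*(H_1(Y))\otimes\mathbb F_2[U,U^{-1}]$, with $H_1(Y)$ acting by contraction and lowering the grading by one.

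Next I would compute the action on cohomology. Since $\hat\psi$ is obtained from $\psi=\Phi_fL_\phi$ by extending over the $0$-surgery handle, $\hat\psi$ and $\psi$ restrict to the same diffeomorphism of $Y\setminus N(K)=(\Sigma_g\times S^1)\setminus N(\{\ast\}\times S^1)$, hence induce the same map on the $H^1$ of that piece. As $0$-surgery identifies the meridian of $K$ with the fiber class $[\{\ast\}\times S^1]$, Mayer--Vietoris realizes $H^1(Y;\mathbb Z)$ as the subgroup $H^1(\Sigma_g;\mathbb Z)\subset H^1(\Sigma_g\times S^1;\mathbb Z)$ of classes vanishing on that circle, a subgroup preserved by $\psi^*$; the computation in Proposition~\ref{prop:HC-MCG-calc} shows $\Phi_f^*$ is the identity and $L_\phi^*=\phi^*$ there. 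So $\hat\psi^*=\phi^*$ on $H^1(Y)\cong H^1(\Sigma_g)$, equivalently $\hat\psi_*=\phi_*$ on $H_1(Y)\cong H_1(\Sigma_g)$.

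The rigidity step is then as follows. The candidate map $\Lambda^*((\phi^{-1})^*)$ is also semilinear over $\Lambda^*(\phi_*)$ (a standard exterior-algebra identity, since contraction is natural), so $\hat\psi_*\circ\Lambda^*((\phi^{-1})^*)^{-1}$ is a genuine $\Lambda^*(H_1(Y))\otimes\mathbb F_2[U,U^{-1}]$-linear automorphism of a free rank-one module, i.e.\ multiplication by a unit $\lambda$. As this composite preserves the absolute grading, while multiplication by any homogeneous element of positive exterior degree or by $U^{\pm1}$ strictly changes the grading, $\lambda$ must be a constant unit, hence $\lambda=1$ over $\mathbb F_2$, which gives the $\mathbb F_2$ statement. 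Under Postulate~\ref{post:HF-works} the same argument runs over $\mathbb Z$, except that now $\lambda\in\mathbb Z[U,U^{-1}]^\times=\{\pm U^k\}$ and grading only forces $\lambda=\epsilon(\hat\psi)\in\{\pm1\}$. Because $\psi\mapsto\hat\psi$ respects composition, $\psi\mapsto\epsilon(\hat\psi)$ is a homomorphism $\mathrm{MCG}^{++}(\Sigma_g\times S^1)=\mathrm{MCG}^+(\Sigma_g)\ltimes H^1(\Sigma_g;\mathbb Z)\to\{\pm1\}$; it must kill the normal subgroup $H^1(\Sigma_g;\mathbb Z)$, whose $\mathrm{Sp}(2g,\mathbb Z)$-coinvariants vanish, and factor through $\mathrm{MCG}^+(\Sigma_g)^{\mathrm{ab}}$, which is trivial for $g\ge3$ by the Mumford--Birman--Powell theorem. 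Hence $\epsilon\equiv1$ and $\hat\psi_*=\Lambda^*((\phi^{-1})^*)$ over $\mathbb Z$ as well.

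I expect this rigidity step to be the main obstacle: one must verify that the leftover scalar is genuinely a constant -- which rests on preservation of the absolute grading and on the precise contraction-module structure of $HF^\infty(\#^{2g}S^2\times S^1)$ -- and, over $\mathbb Z$, that the remaining sign character on $\mathrm{MCG}^{++}(\Sigma_g\times S^1)$ is trivial, which is exactly where $g\ge3$ and Postulate~\ref{post:HF-works} are used. A secondary point requiring care is checking that $\psi\mapsto\hat\psi_*$ is genuinely multiplicative, in view of the isotopy subtleties flagged around Lemma~\ref{lemma:first-action}.
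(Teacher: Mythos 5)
Your identification of $\hat\psi^*$ on $H^1(Y;\mathbb Z)$ via Mayer--Vietoris, and your strategy for the sign over $\mathbb Z$ (a character on $\textup{MCG}^+(\Sigma_g)\ltimes H^1(\Sigma_g;\mathbb Z)$ killed by coinvariance on the transvection factor and by triviality of the abelianization for $g\ge 3$), run parallel to what the paper does. The genuine gap is in your rigidity step. You claim that the composite $\hat\psi_*\circ\Lambda^*((\phi^{-1})^*)^{-1}$, being a module-linear automorphism of a free rank-one module over $\Lambda^*(H_1(Y))\otimes\mathbb F_2[U,U^{-1}]$ that preserves the absolute grading, must be multiplication by a \emph{constant} unit. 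This does not follow: because $U$ is invertible, the ring has homogeneous units of total degree zero that are not constants, for instance $1+\gamma_1\gamma_2U^{-1}$ (with $\gamma_i$ of degree $-1$ and $U$ of degree $-2$); such an element is even, hence central, and unipotent, hence invertible, so acting by it is a grading-preserving, module-linear automorphism which is not $\pm 1$. So grading plus freeness cannot pin down $\lambda$. Worse, your premise that \eqref{eq:startingpoint} exhibits $HF^\infty(Y)$ as free of rank one with $H_1(Y)$ acting by \emph{pure contraction} is not something \eqref{eq:startingpoint} asserts, and if the action really were pure contraction the rigidity you need would be \emph{false}, by exactly the unit above (contraction by $1+\gamma_1\gamma_2U^{-1}$ commutes with the contraction action and preserves grading). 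The uniqueness statement you need --- any $\mathbb F_2[U,U^{-1}]$-linear automorphism $f$ of $HF^\infty(\#^{2g}S^2\times S^1,\mathfrak s_0)$ satisfying $f(\gamma\cdot x)=(\phi_*\gamma)\cdot f(x)$ is unique up to sign --- is precisely what the paper imports from the argument of \cite[Lemma 5.3]{OzSzKnot2}; its proof uses the actual $\Lambda^*(H_1)\otimes\mathbb Z[U,U^{-1}]$-module structure of this group, which is finer than the naive contraction action together with the grading. Since $(\phi^{-1})^*$ satisfies the same twisted equivariance, this yields $\hat\psi_*=\pm(\phi^{-1})^*$, and only then does your sign argument (or the paper's, phrased through $\textup{MCG}^+(\Sigma_{g,1})$ and its trivial abelianization for $g\ge 3$) take over.

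To repair your write-up, either quote that uniqueness-up-to-sign result directly, or establish the precise $\Lambda^*(H_1)\otimes R[U,U^{-1}]$-module structure of $HF^\infty(\#^{2g}S^2\times S^1,\mathfrak s_0)$ and check that it rules out the degree-zero units such as $1+\gamma_1\gamma_2U^{-1}$; as it stands, the ``main obstacle'' you flagged is exactly where the argument breaks. The remaining caveat you raise (multiplicativity of $\psi\mapsto\hat\psi_*$ despite isotopy subtleties) is handled in the paper by Lemma \ref{lemma:first-action} together with the fact that $F_0$ intertwines the actions, and your outline is compatible with that.
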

\begin{proof}
We first identify the map induced by $\hat \psi$ on $H^1(Y; \mathbb Z)$. The Mayer--Vietoris sequence identifies $H^1(Y; \mathbb Z) \cong H^1(\Sigma_g; \mathbb Z) \subset H^1(\Sigma_g \times S^1; \mathbb Z)$, the summand of $1$-forms which vanish on $H_1(S^1)$; because $\psi$ and $\hat \psi$ are both the identity on a neighborhood of the knot, their actions are compatible with the Mayer--Vietoris sequence, and we find that the action of $\hat \psi$ on $H^1(\Sigma_g; \mathbb Z)$ is the restriction of the action of $\psi$. Because $\psi = \Phi_f L_\phi$ fixes a neighborhood of the knot we must have $f = 0 \in H^1(\Sigma_g; \mathbb Z)$. Because $L_\phi(x, z) = (\phi(x), z)$, we finally see that $\hat \psi^* = \phi^*$. Considering the cobordism $W_{\hat \psi}: Y \to Y$, the inclusion $i: Y \to W$ induces the identity on homology, whereas $j: Y \to W$ induces $\hat \psi^{-1}$ on homology, which is the same as the map induced by $\phi^{-1}$. It follows from \eqref{eq:gamma-action} that for $f = (W_{\hat \psi})_*$, we have \[f(\gamma \cdot x) = (\phi_* \gamma) \cdot f(x).\]

Though it is only stated for $\phi_* = 1$, the argument of \cite[Lemma 5.3]{OzSzKnot2} shows that there is a unique such automorphism $f$ up to sign. Because $(\phi^{-1})^*$ satisfies this formula, we see $f = \pm (\phi^{-1})^*$.

To pin down the sign over $\mathbb Z$, what we have described gives an action on $H^1(\Sigma_g; \mathbb Z)$ of the mapping class group $\text{MCG}^+(\Sigma_{g,1})$ of the genus $g$ surface with one boundary component. It differs from the action of $(\phi^{-1})^*$ by a homomorphism $\epsilon: \text{MCG}^+(\Sigma_{g,1}) \to \pm 1$, but this mapping class group has trivial abelianization for $g \ge 3$ \cite[Theorem 5.1]{MCG-ab} so $\epsilon = 1$. 
\end{proof}

In particular, the action of $\hat \psi$ on this term depends only on the isotopy class of $\psi \in \textup{MCG}^{++}(\Sigma_g \times S^1)$; what's more, the action of the subgroup of transvections is trivial on $Y$. 

\begin{cor}
The action by diffeomorphisms of Lemma \ref{lemma:first-action} descends to an action of $\textup{MCG}^{++}(\Sigma_g \times S^1)$ on the surgery exact triangle with $\mathbb F_2$ coefficients. If Postulate \ref{post:HF-works} holds and $g \ge 3$, the same is true with $\mathbb Z$ coefficients. 
\end{cor}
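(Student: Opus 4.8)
The plan is to promote the diffeomorphism-level assignment of Lemma~\ref{lemma:first-action} to a genuine group action, using Lemma~\ref{lemma:std-form} to produce good representatives and Lemma~\ref{lemma:compute-on-Y} to control the outcome. Concretely: given $\psi \in \textup{MCG}^{++}(\Sigma_g \times S^1)$, Lemma~\ref{lemma:std-form} lets us choose a representative of the form $\Phi_f L_\phi$ which is the identity on a neighborhood of $K = \{\ast\} \times S^1$; this induces a diffeomorphism $\hat\psi \colon Y \to Y$ (with $Y = \#^{2g} S^2 \times S^1$) which is the identity near $K$, and hence by Lemma~\ref{lemma:first-action} an automorphism of the short exact sequence \eqref{simplified-surgseq}. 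The two things to verify are that this automorphism depends only on the isotopy class $[\psi]$, and that $\psi \mapsto \psi_*$ is a homomorphism.

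For well-definedness, the key observation is that an automorphism of a short exact sequence $0 \to A \to B \to C \to 0$ is determined by its restriction to the middle term $B$: the component on $A$ is the restriction of that on $B$, and the component on $C$ is the induced map on $B/A$. Here $B = HF^\infty(Y, \mathfrak s_0)$, and Lemma~\ref{lemma:compute-on-Y} identifies the action of $\hat\psi$ on $B$ --- under the isomorphism \eqref{eq:startingpoint} --- with $(\phi^{-1})^*$. By Lemma~\ref{lemma:MCG-calc} and the description of $\textup{MCG}^{++}$ following Definition~\ref{def:MCG-pp} as the semidirect product $\textup{MCG}^+(\Sigma_g) \ltimes H^1(\Sigma_g; \mathbb Z)$, the isotopy class of $\phi$ is a well-defined function of $[\psi]$; hence $(\phi^{-1})^*$, and with it the whole automorphism of \eqref{simplified-surgseq}, depends only on $[\psi]$. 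This reduction is precisely the point of the remark preceding the corollary.

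For the homomorphism property, given $\psi, \psi' \in \textup{MCG}^{++}$ choose representatives $\Phi_f L_\phi$, $\Phi_{f'} L_{\phi'}$ which are the identity near $K$. Their composite is again of this form (the product lies in the semidirect product) and is again the identity near $K$, so it is an admissible representative of $\psi\psi'$; by the well-definedness just established, $(\psi\psi')_*$ may be computed from it. The associated diffeomorphism of $Y$ is $\hat\psi \circ \hat\psi'$, which extends over each surgery cobordism as the composite of the individual extensions. Functoriality of $HF^\infty$ under diffeomorphisms --- together with the fact, used already in the proof of Lemma~\ref{lemma:HF-has-MCG-action}, that $W_\psi \circ W_{\psi'}$ carries a unique spin$^c$ structure restricting correctly to each factor --- then gives $(\hat\psi\hat\psi')_* = \hat\psi_* \circ \hat\psi'_*$ on all three groups of \eqref{simplified-surgseq}, compatibly with the connecting maps. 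Thus $\psi \mapsto \psi_*$ is a homomorphism into the automorphism group of the sequence.

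The integral statement is obtained by running the identical argument with $\mathbb Z$-coefficients, invoking the integral versions of Lemmas~\ref{lemma:first-action}, \ref{lemma:std-form} (the latter being purely topological), and \ref{lemma:compute-on-Y}; the lone point at which anything changes is that the middle-term action is pinned down to the honest map $(\phi^{-1})^*$, rather than merely to $\pm(\phi^{-1})^*$, only under the hypotheses that Postulate~\ref{post:HF-works} holds and $g \ge 3$. The main obstacle is conceptual rather than computational: Lemma~\ref{lemma:first-action} a priori produces only a well-defined map out of the \emph{set} of diffeomorphisms, and the descent to isotopy classes rests entirely on the rigidity of the $HF^\infty(Y, \mathfrak s_0)$-action furnished by Lemma~\ref{lemma:compute-on-Y} (which in turn leans on the uniqueness statement of \cite[Lemma 5.3]{OzSzKnot2}) combined with the elementary fact that automorphisms of a short exact sequence are detected on the middle term.
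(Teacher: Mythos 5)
There is a genuine gap in your well-definedness argument. You treat \eqref{simplified-surgseq} as a short exact sequence $0 \to A \to B \to C \to 0$ with middle term $B = HF^\infty(Y, \mathfrak s_0)$ and invoke the principle that an automorphism of a short exact sequence is determined by its middle component. But \eqref{simplified-surgseq} is an exact triangle (a long exact sequence wrapping around), not a short exact sequence with that middle term: the map $HF^\infty(\Sigma_g \times S^1, \mathfrak u_0) \to HF^\infty(Y, \mathfrak s_0)$ has kernel the image of the connecting map, isomorphic to $\coker(e^{\omega U}-1)$, which is nonzero, and $F = F_0 + F_1$ is not surjective (its cokernel is again $\coker(e^{\omega U}-1)$ up to the identification by $F_0$). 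Consequently the automorphism of the $\Sigma_g \times S^1$ term is \emph{not} determined by the action on $HF^\infty(Y,\mathfrak s_0)$, nor is the automorphism of $HF^\infty(Y_{-n}(K),\mathfrak t_0)$ determined as ``the induced map on $B/A$.'' Indeed, if the action on the $\Sigma_g\times S^1$ term were forced by the action on $HF^\infty(Y)$, one could read off the full $\textup{MCG}^{++}$-module structure of $HF^\infty(\Sigma_g\times S^1)$ rather than only its associated graded pieces, contrary to Theorem \ref{thm:HFHC-filt-intro}(b); so this is not a repairable bookkeeping slip but a misidentification of where the content lies.

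The paper's proof supplies exactly the two ingredients your reduction skips. For the $\Sigma_g \times S^1$ term, independence of the choice of representative follows from the isotopy invariance of diffeomorphism-induced maps on $HF^\infty$ (the mapping-cylinder cobordisms $W_\psi$ of Section \ref{sec:HF-bg} depend only on the isotopy class), since the representative produced by Lemma \ref{lemma:std-form} is isotopic to $\psi$. For the $Y_{-n}(K)$ term, one uses that the single cobordism map $F_0$ (not the full map $F$) is an isomorphism and, by the naturality noted in the proof of Lemma \ref{lemma:first-action}, intertwines the diffeomorphism actions; this transports the rigidity of Lemma \ref{lemma:compute-on-Y} from the $Y$ term to the $Y_{-n}(K)$ term. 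Your use of Lemma \ref{lemma:compute-on-Y} for the $Y$ term, and your explicit verification of the homomorphism property by composing representatives fixed near $K$, are fine and consistent with the paper; the argument becomes correct once the other two terms are handled as above.
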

\begin{proof}
That the induced map on the $\Sigma_g \times S^1$ term depends on $\psi$ only up to isotopy was discussed in the Section \ref{sec:HF-bg}. That this is true on the $Y$ term follows from Lemma \ref{lemma:compute-on-Y}. Ozsv\'ath and Szab\'o prove that the map \[F_0: HF^\infty(Y, \mathfrak s_0; \mathbb F_2) \to HF^\infty(Y_{-n}(K), \mathfrak t_0; \mathbb F_2)\] is an isomorphism, and as discussed in the proof of Lemma \ref{lemma:first-action} this commutes with the action by diffeomorphisms. It follows that $\psi_*$ only depends on the isotopy class in $\Sigma_g \times S^1$ for this term as well.
\end{proof}

Now we are reduced to algebraic considerations about the surgery triangle. Recall that the map $F$ in the exact sequence is a sum of two terms $F_0$ and $F_1$. Both $F_0$ and $F_1$ are $\text{MCG}^{++}(\Sigma_g \times S^1)$-equivariant isomorphisms, and $F_1 = F_0 J$, where $J: (HF^\infty)^*(Y, \mathfrak s_0) \to (HF^\infty)^*(Y, \mathfrak s_0)$ is the `basepoint-swapping automorphism'. Finally, with respect to the isomorphism \eqref{eq:startingpoint}, \cite{JabukaMark} compute 
\[J(e^{-\omega U} \wedge (x U^i)) = e^{-\omega U} \wedge -\iota_{e^{-\omega U}}(x U^i).\] 

In particular, with respect to the further isomorphism given by wedging by $e^{-\omega U}$, the map $J$ is given by contraction against $-e^{-\omega U}$. Notice that $J$ is equivariant under the action of the mapping class group $\text{MCG}^{++}(\Sigma_g \times S^1)$.

The signs are irrelevant; the action of $-e^{-\omega U}$ can be taken to $e^{\omega U}$ by composing in the first $\Lambda^*$ term with the isomorphism $\Lambda^*(\mathbb Z^{2g})[U, U^{-1}]$ which is $+1$ on $\Lambda^p$ for $p \equiv 0,1 \mod 4$ and $-1$ on $\Lambda^p$ for $p \equiv 3, 4 \mod 4$ and in the second $\Lambda^*$ term with the negative of this isomorphism. Applying this isomorphism and wedging with $e^{-\omega U}$, we see that the surgery long exact sequence is isomorphic to the long exact sequence 
\[\cdots \to HF^\infty(\Sigma_g \times S^1, \mathfrak s_0)\to \Lambda^*(\mathbb Z^{2g})[U, U^{-1}] \xrightarrow{\iota_{e^{\omega U}-1}} \Lambda^*(\mathbb Z^{2g})[U, U^{-1}] \to HF^\infty(\Sigma_g \times S^1, \mathfrak s_0) \to \cdots\]

This can then be reduced to an $\text{MCG}^{++}(\Sigma_g \times S^1)$-equivariant short exact sequence 
\begin{equation}\label{eqn:HF-SES}
0 \to \coker(e^{\omega U} - 1;\mathbb F_2) \to HF^\infty(\Sigma_g \times S^1, \mathfrak s_0) \to \ker(e^{\omega U} - 1; \mathbb F_2) \to 0;
\end{equation}
here, recall that we write $\ker(\alpha)$ and $\coker(\alpha)$ to denote the kernel and cokernel of \emph{contraction} by $\alpha$. 

This completes the proof of Proposition \ref{prop:HF-MCG-calc} over $\mathbb F_2$. If the Heegaard Floer machinery lifts to $\mathbb Z$, so does the proof above, barring the restriction of Lemma \ref{lemma:compute-on-Y} to $g \ge 3$.

\subsection{Proofs of the main theorems}

\begin{proof}[Proof of Theorem \ref{thm:HFHC-filt-intro}]
If we are not keeping track of mapping class group actions, the short exact sequence \eqref{eqn:HF-SES} was constructed over $\mathbb Z$ in \cite[Section 4.3]{JabukaMark} without additional assumptions on the foundations of Heegaard Floer homology. Because $\ker(\omega; \mathbb Z)$ and $\ker(e^{\omega U} - 1; \mathbb Z)$ are free $\mathbb Z[U, U^{-1}]$-modules, the short exact sequence \eqref{eqn:HF-SES} splits and we have isomorphisms of $\mathbb Z[U, U^{-1}]$-modules
\begin{align*}
    HF^\infty(\Sigma_g \times S^1, \mathfrak s_0; \mathbb Z) &\cong \coker(e^{\omega U} - 1;\mathbb Z) \oplus \ker(e^{\omega U} - 1; \mathbb Z) \\
    HC_*(\Sigma_g \times S^1; \mathbb Z) &\cong \coker(\omega; \mathbb Z) \oplus \ker(\omega; \mathbb Z).
\end{align*}
That these two $\mathbb Z[U, U^{-1}]$-modules are isomorphic follows from Lemma \ref{lemma:ker-contraction} and Proposition \ref{prop:computation-for-coker}(a). The appearance of $U$ is inconsequential; its primary function is to make $e^{\omega U} - 1$ and $\omega U$ homogeneous maps of degree zero. The results of Proposition \ref{prop:computation-for-coker}(a) and (b) may be easily modified to include the action of $U$. \\

We now take $g \ge 3$. Using the filtrations of Proposition \ref{prop:computation-for-coker}(b) and the exact sequences of Propositions \ref{prop:HC-MCG-calc} and \ref{prop:HF-MCG-calc}, we may filter $HF^\infty$ and $HC_*$ by 
\begin{align*}
F_r HF^\infty(\Sigma_g \times S^1, \mathfrak s_0; \mathbb Z) &= \begin{cases} F_k \coker(e^{\omega U} - 1; \mathbb Z) & 0 \le k \le g \\
HF^\infty(\Sigma_g \times S^1; \mathbb Z) & k > g \end{cases} \\
F_r HC_*(\Sigma_g \times S^1; \mathbb Z) &= \begin{cases} F_k \coker(\omega; \mathbb Z) & \;\;\;\;\; 0 \le k \le g \\
HC_*(\Sigma_g \times S^1; \mathbb Z) & \;\;\;\;\;k > g \end{cases}
\end{align*}
The computation of associated graded modules stated in Theorem \ref{thm:HFHC-filt-intro}(b) now follows from Lemma \ref{lemma:ker-contraction} and Proposition \ref{prop:computation-for-coker}(b).\end{proof}

\begin{remark}\label{rmk:F2-case}
The proofs of both Lemma \ref{lemma:ker-contraction} and Lemma \ref{lemma:specseq-collapse} fail over $\mathbb F_2$, and it is not clear what one should expect about their cokernels.

Over $\mathbb F_2$, one instead finds that \begin{align*}
        \text{gr}_{k+1} HF^\infty(\Sigma_g \times S^1, \mathfrak s_0; \mathbb F_2) &\cong \ker(e^{\omega U} - 1; \mathbb F_2) \\
        \text{gr}_{k+1} HC_*(\Sigma_g \times S^1; \mathbb F_2) &\cong \ker(\omega; \mathbb F_2).
    \end{align*}
The authors have not attempted to compare these modules, but it seems likely they fail to be isomorphic for sufficiently large $g$. 
\end{remark}

\begin{proof}[Proof of Theorem \ref{thm:no-iso-intro}]
The key is to use the action of transvections to reduce to the cokernel term. 

\begin{lemma}\label{lemma:contraction1}
For any commutative ring $R$ and any $g \ge 1$, if $\alpha \in \ker(\omega;R)$ is nonzero, there exists $f \in H^1(\Sigma_g; \mathbb Z)$ so that $[\alpha \wedge f] \ne 0 \in\coker(\omega;R).$
\end{lemma}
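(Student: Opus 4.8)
The plan is to dualize via the Hodge--Lefschetz star and reduce the statement to a spanning assertion about contractions of coprimitive subspaces.

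First I would reduce to the case that $\alpha$ is homogeneous: since $\ker(\omega;R)$ and $\operatorname{im}(\iota_\omega;R)$ are graded and wedging by $f$ shifts degree by one, it suffices to treat a nonzero $\alpha\in\Lambda^k(R^{2g})$ with $\iota_\omega\alpha=0$, and moreover $k\le 2g-1$ because $\iota_\omega(\omega_g)=-\omega_{g-1}$ is a primitive (hence unimodular) vector, so $\ker(\iota_\omega)\cap\Lambda^{2g}(R^{2g})=0$. The Hodge--Lefschetz star $*x=\iota_x(\omega_g)$ is an isomorphism $\Lambda^j(R^{2g})\xrightarrow{\sim}\Lambda^{2g-j}(R^{2g})$ over any $R$ (base change of the integral isomorphism of \cite[Proposition 3.3]{JabukaMark}), and the identity $\iota_{a\wedge b}=\iota_a\iota_b$ gives both $*(\omega\wedge x)=\iota_\omega(*x)$ and $*(v\wedge x)=\iota_v(*x)$ for $v\in\Lambda^1(R^{2g})$. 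Hence $*$ takes the image of $\omega\wedge(-)$ onto the image of $\iota_\omega$, and a short sign computation (using $*(v\wedge x)=\iota_v(*x)$ and $*^2=\pm 1$) shows that for $\beta:=*^{-1}\alpha\in\Lambda^{2g-k}(R^{2g})$ one has $\omega\wedge\beta=0$, $\beta\ne 0$, and $\alpha\wedge f=\pm\,*^{-1}(\iota_f\beta)$. So it is enough to find $f\in\mathbb Z^{2g}$ with $\iota_f(\beta)\notin\operatorname{im}\!\big(\omega\wedge(-):\Lambda^{2g-k-3}(R^{2g})\to\Lambda^{2g-k-1}(R^{2g})\big)$.

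Next I would use the unimodular pairing on $\Lambda^j(R^{2g})\times\Lambda^{2g-j}(R^{2g})$ given by $x\wedge y=\langle x,y\rangle\,\omega_g$, which remains perfect over every $R$ since its Gram matrix in the monomial basis is a signed permutation. Because $\omega$ is central of even degree, $\omega\wedge(-)$ is self-adjoint, and because each $\iota_v$ is a derivation with $\Lambda^{2g+1}=0$, each $\iota_v$ is self-adjoint up to sign; perfectness then identifies $\operatorname{im}(\omega\wedge(-):\Lambda^{m-2}\to\Lambda^m)^\perp$ with $\ker(\omega\wedge(-):\Lambda^{2g-m}\to\Lambda^{2g-m+2})$. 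Consequently, if $\iota_f(\beta)$ lay in $\operatorname{im}(\omega\wedge(-))$ for \emph{every} $f$, we would obtain $\langle\beta,\iota_f(y)\rangle=0$ for all $f\in\Lambda^1(R^{2g})$ and all $y$ in the coprimitive subspace $\ker(\omega\wedge(-))\cap\Lambda^{k+1}(R^{2g})$. The proof would then be completed by the spanning claim that $\Lambda^k(R^{2g})$ is generated over $R$ by the set of all such $\iota_f(y)$, since this forces $\langle\beta,-\rangle\equiv 0$ and hence $\beta=0$, a contradiction.

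The crux, and the step I expect to be the main obstacle, is this spanning claim: that contracting coprimitive forms against all degree-one classes recovers all of $\Lambda^k(R^{2g})$. This is precisely where the linear structure of the example must enter, for a general K\"ahler manifold the analogous statement fails, in the same way that the surjectivity discussed just before Proposition~\ref{prop:w-surj} is special to the linear case. I would prove it by induction on $g$ in the style of that proposition --- writing $\mathbb Z^{2g}=\mathbb Z^{2g-2}\oplus\langle e^{2g-1},e^{2g}\rangle$, decomposing a coprimitive $y$ along powers of $e^{2g-1}\wedge e^{2g}$ into pieces that are coprimitive over $\mathbb Z^{2g-2}$, applying the inductive hypothesis, and reassembling from the contractions $\iota_{e^j}(y)$ --- or else by a direct monomial computation, choosing for each monomial $e^S$ a coprimitive form of the shape $e^S$ wedged with the product of the symplectic pairs disjoint from $S$, and contracting by one leg of a pair that meets $S$. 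The delicate point is checking that the span so produced is all of $\Lambda^k$ integrally, rather than merely a finite-index subgroup, uniformly across the whole range of $k$; once the integral statement is secured, tensoring with $R$ yields the claim for an arbitrary coefficient ring and completes the argument.
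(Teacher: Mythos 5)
The preliminary reductions in your plan are all correct: the restriction to homogeneous $\alpha$, the duality identities $*(y\wedge x)=\iota_y(*x)$ and $*^2=\pm 1$, which convert the problem into finding $f$ with $\iota_f\beta\notin\operatorname{im}(\omega\wedge(-))$ for a nonzero $\beta$ with $\omega\wedge\beta=0$, and the adjointness argument with the perfect pairing $x\wedge y=\langle x,y\rangle\,\omega_g$. This is also a genuinely different route from the paper, which never dualizes: there the lemma is attacked by induction on $g$, writing $\alpha=a+e^{2g+1}b+e^{2g+2}c+e^{2g+1}e^{2g+2}d$, reading off from $\iota_\omega(\alpha)=(\iota_\eta a-d)+e^{2g+1}\iota_\eta b+e^{2g+2}\iota_\eta c+e^{2g+1}e^{2g+2}\iota_\eta d$ that $b,c,d\in\ker(\iota_\eta)$, and applying the inductive hypothesis to $b,c,d$ (for every symplectic pair of coordinates) to force them to vanish; no star operator and no pairing enter.

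The gap in your proposal sits exactly where you predicted it, but it is not repairable: the spanning claim is false over $\mathbb Z$ and $\mathbb Q$ whenever $k+1<g$. In that range $\omega\wedge(-):\Lambda^{k+1}(\mathbb Z^{2g})\to\Lambda^{k+3}(\mathbb Z^{2g})$ is injective (hard Lefschetz below the middle degree), so the coprimitive subspace you propose to contract from is zero while $\Lambda^{k}\neq 0$; no induction in the style of Proposition \ref{prop:w-surj} and no monomial construction can produce a spanning set from the zero module, and tensoring up from $\mathbb Z$ cannot help. Since $\ker(\omega;R)$ contains $P^0\oplus\cdots\oplus P^{g-2}$, your argument cannot close for kernel elements of degree at most $g-2$.

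You should know, however, that this is not an artifact of your dualization, which is faithful: it reflects a defect of the statement itself in exactly this range. For $g\ge 2$ take $\alpha=1\in\Lambda^0\subseteq\ker(\omega;R)$; then $\alpha\wedge f=f$, and the identity $\iota_\omega(e^1\wedge e^3\wedge e^4)=-e^1$ (together with its analogues for the other basis covectors) shows $\Lambda^1\subseteq\operatorname{im}(\iota_\omega)$, so $[\alpha\wedge f]=0$ in $\coker(\omega;R)$ for every $f$. More generally, over $\mathbb Q$ the cokernel of $\iota_\omega$ vanishes in every degree below $g$, so any nonzero $\alpha$ of degree at most $g-2$ violates the conclusion, and over $\mathbb Z$ divisible classes such as $2e^1$ (already for $g=3$) do as well. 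Correspondingly, the paper's own induction only shows that $\alpha$ contains no monomial meeting any symplectic pair, i.e.\ $\alpha\in\Lambda^0$, and the final assertion $\alpha=0$ is precisely the problematic step. So before either your approach (in degrees $k\ge g-1$, where your spanning claim is plausible but still unproven) or the paper's induction can succeed, the hypothesis of the lemma must be strengthened to exclude these low-degree and divisible kernel classes, with the corresponding adjustment in the fixed-point computation used in the proof of Theorem \ref{thm:no-iso-intro}.
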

\begin{proof}
The proof is by induction on $g$. The case $g = 1$ is straightforward, as there $\ker(\omega;R) = \Lambda^0(R^2) \oplus \Lambda^1(R^2)$ while $\coker(\omega;R) = \Lambda^1(R^2) \oplus \Lambda^2(R^2)$, and the wedge product pairing on $\Lambda^1(R^2)$ is nondegenerate. 

Write $\omega$ for the symplectic form on $R^{2g+2}$ and $\eta$ for the symplectic form on $R^{2g}$. If $\alpha \in \ker(\omega;R)$ and $e^i \alpha \in \text{im}(\iota_\omega)$ for all $i$, we aim to show that $\alpha = 0$. We will do so by showing that for every pair of indices $\{2i-1,2i\}$, that $\alpha$ consists of monomials disjoint from this pair. To simplify notation, we present this argument specifically for the pair $\{2g+1, 2g+2\}$. Taking an arbitrary $\alpha \in \Lambda^*(R^{2g+2}),$ we compute 
\begin{align*}\alpha &= a + e^{2g+1} b + e^{2g+2} c + e^{2g+1} e^{2g+2} d, \quad a,b,c,d \in \Lambda^*(R^{2g}), \\
\iota_\omega(\alpha) &= \iota_\eta(a) - d + e^{2g+1} \iota_\eta(b) + e^{2g+2} \iota_\eta(c) + e^{2g+1} e^{2g+2} \iota_\eta(d).
\end{align*}
If $\alpha \in \ker(\omega;R)$, then $b, c, d \in \ker(\eta;R)$. By inductive hypothesis, if $d \ne 0$, then there exists $0 \le i \le g$ for which $e^i d \not\in \text{im}(\iota_\eta)$; the formula above then implies $e^i \alpha \not\in \text{im}(\iota_\omega).$ A similar argument implies if $b$ or $c$ is nonzero, so $\alpha = a$ and the monomials appearing in $\alpha$ are disjoint from $\{2g+1, 2g+2\}$. Applying the same argument to all other coordinates, it follows that $\alpha = 0$. 
\end{proof}

We will now prove Theorem \ref{thm:no-iso-intro} over $\mathbb Z$, as the argument over $\mathbb F_2$ just removes a reduction step. Define the finitely generated abelian groups \[HC_{U=1} = HC \otimes_{\mathbb Z[U]} \mathbb Z, \quad \quad HC_{U=1}^T = \{x \in HC_{U=1} \mid f \cdot x = x \text{ for all } f \in H^1(\Sigma_g; \mathbb Z)\}\] and similarly for $HF$. It follows from Proposition \ref{prop:HC-MCG-calc} and Lemma \ref{lemma:contraction1} that $HF^T = \coker(\omega;\mathbb Z)$. 

Suppose, towards a contradiction, that we had an $\text{MCG}^{++}(\Sigma_g \times S^1)$-equivariant  isomorphism $HC \cong HF$ of $\mathbb Z[U]$-modules. This induces an isomorphism $HC_{U=1} \cong HF_{U=1}$, which restricts to an isomorphism $HC_{U=1}^T \cong HF_{U=1}^T$ of their fixed point subgroups. These subgroups retain an action of $\text{MCG}^+(\Sigma_g)$, and the isomorphism necessarily preserves this action. 

If we can show that $HF_{U=1}^T = \coker(e^{\omega} - 1; \mathbb F_2)$ with the action described in Proposition \ref{prop:HF-MCG-calc}, then the desired result follows from Proposition \ref{prop:computation-for-coker}(c). We have a containment 
\[\coker(e^\omega - 1;\mathbb F_2) \subset HF_{U=1}^T \cong \coker(\omega; \mathbb F_2);\]
to prove equality, it suffices to show these spaces have the same dimension.  Proposition \ref{prop:computation-for-coker}(a) gives an isomorphism $\coker(e^\omega - 1; \mathbb Z) \cong \coker(\omega; \mathbb Z).$ Because tensor products are right exact, we obtain \[\dim \coker(e^\omega-1;\mathbb F_2) = \dim \coker(e^\omega-1;\mathbb Z)/2 = \dim \coker(\omega;\mathbb Z)/2 = \dim \coker(\omega; \mathbb F_2).\qedhere\]
\end{proof}



\bibliography{biblio.bib}
\bibliographystyle{alphaurl} 
\end{document}